\documentclass[11pt,reqno]{amsart}

\usepackage[latin1]{inputenc}
\usepackage{amsmath,amsthm,amssymb,amscd,mathrsfs,amsbsy,ulem}
\usepackage{mathtools,color,esint,bm,float}
\usepackage{booktabs}
\usepackage{nicematrix}
\usepackage{tikz}
\usepackage[perpage]{footmisc}
\usepackage{subfig,caption}
\usepackage[shortlabels]{enumitem}
\usepackage{cancel}
\usepackage{bbm}

\newcolumntype{?}{!{\vrule width 0.4pt}}

\DeclareRobustCommand{\SkipTocEntry}[5]{}

\allowdisplaybreaks[4]

\definecolor{darkgreen}{rgb}{0.0, 0.6, 0.13}

\usepackage{cite}

\usepackage{hyperref}

\newtheorem{thm}{Theorem}[section]
 \newtheorem{cor}[thm]{Corollary}
 \newtheorem{lem}[thm]{Lemma}
 \newtheorem{prop}[thm]{Proposition}

 \theoremstyle{definition}
 \newtheorem{df}[thm]{Definition}
 \theoremstyle{remark}

 \numberwithin{equation}{section}

 \newtheoremstyle{remarkstyle}  
  {3pt}   
  {3pt}   
  {}      
  {}      
  {\bfseries} 
  {.}     
  {.5em}  
  {}      

\theoremstyle{remarkstyle} 
\newtheorem{remark}{Remark} %

\newcommand{\Bc}{\mathcal B}

\newcommand{\Fc}{\mathcal F}

\newcommand{\Qc}{\mathcal Q}
\newcommand{\Rc}{\mathcal R}

\newcommand{\vt}{\boldsymbol{t}}

\newcommand{\eq}{\begin{equation}}
\newcommand{\eeq}{\end{equation} }

\begin{document}

\title{Local-in-Time Existence of $L^1$ solutions to the Gravity Water Wave Kinetic Equation}

\author{Yulin Pan}
\address{\textsc{Department of Naval Architecture and Marine Engineering, University of Michigan, Ann Arbor, MI, USA}}
\email{\texttt{yulinpan@umich.edu}}

\author{Xiaoxu Wu}
\address{\textsc{Mathematical Sciences Institute, Australia National University}}
\email{\texttt{xiaoxu.wu@anu.edu.au}}

\date{}

 \begin{abstract}
In this paper, we study the Cauchy problem for the four-wave kinetic equation describing the weak turbulence of gravity water waves. The mathematical challenges of this analysis stem primarily from two interrelated aspects: (1) the extreme algebraic complexity of the collision kernel, where controlling its growth in the highly non-local regime constitutes the primary analytical bottleneck, and (2) the construction of strong solutions under the resulting singular integral operators. First, we re-analyze the interaction kernel in this precise regime, where the interacting wave numbers satisfy $|k|, |k_3| \gg |k_1|, |k_2|$. We establish a rigorous upper bound of $\mathcal{O}(|k||k_3|)$, which rigorously verifies the asymptotic smallness of the interaction coefficient anticipated in the physics literature \cite{zakharov2010energy, geogjaev2017numerical, geogjaev2025properties}. Furthermore, this result improves upon the recent $\mathcal{O}\big((|k||k_3|)^{3/2}\big)$ estimate proposed in \cite{waterkernel2024}, demonstrating a strictly milder singularity of wave interactions in this limit. Physically, this regime governs the energy exchange between disparate scales, such as the modulation of short gravity waves by long ocean swells. Second, leveraging this crucial integrability gain alongside a refined structural decomposition of the collision operator, we establish the local-in-time existence of $L^1$ strong solutions to the gravity water kinetic equation for initial data in a suitably weighted $L^2 \cap L^\infty$ space. Specifically, we prove that for any initial data in this class, the resulting $L^1$ strong solution strictly propagates the weighted $L^2 \cap L^\infty$ regularity and conserves the fundamental physical properties of the kinetic model. Our results provide a rigorous mathematical framework, laying a firm foundation for future studies on global dynamics and inter-scale energy cascades.

\end{abstract}

\maketitle

\section{Introduction}
Wave Turbulence Theory (WTT) serves as the foundational statistical framework for describing the interactions of a large number of weakly nonlinear dispersive waves. A cornerstone of this theory is the wave kinetic equation (WKE), which governs the long-time evolution of the wave action spectrum $f(k)$ in momentum space. One of the first WKEs was established by Klaus Hasselmann in 1962~\cite{HM1962} for surface gravity waves, sometimes referred to as Hasselmann's kinetic equation. This equation serves as the foundation of modern wave forecasting, in the sense that it is routinely solved numerically in conjunction with additional terms representing wind input and wave breaking to provide forecasts of global wave states. In addition, it was found by Zakharov~\cite{gkln2022,nazarenko2011,ZZ1982,K2012} that WKEs (such as Hasselmann's equation) yield power-law Kolmogorov-Zakharov solutions that can often be observed in experiments and field~\cite{lenain2017measurements,dyachenko2004weak,denissenko2007gravity,zhang2022numerical,pan2014direct,falcon2007observation}.

The mathematical landscape of water waves and WTT has witnessed spectacular breakthroughs, spanning from the foundational well-posedness theories of the Euler system to the rigorous justification of effective kinetic limits. The local well-posedness of the water wave system is now well-established across various physical scenarios, owing to the foundational contributions of many authors (see, for example, \cite{ABZ2014,CS2007,L2005,W1997,W1999}). Furthermore, for sufficiently small initial data, global existence and long-time dynamics have been rigorously established \cite{dip2017,GMS2012,ip2015,ip2018,it2017,w2011,AD2015}.

In recent years, derivation and justification of WKE has received intense mathematical attention, initially gaining momentum in the context of dispersive models such as the semilinear Schr\"odinger equation. The rigorous derivation of kinetic equations for weakly interacting dispersive waves has seen impressive advances. Following a series of pioneering works on the derivation of the 4-wave WKE from the cubic nonlinear Schr\"odinger equation (NLS) (see, e.g., \cite{BGHS2021,CG2025,DH2021}), a state-of-the-art milestone was achieved by Deng and Hani \cite{DH2023}. They rigorously derived the WKE up to the kinetic timescale and subsequently established its validity for as long as the kinetic equation itself remains well-posed \cite{dhlong2023}. Recently, Deng, Hani, and Ma \cite{dhm2025} successfully extended this robust program to interacting particle systems, providing a long-time derivation of the Boltzmann equation, again conditioned on its underlying well-posedness.

Crucially, these powerful derivation paradigms are now being adapted to the much more complex setting of genuine fluid interfaces. In \cite{DIP2025,DIP2025-2}, Deng, Ionescu, and Pusateri established long-time regularity results for the water wave system configured with large total energy but small local energy. This configuration represents the precise physical and mathematical setup required for applying WTT to water waves. For a broader context on other recent breakthroughs in rigorous derivations and WTT, we refer the reader to \cite{bgms2025,BGHS2018,DH2023,DH2026,dhm2024,dhm2025,DIP2025,ma2022,SBS2026,st2021,V2025,ST2020,EV2015,ampatzoglou2025rigorous,MR3987176}. At a fundamental structural level, the wave kinetic equations derived from WTT share profound mathematical similarities with the classical Boltzmann equation in statistical mechanics, as both govern the macroscopic evolution of complex interacting systems via non-local collision integrals. However, a stark contrast emerges regarding their regularizing effects and long-time asymptotic behaviors. For the Boltzmann equation, the intrinsic structure of the collision nonlinearity provides strong coercivity, often yielding powerful regularizing effects and an exponential decay toward the Maxwellian equilibrium (see, for instance, the seminal works on the exponential H-theorem and spectral gaps \cite{Villani2002_review, DV2005, MR3779780}). See also~\cite{MR2679369,MR2784329,MR4470411,MR2013332}. In contrast, the gravity water wave collision operator exhibits severe algebraic singularities. Furthermore, instead of an exponential Maxwellian, its exact thermodynamic equilibria are characterized by algebraic Rayleigh-Jeans distributions. Even more intricately, the system admits power-law Kolmogorov-Zakharov spectra as non-equilibrium stationary states driven by constant cascades of energy or wave action. It remains highly unobvious whether the water wave kinetic equation can offer analogous regularizing mechanisms or exponential relaxation.

While spectacular progress has recently been made in the rigorous justification of WTT for various dispersive systems, the intrinsic analytical properties of the corresponding WKE for gravity water waves remain largely unexplored. Studying the existence of strong solutions for this equation is therefore of fundamental importance to the broader theory of water wave turbulence. Establishing a rigorous well-posedness framework is essential to validate the physical predictions of the kinetic model, clarify the functional spaces governing the inter-scale energy cascades, and provide indispensable theoretical insights for future mathematical endeavors.

In this paper, we address this critical gap by rigorously constructing local-in-time $L^1$ strong solutions to the WKE for gravity water waves.

\subsection{Mathematical Setup and Main Result} Consider the spatially homogeneous kinetic equation for gravity water waves (setting the gravitational acceleration $g=1$) in dimension $d \geq 2$:
\begin{equation}
\label{4wave}\tag{WKE}
\begin{aligned}
\partial_t f(t,k) &= \mathcal{Q}[f](t,k), \quad \mbox{on } \mathbb{R}_+\times\mathbb{R}^d,\\
f(0,k) &= f_0(k)\in \Bc \quad \mbox{on } \mathbb{R}^d.
\end{aligned}
\end{equation}
We seek solutions within an admissible class $\mathcal{B}$ for non-negative initial data $f_0(k) \geq 0$. As is standard in kinetic theory, the system inherently preserves the real-valuedness and non-negativity of the solution throughout its evolution (see, e.g., \cite{nazarenko2011}). Let $\langle k\rangle := \sqrt{|k|^2+1}$ denote the Japanese bracket. For $1 \leq p \leq \infty$ and $s \geq 0$, we define the weighted Lebesgue space $L^p_s(\mathbb{R}^d)$ as the space of real-valued functions equipped with the norm $\| f \|_{p,s} := \| \langle k \rangle^s f \|_{L^p}$. The admissible class $\mathcal{B}$ is then defined as:
\begin{equation}\label{def: B}
\Bc=\big\{f\in L^2_{22+5d}(\mathbb R^d) \cap L^\infty_{12+4d}(\mathbb R^d) \,:\, f\geq 0 \big\},
\end{equation}
where the polynomial weights $22+5d$ and $12+4d$ are chosen to be sufficiently large to close our subsequent estimates. The collision integral $\mathcal{Q}[f]$, which models the four-wave resonant interactions, is given by:

\begin{align}
\mathcal{Q}[f](k) =  & \iiint_{\mathbb{R}^{3d}} T_{k,k_1,k_2,k_3} \delta(\Sigma)\delta(\Omega) \big[f_2 f_3 (f_1+f) - f f_1 (f_2 + f_3)\big]\,dk_1\,dk_2\, dk_3,\label{Qf}
\end{align}
with the standard shorthand notations for the dispersion relation and distribution functions:
$$
\omega= \omega(k)=\sqrt{|k|}, \qquad \omega_i = \omega(k_i), \qquad f = f(t,k), \qquad f_i = f(t,k_i), \quad \mbox{for } i=1, 2, 3.
$$
The resonant manifold is governed by the momentum and energy conservation laws:
$$
    \Sigma = k+k_1-k_2-k_3 = 0, \qquad \Omega = \omega+\omega_1-\omega_2-\omega_3 = 0.
$$
The collision kernel $T_{k,k_1,k_2,k_3} = |T_{k_1k}^{k_2k_3}|^2$ captures the transition probability of the four-wave interactions. The function $T_{k_1k}^{k_2k_3}$ is defined precisely in Eq.~\eqref{eq: def T}. Crucially, the kernel $T_{k,k_1,k_2,k_3}$ satisfies the standard scattering symmetries; it is invariant under the exchange of the incoming and outgoing pairs, as well as under the permutation of individual variables within each pair:
$$
    T_{k,k_1,k_2,k_3} = T_{k_2,k_3,k,k_1} = T_{k_1,k,k_2,k_3} = T_{k,k_1,k_3,k_2}.
$$

\medskip

Throughout this paper, all functional spaces are assumed to consist of real-valued functions unless stated otherwise. We adopt the following notational conventions for norms: $\|\cdot\|_{p}\equiv \|\cdot\|_{L^p}$, $\|\cdot\|_{p\to p}\equiv \|\cdot\|_{L^p\to L^p},$ $\|\cdot\|_{p,s}\equiv \|\cdot\|_{L^p_s}$, and $\|\cdot\|_{p,s\to p,s}:=\| \cdot\|_{L^p_s\to L^p_s }$.

\begin{remark}
The specific polynomial weights \(22+5d\) and \(12+4d\) in Eq.~\eqref{def: B} are not necessarily optimal. However, since the primary objective of this paper is to rigorously construct strong $L^1$ solutions to the system~\eqref{4wave} in the physical space \(L^1(\mathbb{R}^d)\), these conservative choices for the "good" initial data class $\mathcal{B}$ suffice to control the nonlinear collision operator.
\end{remark}

\medskip

The rigorous notion of a strong solution is specified as follows:
\begin{df}[Strong $L^1$ Solutions] 
We say that the system \eqref{4wave} admits a strong solution in $L^1$ if, for a given initial datum $f(0,k)=f_0$, there exists a lifespan $T=T(f_0)>0$ such that the solution $f(t,k)$ satisfies $f \in C([0,T]; L^1(\mathbb{R}^d))$. 
\end{df}

The main theorem of this paper will establish the local-in-time existence of such strong $L^1$-solutions to the system \eqref{4wave} for any initial data $f_0 \in \Bc$.
\begin{thm}\label{thm: LWP}Assume the dimension is $d \geq 2$. For any initial data $f_0 \in \Bc$, there exists a local existence time $T = T\big(\|f_0\|_{2,{22+5d}},\|f_0\|_{\infty,{12+4d}}\big) > 0$, depending strictly on the weighted norms of the initial data, such that the gravity water wave kinetic equation \eqref{4wave} admits a strong solution $f(t,k)$ in $L^1$ on the time interval $[0, T]$. Moreover, the solution propagates the initial weighted regularity, satisfying
\begin{equation}\label{eq: stay}
f \in L^\infty\big([0, T]; L^2_{22+5d}(\mathbb R^d)\big) \cap L^\infty\big([0, T]; L^\infty_{12+4d}(\mathbb R^d)\big).
\end{equation}
\end{thm}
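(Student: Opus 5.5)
The plan is to split $\mathcal{Q}[f]=\mathcal{Q}^+[f]-f\,\mathcal{L}[f]$ into a gain term and a loss term, where
$$\mathcal{L}[f](k)=\iiint T_{k,k_1,k_2,k_3}\,\delta(\Sigma)\delta(\Omega)\,\big[f_1(f_2+f_3)-f_2f_3\big]\,dk_1dk_2dk_3 .$$
It is cleaner to use the manifestly nonnegative loss $\iiint T\delta\delta\, f_1(f_2+f_3)$ and to put the term $f f_2 f_3$ into the gain. The solution is then built by a monotone or Picard-type iteration on the mild formulation
$$f(t)=f_0\,e^{-\int_0^t\mathcal{L}[f](s)\,ds}+\int_0^t e^{-\int_s^t \mathcal{L}[f]}\,\mathcal{Q}^+[f](s)\,ds .$$
This is the standard route for Boltzmann-type equations with nonnegative data. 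It keeps $f\ge0$ automatically, and nonnegativity is exactly what the class $\mathcal{B}$ requires.

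The analytic core is a set of trilinear bounds for the collision operator in the spaces $X:=L^2_{22+5d}\cap L^\infty_{12+4d}$, of the form
$$\|\mathcal{Q}^{\pm}(f,g,h)\|_{X}\lesssim \|f\|_X\|g\|_X\|h\|_X .$$
Here $\mathcal{Q}^{\pm}$ denote the trilinear versions of the individual terms of $\mathcal{Q}$. To prove these, I would first integrate out $\delta(\Sigma)$ by setting $k_3=k+k_1-k_2$. I would then parametrize the resonant surface $\{\Omega=0\}$ in the remaining variable and compute its surface measure $d\sigma/|\nabla\Omega|$. Since $\omega=\sqrt{|k|}$ has degenerate gradient at the origin, the Jacobian factor $|\nabla_{k_2}\Omega|^{-1}$ must be controlled near small wave numbers and near collinear configurations.

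The kernel estimates established later in the paper would be taken as given. The homogeneity bound is $T\lesssim |k|^{3}$-type, of degree $3$ in the wave numbers. The key non-local bound is $T_{k,k_1,k_2,k_3}\lesssim (|k||k_3|)^{2}$, i.e.\ $|T_{k_1k}^{k_2k_3}|\lesssim |k||k_3|$, in the regime $|k|,|k_3|\gg|k_1|,|k_2|$. With these in hand, the plan is to decompose the integration domain according to which wave numbers are large. In each region the growth of $T$ is absorbed by the polynomial weights on those factors of $f$ evaluated at large wave numbers. The weights $22+5d$ and $12+4d$ are chosen to leave enough spare decay for integrability in $\mathbb{R}^{2d}$ after the delta functions are removed. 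For the $L^2$ bound on the gain term I would use a Cauchy--Schwarz/duality argument on the resonant manifold, exploiting the symmetries of $T$ to move the weight to the output variable. For $L^\infty$ I would bound $\sup_k \langle k\rangle^{12+4d}\,|\cdot|$ directly.

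Given these bounds, the map $\Phi$ defined by the mild formulation maps a ball of radius $2\|f_0\|_X$ in $L^\infty([0,T];X)$ to itself, and is a contraction there, for $T\sim \|f_0\|_X^{-2}$. Equivalently, one can run an increasing iteration from $f^{(0)}=0$ and use monotone convergence, which preserves $f\ge0$. The fixed point yields \eqref{eq: stay}. Continuity in $L^1$ follows because $X\hookrightarrow L^1$ once the weight exceeds $d/2$ in $L^2$, and $\partial_t f=\mathcal{Q}[f]\in L^\infty_t X\subset L^\infty_t L^1$; hence $f$ is Lipschitz in time with values in $L^1$, so $f\in C([0,T];L^1)$.

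The main obstacle is the trilinear estimate in the highly non-local regime $|k|,|k_3|\gg|k_1|,|k_2|$, together with the degenerate Jacobian of the resonant surface there. In this regime two of the inputs sit at small wave numbers and carry no decaying weight. The kernel growth $(|k||k_3|)^2\sim|k|^4$ must therefore be compensated entirely by the weights on $f$ and $f_3$, and also by the output weight $\langle k\rangle^{s}$. I would first try to put the output weight and the $|k|^4$ factor onto $f_3$ using $|k_3|\approx|k|$, leaving $\langle k_3\rangle^{-(12+4d)+4+s}$ type decay. Whatever surplus remains would then integrate the measure on $\{\Omega=0\}$, whose size in $k_2$ near small $|k_1|,|k_2|$ I expect to be bounded by a power $|k|^{1/2}$ from $|\nabla\omega(k_3)|\sim|k|^{-1/2}$. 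If this fails, I would split further by whether $k_1$ and $k_2$ are comparable, treating the near-collinear piece separately.
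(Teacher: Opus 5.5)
There is a genuine gap. Your whole scheme rests on the trilinear bound $\|\mathcal{Q}^{\pm}(f,g,h)\|_X\lesssim\|f\|_X\|g\|_X\|h\|_X$, and this bound is false. Consequently $\Phi$ neither maps a ball of $L^\infty([0,T];X)$ into itself nor contracts there.

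In the non-local regime $|k|\approx|k_3|\gg|k_1|,|k_2|$ the squared kernel grows like $|k|^2$: Proposition~\ref{prop: kernel} gives $T_{k,k_1,k_2,k_3}\lesssim(|k_1|^2+|k_2|^2)^2|k|^2$. You misread this as $(|k||k_3|)^2$, i.e.\ quartic growth, which would be worse still; beyond the quadratic threshold even the paper's method breaks down. Removing $\delta(\Sigma)\delta(\Omega)$ gives no compensating decay in $|k|$. By Lemma~\ref{lem: Rep1}, the remaining $k_3$-integral is controlled by a local average of $f_3$ over $|k_3-k|=|k_1-k_2|\lesssim1$ against the integrable weight $|k-k_3|^{-1}$.

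So the gain piece $\iiint T\delta(\Sigma)\delta(\Omega)f_1f_2f_3$ and the loss piece $f\iiint T\delta(\Sigma)\delta(\Omega)f_1f_2$ each behave like $|k|^2 f$ near $k$. Each separately maps $L^2_{s+2}$ into $L^2_s$, and generically no better. Your own bookkeeping shows this: in a self-map estimate the output weight equals the input weight, so with $s=12+4d$ the exponent $-(12+4d)+4+s$ equals $+4$, which is growth rather than surplus.

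The $L^\infty_{12+4d}$ component can be rescued by borrowing from the larger $L^2_{22+5d}$ weight; this is essentially what Lemma~\ref{lem: infty bd} does. The top norm $L^2_{22+5d}$ has nothing above it to borrow from, so $\|\Phi(f)\|_X$ is not controlled by $\|f\|_X$. The same two-weight loss appears in $e^{-\int\mathcal L[f]}-e^{-\int\mathcal L[g]}$, so the contraction fails too. This is exactly why the paper says quadratic growth rules out standard contraction arguments.

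The missing idea is to \emph{not} separate gain and loss in the non-local regime. The paper keeps them together as $2\iiint T\delta(\Sigma)\delta(\Omega)\chi_{2<3}\chi_{1<0}g_1g_2(h_3-h)$. This is dissipative by the symmetry $(k,k_1)\leftrightarrow(k_3,k_2)$ (Lemma~\ref{lem: QgD diss}).

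The paper then splits the weight commutator $[\langle k\rangle^a,\mathcal Q_g]\langle k\rangle^{-a}$ into two parts:
\begin{itemize}
\item a skew-symmetric part $\tilde{\mathcal Q}_n$ with multiplier $(\langle k\rangle^a-\langle k_3\rangle^a)/(\langle k\rangle^{a/2}\langle k_3\rangle^{a/2})$, which satisfies $(h,\tilde{\mathcal Q}_nh)_{L^2}=0$ (Lemma~\ref{lem: tQn diss});
\item a remainder $\mathcal R_n$ whose multiplier is quadratic in $\langle k\rangle^{a/2}-\langle k_3\rangle^{a/2}$. It is therefore of size $|k_1-k_2|^2/(\langle k\rangle\langle k_3\rangle)$, which exactly offsets the $|k|^2$ growth and makes $\mathcal R_n$ bounded (Lemma~\ref{lem: eq: Rn bd}).
\end{itemize}

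This yields a loss-free Gr\"onwall bound on $\|f_{n+1}\|_{2,22+5d}$ for the linearized iteration $\partial_t f_{n+1}=\mathcal Q_{f_n}f_{n+1}$. The $L^\infty_{12+4d}$ bound then follows from Duhamel. Convergence is proved only in the weaker norm $L^2_{2d+2}$, where the uniform high-norm bounds absorb the loss.

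Your integrating factor $e^{-\int\mathcal L}$ could in principle supply damping at large $|k|$. It would do so only through a pointwise comparison of the non-local gain with the non-local loss rate, i.e.\ through this same commutator structure, which your plan does not contain.
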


The rigorous proof of Theorem~\ref{thm: LWP} is deferred to Section~4.

\subsection{Comparison with the NLS Kinetic Model and our approach}
To fully appreciate the severe difficulties inherent to the gravity water WKE, it is instructive to compare it with the kinetic equations derived from the NLS and the related Majda-McLaughlin-Tabak (MMT) models~\cite{GLZ2025,MMT1997}. For the NLS kinetic equation, the dispersion relation is given by $\omega(k) = |k|^2$. In the highly non-local regime ($|k|, |k_3| \gg |k_1|, |k_2|$), integrating out the resonant manifold constraint $\delta(\Sigma)\delta(\Omega)$ naturally yields a decay factor of $\mathcal{O}(|k|^{-1})$. Taken together, the local well-posedness results of Germain, Ionescu, and Tran \cite{GIT2020} in the bounded-kernel case and of Ampatzoglou and L\'eger \cite{AL2025} in the case of kernels with at most linear growth show that this decay rate is sufficient to compensate for the growth of the collision kernel, thereby allowing for the construction of strong solutions via contraction-based arguments.

Surprisingly, however, Ampatzoglou and L\'eger \cite{AL2025} rigorously proved that if the collision kernel grows strictly faster than $\mathcal{O}(|k|)$, the kinetic equation crosses the threshold into ill-posedness in weighted $L^\infty$ spaces. This phenomenon highlights a fundamental barrier: when the collision kernel grows too rapidly, standard analytical techniques completely fail to capture strong solutions in the same weighted $L^\infty$ space.

Turning to the genuine gravity water WKE (where we set the gravitational constant $g=1$ for simplicity), the analytical landscape becomes drastically more formidable. First, the gravity wave dispersion relation $\omega(k) = \sqrt{|k|}$ fails to yield the favorable $\mathcal{O}(|k|^{-1})$ decay upon integrating out the resonant manifold constraints encoded in $\delta(\Sigma)\delta(\Omega)$; indeed, a direct inspection of the resulting integral reveals no decay in $|k|$ whatsoever. Second, the algebraic structure of the water wave collision kernel is notoriously complicated, culminating in a profound singularity precisely in the most analytically challenging non-local regime, where the interacting wave numbers are widely separated in scale. In this extreme configuration, Korotkevich, Nazarenko, Pan, and Shatah \cite{waterkernel2024} suggested that the leading-order term of the kernel exhibits a severe $\mathcal{O}(|k|^3)$ growth. They deduced this by evaluating the interaction coefficient $T_{k,k_1,k_2,k_3}$ on the restricted configuration $k_3=k$ and $|k_1|=|k_2|$. In contrast, through formal asymptotic expansions within the framework of weak turbulence theory, Zakharov and Geogjaev \cite{zakharov2010energy, geogjaev2017numerical, geogjaev2025properties} revealed that the interaction coefficient actually exhibits a sharp leading-order behavior of $\mathcal{O}(|k|^2)$ for highly non-local wave interactions (see, for example, \cite[Eq.~(4.12)]{geogjaev2025properties}). To contextualize the severity of this singularity, we note that recent results by Germain, La, and Zhang \cite[Theorem 1.4 and Eq.~(5.5)]{GLZ2025} on the kinetic Majda--McLaughlin--Tabak (MMT) model establish local well-posedness in Banach spaces for the one-dimensional ($d=1$) case only when the collision kernel exhibits strictly sub-linear growth. Both the $\mathcal{O}(|k|^3)$ and $\mathcal{O}(|k|^2)$ growth rates of the gravity water wave kernel vastly exceed this analytical threshold. Consequently, such staggering singular behavior lies far beyond the reach of existing analytical frameworks, rendering the construction of strong solutions seemingly intractable.

In this paper, we overcome these formidable obstacles through two major breakthroughs: a structural discovery regarding an algebraic cancellation within the collision kernel, and a novel functional decomposition technique for the collision operator.

First, through a delicate re-examination of the water wave collision kernel, we discover a hidden algebraic cancellation. We rigorously demonstrate that a specific component of the collision kernel containing the factor $\omega_1-\omega_2$---which was unaccounted for in~\cite{waterkernel2024}---exactly cancels out the severe $\mathcal{O}(|k|^3)$ leading-order term. Exploiting this exact cancellation, we establish an improved upper bound, proving that the kernel grows at most quadratically (i.e., bounded by $\mathcal{O}(|k|^2)$ or $\mathcal{O}(|k||k_3|)$) in the highly non-local regime.\footnote{We note that the estimate derived in \cite{waterkernel2024} is in fact sharp under the strict kinematic constraint $k_3=k$ and $|k_1|=|k_2|$. The additional cancellation discovered in the present work arises from carefully tracking the small but critical difference between $|k_1|$ and $|k_2|$ on the resonant manifold.} This rigorously validates the asymptotic smallness of the interaction coefficient anticipated in the physics literature \cite{zakharov2010energy, geogjaev2017numerical, geogjaev2025properties}.

Second, recognizing that quadratic growth still precludes the use of standard contraction mapping arguments, we recalibrate our analytical objective. Instead of pursuing classical local well-posedness, we establish the local-in-time \emph{existence} of $L^1$ strong solutions for any initial data belonging to a suitably weighted $L^2 \cap L^\infty$ space, and show that the solution propagates within this space locally in time. Crucially, as the model under our consideration is precisely the celebrated Hasselmann kinetic equation \cite{HM1962,H1963}, our result rigorously establishes the existence of solutions to the foundational water wave model originally formulated by Hasselmann.

To achieve this, we introduce a new methodology specifically tailored to accommodate both the $\omega(k) = \sqrt{|k|}$ dispersion relation and the at most $\mathcal{O}(|k|^2)$ kernel growth. Our core observation is that, under a specific linearization, the collision operator can be structurally decomposed into the sum of a dissipative operator and a bounded operator on $L^p$ ($1 \leq p \leq \infty$). Crucially, we prove that the commutator between this linearized collision operator and the weight $\langle k \rangle^a$ inherits a similarly favorable structure: it can be bounded by a dissipative operator plus a bounded operator on $L^2$, composed with the multiplication operator $\langle k \rangle^a$. This delicate structural decomposition enables us to uniformly control the weighted $L^2 \cap L^\infty$ norm of the solution at each iteration step, ultimately allowing for the rigorous construction of the $L^1$ strong solution. 

It is worth noting that our technique faces a rigid analytical barrier at the quadratic growth threshold for the gravity wave dispersion relation $\omega(k) = \sqrt{|k|}$. Specifically, if the kernel grows faster than $\mathcal{O}(|k|^2)$, the weighted linearized collision operator $\langle k \rangle^a \mathcal{Q}_g(t)$ can no longer be structurally decomposed into a dissipative component and a bounded operator composed with $\langle k \rangle^a$ in any $L^p$ space, rendering the uniform weighted estimates utterly unattainable. In light of this strict analytical barrier, it is a remarkable structural synergy that our refined collision kernel---reduced via the exact cancellation---satisfies this exact $\mathcal{O}(|k|^2)$ threshold, making the implementation of our functional framework rigorously viable.

\subsection{Outline of the proof. } 
Throughout the outline of this proof, let $T > 0$ be a fixed positive constant representing the lifespan of the local solution, whose precise value will be determined later.
\begin{df}\cite[Page 82]{KR} A linear operator $(A, D(A))$ on a Banach space $X$ is called \textit{dissipative} if
\begin{equation}
    \|(\lambda - A)x\| \geq \lambda \|x\| 
\end{equation}
for all $\lambda > 0$ and $x \in D(A)$.
\end{df}

The core of our proof lies in the observation that, for any weight exponent $a>0$, the weighted collision term $\langle k\rangle^a \mathcal{Q}[f]$ can be rewritten as the action of a linear operator on the weighted function $\langle k\rangle^a f$. Specifically, it admits a natural decomposition in $L^2$ into a dissipative part and a bounded part:
\begin{equation}
    \langle k\rangle^a \mathcal{Q}[f] = \big( \mathcal{Q}_{f,D} + \mathcal{Q}_{f,b} \big) (\langle k\rangle^a f), \qquad \text{ in } L^2,
\end{equation}
where $\mathcal{Q}_{f,D}$ is a dissipative operator and $\mathcal{Q}_{f,b}$ is a bounded operator.

To make this operator splitting precise, for a given background flow $g \in L^\infty([0,T]; \Bc)$ and any $p \in [1,\infty)$, we define the linearized operator $\mathcal{Q}_g(t): L^p_2(\mathbb{R}^d) \to L^p(\mathbb{R}^d)$ acting on $h$ as:
\begin{equation}\label{Qfn}
\begin{aligned}
    \mathcal{Q}_g(t) h(k) =  & \iiint_{\mathbb{R}^{3d}} T_{k,k_1,k_2,k_3} \delta(\Sigma)\delta(\Omega) \Big[ 2\chi_{2<3} g_1(t) g_2(t) (h_3-h) \\
    & \qquad\qquad\qquad\qquad + g_2(t) g_3(t)h - 2\chi_{2\geq 3} g_1(t) g_2(t)h \Big]\,dk_1\,dk_2\, dk_3,
\end{aligned}
\end{equation}
where we use the shorthand characteristic functions $\chi_{j<l}:=\chi(|k_j|<|k_l|)$ and $\chi_{j\geq l}:=1-\chi_{j<l}$. Notice that by Lemma~\ref{lem: Qf and Qg}, this linearization satisfies the consistency relation $\Qc_f(t)f(t)=\Qc[f(t)]$. In light of Lemmas~\ref{lem: QgD diss} and~\ref{lem: QgB bd} and Eq.~\eqref{Qfn}, this operator naturally decomposes into a dissipative and a bounded component:
\begin{equation}
    \Qc_{g}(t)=\Qc_{g,D}(t)+\Qc_{g,b}(t).
\end{equation}
Here, $\mathcal{Q}_{g,b}(t)$ (given in Eq.~\eqref{eq: Qbg}) is bounded on $L^p$ for all $1 \leq p \leq \infty$. Meanwhile, by virtue of~\eqref{eq: est kernel} and Lemmas~\ref{lem: bd QgD} and~\ref{lem: Rep1}, the family of dissipative operators $\{\mathcal{Q}_{g,D}(t)\}_{t \in [0,T]}$ is defined on a common, time-independent dense domain $L^p_2(\mathbb{R}^d)$ for $1 \leq p < \infty$. By definition, the limiting operator $\Qc_{g,D}(t)\equiv \Qc_{g,D,0}(t)$ is understood in the sense of the strong limit from $L^2_2$ to $L^2$ of the regularized operators $\Qc_{g,D,\epsilon}(t)$ defined in~\eqref{eq: QDg}. 

To rigorously construct the evolution generated by $\Qc_{g,D}(t)$, we note that since each regularized operator $\Qc_{g,D,\epsilon}(t)$ is bounded on $L^2$ for $\epsilon\in (0,1]$, it generates a unique two-parameter propagator $U_{g,D,\epsilon}(t,s)$ for $0\leq s\leq t\leq T$ satisfying:
\begin{equation}
    \partial_t U_{g,D,\epsilon}(t,s) =\Qc_{g,D,\epsilon}(t)U_{g,D,\epsilon}(t,s),\qquad U_{g,D,\epsilon}(s,s)=I.
\end{equation}
As we will establish in Lemma~\ref{lem: Cauchy}, the family of regularized propagators satisfies the Cauchy criterion in the strong operator topology on $L^2(\mathbb{R}^d)$ as $\epsilon \downarrow 0$. We defer the rigorous proof of this Cauchy property to Appendix~\ref{app: Cauchy}, which crucially relies on the uniform dissipativity of the regularized operators combined with fine commutator estimates involving the weight $\langle k \rangle^a$. Consequently, the strong limit
\begin{equation}
    U_{g,D}(t,s) := s\text{-}\lim_{\epsilon \downarrow 0} U_{g,D,\epsilon}(t,s)
\end{equation}
is well-defined and rigorously serves as the exact propagator generated by the limiting dissipative family $\{\Qc_{g,D}(t)\}$.

\medskip
\noindent\textbf{The iteration scheme.} To construct the solution to the non-linear system \eqref{4wave}, we employ a standard iteration scheme. We initialize the sequence by setting the first iterate to the initial data, $f_1(t,k) = f_0(k)$. For each integer $n \geq 1$, we then define the subsequent iterate $f_{n+1}(t,k)$ as the unique solution to the following linearized initial value problem:
\begin{equation}\label{Qn}
    \begin{cases}
        \partial_t f_{n+1}(t) = \mathcal{Q}_n(t) f_{n+1}(t), \\
        f_{n+1}(0) = f_0 \in \Bc,
    \end{cases} \qquad \text{on } [0,T] \times \mathbb{R}^d,
\end{equation}
where we use the shorthand notation $\mathcal{Q}_n(t) \equiv \mathcal{Q}_{f_n}(t)$ for the linearized collision operator evaluated at the previous iterate. Similar to standard wave kinetic models, the intrinsic structure of the linearized operator ensures that this iterative scheme is non-negativity preserving. Consequently, provided the initial data is non-negative ($f_0 \geq 0$), the iterates strictly satisfy $f_{n+1}(t,k) \geq 0$ for all $n \in \mathbb{N}^+$ and $t \in [0,T]$.

\medskip

\noindent\textbf{The dissipation of the weighted solutions.} Crucially, for any weight exponent $a>0$, the weighted iterate $\langle k\rangle^a f_{n+1}(t)$ obeys the evolution equation:
\begin{equation}\label{eq: DE afn+1}
    \partial_t \big( \langle k\rangle^a f_{n+1}(t) \big) = \big( \mathcal{Q}_{n}(t) + \tilde{\mathcal{Q}}_n(t) + \mathcal{R}_n(t) \big) \big( \langle k\rangle^a f_{n+1}(t) \big),
\end{equation}
where $\tilde{\mathcal{Q}}_n(t): D(\tilde{\mathcal{Q}}_n(t)) \to L^2(\mathbb{R}^d)$ is a supplementary dissipative operator, and $\mathcal{R}_n(t)$ is a bounded operator on $L^p(\mathbb{R}^d)$ for all $1 \leq p \leq \infty$. Writing $g(t) = f_n(t)$ for brevity, the actions of these operators on a test function $h$ are defined as:
\begin{equation}\label{tQfn}
\begin{aligned}
    \tilde{\mathcal{Q}}_n(t) h(k) = 2 \iiint_{\mathbb{R}^{3d}} & T_{k,k_1,k_2,k_3} \frac{\langle k\rangle^a - \langle k_3\rangle^a}{\langle k\rangle^{a/2}\langle k_3\rangle^{a/2}} \delta(\Sigma)\delta(\Omega) \\
    & \times \chi_{2<3}\chi_{1<0} g_1(t) g_2(t) h_3 \,dk_1\,dk_2\, dk_3
\end{aligned}
\end{equation}
and
\begin{equation}\label{eq: def Rnt}
\begin{aligned}
    \mathcal{R}_n(t) h(k) = 2 \iiint_{\mathbb{R}^{3d}} & T_{k,k_1,k_2,k_3} \frac{\langle k\rangle^a - \langle k_3\rangle^a}{\langle k_3\rangle^a} \delta(\Sigma)\delta(\Omega) \\
    & \times \chi_{2<3}\chi_{1\geq 0} g_1(t) g_2(t) h_3 \,dk_1\,dk_2\, dk_3 \\
    + 2 \iiint_{\mathbb{R}^{3d}} & T_{k,k_1,k_2,k_3} \frac{(\langle k\rangle^{a/2} - \langle k_3\rangle^{a/2})^2 (\langle k\rangle^{a/2} + \langle k_3\rangle^{a/2})}{\langle k\rangle^{a/2}\langle k_3\rangle^a} \delta(\Sigma)\delta(\Omega) \\
    & \times \chi_{2<3}\chi_{1<0} g_1(t) g_2(t) h_3 \,dk_1\,dk_2\, dk_3.
\end{aligned}
\end{equation}
The rigorous justification of these operator properties is deferred to Section~\ref{sec: A-Est}: we establish the validity of the weak formulation of Eq.~\eqref{eq: DE afn+1} in Lemma~\ref{lem: we Dyn}, the dissipativity of $\tilde{\mathcal{Q}}_n(t)$ in Lemma~\ref{lem: tQn diss}, and the uniform boundedness of $\mathcal{R}_n(t)$ in Lemma~\ref{lem: eq: Rn bd}. Here, $a > 0$ acts as a flexible weight parameter whose specific value depends on whether we are estimating the weighted $L^2$ or weighted $L^\infty$ norm, as well as whether we are establishing the convergence of the iterative sequence $\{f_n(t)\}$.

It is important to emphasize that the uniqueness of the propagator generated by the combined operator $\mathcal{Q}_{n}(t) + \tilde{\mathcal{Q}}_n(t) + \mathcal{R}_n(t)$ does not follow trivially from abstract bounded perturbation theory. Rather, it is rigorously constructed similarly to the purely dissipative case: by regularizing the dissipative component and taking the strong limit as $\epsilon \downarrow 0$. For this resulting exact propagator, the notions of strong and weak solutions rigorously coincide. In fact, we adopt this as a global convention: throughout the remainder of this paper, whenever we refer to a propagator generated by the sum of a dissipative and a bounded operator, it is implicitly understood to be strictly constructed through this exact $\epsilon$-regularization limit. By leveraging this fact alongside the dissipative nature of the underlying operators and applying Lemmas~\ref{lem: QgB bd} and~\ref{lem: eq: Rn bd} to Eq.~\eqref{eq: DE afn+1}, we can dominate the energy evolution entirely by the bounded components. 

Specifically, setting the weight exponent to $a=22+5d$, we obtain the following differential inequality for the weighted $L^2$ norm:
\begin{equation}
    \frac{d}{dt} \| f_{n+1}(t) \|_{2,22+5d}^2 \leq 2 \left( (C_b + \tilde{C}_a) \sup_{\tau \in [0,T]} \| f_n(\tau) \|_{\infty,10+2d}^2 \right) \| f_{n+1}(t) \|^2_{2,22+5d}, \qquad \forall\, t \in (0,T].
\end{equation}
Applying Gr\"onwall's inequality and taking the square root (which cleanly cancels the factor of 2), this yields the uniform weighted $L^2$ bound:
\begin{equation}\label{eq: La2}
    \| f_{n+1}(t) \|_{2,22+5d} \leq \exp\left( t(C_b + \tilde{C}_a) \sup_{\tau \in [0,T]} \| f_n(\tau) \|_{\infty,10+2d}^2 \right) \|f_0\|_{2,22+5d}, \qquad \forall\, t \in (0,T].
\end{equation}

Turning to the pointwise estimates, let $\tilde{U}_n(t,s)$ denote the evolution family (propagator) generated by the operator $\mathcal{Q}_n(t) + \mathcal{R}_n(t)$. Fixing the weight exponent to $a=12+4d$ specifically for the weighted $L^\infty$ estimate, Duhamel's principle allows us to represent the weighted iterate as:
\begin{equation}
    \langle k\rangle^{12+4d} f_{n+1}(t) = \tilde{U}_{n}(t,0) \big( \langle k\rangle^{12+4d}f_0 \big) + \int_0^t \tilde{U}_{n}(t,s) \big( \tilde{\mathcal{Q}}_{n}(s)\langle k\rangle^{12+4d}f_{n+1}(s) \big) \, ds.
\end{equation}
Since the dissipativity of $\mathcal{Q}_{f_n,D}(t)$ guarantees the exponential bounds for the propagator $\tilde{U}_n(t,s)$ established in Lemma~\ref{lem: tUn h}, taking the $L^\infty$ norm of the Duhamel formulation yields:
\begin{equation}
\begin{aligned}
    \|f_{n+1}(t)&\|_{\infty,12+4d}  \leq \exp\left( t(C_b+\tilde{C}_a) \sup_{\tau \in [0,T]} \| f_n(\tau) \|_{\infty,10+2d}^2 \right) \|f_0\|_{\infty,12+4d} \\
    &+ \int_0^t \exp\left( (t-s)(C_b+\tilde{C}_a) \sup_{\tau \in [0,T]} \| f_n(\tau) \|_{\infty,10+2d}^2 \right) \|\tilde{\mathcal{Q}}_{n}(s)\langle k\rangle^{12+4d}f_{n+1}(s)\|_{\infty} \, ds.
\end{aligned}
\end{equation}
Finally, bounding the integrand by applying Lemma~\ref{lem: infty bd}, we arrive at the following integral inequality for all $t \in (0, T]$:
\begin{equation}\label{eq: Linfty}
\begin{aligned}
    \|f_{n+1}(t)\|_{\infty,12+4d} & \leq \exp\left( t(C_b+\tilde{C}_a) \sup_{\tau \in [0,T]} \| f_n(\tau) \|_{\infty,10+2d}^2 \right) \|f_0\|_{\infty,12+4d} \\
    & \quad + \int_0^t \exp\left( (t-s)(C_b+\tilde{C}_a) \sup_{\tau \in [0,T]} \| f_n(\tau) \|_{\infty,10+2d}^2 \right) \\
    & \qquad \times \tilde{C}_b \left( \sup_{\tau \in [0,T]} \|f_n(\tau)\|_{\infty,10+2d}^2 \right)  \|f_{n+1}(s)\|_{2,22+5d}\, ds.
\end{aligned}
\end{equation}
Combining estimates~\eqref{eq: La2} and~\eqref{eq: Linfty} and taking the supremum over time yields the combined bound for the intersection norm:
\begin{equation}\label{eq: L2infty fbd}
\begin{aligned}
   & \sup_{t\in (0,T]} \|f_{n+1}(t)\|_{L^2_{22+5d}\cap L^\infty_{12+4d}} \\
   & \leq \exp\left( T(C_b+\tilde{C}_a) \sup_{t\in (0,T]} \|f_n(t)\|_{\infty,10+2d}^2 \right) \\
   & \quad \times \left( 1 + T\tilde{C}_b \sup_{t\in (0,T]} \|f_n(t)\|_{\infty,10+2d}^2 \right) \|f_0\|_{L^2_{22+5d}\cap L^\infty_{12+4d}}.
\end{aligned}
\end{equation}
To close the uniform estimates, we impose a smallness condition on the existence time. Specifically, taking $T > 0$ such that 
\begin{equation}\label{eq: def T1}
    T \leq T_1 := \frac{1}{10(C_b+\tilde{C}_a+\tilde{C}_b) \|f_0\|_{L^2_{22+5d}\cap L^\infty_{12+4d}}^2},
\end{equation}
allows us to establish a uniform-in-$n$ bound via mathematical induction. Indeed, assuming the induction hypothesis 
\begin{equation}
    \sup_{t\in (0,T]} \|f_{n}(t)\|_{L^2_{22+5d}\cap L^\infty_{12+4d}} \leq 2\|f_0\|_{L^2_{22+5d}\cap L^\infty_{12+4d}}
\end{equation}
holds for the $n$-th iterate, substituting this into the previous estimate guarantees that 
\begin{equation}
    \sup_{t\in (0,T]} \|f_{n+1}(t)\|_{L^2_{22+5d}\cap L^\infty_{12+4d}} \leq 2\|f_0\|_{L^2_{22+5d}\cap L^\infty_{12+4d}}.
\end{equation}
Since the base case $f_1(t,k) = f_0(k)$ trivially satisfies this bound, we conclude that the entire iterative sequence is uniformly bounded:
\begin{equation}\label{eq: uniform bound fn}
    \sup_{t\in (0,T]} \|f_{n}(t)\|_{L^2_{22+5d}\cap L^\infty_{12+4d}} \leq 2\|f_0\|_{L^2_{22+5d}\cap L^\infty_{12+4d}} \qquad \forall \, n \in \mathbb{N}.
\end{equation}
Combining this uniform bound with an application of~\eqref{eq: DE afn+1} for $a = 2d+2$, together with a standard contraction argument, we establish the existence of a finite lifespan
\[
T = T\bigl(\|f_0\|_{2,22+5d}, \|f_0\|_{\infty,12+4d}\bigr) > 0,
\]
such that the gravity water-wave kinetic equation~\eqref{4wave} admits a local-in-time strong $L^1$ solution $f(t,k)$ on the interval $[0,T]$.

\medskip

  \section{Structure of the Collision Kernel for Gravity Water Waves}

In the context of the kinetic equation for gravity water waves, the four-wave collision kernel takes the form $|T_{k_1 k}^{k_2 k_3}|^2$, where the symmetrized amplitude $T_{k_1 k}^{k_2 k_3}$ is defined as
\begin{equation}\label{eq: def T}
    T_{k_1 k}^{k_2k_3} = \frac{1}{4}\Big( \tilde{T}_{k_1 k}^{k_2 k_3} + \tilde{T}_{k k_1}^{k_2 k_3} + \tilde{T}_{k_2 k_3}^{k_1 k} + \tilde{T}_{k_3 k_2}^{k_1 k} \Big).
\end{equation}
Here, $\tilde{T}_{k_1 k}^{k_2 k_3}$ represents the pre-symmetrized interaction coefficient; its explicit algebraic expression is deferred to~\eqref{eq: tTk1k2k3k4} below. Detailed derivations and representations of this coefficient can be found, for instance, in~\cite[Appendix B]{waterkernel2024} and~\cite[Appendix B]{PRZ2003}. To rigorously construct local-in-time strong solutions, it is crucial to analyze the kernel's singular behavior under extreme scale separations. Therefore, in this section, we isolate and evaluate the kernel on the precise support defined by the resonant manifold and specific frequency filters:
\begin{equation}
    \text{supp} \left\{ \delta(\Sigma) \delta(\Omega) \varphi_{1<3} \varphi_{2<3} \right\}.
\end{equation}
For clarity of exposition, we fix the gravitational constant $g=1$ throughout our derivation. We expect that this analytical framework and its corresponding estimates apply seamlessly to the general case for any arbitrary constant $g > 0$.

\begin{prop}\label{prop: kernel}
On the resonant manifold defined by the support of $\delta(\Sigma) \delta(\Omega)$, the collision kernel $T_{k,k_1,k_2,k_3} \coloneqq |T_{k_1k}^{k_2k_3}|^2$ satisfies the bound, with $k=k_0$,
\begin{equation}\label{eq: est kernel}
    T_{k,k_1,k_2,k_3} \leq C_Q \left( \min_{0\leq j<l\leq 3} \big( |k_j|^2+|k_l|^2 \big) \right)^2 \left( \sum_{j=0}^3|k_j|^2 \right)
\end{equation}
for some constant $C_Q>0$.
\end{prop}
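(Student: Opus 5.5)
The stated bound is equivalent to an estimate on the amplitude:
\[
|T_{k_1k}^{k_2k_3}| \lesssim \mu\,M^{1/2},\qquad \mu:=\min_{0\le j<l\le 3}\big(|k_j|^2+|k_l|^2\big),\quad M:=\sum_{j=0}^3|k_j|^2 .
\]
Order the moduli as $\rho_1\le\rho_2\le\rho_3\le\rho_4$ and set $R:=\rho_4$. Then $\mu\simeq\rho_2^2$ and $M\simeq R^2$, so the goal is
\[
|T_{k_1k}^{k_2k_3}|\lesssim \rho_2^2\,R .
\]
The plan rests on two facts. The amplitude $T_{k_1k}^{k_2k_3}$ defined in \eqref{eq: def T}, via the explicit $\tilde T$ of \eqref{eq: tTk1k2k3k4}, is positively homogeneous of degree $3$ in $(k,k_1,k_2,k_3)$. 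Also, every term of $\tilde T$ is a product of moduli, square roots of moduli and inner products, divided by factors built from $\omega$'s. By homogeneity I normalize $R=1$. I then fix a small constant $\eta>0$ and split into two regimes.

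\textbf{Regime $\rho_2\ge\eta$.} Here I need only the crude bound $|T|\lesssim R^3$ on the resonant manifold, since $\rho_2^2R\ge\eta^2R^3$. I would get it by bounding each term of $\tilde T$ directly. The only care needed is with the denominators of the form $(\omega_i+\omega_j)^2-\omega(k_i+k_j)^2$ and similar. On $\{\Sigma=\Omega=0\}$ these are either bounded below in terms of the moduli, or their vanishing is compensated by a vanishing numerator. This is standard: the Zakharov kernel is regular on the resonant manifold, as used in \cite{waterkernel2024}. I would quote or reprove that step.

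\textbf{Geometry when $\rho_2\le\eta R$.} Two wave numbers are small. Neither incoming pair can be the small pair. If, say, $|k|,|k_1|\le\eta$, then $|k_2+k_3|\le2\eta$ while $\max(|k_2|,|k_3|)=1$. Hence $\omega_2+\omega_3\ge1$, whereas $\omega+\omega_1\le2\sqrt\eta$, which contradicts $\Omega=0$. So the small pair is crossed. By the symmetries $T_{k,k_1,k_2,k_3}=T_{k_2,k_3,k,k_1}=T_{k_1,k,k_2,k_3}=T_{k,k_1,k_3,k_2}$, I reduce to $|k_1|,|k_2|\le\rho:=\rho_2\le\eta$ and $|k|,|k_3|\simeq1$, with $k_3=k+k_1-k_2$.

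\textbf{Expansion in the small variables.} Write $T$ as a function of the large variable $k$ and the small variables $(k_1,k_2)$. It is smooth in $k$ near $|k|=1$, but only homogeneous (not smooth) in $k_1,k_2$. I would therefore expand in the increment $k_3-k=k_1-k_2$ (Taylor in the smooth large variable). Each coefficient is then a homogeneous expression in $k_1,k_2$ of explicit degree, e.g. $\omega_1$, $|k_1|$, $k_1\cdot k_2$, $\omega_1\omega_2$. The claim reduces to showing that the total contribution of degree $0$ (size $R^3$) and of degree $<2$ in the small variables (size $\rho R^2$ and $\rho^{1/2}R^{5/2}$, $\rho^{3/2}R^{3/2}$) cancels exactly. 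Everything of degree $\ge2$ in $(k_1,k_2)$ is then $\lesssim\rho^2R$, which is the desired bound.

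\textbf{Main obstacle: the cancellation.} This is where the resonance must enter, through the relation
\[
\omega_2-\omega_1=\omega-\omega_3=\frac{|k|-|k_3|}{\omega+\omega_3}
=\frac{\hat k\cdot(k_2-k_1)}{2\sqrt{|k|}}+O(\rho^2).
\]
This shows that $\omega_1-\omega_2=O(\rho)$, much smaller than the naive $O(\rho^{1/2})$. I would substitute it into the terms of $\tilde T$ that carry a factor $\omega_1-\omega_2$ (or $\omega-\omega_3$). I expect those terms to cancel exactly the $O(R^3)$ piece that survives on the configuration $k_3=k$, $|k_1|=|k_2|$ of \cite{waterkernel2024}, along with the half-integer order terms.

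I would organize this by first computing the four symmetrized pieces in \eqref{eq: def T} at leading order. I would then group them into those symmetric and antisymmetric under $k_1\leftrightarrow k_2$, $k\leftrightarrow k_3$, and verify the vanishing order by order. I would carry the algebra with $k_1,k_2$ kept as free vectors rather than imposing $|k_1|=|k_2|$.

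Combining the two regimes and undoing the normalization gives $|T_{k_1k}^{k_2k_3}|\le C\rho_2^2R\le C'\mu M^{1/2}$. Squaring this yields \eqref{eq: est kernel}.
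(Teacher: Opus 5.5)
Your outline follows the paper's route: localize to $|k|\sim|k_3|\gg|k_1|,|k_2|$, compare with the value at $k_3=k$, and use $\omega_1-\omega_2=\omega_3-\omega=O(|k_1-k_2|/\omega)$ to produce a cancellation. Your reduction is more careful than the paper's one-line appeal to symmetry. The $\Omega=0$ argument that the small pair must be crossed, and the separate crude $R^3$ bound when $\rho_2\gtrsim R$, are both steps the paper never addresses.

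The gap is that the proposal stops exactly where the proof begins. The cancellation is the whole content of the proposition, and you only say you ``expect'' it. What has to be shown, and what Lemmas~\ref{lem: T1}, \ref{lem: T2}, \ref{lem: T4}, \ref{lem: T5} actually compute, is the following. At $k_3=k$, the naive top-order terms of $\tilde T_{k_1k}^{k_2k_3}$ have size $\rho R^2$. They are the $\omega^2,\omega_3^2$ parts of $(\omega_1+\omega)^2$ and $(\omega_1-\omega_3)^2$, together with the leading parts $\mp1/(2\omega_1\omega)$ of the two denominators. These collapse exactly to $\alpha|k|^2\sqrt{\omega_1\omega_2}\,(\omega_1-\omega_2)/(4\pi^2)$. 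After symmetrizing in $k_1\leftrightarrow k_2$ and inserting \eqref{eq: omega1-2}, this becomes $+(\alpha-\beta)^2|k|^{3/2}(\omega_1\omega_2)^{3/2}/(16\pi^2)$. Meanwhile, the terms of size $\rho^{3/2}R^{3/2}$ symmetrize to exactly $-(\alpha-\beta)^2|k|^{3/2}(\omega_1\omega_2)^{3/2}/(16\pi^2)$, up to $O(\rho^{5/2}R^{1/2})$. The same happens for $\tilde T_{kk_1}^{k_2k_3}+\tilde T_{k_3k_2}^{k_1k}$. Only this computation shows that the $\rho^{3/2}R^{3/2}$ part vanishes; it does not vanish term by term.

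Your bookkeeping would also misdirect that computation, in four places.
\begin{itemize}
\item There is no degree-$0$ (size $R^3$) or degree-$\tfrac12$ term. Every term in the braces of $\tilde T$ has total degree at least $3/2$ in the small variables. After the prefactor $(|k||k_1||k_2||k_3|)^{-1/4}$, the naive top order is therefore $\rho R^2$. The piece surviving on the configuration $k_3=k$, $|k_1|=|k_2|$ is of size $\rho^{3/2}R^{3/2}$ in the amplitude; the ``$|k|^3$'' refers to the squared kernel.
\item The cancelling contribution does not come from the explicit $(\omega_1-\omega_2)^2$ terms of $\tilde T$. On shell those are $O(\rho^{5/2}R^{1/2})$ and belong to the remainder. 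It comes from the factor $\omega_1-\omega_2$ that emerges when the $\rho R^2$ terms are combined.
\item Taylor expansion in $k_3-k$ alone does not leave homogeneous coefficients. The value at $k_3=k$ still contains $|k+k_1|$ and $|k_1-k|$ (resp.\ $|k-k_2|$) in the denominators. Their first-order corrections, e.g.\ $(|k_1|\mp k_1\cdot\hat k)/(2\omega_1\omega)^2$, enter precisely at the critical order. If you keep only the leading parts of the denominators, the subleading angular factor is $-2-6\alpha\beta$, which is not a multiple of $\alpha-\beta$, and no cancellation occurs.
\item The term with denominator $\omega_{k-k_3}^2-(\omega-\omega_3)^2$ is of the form $0/0$ at $k_3=k$, so it cannot be Taylor expanded there. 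Like the other lower-order pieces, it must be bounded directly on shell via \eqref{eq: est omega1-2} and \eqref{eq: omega dyz}. These two bounds also give $|k_1|\sim|k_2|$, which is what controls the factor $(|k_1||k_2|)^{-1/4}$.
\end{itemize}
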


Since each term in the pre-symmetrized amplitude \eqref{eq: tTk1k2k3k4} is homogeneous of degree 3, the squared kernel $T_{k,k_1,k_2,k_3}$ possesses a total homogeneity of degree 6. Exploiting the full symmetry of the kernel, it suffices to establish the bound in the extreme scale-separation regime where $|k_3| \sim |k| \gg |k_1|, |k_2|$. Specifically, the proof reduces to showing that
\begin{equation}\label{eq: est kernel'}
    T_{k,k_1,k_2,k_3} \leq \tilde{C}_Q (|k_1|^2+|k_2|^2)^2|k|^2
\end{equation}
holds for some constant $\tilde{C}_Q > 0$. To systematically achieve this bound, the kernel is decomposed into symmetric constituents; we refer to Figure~\ref{fig:kernel_estimates} for a schematic overview of this decomposition strategy and the corresponding roadmap of the \emph{a priori} estimates.

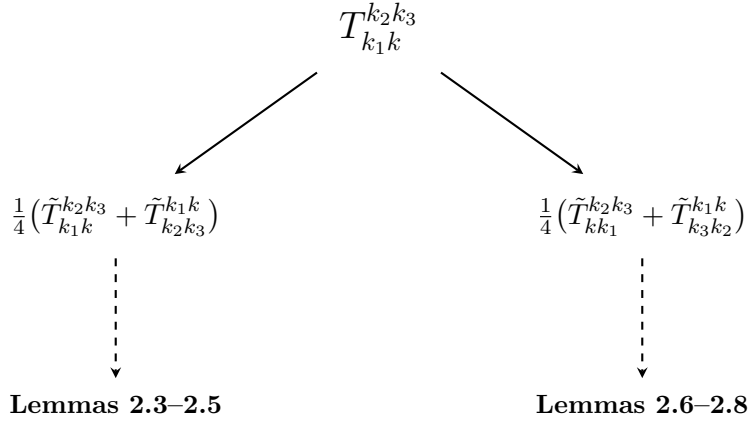
\begin{figure}[htbp]
    \centering
    \begin{tikzpicture}[
        node distance=2.5cm and 4cm,
        >=stealth, 
        thick,
        term_box/.style={align=center, inner sep=8pt},
        lemma_box/.style={align=center, font=\small\bfseries, inner sep=6pt}
    ]
        \node[term_box] (root) at (0,0) {\Large $T_{k_1 k}^{k_2 k_3}$};
        \node[term_box] (left_term) at (-3.5, -2.5) {$\frac{1}{4}\big(\tilde{T}_{k_1 k}^{k_2 k_3} + \tilde{T}_{k_2 k_3}^{k_1 k}\big)$};
        \node[term_box] (right_term) at (3.5, -2.5) {$\frac{1}{4}\big(\tilde{T}_{k k_1}^{k_2 k_3} + \tilde{T}_{k_3 k_2}^{k_1 k}\big)$};
        \node[lemma_box] (left_lemmas) at (-3.5, -5) {Lemmas 2.3--2.5};
        \node[lemma_box] (right_lemmas) at (3.5, -5) {Lemmas 2.6--2.8};
        
        \draw[->] (root) -- (left_term);
        \draw[->] (root) -- (right_term);
        \draw[->, dashed] (left_term) -- (left_lemmas); 
        \draw[->, dashed] (right_term) -- (right_lemmas);
        
    \end{tikzpicture}
    \caption{Decomposition scheme and \emph{a priori} estimates for the collision kernel.}
    \label{fig:kernel_estimates}
\end{figure}

To rigorously localize this asymptotic regime, let $\varphi: [0,\infty) \to [0,1]$ be a smooth cut-off function such that
\begin{equation}\label{eq: def varphi}
    \varphi(\lambda) = \begin{cases}
        1 & \text{if } \lambda \leq \frac{1}{40}, \\
        0 & \text{if } \lambda \geq \frac{1}{20}.
    \end{cases}
\end{equation}
In what follows, we adopt the shorthand notation
\begin{equation}\label{eq: varphi j<l}
    \varphi_{j<l} \coloneqq \varphi\!\left(|k_j|/|k_l|\right), \qquad \bar{\varphi}_{j<l} \coloneqq 1 - \varphi_{j<l}
\end{equation}
for all $j,l \in \{0,1,2,3\}$. On the restricted support of the product $\delta(\Sigma)\delta(\Omega)\varphi_{1<3}\varphi_{2<3}$, the cut-off functions strictly enforce the frequency scale separations
\begin{equation}\label{eq: k12 and k3}
   20 |k_1| \leq |k_3| \quad \text{and} \quad 20|k_2| \leq |k_3|.
\end{equation}
Consequently, the momentum conservation law $k_1 + k = k_2 + k_3$ (enforced by $\delta(\Sigma)$) dictates via the triangle inequality that $|k|$ must lie within the range
\begin{equation}\label{eq: range k}
    |k| \in \big[|k_3|-|k_1|-|k_2|, \, |k_3|+|k_1|+|k_2|\big] \subseteq \left[\frac{9}{10}|k_3|, \, \frac{11}{10}|k_3|\right].
\end{equation}

Let us introduce the auxiliary functions $f_\pm(x,y) \coloneqq x \cdot y \pm |x||y|$ for all $x,y \in \mathbb{R}^d$. The proof of Proposition~\ref{prop: kernel} crucially relies on the preliminary estimates established in Lemma~\ref{lem: basic kernel}. To preserve the continuity of our main exposition, the detailed proof of Lemma~\ref{lem: basic kernel} is deferred to the Appendix.

\begin{lem}\label{lem: basic kernel}
    On the restricted support defined by $\delta(\Sigma)\delta(\Omega)\varphi_{1<3}\varphi_{2<3}$, the interacting wave frequencies and wavenumbers satisfy the following estimates:
    \begin{equation}\label{eq: est omega1-2}
        |\omega_1-\omega_2| = |\omega-\omega_3| \leq \frac{3|k_2-k_1|}{5\omega_3},
    \end{equation}
    \begin{equation}\label{eq: k-k3}
        \big| |k|-|k_3| \big| \leq |k_2-k_1|,
    \end{equation}
    \begin{equation}\label{eq: kk3 1/4}
        \left| \frac{1}{|k|^{1/4}} - \frac{1}{|k_3|^{1/4}} \right| \leq \frac{2|k_2-k_1|}{5|k_3|^{5/4}},
    \end{equation}
    \begin{equation}\label{eq: 1-3-1-0 omega}
        \big| (\omega_1-\omega_3)^2 - (\omega_1-\omega)^2 \big| \leq |k_2-k_1|,
    \end{equation}
    and
    \begin{equation}\label{eq: omega dyz}
        \omega_{y-z}^2 - (\omega_y-\omega_z)^2 \geq 2\min\{\omega_y,\omega_z\} |\omega_y-\omega_z|
    \end{equation}
    for any $y, z \in \mathbb{R}^d$.
\end{lem}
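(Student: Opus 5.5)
The plan is to treat the five estimates one by one. Each reduces to elementary consequences of the two constraints on the support, together with the scale separation \eqref{eq: k12 and k3} and the range \eqref{eq: range k}. On the support of $\delta(\Sigma)\delta(\Omega)\varphi_{1<3}\varphi_{2<3}$ we use three facts: $k-k_3=k_2-k_1$, $\omega+\omega_1=\omega_2+\omega_3$, and $|k_1|,|k_2|\le |k_3|/20$. By \eqref{eq: range k} we also have $\tfrac{9}{10}|k_3|\le |k|\le \tfrac{11}{10}|k_3|$, and hence $\omega\ge \sqrt{9/10}\,\omega_3$.

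\emph{Estimates \eqref{eq: k-k3} and \eqref{eq: est omega1-2}.} Estimate \eqref{eq: k-k3} comes first, since it drives the others. It is just the reverse triangle inequality: $\big||k|-|k_3|\big|\le |k-k_3|=|k_2-k_1|$. For \eqref{eq: est omega1-2}, the energy law gives $\omega_1-\omega_2=\omega_3-\omega$, so the two absolute values agree. Rationalizing gives
\[
|\omega-\omega_3|=\frac{\big||k|-|k_3|\big|}{\omega+\omega_3}\le \frac{|k_2-k_1|}{(1+\sqrt{9/10})\,\omega_3}.
\]
Since $1+\sqrt{9/10}>1.94$, the prefactor is below $0.52<3/5$.

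\emph{Estimate \eqref{eq: kk3 1/4}.} Apply the mean value theorem to $x\mapsto x^{-1/4}$ on the interval between $|k|$ and $|k_3|$. Both endpoints are at least $\tfrac{9}{10}|k_3|$, so
\[
\Big||k|^{-1/4}-|k_3|^{-1/4}\Big|\le \tfrac14\Big(\tfrac{10}{9}\Big)^{5/4}|k_3|^{-5/4}\,\big||k|-|k_3|\big|.
\]
Combining with \eqref{eq: k-k3} and the numerical bound $\tfrac14(10/9)^{5/4}\approx 0.29<2/5$ gives the claim.

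\emph{Estimate \eqref{eq: 1-3-1-0 omega}.} Factor the difference of squares:
\[
(\omega_1-\omega_3)^2-(\omega_1-\omega)^2=(\omega-\omega_3)(2\omega_1-\omega-\omega_3).
\]
Because $2\omega_1\le 2\sqrt{1/20}\,\omega_3<\omega+\omega_3$, we get $|2\omega_1-\omega-\omega_3|\le \omega+\omega_3$. Using $|\omega-\omega_3|=\big||k|-|k_3|\big|/(\omega+\omega_3)$, the factor $\omega+\omega_3$ cancels, leaving $\big||k|-|k_3|\big|\le|k_2-k_1|$ by \eqref{eq: k-k3}.

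\emph{Estimate \eqref{eq: omega dyz}.} This one holds for arbitrary $y,z\in\mathbb{R}^d$. By the triangle inequality,
\[
\omega_{y-z}^2=|y-z|\ge\big||y|-|z|\big|=|\omega_y-\omega_z|(\omega_y+\omega_z).
\]
Subtracting $(\omega_y-\omega_z)^2=|\omega_y-\omega_z|\cdot|\omega_y-\omega_z|$ gives
\[
\omega_{y-z}^2-(\omega_y-\omega_z)^2\ge |\omega_y-\omega_z|\,\big(\omega_y+\omega_z-|\omega_y-\omega_z|\big)=2\min\{\omega_y,\omega_z\}\,|\omega_y-\omega_z|.
\]

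There is no conceptual obstacle here. The only point requiring care is checking that the explicit numerical constants $3/5$ and $2/5$ are compatible with the cut-off thresholds $1/20$ and the resulting range $[9/10,11/10]$. The tightest check is the lower bound $\omega\ge\sqrt{9/10}\,\omega_3$, which enters both the denominator in \eqref{eq: est omega1-2} and the mean value bound in \eqref{eq: kk3 1/4}.
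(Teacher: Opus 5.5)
Your proof is correct and follows essentially the same elementary route as the paper: the reverse triangle inequality for \eqref{eq: k-k3}, rationalizing $\omega-\omega_3$ using the range \eqref{eq: range k}, the factorization $(\omega-\omega_3)(2\omega_1-\omega-\omega_3)$ for \eqref{eq: 1-3-1-0 omega}, and $|y-z|\ge\big||y|-|z|\big|$ for \eqref{eq: omega dyz}. The differences are only cosmetic---you rationalize once through $\big||k|-|k_3|\big|$ rather than through $|k|^2-|k_3|^2$, and you use the mean value theorem for \eqref{eq: kk3 1/4} rather than factoring through $\omega-\omega_3$---and your numerical constants check out, including the correct lower bound $\omega+\omega_3\ge(1+\sqrt{9/10})\,\omega_3$.
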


Consider the explicit algebraic expression for the pre-symmetrized interaction coefficient $\tilde T_{k_1k_2}^{k_3k_4}$, which is given by:
\begin{equation}\label{eq: tTk1k2k3k4}
\begin{aligned}
\tilde{T}_{k_1 k_2}^{k_3 k_4} = &-\frac{1}{16\pi^2 (|k_1| |k_2| |k_3| |k_4|)^{1/4}} \Bigg\{ -12 |k_1| |k_2| |k_3| |k_4| \\
& - 2(\omega_1 + \omega_2)^2 \Big[ \omega_3 \omega_4 f_-(k_1,k_2) + \omega_1 \omega_2 f_-(k_3,k_4) \Big] \\
& - 2(\omega_1 - \omega_3)^2 \Big[ \omega_2 \omega_4 f_+(k_1,k_3) + \omega_1 \omega_3 f_+(k_2,k_4) \Big] \\
& - 2(\omega_1 - \omega_4)^2 \Big[ \omega_2 \omega_3 f_+(k_1,k_4) + \omega_1 \omega_4 f_+(k_2,k_3)\Big] \\
& + f_+(k_1,k_2)f_+(k_3,k_4)+f_-(k_1,k_3)f_-(k_2,k_4)+f_-(k_1,k_4)f_-(k_2,k_3) \\
& + \frac{4(\omega_1 + \omega_2)^2 f_-(k_1,k_2)f_-(k_3,k_4)}{\omega_{k_1+k_2}^2 - (\omega_1 + \omega_2)^2} + \frac{4(\omega_1 - \omega_3)^2 f_+(k_1,k_3)f_+(k_2,k_4)}{\omega_{k_1-k_3}^2 - (\omega_1 - \omega_3)^2} \\
& + \frac{4(\omega_1 - \omega_4)^2f_+(k_1,k_4)f_+(k_2,k_3)}{\omega_{k_1-k_4}^2 - (\omega_1 - \omega_4)^2} \Bigg\}.
\end{aligned}
\end{equation}
Furthermore, to streamline the upcoming singularity analysis, we introduce the angular variables:
\begin{equation}\label{eq: ab}
\alpha:=\hat k_1\cdot \hat k\qquad \text{ and }\qquad \beta:=\hat k_2\cdot \hat k.
\end{equation}

\subsection{The \texorpdfstring{$\tilde T_{k_1 k}^{k_2 k_3}+\tilde T_{k_2 k_3}^{k_1 k}$}{T\_{k1,k}\^{k2,k3} + T\_{k2,k3}\^{k1,k}} contribution}
Based on Eq.~\eqref{eq: tTk1k2k3k4}, the expression for $\tilde T_{k_1 k}^{k_2 k_3}$ reads
\begin{equation}\label{eq: tT kk1k2k3}
\begin{aligned}
    \tilde{T}_{k_1 k}^{k_2 k_3} &= -\frac{1}{16\pi^2 (|k_1| |k| |k_2| |k_3|)^{1/4}} \Bigg\{ -12 |k_1| |k| |k_2| |k_3| \\
    &\quad - 2(\omega_1 + \omega)^2 \Big[ \omega_2 \omega_3 f_-(k_1,k) + \omega_1 \omega f_-(k_2,k_3) \Big] \\
    &\quad - 2(\omega_1 - \omega_2)^2 \Big[ \omega \omega_3 f_+(k_1,k_2) + \omega_1 \omega_2 f_+(k,k_3) \Big] \\
    &\quad - 2(\omega_1 - \omega_3)^2 \Big[ \omega \omega_2 f_+(k_1,k_3) + \omega_1 \omega_3 f_+(k,k_2)\Big] \\
    &\quad + f_+(k_1,k)f_+(k_2,k_3)+f_-(k_1,k_2)f_-(k,k_3)+f_-(k_1,k_3)f_-(k,k_2) \\
    &\quad + \frac{4(\omega_1 + \omega)^2 f_-(k_1,k)f_-(k_2,k_3)}{\omega_{k_1+k}^2 - (\omega_1 + \omega)^2} + \frac{4(\omega_1 - \omega_2)^2 f_+(k_1,k_2)f_+(k,k_3)}{\omega_{k_1-k_2}^2 - (\omega_1 - \omega_2)^2} \\
    &\quad + \frac{4(\omega_1 - \omega_3)^2f_+(k_1,k_3)f_+(k,k_2)}{\omega_{k_1-k_3}^2 - (\omega_1 - \omega_3)^2} \Bigg\}.
\end{aligned}
\end{equation}
On the restricted support defined by $\delta(\Sigma)\delta(\Omega)\varphi_{1<3}\varphi_{2<3}$, we decompose $\tilde{T}_{k_1 k}^{k_2 k_3}$ into three distinct scale components:
\begin{equation}
    \tilde{T}_{k_1 k}^{k_2 k_3} = \tilde{T}_{k_1 k,1}^{k_2 k_3} + \tilde{T}_{k_1 k,2}^{k_2 k_3}+\tilde{T}_{k_1 k,3}^{k_2 k_3}.
\end{equation}
Here, the first piece $\tilde{T}_{k_1 k,1}^{k_2 k_3}$ captures the leading-order terms of $O(|k_3|^{2})$, extracted from the second, fourth, eighth, and tenth terms within the braces of Eq.~\eqref{eq: tT kk1k2k3}:

\begin{equation}\label{eqL tT 1}
\begin{aligned}
    \tilde{T}_{k_1 k,1}^{k_2 k_3} \coloneqq &-\frac{1}{16\pi^2 (|k_1| |k| |k_2| |k_3|)^{1/4}} \Bigg\{ - 2\omega^2 \Big[ \omega_2 \omega_3 f_-(k_1,k) + \omega_1 \omega f_-(k_2,k_3) \Big] \\
    &\quad - 2\omega_3^2 \Big[ \omega \omega_2 f_+(k_1,k_3) + \omega_1 \omega_3 f_+(k,k_2)\Big] \\
    &\quad + \frac{4 \omega^2 f_-(k_1,k)f_-(k_2,k_3)}{-2\omega_1\omega} + \frac{4 \omega_3^2f_+(k_1,k_3)f_+(k,k_2)}{2\omega_1\omega_3} \Bigg\}.
\end{aligned}
\end{equation}
The second piece $\tilde T_{k_1k,2}^{k_2k_3}$ comprises the sub-leading terms of $O(|k_3|^{3/2})$ from Eq.~\eqref{eq: tT kk1k2k3}:
\begin{equation}\label{eq: tT2 kk1k2k3}
\begin{aligned}
    \tilde{T}_{k_1 k,2}^{k_2 k_3} \coloneqq &-\frac{1}{16\pi^2 (|k_1| |k| |k_2| |k_3|)^{1/4}} \Bigg\{ -12 |k_1| |k| |k_2| |k_3| \\
    &\quad - 4\omega_1\omega \Big[ \omega_2 \omega_3 f_-(k_1,k) + \omega_1 \omega f_-(k_2,k_3) \Big] + 4\omega_1\omega_3 \Big[ \omega \omega_2 f_+(k_1,k_3) + \omega_1 \omega_3 f_+(k,k_2)\Big] \\
    &\quad + f_+(k_1,k)f_+(k_2,k_3) + f_-(k_1,k_3)f_-(k,k_2) \\
    &\quad + \frac{8\omega_1\omega f_-(k_1,k)f_-(k_2,k_3)}{-2\omega_1\omega} + \frac{4\omega^2 f_-(k_1,k)f_-(k_2,k_3)(|k_1|-k_1\cdot \hat k)}{(-2\omega_1\omega)^2} \\
    &\quad +\frac{-8\omega_1\omega_3 f_+(k_1,k_3)f_+(k,k_2)}{2\omega_1\omega_3} + \frac{4\omega^2_3 f_+(k_1,k_3)f_+(k,k_2)(|k_1|+k_1\cdot \hat k_3)}{(2\omega_1\omega_3)^2} \Bigg\}.
\end{aligned}
\end{equation}
Finally, the third piece $\tilde T_{k_1 k,3}^{k_2 k_3}$ constitutes the remainder term:
\begin{equation}\label{eq: tT3 kk1k2k3}
\begin{aligned}
    \tilde{T}_{k_1 k,3}^{k_2 k_3} \coloneqq &-\frac{1}{16\pi^2 (|k_1| |k| |k_2| |k_3|)^{1/4}} \Bigg\{ - 2\omega_1^2 \Big[ \omega_2 \omega_3 f_-(k_1,k) + \omega_1 \omega f_-(k_2,k_3) \Big] \\
    &\quad - 2(\omega_1 - \omega_2)^2 \Big[ \omega \omega_3 f_+(k_1,k_2) + \omega_1 \omega_2 f_+(k,k_3) \Big] \\
    &\quad - 2\omega_1^2 \Big[ \omega \omega_2 f_+(k_1,k_3) + \omega_1 \omega_3 f_+(k,k_2)\Big] + f_-(k_1,k_2)f_-(k,k_3) \\
    &\quad + f_-(k_1,k)f_-(k_2,k_3) \bigg[ \frac{4(\omega_1 + \omega)^2}{\omega_{k_1+k}^2 - (\omega_1 + \omega)^2} - \frac{4\omega^2}{-2\omega_1\omega} - \frac{8\omega_1\omega}{-2\omega_1\omega} - \frac{4\omega^2(|k_1|-k_1\cdot\hat k)}{(-2\omega_1\omega)^2} \bigg] \\
    &\quad + \frac{4(\omega_1 - \omega_2)^2 f_+(k_1,k_2)f_+(k,k_3)}{\omega_{k_1-k_2}^2 - (\omega_1 - \omega_2)^2} \\
    &\quad + f_+(k_1,k_3)f_+(k,k_2) \bigg[ \frac{4(\omega_1 - \omega_3)^2}{\omega_{k_1-k_3}^2 - (\omega_1 - \omega_3)^2} - \frac{4\omega_3^2}{2\omega_1\omega_3} + \frac{8\omega_1\omega_3}{2\omega_1\omega_3} - \frac{4\omega_3^2(|k_1|+k_1\cdot\hat k_3)}{(2\omega_1\omega_3)^2} \bigg] \Bigg\}.
\end{aligned}
\end{equation}
Following this decomposition, we symmetrize each component individually by defining
\begin{equation}
    T_{k_1 k,j}^{k_2k_3}:=\frac{1}{2}(\tilde T_{k_1 k,j}^{k_2k_3}+\tilde T_{k_2k_3,j}^{k_1 k}),\qquad j=1,2,3.
\end{equation}

Next, we systematically evaluate the three components of the kernel decomposition. We begin by proving the following lemma for the leading-order term.

\begin{lem}\label{lem: T1}
On the restricted support defined by $\delta(\Sigma)\delta(\Omega)\varphi_{1<3}\varphi_{2<3}$, there exists a decomposition 
\begin{equation}\label{eq: decom T1}
    T_{k_1k,1}^{k_2k_3} = M_{k_1k,1}^{k_2k_3} + R_{k_1k,1}^{k_2k_3}
\end{equation}
where the leading-order term $M_{k_1 k,1}^{k_2k_3}$ is given exactly by 
\begin{equation}\label{eq: def M1}
   M_{k_1 k,1}^{k_2k_3} \coloneqq \frac{(\alpha-\beta)^2|k|^{3/2}(\omega_1\omega_2)^{3/2}}{16\pi^2}
\end{equation}
and the remainder term $R_{k_1 k,1}^{k_2k_3} \coloneqq T_{k_1k,1}^{k_2k_3} - M_{k_1k,1}^{k_2k_3}$ satisfies the bound
\begin{equation}
    |R_{k_1 k,1}^{k_2k_3}| \leq C_{1}(|k_1|^2+|k_2|^2)|k|
\end{equation}
for some positive constant $C_{1}>0$.
\end{lem}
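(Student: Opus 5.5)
The plan is to compute $T_{k_1k,1}^{k_2k_3}=\frac12(\tilde T_{k_1k,1}^{k_2k_3}+\tilde T_{k_2k_3,1}^{k_1k})$ by expanding every quantity in the small parameter $|k_1|/|k|\sim|k_2|/|k|\lesssim 1/20$. The large variables are always re-expressed through $k$ via the momentum relation $k_3=k+k_1-k_2$. I would first write the four terms inside the braces of \eqref{eqL tT 1} in polar form: $f_-(k_1,k)=|k_1||k|(\alpha-1)$, $f_+(k,k_2)=|k||k_2|(\beta+1)$, $f_+(k_1,k_3)=|k_1||k_3|(\hat k_1\cdot\hat k_3+1)$, $f_-(k_2,k_3)=|k_2||k_3|(\hat k_2\cdot\hat k_3-1)$. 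The companion $\tilde T_{k_2k_3,1}^{k_1k}$ is obtained from the same formula by the swaps $k_1\leftrightarrow k_2$, $k\leftrightarrow k_3$. A homogeneity count then shows the following. After dividing by the prefactor $(|k_1||k||k_2||k_3|)^{1/4}$, each individual term is of size $|k|^2$ times a quantity of degree $1$ in the small wavenumbers. The claimed main term $M_{k_1k,1}^{k_2k_3}$ is of size $|k|^{3/2}$. The remainder must be $O((|k_1|^2+|k_2|^2)|k|)$. So the proof consists of exhibiting two successive cancellations and then bounding what is left.

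\textbf{Step 1 (order $|k|^2$).} Freeze the large variables by formally setting $k_3=k$, $\omega_3=\omega$, $\hat k_3=\hat k$. Adding the four terms and their swapped counterparts, the $|k|^2$ coefficients should cancel identically as polynomials in $\alpha,\beta,\omega_1,\omega_2$. I would verify this by direct algebra. The errors made by freezing are controlled by Lemma~\ref{lem: basic kernel}. The bounds \eqref{eq: k-k3} and \eqref{eq: kk3 1/4} control $|k|-|k_3|$ and $|k|^{-1/4}-|k_3|^{-1/4}$. The angle satisfies $|\hat k_3-\hat k|\lesssim|k_1-k_2|/|k|$. The bound \eqref{eq: est omega1-2} gives $|\omega-\omega_3|\lesssim |k_1-k_2|/\omega$. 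Each of these gains a factor $|k|^{-1}$ (or $|k|^{-1/2}$ at the level of $\omega$), relative to a term of size $|k|^2$ times degree one. Consequently, first-order Taylor corrections land at size $|k|^{3/2}$ or $|k|$.

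\textbf{Step 2 (order $|k|^{3/2}$ and the hidden cancellation).} Collect the first-order corrections from Step 1 together with the terms that are genuinely of order $|k|^{3/2}$. The key input is the resonance identity $\omega_1-\omega_2=\omega_3-\omega$. Together with \eqref{eq: est omega1-2}, it shows $|\omega_1-\omega_2|\lesssim|k_1-k_2|/\omega_3$. In other words, $|k_1|$ and $|k_2|$ agree up to a correction that is smaller by a factor $|k|^{-1/2}$, so they can be interchanged inside the $|k|^{3/2}$ coefficient at the cost of an $O((|k_1|^2+|k_2|^2)|k|)$ error. After this replacement, the surviving $|k|^{3/2}$ coefficient should collapse to a perfect square in the angles. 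I expect it to be $(\alpha-\beta)^2(\omega_1\omega_2)^{3/2}/(16\pi^2)$, where the sign is fixed by the overall minus sign of the prefactor; this is exactly $M_{k_1k,1}^{k_2k_3}$. Along the way one uses $\hat k_j\cdot\hat k_3=\hat k_j\cdot\hat k+O(|k_1-k_2|/|k|)$ for $j=1,2$.

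\textbf{Step 3 (remainder).} Define $R_{k_1k,1}^{k_2k_3}=T_{k_1k,1}^{k_2k_3}-M_{k_1k,1}^{k_2k_3}$. It is a finite sum of products, each carrying either two small expansion factors or one factor of $\omega_1-\omega_2$ times $\omega$. I would bound each product with Lemma~\ref{lem: basic kernel}, using \eqref{eq: range k} to treat $|k|\sim|k_3|$ and $|k_1-k_2|\le |k_1|+|k_2|$. This gives $|R_{k_1k,1}^{k_2k_3}|\le C_1(|k_1|^2+|k_2|^2)|k|$.

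\textbf{Main obstacle.} The hard part is Step 2. There one must track the $|k|^{3/2}$ coefficient exactly, with no loss, through the symmetrization and the expansions of $|k_3|^{-1/4}$, $\omega_3$ and $\hat k_3$, and then recognize the square $(\alpha-\beta)^2$. I would organize this computation by writing everything in terms of $\omega$, $\omega_1$, $\omega_2$, $\alpha$, $\beta$ and the single small vector $k_1-k_2$. Collecting coefficients degree by degree in $|k|^{1/2}$ would then serve as a check that the $|k|^2$ coefficient vanishes.
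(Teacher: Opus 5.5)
There is a genuine gap in where you expect the main term $M_{k_1k,1}^{k_2k_3}$ to come from. Your freeze-and-compare architecture (set $k_3=k$ and control the difference with Lemma~\ref{lem: basic kernel}) is the same as the paper's; the problem is Step 1. Its claim that the frozen, symmetrized $|k|^2$ coefficients cancel identically as polynomials in $\alpha,\beta,\omega_1,\omega_2$ is false. Inserting $f_\pm(k_1,k)=|k_1||k|(\alpha\pm1)$ and $f_\pm(k_2,k)=|k_2||k|(\beta\pm1)$ into \eqref{eqL tT 1} at $k_3=k$ gives $\tilde T_{k_1k,1}^{k_2k}=\frac{\alpha|k|^2\sqrt{\omega_1\omega_2}(\omega_1-\omega_2)}{4\pi^2}$, and hence
\[
T_{k_1k,1}^{k_2k}=\frac{(\alpha-\beta)\,|k|^{2}\sqrt{\omega_1\omega_2}\,(\omega_1-\omega_2)}{8\pi^2},
\]
which is nonzero unless $|k_1|=|k_2|$. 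At the same time, every unfreezing correction costs a relative factor $|k_1-k_2|/|k|$, not $|k|^{-1/2}$. This covers $|k|$ versus $|k_3|$, $|k|^{-1/4}$ versus $|k_3|^{-1/4}$, $\omega$ versus $\omega_3$, $\hat k$ versus $\hat k_3$, and $f_\pm(k_j,k)$ versus $f_\pm(k_j,k_3)$. So these corrections are already $O((|k_1|^2+|k_2|^2)|k|)$ and contribute nothing at order $|k|^{3/2}$; this is exactly Lemma~\ref{lem: C11 est}. Piece $1$ also contains no terms that are genuinely of order $|k|^{3/2}$ (those sit in piece $2$). Hence there is nothing to collect in your Step 2: all of $M_{k_1k,1}^{k_2k_3}$ is hidden in the displayed $|k|^2(\omega_1-\omega_2)$ term.

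The missing idea is to use the resonance identity as an exact first-order expansion, not only as the bound $|\omega_1-\omega_2|\lesssim|k_1-k_2|/\omega_3$:
\[
\omega_1-\omega_2=\omega_3-\omega=\frac{2k\cdot(k_1-k_2)+|k_1-k_2|^2}{(\omega_3+\omega)(|k_3|+|k|)}=\frac{\alpha\omega_1^2-\beta\omega_2^2}{2\omega}+\ldots=\frac{(\alpha-\beta)\omega_1\omega_2}{2\omega}+\ldots
\]
The bound on $\omega_1-\omega_2$ enters only in the last step, to replace $\omega_1^2$ and $\omega_2^2$ by $\omega_1\omega_2$. Every dropped piece, once multiplied by $|k|^2\sqrt{\omega_1\omega_2}$, is $O((|k_1|^2+|k_2|^2)|k|)$. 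Substituting into the display above yields $M_{k_1k,1}^{k_2k_3}$ exactly.

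The factor $(\alpha-\beta)^2$ is therefore not a perfect square emerging from some $|k|^{3/2}$ coefficient. It is the product of the $(\alpha-\beta)$ produced by symmetrization and the $(\alpha-\beta)$ coming from $\hat k\cdot(k_1-k_2)$.

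Note also what your interchange move does if applied to the $|k|^2$ term, which is what asserting its cancellation amounts to. Swapping $|k_1|$ and $|k_2|$ at relative cost $|k|^{-1/2}$ there discards an $O(|k|^{3/2})$ quantity whose leading part is precisely $M_{k_1k,1}^{k_2k_3}$. The resulting remainder could then not satisfy $|R_{k_1k,1}^{k_2k_3}|\le C_1(|k_1|^2+|k_2|^2)|k|$.
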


\begin{proof}
We begin by introducing the intermediate quantity $\tilde T_{k_1k,1}^{k_2k}$ and writing 
\begin{equation}
    \tilde T_{k_1k,1}^{k_2k_3} = \tilde T_{k_1k,1}^{k_2k} + \big(\tilde T_{k_1k,1}^{k_2k_3} - \tilde T_{k_1k,1}^{k_2k}\big).
\end{equation}
Crucially, the leading-order terms in the kernel exhibit a quadratic growth with respect to $|k|$ or $|k_3|$. Taking the difference effectively cancels these highest-order contributions, reducing the local growth degree by one. Since the overall degree of the kernel with respect to all variables $(k_1, k_2, k_3, k)$ is $3$, we can combine the estimates from Lemma~\ref{lem: basic kernel} with the scale separations~\eqref{eq: k12 and k3} and the bounds on $|k|$ in~\eqref{eq: range k} to deduce the bound
\begin{equation}\label{eq: d t T1 1}
   \big| \tilde T_{k_1k,1}^{k_2k_3} - \tilde T_{k_1k,1}^{k_2k} \big| \leq C_{11}(|k_1|^{2}+|k_2|^{2})|k|,
\end{equation}
for some universal constant $C_{11}>0$ (see Lemma~\ref{lem: C11 est} for the rigorous proof). Next, substituting the angular variables from Eq.~\eqref{eq: ab}, we have the identities
\begin{equation}\label{eq: fpm a}
    f_\pm(k_1,k) = |k||k_1|(\alpha\pm 1) \quad \text{and} \quad f_\pm(k_2,k) = |k||k_2|(\beta\pm 1).
\end{equation}
Inserting these into Eq.~\eqref{eqL tT 1} evaluated at $k_3 = k$ yields 
\begin{equation}
\begin{aligned}
    \tilde{T}_{k_1 k,1}^{k_2 k} &= -\frac{1}{16\pi^2 (|k_1| |k|^2 |k_2| )^{1/4}} \Bigg\{ - 2\omega^2 \Big[ \omega_2 \omega |k_1||k|(\alpha-1) + \omega_1 \omega |k_2||k|(\beta-1)\Big] \\
    &\quad - 2\omega^2 \Big[ \omega \omega_2 |k_1||k|(\alpha+1) + \omega_1 \omega |k||k_2|(\beta+1)\Big] \\
    &\quad - \frac{2\omega |k_1||k_2||k|^2(\alpha-1)(\beta-1)}{\omega_1} + \frac{2\omega |k_1||k_2||k|^2(\alpha+1)(\beta+1)}{\omega_1} \Bigg\}.
\end{aligned}
\end{equation}
Simplifying the terms inside the brackets, we obtain
\begin{equation}
\begin{aligned}
    \tilde{T}_{k_1 k,1}^{k_2 k} &= \frac{1}{4\pi^2 (|k_1| |k|^2 |k_2| )^{1/4}} \Bigg\{ \omega^2 \Big[ \omega_2 \omega |k_1||k|\alpha + \omega_1 \omega |k_2||k|\beta \Big] - \frac{\omega |k_1||k_2||k|^2(\alpha+\beta)}{\omega_1} \Bigg\} \\
    &= \frac{\alpha|k|^{2}\sqrt{\omega_1\omega_2}(\omega_1-\omega_2)}{4\pi^2}.
\end{aligned}
\end{equation}
Symmetrizing this expression yields 
\begin{equation}
    T_{k_1 k,1}^{k_2 k} = \frac{1}{2} \big( \tilde{T}_{k_1 k,1}^{k_2 k} + \tilde{T}_{k_2 k,1}^{k_1 k} \big) = \frac{(\alpha-\beta)|k|^{2}\sqrt{\omega_1\omega_2}(\omega_1-\omega_2)}{8\pi^2}.
\end{equation}
On the resonant manifold supported by $\delta(\Sigma)\delta(\Omega)$, we exploit the frequency conservation $\omega_1-\omega_2 = \omega_3-\omega$ to expand
\begin{equation}\label{eq: omega1-2}
\begin{aligned}
    \omega_1-\omega_2 &= \frac{|k+k_1-k_2|^2-|k|^2}{(\omega_3+\omega)(|k_3|+|k|)} = \frac{2k\cdot (k_1-k_2)+|k_1-k_2|^2}{(\omega_3+\omega)(|k_3|+|k|)} \\
    &= \frac{(\alpha-\beta)\omega_1\omega_2}{2\omega} + \mathcal{O}_1\!\left(\frac{1}{|k|}\right),
\end{aligned}
\end{equation}
where the precise algebraic remainder is given by
\begin{equation}
\begin{aligned}
    \mathcal{O}_1\!\left(\frac{1}{|k|}\right) \coloneqq &\frac{2k\cdot (k_1-k_2)+|k_1-k_2|^2}{(\omega_3+\omega)(|k_3|+|k|)} - \frac{2k\cdot (k_1-k_2)+|k_1-k_2|^2}{4\omega^3} \\
    &+ \frac{|k_1-k_2|^2}{4\omega^3} + \frac{\alpha \omega_1(\omega_1-\omega_2)+\beta\omega_2(\omega_2-\omega_1)}{2\omega}.
\end{aligned}
\end{equation}
Substituting this expansion back into the symmetrized kernel gives 
\begin{equation}\label{eq: Tkk 12 1}
    T_{k_1 k,1}^{k_2 k} = \frac{(\alpha-\beta)^2|k|^{3/2}(\omega_1\omega_2)^{3/2}}{16\pi^2} + \tilde{\mathcal{O}}_1(|k|),
\end{equation}
where the corresponding remainder is explicitly 
\begin{equation}
\begin{aligned}
    \tilde{\mathcal{O}}_1(|k|) \coloneqq &\frac{(\alpha-\beta)|k|^{2}\sqrt{\omega_1\omega_2}}{8\pi^2} \Bigg\{ \frac{2k\cdot (k_1-k_2)+|k_1-k_2|^2}{(\omega_3+\omega)(|k_3|+|k|)} \\
    &- \frac{2k\cdot (k_1-k_2)+|k_1-k_2|^2}{4\omega^3} + \frac{|k_1-k_2|^2}{4\omega^3} + \frac{\alpha \omega_1(\omega_1-\omega_2)+\beta\omega_2(\omega_2-\omega_1)}{2\omega} \Bigg\}.
\end{aligned}
\end{equation}
Using the kinematic bounds established in Lemma~\ref{lem: basic kernel}, alongside Eqs.~\eqref{eq: k12 and k3} and \eqref{eq: range k}, we can uniformly bound this remainder by
\begin{equation}
    \big| \tilde{\mathcal{O}}_1(|k|) \big| \leq C_{12}(|k_1|^2+|k_2|^2) |k|
\end{equation}
for some positive constant $C_{12}>0$. Combining this bound with Eq.~\eqref{eq: d t T1 1} and Eq.~\eqref{eq: Tkk 12 1} immediately yields the desired decomposition \eqref{eq: decom T1} with the total constant $C_1 = C_{11} + C_{12}$.
\end{proof}

Proceeding to the next component of the kernel decomposition, we establish a similar structural bound for the sub-leading term $T_{k_1 k, 2}^{k_2 k_3}$, extracting an explicit leading-order piece that will ultimately cancel the one found in Lemma~\ref{lem: T1}.

\begin{lem}\label{lem: T2}
On the restricted support defined by $\delta(\Sigma)\delta(\Omega)\varphi_{1<3}\varphi_{2<3}$, there exists a decomposition  
\begin{equation}\label{eq: Tkk1k2k3 2}
    T_{k_1 k,2}^{k_2k_3} = M_{k_1 k,2}^{k_2k_3} + R_{k_1 k,2}^{k_2k_3},
\end{equation}
where the principal part $M_{k_1 k,2}^{k_2k_3}$ is exactly given by 
\begin{equation}
    M_{k_1 k,2}^{k_2 k_3} = -\frac{(\alpha-\beta)^2(\omega_1\omega_2)^{3/2}|k|^{3/2}}{16\pi^2}
\end{equation}
and the remainder $R_{k_1k,2}^{k_2k_3} \coloneqq T_{k_1k,2}^{k_2k_3} - M_{k_1k,2}^{k_2k_3}$ satisfies the estimate 
\begin{equation}
    |R_{k_1k,2}^{k_2k_3}| \leq C_{2}(|k_1|^{5/2}+|k_2|^{5/2})\omega
\end{equation}
for some positive constant $C_{2}>0$.
\end{lem}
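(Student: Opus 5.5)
The plan follows the proof of Lemma~\ref{lem: T1}: freeze $k_3=k$, compute the frozen kernel exactly, extract the principal part by means of the resonance identity \eqref{eq: omega1-2}, and absorb everything else into the remainder. Concretely, I would write
\begin{equation*}
    \tilde T_{k_1k,2}^{k_2k_3}=\tilde T_{k_1k,2}^{k_2k}+\big(\tilde T_{k_1k,2}^{k_2k_3}-\tilde T_{k_1k,2}^{k_2k}\big),
\end{equation*}
and likewise for $\tilde T_{k_2k_3,2}^{k_1k}$ (frozen at $k_3=k$ in the same way).

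\emph{Difference term.} Every term inside the braces of \eqref{eq: tT2 kk1k2k3} is of total degree $3$ in $|k|,|k_3|$ and the small variables. Each such term carries at most a factor $|k|^{3/2}$, so after the prefactor $(|k||k_3|)^{-1/4}$ the large variables appear to total power at most $1$. Replacing $k_3$ by $k$ costs one power of $|k|$ and gains a factor $|k_2-k_1|$. This uses \eqref{eq: k-k3} and \eqref{eq: kk3 1/4}, together with $|\hat k_3-\hat k|\lesssim |k_2-k_1|/|k|$ (which follows from $k_3=k+k_1-k_2$), and $|\omega-\omega_3|\lesssim |k_2-k_1|/\omega_3$ from \eqref{eq: est omega1-2}. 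The quotient terms need separate care. Their denominators $(-2\omega_1\omega)^2$ and $(2\omega_1\omega_3)^2$ contribute $|k_1|^{-1}$, which must be compensated: the factors $(|k_1|\mp k_1\cdot\hat k)$ and $f_\pm(k,k_2)$ each carry a power of $|k_1|$ or $|k_2|$. I would track the powers of the small variables explicitly, always placing the worst one ($|k_1|^{-1/4}$ and the like) against an explicit factor $|k_1|$ or $|k_2|$. The aim is a bound of the form $C(|k_1|^{5/2}+|k_2|^{5/2})\omega$, using $|k_2-k_1|\le |k_1|+|k_2|$ and \eqref{eq: range k}.

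\emph{Frozen kernel.} At $k_3=k$ I would insert \eqref{eq: fpm a}, together with $f_\pm(k,k)=|k|^2(1\pm1)$, so that $f_-(k_2,k_3)=|k||k_2|(\beta-1)$ and $f_+(k_1,k_3)=|k||k_1|(\alpha+1)$. I would then simplify as in Lemma~\ref{lem: T1}. The terms of order $|k|^2$ inside the braces should combine into an expression linear in $\alpha,\beta$ with coefficients built from $\omega_1,\omega_2$, namely $-4\omega_1\omega[\cdots]+4\omega_1\omega[\cdots]$, the two quotient terms with numerators $8\omega_1\omega$ and $-8\omega_1\omega_3$, and the $(|k_1|\mp k_1\cdot\hat k)$ corrections. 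I expect that after symmetrization $k_1\leftrightarrow k_2$ this reduces to
\begin{equation*}
    -\frac{(\alpha-\beta)|k|^{2}\sqrt{\omega_1\omega_2}\,(\omega_1-\omega_2)}{8\pi^2}
\end{equation*}
plus terms of size $(|k_1|^{5/2}+|k_2|^{5/2})\omega$. Substituting \eqref{eq: omega1-2}, $\omega_1-\omega_2=(\alpha-\beta)\omega_1\omega_2/(2\omega)+\mathcal O_1(1/|k|)$, then yields exactly $M_{k_1k,2}^{k_2k_3}$. The $\mathcal O_1$ part is controlled, as in the bound for $\tilde{\mathcal O}_1$, by Lemma~\ref{lem: basic kernel} and \eqref{eq: k12 and k3}.

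\emph{Main obstacle.} The hard step is the exact algebra of the frozen kernel: verifying that the large pieces, $-12|k_1||k|^2|k_2|$ and the products $f_+(k_1,k)f_+(k_2,k)$, $f_-(k_1,k)f_-(k,k_2)$, cancel or are of lower order. These are individually of size $|k|^{3/2}|k_1|^{1/2}|k_2|^{1/2}$ after the prefactor. The danger is a leftover symmetric term that is \emph{not} of the form $(\alpha-\beta)(\omega_1-\omega_2)$, for example something proportional to $(\alpha+1)(\beta-1)$. Such a term would break the estimate unless it vanishes identically, or is cancelled by the symmetrization with $\tilde T_{k_2k_3,2}^{k_1k}$, in which the roles of $(k_1,k)$ and $(k_2,k_3)$ are exchanged. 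I would first check this algebra with $\hat k_1,\hat k_2$ collinear with $\hat k$ as a sanity test, and then in full generality.
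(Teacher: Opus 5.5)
There is a genuine gap in how you extract the principal part. Your freezing step and your bound on $\tilde T_{k_1k,2}^{k_2k_3}-\tilde T_{k_1k,2}^{k_2k}$ are fine. They parallel Lemma~\ref{lem: C21 est}, and your care with the $|k_1|^{-1/4}$ factors and with $\hat k_3-\hat k$ is appropriate.

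The problem is your prediction that the symmetrized frozen kernel equals $-\frac{(\alpha-\beta)|k|^2\sqrt{\omega_1\omega_2}(\omega_1-\omega_2)}{8\pi^2}$ up to $O\big((|k_1|^{5/2}+|k_2|^{5/2})\omega\big)$; this candidate is exactly $-T_{k_1k,1}^{k_2k}$. You then invoke \eqref{eq: omega1-2}.
\begin{itemize}
\item As an algebraic identity this cannot hold. For fixed $k_1,k_2$ the frozen $\tilde T_{k_1k,2}^{k_2k}$ grows like $|k|^{3/2}$, while your candidate grows like $|k|^2$.
\item The two can only be matched on the resonant manifold, and there the matching costs $\frac{(\alpha-\beta)|k|^2\sqrt{\omega_1\omega_2}}{8\pi^2}\,\mathcal{O}_1(1/|k|)$.
\item $\mathcal{O}_1$ contains $\frac{(\alpha\omega_1+\beta\omega_2)(\omega_1-\omega_2)}{2\omega}$. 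This is the gap between the true leading term $\frac{\alpha|k_1|-\beta|k_2|}{2\omega}$ of $\omega_1-\omega_2$ and $\frac{(\alpha-\beta)\omega_1\omega_2}{2\omega}$, and it is generically of size $|k_1|^{3/2}/|k|$. For instance, with $\alpha=1$, $\beta=0$ one has $\omega_1-\omega_2\approx\omega_1^2/(2\omega)$, and this term is about $|k_1|^{3/2}/(4|k|)$.
\item The resulting error is of size $(|k_1|^2+|k_2|^2)|k|$. That is precisely the $\tilde{\mathcal{O}}_1$ bound you propose to reuse.
\item It exceeds the claimed $C_2(|k_1|^{5/2}+|k_2|^{5/2})\omega$ by a factor of order $(|k|/|k_1|)^{1/2}$, which is unbounded in this regime.
\end{itemize}
So your route gives at best a weaker remainder estimate. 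That would suffice for Proposition~\ref{prop: kernel}, but it does not prove the statement.

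The missing idea is that for $\tilde T_{k_1k,2}$ the large pieces you worry about do not cancel. They are the principal part, and no resonance expansion is needed to produce it.

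At $k_3=k$, the terms $-4\omega_1\omega[\cdots]$ and $+4\omega_1\omega_3[\cdots]$ sum to $8|k|^2\omega_1^2\omega_2(\omega_1+\omega_2)=16|k_1||k_2||k|^2+8|k|^2\omega_1^2\omega_2(\omega_1-\omega_2)$. Every other term is $|k_1||k_2||k|^2$ times a polynomial in $\alpha,\beta$. The braces therefore equal $|k_1||k_2||k|^2\{4+(\alpha+1)(\beta+1)(\alpha-2)+(\alpha-1)(\beta-1)(-\alpha-2)\}+8|k|^2\omega_1^2\omega_2(\omega_1-\omega_2)$, and the angular factor is identically $2\alpha(\alpha-\beta)$. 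This gives the exact identity $\tilde T_{k_1k,2}^{k_2k}=-\frac{\alpha(\alpha-\beta)(\omega_1\omega_2)^{3/2}|k|^{3/2}}{8\pi^2}-\frac{|k|^{3/2}\omega_1^{3/2}\sqrt{\omega_2}(\omega_1-\omega_2)}{2\pi^2}$.

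Symmetrizing in $k_1\leftrightarrow k_2$ turns $\alpha(\alpha-\beta)$ into $\frac12(\alpha-\beta)^2$, which is exactly $M_{k_1k,2}^{k_2k_3}$. It turns the leftover into $-\frac{|k|^{3/2}\sqrt{\omega_1\omega_2}(\omega_1-\omega_2)^2}{4\pi^2}$. The resonance enters only here, through \eqref{eq: est omega1-2}: $(\omega_1-\omega_2)^2\le\frac{9|k_1-k_2|^2}{25|k_3|}$ gives a bound $C(|k_1||k_2|)^{1/4}|k_1-k_2|^2\omega\le C(|k_1|^{5/2}+|k_2|^{5/2})\omega$.

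It is this quadratic vanishing in $\omega_1-\omega_2$ after symmetrization that yields the extra half power. The unsymmetrized leftover, which is linear in $\omega_1-\omega_2$, would again only be $O\big((|k_1|^2+|k_2|^2)|k|\big)$. The cancellation with Lemma~\ref{lem: T1} thus happens between $M_{k_1k,1}^{k_2k_3}$ (obtained through \eqref{eq: omega1-2}) and $M_{k_1k,2}^{k_2k_3}$ (obtained purely algebraically), not between the frozen kernels themselves.
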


\begin{proof}
Similar to the previous lemma, we introduce the evaluation at $k_3 = k$ and write 
\begin{equation}
    \tilde T_{k_1k,2}^{k_2k_3} = \tilde T_{k_1k,2}^{k_2k} + \big(\tilde T_{k_1k,2}^{k_2k_3} - \tilde T_{k_1k,2}^{k_2k}\big).
\end{equation}
Using the bounds from Lemma~\ref{lem: basic kernel}, together with Eqs.~\eqref{eq: k12 and k3} and \eqref{eq: range k}, we estimate the difference as
\begin{equation}\label{eq: d t T1 2}
   \big| \tilde T_{k_1k,2}^{k_2k_3} - \tilde T_{k_1k,2}^{k_2k} \big| \leq C_{21}(|k_1|^{5/2}+|k_2|^{5/2})\omega
\end{equation}
for some positive constant $C_{21}>0$ (see Lemma~\ref{lem: C21 est} for the rigorous proof). Substituting the angular relations \eqref{eq: ab} and \eqref{eq: fpm a} into Eq.~\eqref{eq: tT2 kk1k2k3} evaluated at $k_3=k$ yields 
\begin{equation}
\begin{aligned}
    \tilde{T}_{k_1 k,2}^{k_2 k} &= -\frac{1}{16\pi^2 (|k_1| |k|^2 |k_2|)^{1/4}} \Bigg\{ -12 |k_1| |k_2| |k|^2 \\
    &\quad - 4\omega_1\omega \Big[ \omega_2|k_1| |k|^{3/2}(\alpha-1) + \omega_1 |k_2||k|^{3/2}(\beta-1) \Big] \\
    &\quad + 4\omega_1\omega \Big[ \omega_2|k_1||k|^{3/2}(\alpha+1) + \omega_1 |k_2||k|^{3/2}(\beta+1)\Big] \\
    &\quad + |k_1||k_2||k|^2(\alpha+1)(\beta+1) + |k_1||k_2||k|^2(\alpha-1)(\beta-1) \\
    &\quad - 4|k_1||k_2||k|^2(\alpha-1)(\beta-1) + |k_1||k_2||k|^2(\alpha-1)(\beta-1)(1-\alpha) \\
    &\quad - 4|k_1||k_2||k|^2(\alpha+1)(\beta+1) + |k_1||k_2||k|^2(\alpha+1)(\beta+1)(1+\alpha) \Bigg\},
\end{aligned}
\end{equation}
which simplifies by grouping terms to 
\begin{equation}
\begin{aligned}
    \tilde{T}_{k_1 k,2}^{k_2 k} &= -\frac{1}{16\pi^2 (|k_1| |k|^2 |k_2|)^{1/4}} \Bigg\{ -12 |k_1| |k_2| |k|^2  + 8\omega_1\omega \Big[ \omega_2|k_1| |k|^{3/2} + \omega_1 |k_2||k|^{3/2}\Big] \\
    &\quad + |k_1||k_2||k|^2(\alpha-1)(\beta-1)(-2-\alpha) + |k_1||k_2||k|^2(\alpha+1)(\beta+1)(-2+\alpha) \Bigg\}.
\end{aligned}
\end{equation}
Factoring out the shared leading order $|k_1||k_2||k|^2$, we extract the exact angular dependence alongside a remainder:
\begin{equation}
\begin{aligned}
    \tilde{T}_{k_1 k,2}^{k_2 k} &= -\frac{|k_1||k_2||k|^2}{16\pi^2 (|k_1| |k|^2 |k_2|)^{1/4}} \Big\{ -12 + 16 \\
    &\quad + (\alpha-1)(\beta-1)(-2-\alpha) + (\alpha+1)(\beta+1)(-2+\alpha) \Big\} + \mathcal{O}_2(|k|) \\
    &= -\frac{\alpha (\alpha-\beta)(\omega_1\omega_2)^{3/2}|k|^{3/2}}{8\pi^2} + \mathcal{O}_2(|k|),
\end{aligned}
\end{equation}
where the algebraic remainder is given by
\begin{equation}
    \mathcal{O}_2(|k|) \coloneqq -\frac{|k|^{3/2}\omega_1^{3/2}\sqrt{\omega_2}(\omega_1-\omega_2)}{2\pi^2}.
\end{equation}
Subsequently, symmetrizing this expression gives
\begin{equation}
    T_{k_1 k,2}^{k_2 k} = \frac{1}{2} \big( \tilde{T}_{k_1 k,2}^{k_2 k} + \tilde{T}_{k_2 k,2}^{k_1 k} \big) = -\frac{(\alpha-\beta)^2(\omega_1\omega_2)^{3/2}|k|^{3/2}}{16\pi^2} + \tilde{\mathcal{O}}_2(\omega),
\end{equation}
with the symmetrized remainder defined as 
\begin{equation}
    \tilde{\mathcal{O}}_2(\omega) \coloneqq -\frac{|k|^{3/2}\sqrt{\omega_1\omega_2}(\omega_1-\omega_2)^2}{4\pi^2}.
\end{equation}
By the kinematic constraints, this remainder is bounded by 
\begin{equation}
    \big| \tilde{\mathcal{O}}_2(\omega) \big| \leq C_{22}(|k_1|^{5/2}+|k_2|^{5/2})\omega
\end{equation}
for some positive constant $C_{22}>0$. Combining this bound with Eq.~\eqref{eq: d t T1 2} completes the proof with $C_{2} = C_{21} + C_{22}$.
\end{proof}

Finally, we address the third component of the kernel decomposition. Because $T_{k_1 k, 3}^{k_2 k_3}$ consists exclusively of lower-order remainder terms, it requires no further principal part extraction and can be uniformly bounded directly; accordingly, we defer the detailed proof of Lemma~\ref{lem: T3} to Appendix~\ref{app: Cauchy}.

\begin{lem}\label{lem: T3}
On the restricted support defined by $\delta(\Sigma)\delta(\Omega)\varphi_{1<3}\varphi_{2<3}$, the remainder component $T_{k_1 k,3}^{k_2k_3}$ satisfies the bound 
\begin{equation}\label{eq: last est T3}
    |T_{k_1 k,3}^{k_2k_3}| \leq C_{3}(|k_1|^2+|k_2|^2) |k|
\end{equation}
for some positive constant $C_{3}>0$.
\end{lem}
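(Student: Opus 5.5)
We bound $\tilde T_{k_1k,3}^{k_2k_3}$ term by term on the support of $\delta(\Sigma)\delta(\Omega)\varphi_{1<3}\varphi_{2<3}$, then symmetrize with $\tilde T_{k_2k_3,3}^{k_1k}$, which is handled identically after relabeling. Throughout we use \eqref{eq: k12 and k3} and \eqref{eq: range k}, so that $|k|\sim|k_3|$, together with $|f_\pm(x,y)|\le 2|x||y|$. The prefactor is $(|k_1||k||k_2||k_3|)^{-1/4}\sim (\omega_1\omega_2)^{-1/2}\omega^{-1}$. The target bound $(|k_1|^2+|k_2|^2)|k|$ therefore amounts to showing that the brace in \eqref{eq: tT3 kk1k2k3} is $O\big((\omega_1\omega_2)^{1/2}(|k_1|^2+|k_2|^2)|k|^{3/2}\big)$. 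In practice each term should carry a factor $|k_1||k_2|$, or at least $|k_1|$ or $|k_2|$, times a power of the small wavenumbers matching the degree count.

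\textbf{Polynomial terms.} These are handled directly.
\begin{itemize}
\item The term $2\omega_1^2[\omega_2\omega_3 f_-(k_1,k)+\omega_1\omega f_-(k_2,k_3)]$ is $\lesssim |k_1|(\omega_2|k_1|+\omega_1|k_2|)|k|^{3/2}$.
\item The $\omega_1^2[\dots f_+(k_1,k_3)\dots]$ term is handled the same way.
\item The term $f_-(k_1,k_2)f_-(k,k_3)$ is $\lesssim |k_1||k_2||k||k_3|$. It is already weighted by $|k_1||k_2|$, but a naive count gives $|k|^2$ rather than $|k|^{3/2}$.
\end{itemize}
This last term therefore needs $|f_-(k,k_3)|=|k||k_3|-k\cdot k_3=\tfrac12(|k-k_3|^2-(|k|-|k_3|)^2)\le \tfrac12|k_1-k_2|^2$, using $k-k_3=k_2-k_1$. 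This gains fully.

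Similarly, $f_+(k,k_3)$ in the $(\omega_1-\omega_2)^2$ term is $O(|k|^2)$, but $(\omega_1-\omega_2)^2\le (3|k_1-k_2|/(5\omega_3))^2$ by \eqref{eq: est omega1-2}. Combined with $\omega\omega_3|f_+(k_1,k_2)|$, this gives $O(|k_1-k_2|^2|k_1||k_2|\,|k|^{-1}\cdot|k|)$, which is fine. The remaining part $\omega_1\omega_2 f_+(k,k_3)(\omega_1-\omega_2)^2\lesssim \omega_1\omega_2|k_1-k_2|^2|k|$ is also acceptable.

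\textbf{Rational terms.} The main work is the three terms with denominators; these are the substantive ones. The denominators are $\omega_{k_1+k}^2-(\omega_1+\omega)^2=|k_1+k|-|k|-|k_1|-2\omega_1\omega$ and its analogue for $k_1-k_3$. We write them as $-2\omega_1\omega(1+\epsilon)$ with $\epsilon=(|k_1|-|k_1+k|+|k|... )$; more precisely, $|k_1+k|-|k|-|k_1| = k_1\cdot\hat k-|k_1|+O(|k_1|^2/|k|)$. Then $\epsilon = O(\omega_1/\omega)$, and $|\epsilon|\le 1/2$ holds because $|k_1|\le|k_3|/20$.

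We Taylor expand $(1+\epsilon)^{-1}=1-\epsilon+O(\epsilon^2)$. Combined with the numerator expansion $4(\omega_1+\omega)^2=4\omega^2+8\omega_1\omega+4\omega_1^2$, the bracket subtracts exactly the zeroth and first-order pieces. The remaining part is of relative size $O(\omega_1^2/\omega^2)$ times the order-$|k|^{1/2}/\omega_1$ leading quotient; in other words the bracket is $O(\omega_1^2/\omega^2\cdot \omega/\omega_1)=O(\omega_1/\omega)$. Multiplying by $|f_-(k_1,k)f_-(k_2,k_3)|\lesssim |k_1||k_2||k|^2$ gives $O(|k_1||k_2|\omega_1|k|^{3/2})$, which fits the target. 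One must also check the $O(|k_1|^2/|k|)$ correction in $|k_1+k|-|k|$; it contributes at the same order. The $k_1-k_3$ term is identical, with $\hat k_3$ in place of $\hat k$.

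\textbf{The $(\omega_1-\omega_2)^2$ quotient (main obstacle).} The hardest term is
\[
\frac{4(\omega_1-\omega_2)^2 f_+(k_1,k_2)f_+(k,k_3)}{\omega_{k_1-k_2}^2-(\omega_1-\omega_2)^2},
\]
whose denominator can degenerate. Here we invoke \eqref{eq: omega dyz}: the denominator is $\ge 2\min\{\omega_1,\omega_2\}|\omega_1-\omega_2|$. Hence the quotient is at most
\[
2|\omega_1-\omega_2|\,|f_+(k_1,k_2)|\,|f_+(k,k_3)|/\min\{\omega_1,\omega_2\}.
\]
Now $|f_+(k_1,k_2)|\le2|k_1||k_2|\le 2\min\{\omega_1,\omega_2\}\,\omega_1\omega_2\max\{\omega_1,\omega_2\}$, which absorbs the minimum. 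Using \eqref{eq: est omega1-2} together with $|f_+(k,k_3)|\lesssim|k|^2$, the quotient is $\lesssim \omega_1\omega_2\max(\omega_1,\omega_2)|k_1-k_2||k|^{2}/\omega_3$. After the prefactor this is $\lesssim (\omega_1\omega_2)^{1/2}\max(\omega_j)|k_1-k_2||k|^{1/2}$, well within the bound.

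Collecting the terms gives \eqref{eq: last est T3}, with $C_3$ being the sum of these absolute constants.
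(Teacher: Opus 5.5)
Your proposal is correct and follows essentially the paper's argument. You Taylor-expand the denominators so that the brackets $\mathcal{B}_1,\mathcal{B}_2$ are $O(\omega_1/\omega)$, and you control the $(\omega_1-\omega_2)^2$ quotient via \eqref{eq: omega dyz} and \eqref{eq: est omega1-2}, exactly as the paper does.

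You are also more complete than the paper in two places. First, you treat the polynomial terms explicitly, and the gain $|f_-(k,k_3)|\le\frac12|k_1-k_2|^2$ is genuinely needed there. Second, you keep the factor $(\omega_1\omega_2)^{1/2}$ that absorbs the prefactor, whereas the paper's closing Young's-inequality step to $(|k_1|^{5/2}+|k_2|^{5/2})|k|^{3/2}$ discards it.

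One small arithmetic slip: after the prefactor, the $(\omega_1-\omega_2)^2$ quotient is $\lesssim(\omega_1\omega_2)^{1/2}\max_j\omega_j\,|k_1-k_2|\,|k|$, not $|k|^{1/2}$. This sits exactly at the target order rather than well within it, so the conclusion still holds.
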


\subsection{The \texorpdfstring{$\tilde T_{k k_1}^{k_2 k_3}+\tilde T_{k_3 k_2}^{k_1 k}$}{T\_{k,k1}\^{k2,k3} + T\_{k3,k2}\^{k1,k}} contribution}

Based on Eq.~\eqref{eq: tTk1k2k3k4}, the expression for $\tilde T_{k k_1}^{k_2 k_3}$ reads
\begin{equation}\label{eq: tTk1kk2k3}
\begin{aligned}
    \tilde{T}_{k k_1}^{k_2 k_3} &= -\frac{1}{16\pi^2 (|k| |k_1| |k_2| |k_3|)^{1/4}} \Bigg\{ -12 |k| |k_1| |k_2| |k_3| \\
    &\quad - 2(\omega + \omega_1)^2 \Big[ \omega_2 \omega_3 f_-(k,k_1) + \omega \omega_1 f_-(k_2,k_3) \Big] \\
    &\quad - 2(\omega - \omega_2)^2 \Big[ \omega_1 \omega_3 f_+(k,k_2) + \omega \omega_2 f_+(k_1,k_3) \Big] \\
    &\quad - 2(\omega - \omega_3)^2 \Big[ \omega_1 \omega_2 f_+(k,k_3) + \omega \omega_3 f_+(k_1,k_2)\Big] \\
    &\quad + f_+(k,k_1)f_+(k_2,k_3)+f_-(k,k_2)f_-(k_1,k_3)+f_-(k,k_3)f_-(k_1,k_2) \\
    &\quad + \frac{4(\omega + \omega_1)^2 f_-(k,k_1)f_-(k_2,k_3)}{\omega_{k+k_1}^2 - (\omega + \omega_1)^2} + \frac{4(\omega - \omega_2)^2 f_+(k,k_2)f_+(k_1,k_3)}{\omega_{k-k_2}^2 - (\omega - \omega_2)^2} \\
    &\quad + \frac{4(\omega - \omega_3)^2f_+(k,k_3)f_+(k_1,k_2)}{\omega_{k-k_3}^2 - (\omega - \omega_3)^2} \Bigg\}.
\end{aligned}
\end{equation}
On the restricted support of $\delta(\Sigma)\delta(\Omega)\varphi_{1<3}\varphi_{2<3}$, we decompose $\tilde{T}_{k k_1}^{k_2 k_3}$ into three components:
\begin{equation}
    \tilde{T}_{k k_1}^{k_2 k_3} = \tilde{T}_{k k_1,1}^{k_2 k_3} + \tilde{T}_{k k_1,2}^{k_2 k_3} + \tilde{T}_{k k_1,3}^{k_2 k_3},
\end{equation}
where the first piece $\tilde{T}_{k k_1,1}^{k_2 k_3}$ is defined as the sum of the leading-order terms of $O(|k_3|^{2})$ from the second, third, eighth, and ninth terms within the braces of Eq.~\eqref{eq: tTk1kk2k3}:
\begin{equation}\label{eqL tT 2}
\begin{aligned}
    \tilde{T}_{k k_1,1}^{k_2 k_3} \coloneqq & -\frac{1}{16\pi^2 (|k_1| |k| |k_2| |k_3|)^{1/4}} \Bigg\{ - 2\omega^2 \Big[ \omega_2 \omega_3 f_-(k,k_1) + \omega \omega_1 f_-(k_2,k_3) \Big] \\
    &\quad - 2\omega^2 \Big[ \omega_1 \omega_3 f_+(k,k_2) + \omega \omega_2 f_+(k_1,k_3)\Big] \\
    &\quad + \frac{4 \omega^2 f_-(k,k_1)f_-(k_2,k_3)}{-2\omega_1\omega} + \frac{4 \omega^2f_+(k,k_2)f_+(k_1,k_3)}{2\omega\omega_2} \Bigg\}.
\end{aligned}
\end{equation}
The second piece $\tilde T_{kk_1,2}^{k_2k_3}$ comprises the terms of $O(|k_3|^{3/2})$ in Eq.~\eqref{eq: tTk1kk2k3}:
\begin{equation}\label{eq: tT2 kk1k2k3 2}
\begin{aligned}
    \tilde{T}_{k k_1,2}^{k_2 k_3} \coloneqq &-\frac{1}{16\pi^2 (|k_1| |k| |k_2| |k_3|)^{1/4}} \Bigg\{ -12 |k_1| |k| |k_2| |k_3| \\
    &\quad - 4\omega_1\omega \Big[ \omega_2 \omega_3 f_-(k,k_1) + \omega \omega_1 f_-(k_2,k_3) \Big] + 4\omega\omega_2 \Big[ \omega_1 \omega_3 f_+(k,k_2) + \omega \omega_2 f_+(k_1,k_3)\Big] \\
    &\quad + f_+(k,k_1)f_+(k_2,k_3)+f_-(k,k_2)f_-(k_1,k_3) \\
    &\quad + \frac{8\omega_1\omega f_-(k,k_1)f_-(k_2,k_3)}{-2\omega_1\omega} + \frac{4\omega^2 f_-(k,k_1)f_-(k_2,k_3)(|k_1|-k_1\cdot \hat k)}{(-2\omega_1\omega)^2} \\
    &\quad + \frac{-8\omega_2\omega f_+(k,k_2)f_+(k_1,k_3)}{2\omega\omega_2} + \frac{4\omega^2 f_+(k,k_2)f_+(k_1,k_3)(|k_2|+k_2\cdot \hat k)}{(2\omega\omega_2)^2} \Bigg\}.
\end{aligned}
\end{equation}
Finally, the third piece $\tilde T_{k k_1,3}^{k_2 k_3}$ is the remainder term:
\begin{equation}\label{eq: tT3 k1kk2k3}
\begin{aligned}
    \tilde{T}_{k k_1,3}^{k_2 k_3} \coloneqq &-\frac{1}{16\pi^2 (|k_1| |k| |k_2| |k_3|)^{1/4}} \Bigg\{ - 2\omega_1^2 \Big[ \omega_2 \omega_3 f_-(k_1,k) + \omega_1 \omega f_-(k_2,k_3) \Big] \\
    &\quad - 2\omega_2^2 \Big[ \omega_1 \omega_3 f_+(k,k_2) + \omega \omega_2 f_+(k_1,k_3) \Big] \\
    &\quad - 2(\omega-\omega_3)^2 \Big[ \omega_1 \omega_2 f_+(k,k_3) + \omega \omega_3 f_+(k_1,k_2)\Big] + f_-(k_1,k_2)f_-(k,k_3) \\
    &\quad + f_-(k_1,k)f_-(k_2,k_3) \bigg\{ \frac{4(\omega_1 + \omega)^2}{\omega_{k_1+k}^2 - (\omega_1 + \omega)^2} - \frac{4\omega^2}{-2\omega_1\omega} - \frac{8\omega_1\omega}{-2\omega_1\omega} - \frac{4\omega^2(|k_1|-k_1\cdot\hat k)}{(-2\omega_1\omega)^2} \bigg\} \\
    &\quad + \frac{4(\omega - \omega_3)^2 f_+(k_1,k_2)f_+(k,k_3)}{\omega_{k-k_3}^2 - (\omega - \omega_3)^2} \\
    &\quad + f_+(k_1,k_3)f_+(k,k_2) \bigg\{ \frac{4(\omega - \omega_2)^2}{\omega_{k-k_2}^2 - (\omega - \omega_2)^2} - \frac{4\omega^2}{2\omega\omega_2} - \frac{8\omega\omega_2}{2\omega\omega_2} - \frac{4\omega^2(|k_2|+k_2\cdot\hat k)}{(2\omega\omega_2)^2} \bigg\} \Bigg\}.
\end{aligned}
\end{equation}
Following this decomposition, we define the symmetrized components
\begin{equation}
    T_{k k_1,j}^{k_2k_3} \coloneqq \frac{1}{2}\big(\tilde T_{k k_1,j}^{k_2k_3} + \tilde T_{k_3k_2,j}^{k_1 k}\big), \qquad j \in \{1,2,3\}.
\end{equation}

Next, we establish the properties of these three components via the following three lemmas.

\begin{lem}\label{lem: T4}
On the restricted support defined by $\delta(\Sigma)\delta(\Omega)\varphi_{1<3}\varphi_{2<3}$, there exists a decomposition 
\begin{equation}\label{eq: decom Tkk1k2k3 1}
    T_{kk_1,1}^{k_2k_3} = M_{kk_1,1}^{k_2k_3} + R_{kk_1,1}^{k_2k_3}
\end{equation}
with the principal part $M_{k k_1,1}^{k_2k_3}$ given exactly by 
\begin{equation}
   M_{k k_1,1}^{k_2k_3} = \frac{(\alpha-\beta)^2|k|^{3/2}(\omega_1\omega_2)^{3/2}}{16\pi^2(|k_1||k_2||k|^2)^{1/4}}
\end{equation}
and the remainder $R_{kk_1,1}^{k_2k_3} \coloneqq T_{kk_1,1}^{k_2k_3} - M_{kk_1,1}^{k_2k_3}$ satisfying the estimate
\begin{equation}
    |R_{k k_1,1}^{k_2k_3}| \leq C_{4}(|k_1|^2+|k_2|^2)|k|
\end{equation}
for some positive constant $C_4>0$.
\end{lem}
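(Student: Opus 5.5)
The argument follows the template of Lemma~\ref{lem: T1}: freeze $k_3$ at $k$, compute exactly, then expand $\omega_1-\omega_2$ on the resonant manifold. First I write
\begin{equation*}
\tilde T_{kk_1,1}^{k_2k_3}=\tilde T_{kk_1,1}^{k_2k}+\big(\tilde T_{kk_1,1}^{k_2k_3}-\tilde T_{kk_1,1}^{k_2k}\big),
\end{equation*}
and do the same for the partner term $\tilde T_{k_3k_2,1}^{k_1k}$, which becomes $\tilde T_{kk_2,1}^{k_1k}$ at $k_3=k$.

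\textbf{Bounding the difference.} Every term inside the braces of \eqref{eqL tT 2} carries at most two powers of the large scale $|k|\sim|k_3|$. Replacing $k_3$ by $k$ therefore costs one large-scale factor, and picks up one factor of $|k_2-k_1|$ from \eqref{eq: k-k3}, \eqref{eq: kk3 1/4}, or from $|\hat k_3-\hat k|\lesssim |k_1-k_2|/|k|$. This uses the scale separation \eqref{eq: k12 and k3} and the range \eqref{eq: range k}.

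There is one delicate point. The denominators $-2\omega_1\omega$ and $2\omega\omega_2$ divide by small frequencies. They must be absorbed by the factors $f_-(k,k_1)=|k||k_1|(\alpha-1)$ and $f_+(k,k_2)=|k||k_2|(\beta+1)$, which carry $|k_1|$ and $|k_2|$ respectively. After this absorption the difference is homogeneous of degree $3$ with total small-scale degree at least $2$. This gives the bound $C(|k_1|^2+|k_2|^2)|k|$, exactly as in Lemma~\ref{lem: C11 est}. I would record it as a companion appendix lemma.

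\textbf{Exact evaluation at $k_3=k$.} At $k_3=k$ we have $f_-(k_2,k)=|k||k_2|(\beta-1)$ and $f_+(k_1,k)=|k||k_1|(\alpha+1)$. Substituting these into \eqref{eqL tT 2}, the braces reduce to $|k|^{5/2}$ times a polynomial in $\alpha,\beta,\omega_1,\omega_2$. I expect a linear combination of $\omega_2|k_1|$ and $\omega_1|k_2|$ terms, plus a term $|k_1||k_2|(\alpha\pm1)(\beta\pm1)/\omega_j$ arising from the two fractions.

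As in Lemma~\ref{lem: T1}, the $\alpha\beta$ and constant parts should cancel. What remains should be proportional to $\sqrt{\omega_1\omega_2}\,(\omega_1-\omega_2)$ times a linear angular factor. Symmetrizing with the partner term (which exchanges $k_1\leftrightarrow k_2$, $\alpha\leftrightarrow\beta$) should then produce a multiple of $(\alpha-\beta)\,|k|^{2}\sqrt{\omega_1\omega_2}\,(\omega_1-\omega_2)$, possibly with the extra normalization factor visible in the stated $M_{kk_1,1}$.

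\textbf{Extracting the principal part.} Next I insert the expansion \eqref{eq: omega1-2}, namely $\omega_1-\omega_2=(\alpha-\beta)\omega_1\omega_2/(2\omega)+\mathcal O_1(1/|k|)$. This converts the symmetrized value into $M_{kk_1,1}^{k_2k_3}$ plus a remainder. The remainder is bounded, as for $\tilde{\mathcal O}_1$ in Lemma~\ref{lem: T1}, by $C(|k_1|^2+|k_2|^2)|k|$, using \eqref{eq: est omega1-2} and the fact that the terms of $\mathcal O_1$ are $O((|k_1|^2+|k_2|^2)/|k|^{3/2})$ relative to the prefactor $|k|^{2}\sqrt{\omega_1\omega_2}$.

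\textbf{Main obstacle.} The hardest step is the exact algebra at $k_3=k$. The cancellation of the $\alpha\beta$ and constant terms must be verified, and the precise constant and normalization of $M_{kk_1,1}$ must be matched, so that this principal part cancels against the one coming from $T_{kk_1,2}$ in the next lemma. If the naive grouping fails to cancel, I would first re-examine the roles of $\omega_1$ versus $\omega_2$ in the two fractions of \eqref{eqL tT 2}. These fractions are asymmetric in $k_1$ and $k_2$, so the cancellation may only appear after symmetrization rather than termwise.
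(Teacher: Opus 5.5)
Your plan is the paper's proof step for step: freeze $k_3=k$, control $\tilde T_{kk_1,1}^{k_2k_3}-\tilde T_{kk_1,1}^{k_2k}$ by $C(|k_1|^2+|k_2|^2)|k|$ in a companion appendix lemma (the paper's Lemma~\ref{lem: C41 est}), evaluate exactly at $k_3=k$, symmetrize, and insert \eqref{eq: omega1-2}. Your fallback on the algebra is the right one---termwise one gets $\tilde T_{kk_1,1}^{k_2k}=-\frac{1}{8\pi^2}|k|^2\sqrt{\omega_1\omega_2}\,(\omega_1-\omega_2)(\alpha\beta-\alpha+\beta+1)$, the $(\alpha\beta+1)$ part cancels only under $k_1\leftrightarrow k_2$, and $T_{kk_1,1}^{k_2k}=\frac{1}{8\pi^2}(\alpha-\beta)|k|^2\sqrt{\omega_1\omega_2}\,(\omega_1-\omega_2)$ coincides with the value in Lemma~\ref{lem: T1}---so the principal part is $\frac{1}{16\pi^2}(\alpha-\beta)^2|k|^{3/2}(\omega_1\omega_2)^{3/2}$ with no $(|k_1||k_2||k|^2)^{1/4}$ in the denominator; that factor in the displayed statement is a typo (it makes $M_{kk_1,1}^{k_2k_3}$ homogeneous of degree $2$ instead of $3$, the paper's own computation ends without it, and only the factor-free version cancels $M_{kk_1,2}^{k_2k_3}$ of Lemma~\ref{lem: T5}).
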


\begin{proof}
We begin by evaluating at $k_3=k$ and writing 
\begin{equation}
    \tilde T_{kk_1,1}^{k_2k_3} = \tilde T_{kk_1,1}^{k_2k} + \big(\tilde T_{kk_1,1}^{k_2k_3} - \tilde T_{kk_1,1}^{k_2k}\big).
\end{equation}
Using the kinematic bounds from Lemma~\ref{lem: basic kernel}, together with Eqs.~\eqref{eq: k12 and k3} and \eqref{eq: range k}, we obtain
\begin{equation}\label{eq: d t T2 q}
   \big| \tilde T_{kk_1,1}^{k_2k_3} - \tilde T_{kk_1,1}^{k_2k} \big| \leq C_{41}(|k_1|^2+|k_2|^2)|k|
\end{equation}
for some positive constant $C_{41}>0$ (see Lemma~\ref{lem: C41 est} for the rigorous proof). Substituting the angular variables from Eqs.~\eqref{eq: ab} and \eqref{eq: fpm a} into Eq.~\eqref{eqL tT 2} evaluated at $k_3=k$ yields 
\begin{equation}
\begin{aligned}
    \tilde{T}_{k k_1,1}^{k_2 k} &= -\frac{1}{16\pi^2 (|k_1| |k|^2 |k_2|)^{1/4}} \Bigg\{ - 2\omega^2 \Big[ \omega_2 |k_1||k|^{3/2} (\alpha-1) +  \omega_1 |k_2||k|^{3/2}(\beta-1) \Big] \\
    &\quad - 2\omega^2 \Big[ \omega_1 |k_2||k|^{3/2}(\beta+1) + \omega_2|k_1||k|^{3/2}(\alpha+1)\Big] \\
    &\quad - \frac{2 \omega |k_1||k_2||k|^2(\alpha-1)(\beta-1)}{\omega_1} + \frac{2 \omega |k_1||k_2||k|^2(\alpha+1)(\beta+1)}{\omega_2} \Bigg\}.
\end{aligned}
\end{equation}
Simplifying the terms inside the brackets gives
\begin{equation}
\begin{aligned}
    \tilde{T}_{k k_1,1}^{k_2 k} &= -\frac{1}{16\pi^2 (|k_1| |k|^2 |k_2|)^{1/4}} \Bigg\{ - 4\omega^2 \Big[ \omega_2 |k_1||k|^{3/2} \alpha +  \omega_1 |k_2||k|^{3/2}\beta \Big] \\
    &\quad - 2\left(\frac{1}{\omega_1}-\frac{1}{\omega_2}\right) \omega |k_1||k_2||k|^2(\alpha-1)(\beta-1) + \frac{4 \omega |k_1||k_2||k|^2(\alpha+\beta)}{\omega_2} \Bigg\} \\
    &= -\frac{1}{16\pi^2 (|k_1| |k|^2 |k_2|)^{1/4}} \Bigg\{ -4\beta(\omega_2-\omega_1)\omega_1\omega_2|k|^{5/2} \\
    &\quad - 2(\omega_2-\omega_1)\omega_1\omega_2|k|^{5/2}(\alpha-1)(\beta-1)\Bigg\}.
\end{aligned}
\end{equation}
Using the frequency expansion from Eq.~\eqref{eq: omega1-2}, the symmetrized kernel becomes
\begin{equation}
\begin{aligned}
     T_{k k_1,1}^{k_2 k} &= \frac{1}{2}\big( \tilde{T}_{k k_1,1}^{k_2 k} + \tilde{T}_{k k_2,1}^{k_1 k} \big) \\
     &= \frac{1}{8\pi^2(|k_1||k_2||k|^2)^{1/4}}(\alpha-\beta)\cdot\frac{(\alpha-\beta)\omega_1\omega_2}{2\omega}\cdot\omega_1\omega_2|k|^{5/2} + \tilde{\mathcal{O}}_3(|k|) \\
     &= \frac{(\alpha-\beta)^2|k|^{3/2}(\omega_1\omega_2)^{3/2}}{16\pi^2} + \tilde{\mathcal{O}}_3(|k|),
\end{aligned}
\end{equation}
where the remainder is defined as
\begin{equation}
    \tilde{\mathcal{O}}_3(|k|) \coloneqq \frac{1}{8\pi^2(|k_1||k_2||k|^2)^{1/4}}(\alpha-\beta) \cdot \mathcal{O}_1\!\left(\frac{1}{|k|}\right) \cdot \omega_1\omega_2|k|^{5/2}.
\end{equation}
Applying the bounds from Lemma~\ref{lem: basic kernel},~\eqref{eq: k12 and k3}, and~\eqref{eq: range k} yields
\begin{equation}
    \big|\tilde{\mathcal{O}}_3(|k|)\big| \leq C_{42}(|k_1|^2+|k_2|^2)|k|
\end{equation}
for some positive constant $C_{42}>0$. Taking $C_{4} = C_{41} + C_{42}$ completes the proof. 
\end{proof}

\begin{lem}\label{lem: T5}
On the restricted support defined by $\delta(\Sigma)\delta(\Omega)\varphi_{1<3}\varphi_{2<3}$, there exists a decomposition 
\begin{equation}\label{eq: Tkk1k2k3 2 2}
    T_{kk_1,2}^{k_2k_3} = M_{kk_1,2}^{k_2k_3} + R_{kk_1,2}^{k_2k_3}
\end{equation}
with the sub-leading principal part given by 
\begin{equation}\label{eq: tT2 decom M2 2}
      M_{kk_1,2}^{k_2k_3} \coloneqq -\frac{(\alpha-\beta)^2 |k|^{3/2}(\omega_1\omega_2)^{3/2}}{16\pi^2}
\end{equation}
and the remainder $R_{kk_1,2}^{k_2k_3} \coloneqq T_{kk_1,2}^{k_2k_3} - M_{kk_1,2}^{k_2k_3}$ satisfying the estimate
\begin{equation}
    |R_{kk_1,2}^{k_2k_3}| \leq C_{5}(|k_1|^{5/2}+|k_2|^{5/2})\omega
\end{equation}
for some positive constant $C_5>0$.
\end{lem}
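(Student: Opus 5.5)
The plan mirrors the proof of Lemma~\ref{lem: T2}. We write
\[
\tilde T_{kk_1,2}^{k_2k_3}=\tilde T_{kk_1,2}^{k_2k}+\big(\tilde T_{kk_1,2}^{k_2k_3}-\tilde T_{kk_1,2}^{k_2k}\big),
\]
and do the same for the partner $\tilde T_{k_3k_2,2}^{k_1k}$, comparing it with its value at $k_3=k$. The proof then has two parts: a difference estimate and an exact evaluation at $k_3=k$.

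\textbf{Difference estimate.} Every term in \eqref{eq: tT2 kk1k2k3 2} has degree $3/2$ in the large variables $(k,k_3)$, counting the prefactor $(|k||k_3|)^{-1/4}$. Replacing $k_3$ by $k$ changes each large factor by a controlled amount:
\begin{itemize}
\item $\omega_3$ and $|k_3|^{-1/4}$ change according to \eqref{eq: est omega1-2}, \eqref{eq: k-k3} and \eqref{eq: kk3 1/4};
\item $f_\pm(k_2,k_3)$ and $f_+(k_1,k_3)$ change by $O(|k_j||k_2-k_1|)$, since $|k_3-k|=|k_1-k_2|$.
\end{itemize}
Each such replacement costs one power of $|k|$ and gains one factor of $|k_1|+|k_2|$. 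Combined with \eqref{eq: k12 and k3} and \eqref{eq: range k}, this should give
\[
\big|\tilde T_{kk_1,2}^{k_2k_3}-\tilde T_{kk_1,2}^{k_2k}\big|\le C_{51}(|k_1|^{5/2}+|k_2|^{5/2})\omega .
\]
The rigorous term-by-term bookkeeping would go in an appendix lemma, analogous to Lemma~\ref{lem: C21 est}.

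\textbf{Evaluation at $k_3=k$.} Substituting \eqref{eq: fpm a} into \eqref{eq: tT2 kk1k2k3 2} with $k_3=k$, and using $f_\pm(k,k)$, $f_+(k,k)=2|k|^2$ and $f_-(k,k)=0$, every term becomes $|k_1||k_2||k|^2$ (or $\omega_j\omega_l|k|^{5/2}$) times a polynomial in $\alpha,\beta$. There is one structural difference from Lemma~\ref{lem: T2}: the last two fractions carry $\omega_2$ rather than $\omega_1$ in the denominators, and they contain the factor $(|k_2|+k_2\cdot\hat k)=|k_2|(1+\beta)$.

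After collecting terms, the bracket should reduce to an expression of the form
\[
c\,(\text{angular polynomial})\,(\omega_1\omega_2)^{3/2}|k|^{3/2}+\mathcal O(\omega_1-\omega_2).
\]
The terms carrying $\omega_1\neq\omega_2$ asymmetry, such as $\omega_1^2\omega_2$ versus $\omega_1\omega_2^2$, are rewritten as symmetric parts plus multiples of $(\omega_1-\omega_2)$.

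\textbf{Symmetrization.} We then symmetrize with $\tilde T_{k_3k_2,2}^{k_1k}$, which at $k_3=k$ is $\tilde T_{kk_2,2}^{k_1k}$, i.e.\ the $1\leftrightarrow2$ swap. This should make the principal angular factor collapse to $-(\alpha-\beta)^2/16\pi^2$, leaving a remainder proportional to $|k|^{3/2}\sqrt{\omega_1\omega_2}(\omega_1-\omega_2)^2$ or to $|k|^{3/2}(\omega_1-\omega_2)\cdot\omega_j^{3}$. By \eqref{eq: est omega1-2} we have $|\omega_1-\omega_2|\lesssim|k_1-k_2|/\omega$. Hence each such remainder is bounded by $C(|k_1|^{5/2}+|k_2|^{5/2})\omega$, or by something even smaller.

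\textbf{Main obstacle.} The hard step is the exact algebraic collapse at $k_3=k$, because of the $\omega_2$-denominators in the last two terms. First-order remainders of size $|k|^{3/2}(\omega_1-\omega_2)\omega_j^{3}$ are only acceptable if they are antisymmetric and cancel under the $1\leftrightarrow2$ symmetrization, or if they become quadratic in $\omega_1-\omega_2$. Otherwise they are of size $|k|\,|k_j|^2$, which is still admissible but weaker.

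My first attempt will be to group terms into symmetric and antisymmetric parts in $(k_1,\alpha)\leftrightarrow(k_2,\beta)$ before simplifying. If a non-cancelling linear $(\omega_1-\omega_2)$ term survives, I will instead expand it with \eqref{eq: omega1-2} and check whether it combines with the principal part, as happened in Lemma~\ref{lem: T4}.
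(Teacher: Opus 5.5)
Your outline is the paper's proof. You split off the value at $k_3=k$, control the difference by the bookkeeping of Lemma~\ref{lem: C51 est}, evaluate exactly at $k_3=k$, average with the swap $\tilde T_{kk_2,2}^{k_1k}$, and bound the leftover with \eqref{eq: est omega1-2}. What you leave as ``should'' is exactly what the paper computes, and it does work. At $k_3=k$, the four $\omega$-weighted products in \eqref{eq: tT2 kk1k2k3 2} equal $16|k_1||k_2||k|^2$ plus an exact deviation linear in $\omega_1-\omega_2$. This gives
\[
\tilde T_{kk_1,2}^{k_2k}=-\frac{(\omega_1\omega_2)^{3/2}|k|^{3/2}}{16\pi^2}\Big[4+(\alpha+1)(\beta+1)(\beta-2)-(\alpha-1)(\beta-1)(\alpha+2)\Big]+\frac{|k|^{3/2}\sqrt{\omega_1\omega_2}\,(\omega_1-\omega_2)}{4\pi^2}\big(\omega_1(\alpha-1)+\omega_2(\beta+1)\big).
\]
Under $(\alpha,\omega_1)\leftrightarrow(\beta,\omega_2)$, the bracket averages to $4-4(\alpha\beta+1)+(\alpha+\beta)^2=(\alpha-\beta)^2$. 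The linear term averages to
\[
\frac{|k|^{3/2}\sqrt{\omega_1\omega_2}\,(\omega_1-\omega_2)}{8\pi^2}\big[\omega_1(\alpha-1)+\omega_2(\beta+1)-\omega_2(\beta-1)-\omega_1(\alpha+1)\big]=-\frac{|k|^{3/2}\sqrt{\omega_1\omega_2}\,(\omega_1-\omega_2)^2}{4\pi^2}.
\]
So the angle-dependent first-order pieces cancel exactly. Then \eqref{eq: est omega1-2} and \eqref{eq: range k} bound what remains by $C(|k_1||k_2|)^{1/4}|k_1-k_2|^2|k|^{1/2}\le C(|k_1|^{5/2}+|k_2|^{5/2})\omega$.

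This exact cancellation is indispensable, and your fallback reasoning around it is wrong in two places. First, a surviving linear term would have the form $|k|^{3/2}\sqrt{\omega_1\omega_2}\,(\omega_1-\omega_2)\,\omega_j$; your $\omega_j^{3}$ has the wrong homogeneity, since $T$ has total degree $3$. By \eqref{eq: est omega1-2} such a term is generically of size $|k|\,|k_j|^2$, because $\omega_1-\omega_2\approx(\alpha-\beta)\omega_1\omega_2/(2\omega)$. That is not $\lesssim(|k_1|^{5/2}+|k_2|^{5/2})\omega$, so your sentence ``hence each such remainder is bounded by $C(|k_1|^{5/2}+|k_2|^{5/2})\omega$'' is false for it. ``Admissible but weaker'' would serve Proposition~\ref{prop: kernel}, not the lemma as stated. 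Second, expanding such a term with \eqref{eq: omega1-2} would produce something of order $(\alpha-\beta)(\omega_1\omega_2)^{3/2}\omega_j|k|$. That is a half-power of $|k|$ below $M_{kk_1,2}^{k_2k_3}$, so it could not merge with the principal part as in Lemma~\ref{lem: T4}. Since the cancellation does occur, your plan goes through once the computation above is carried out.
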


\begin{proof}
We write the difference to the evaluated kernel as
\begin{equation}
    \tilde T_{kk_1,2}^{k_2k_3} = \tilde T_{kk_1,2}^{k_2k} + \big(\tilde T_{kk_1,2}^{k_2k_3} - \tilde T_{kk_1,2}^{k_2k}\big).
\end{equation}
Using the kinematic bounds from Lemma~\ref{lem: basic kernel},~\eqref{eq: k12 and k3}, and~\eqref{eq: range k}, we obtain 
\begin{equation}\label{eq: d t T2 2}
   \big| \tilde T_{kk_1,2}^{k_2k_3} - \tilde T_{kk_1,2}^{k_2k} \big| \leq C_{51}(|k_1|^{5/2}+|k_2|^{5/2})\omega
\end{equation}
for some positive constant $C_{51}>0$ (see Lemma~\ref{lem: C51 est} for the rigorous proof). Next, substituting Eqs.~\eqref{eq: ab} and~\eqref{eq: fpm a} into Eq.~\eqref{eq: tT2 kk1k2k3 2} evaluated at $k_3=k$ yields 
\begin{equation}
\begin{aligned}
    \tilde{T}_{k k_1,2}^{k_2 k} &= -\frac{1}{16\pi^2 (|k_1| |k|^2 |k_2| )^{1/4}} \Bigg\{ -12 |k_1| |k|^2 |k_2|  \\
    &\quad - 4\omega_1\omega \Big[ \omega_2 |k_1||k|^{3/2}(\alpha-1) +  \omega_1 |k_2||k|^{3/2}(\beta-1)\Big] \\
    &\quad + 4\omega\omega_2 \Big[ \omega_1 |k_2||k|^{3/2} (\beta+1) + \omega_2 |k_1||k|^{3/2}(\alpha+1)\Big] \\
    &\quad + |k_1||k_2||k|^2(\alpha+1)(\beta+1) + |k_1||k_2||k|^2(\alpha-1)(\beta-1) \\
    &\quad - 4|k_1||k_2||k|^2(\alpha-1)(\beta-1) + |k_1||k_2||k|^2(\alpha-1)(\beta-1)(1-\alpha) \\
    &\quad - 4|k_1||k_2||k|^2(\alpha+1)(\beta+1) + |k_1||k_2||k|^2(\alpha+1)(\beta+1)(1+\beta) \Bigg\},
\end{aligned}
\end{equation}
which simplifies by factoring the leading-order scale out:
\begin{equation}
\begin{aligned}
    \tilde{T}_{k k_1,2}^{k_2 k} &= -\frac{|k_1| |k|^2 |k_2|}{16\pi^2 (|k_1| |k|^2 |k_2| )^{1/4}} \Bigg\{ -12 + 16  \\
    &\quad + (\alpha+1)(\beta+1)(-2+\beta) + (\alpha-1)(\beta-1)(-2-\alpha) \Bigg\} + \mathcal{O}_4(|k|),
\end{aligned}
\end{equation}
where the remainder is defined algebraically as
\begin{equation}
    \mathcal{O}_4(|k|) \coloneqq \frac{1}{4\pi^2 (|k_1| |k|^2 |k_2| )^{1/4}} \Big\{ \omega_2|k_1||k|^2(\alpha-1)(\omega_1-\omega_2) - \omega_1|k_2||k|^2(\beta+1)(\omega_2-\omega_1)\Big\}.
\end{equation}
Symmetrizing this expression leads to 
\begin{equation}
\begin{aligned}
    T_{k k_1,2}^{k_2 k} &= -\frac{|k_1| |k|^2 |k_2|}{16\pi^2 (|k_1| |k|^2 |k_2| )^{1/4}}\Bigg\{ 4 + (\alpha+1)(\beta+1)\left(-2+\frac{1}{2}(\beta+\alpha)\right) \\
    &\quad + (\alpha-1)(\beta-1)\left(-2-\frac{1}{2}(\alpha+\beta)\right) \Bigg\} + \tilde{\mathcal{O}}_4(\omega),
\end{aligned}
\end{equation}
with the symmetric remainder defined as
\begin{equation}
\begin{aligned}
    \tilde{\mathcal{O}}_4(\omega) \coloneqq &\frac{\sqrt{\omega_1\omega_2}|k|^{3/2}(\omega_1-\omega_2)}{8\pi^2 } \Big\{ \big(\omega_1(\alpha-1)-\omega_2(\beta-1)\big) + \big(\omega_2(\beta+1)-\omega_1(\alpha+1)\big) \Big\} \\
    &= -\frac{\sqrt{\omega_1\omega_2}|k|^{3/2}(\omega_1-\omega_2)^2}{4\pi^2 }. 
\end{aligned}
\end{equation}
Evaluating the algebraic multiplier directly gives
\begin{equation}
\begin{aligned}
    T_{k k_1,2}^{k_2 k} &= -\frac{|k_1| |k|^2 |k_2|}{16\pi^2 (|k_1| |k|^2 |k_2| )^{1/4}}\Bigg\{ 4 - 4(\alpha\beta+1) + (\alpha+\beta)^2 \Bigg\} + \tilde{\mathcal{O}}_4(\omega) \\
    &= -\frac{(\alpha-\beta)^2 |k|^{3/2}(\omega_1\omega_2)^{3/2}}{16\pi^2 } + \tilde{\mathcal{O}}_4(\omega).
\end{aligned}
\end{equation}
Again utilizing Lemma~\ref{lem: basic kernel},~\eqref{eq: k12 and k3} and~\eqref{eq: range k}, the remainder satisfies the bound
\begin{equation}
    \big|\tilde{\mathcal{O}}_4(\omega)\big| \leq C_{52}(|k_1|^{5/2}+|k_2|^{5/2})\omega
\end{equation}
for some positive constant $C_{52}>0$. This completes the proof with $C_5=C_{51}+C_{52}$.
\end{proof}

\begin{lem}\label{lem: T6}
On the restricted support defined by $\delta(\Sigma)\delta(\Omega)\varphi_{1<3}\varphi_{2<3}$, the remainder component $T_{k k_1,3}^{k_2k_3}$ satisfies the uniform bound 
\begin{equation}
    |T_{k k_1,3}^{k_2k_3}| \leq C_{6}(|k_1|^2+|k_2|^2) |k|
\end{equation}
for some positive constant $C_{6}>0$.
\end{lem}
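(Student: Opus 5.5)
We would bound each of the seven groups of terms in $\tilde T_{kk_1,3}^{k_2k_3}$ in \eqref{eq: tT3 k1kk2k3} separately, and then transfer the bound to the symmetric partner $\tilde T_{k_3k_2,3}^{k_1k}$. The transfer uses the symmetry $k\leftrightarrow k_3$, $k_1\leftrightarrow k_2$. On the support of $\delta(\Sigma)\delta(\Omega)\varphi_{1<3}\varphi_{2<3}$ we have $|k|\sim|k_3|$ by \eqref{eq: range k}, so this swap preserves the regime \eqref{eq: k12 and k3}. The prefactor $(|k||k_1||k_2||k_3|)^{-1/4}$ is of size $(|k_1||k_2|)^{-1/4}|k|^{-1/2}$. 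Write $\rho=(|k_1|^2+|k_2|^2)^{1/2}$. The target for each bracketed group is then
\[
O\big((|k_1||k_2|)^{1/4}\rho^{2}|k|^{3/2}\big)
\quad\text{or, more crudely,}\quad
O\big(\rho^{5/2}|k|^{3/2}\big).
\]
The crude form suffices when combined with the prefactor, provided the negative powers of $|k_1|,|k_2|$ are cancelled by the $f_\pm$ factors. For this we will repeatedly use $|f_\pm(x,y)|\le 2|x||y|$.

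\textbf{Polynomial terms.} The term $-2\omega_1^2[\cdots]$ is of size $|k_1|\,\omega_2\omega_3|k_1||k|\lesssim \rho^{5/2}|k|^{3/2}$. The same estimate holds for $-2\omega_2^2[\cdots]$. The term $f_-(k_1,k_2)f_-(k,k_3)$ is $\lesssim \rho^2|k|^2$. This is too big by a factor $|k|^{1/2}$ after the prefactor, so we must use $|f_-(k,k_3)|=|k||k_3|(1-\hat k\cdot\hat k_3)$. Since $k_3=k+k_1-k_2$, one gets $1-\hat k\cdot\hat k_3\lesssim |k_1-k_2|^2/|k|^2$, and hence $|f_-(k,k_3)|\lesssim\rho^2$. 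The term $-2(\omega-\omega_3)^2[\cdots]$ is controlled by \eqref{eq: est omega1-2}: $(\omega-\omega_3)^2\lesssim\rho^2/|k|$, multiplied by a bracket $\lesssim \omega_1\omega_2|k|^2+\omega\omega_3|k_1||k_2|$. For the fraction with $\omega_{k-k_3}^2-(\omega-\omega_3)^2$ in the denominator, \eqref{eq: omega dyz} gives a denominator $\ge 2\min(\omega,\omega_3)|\omega-\omega_3|$. The quotient is therefore $\lesssim |\omega-\omega_3|/\omega\lesssim\rho/|k|^{3/2}$, multiplied by $f_+(k_1,k_2)f_+(k,k_3)\lesssim\rho^2|k|^2$.

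\textbf{The two Taylor brackets (main obstacle).} These are the groups multiplying $f_-(k_1,k)f_-(k_2,k_3)$ and $f_+(k_1,k_3)f_+(k,k_2)$. The subtracted terms are precisely the first three orders of the expansion of $4(\omega_1+\omega)^2/(\omega_{k_1+k}^2-(\omega_1+\omega)^2)$ in the small parameter.

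\emph{Exact reduction.} Set $D=|k+k_1|-|k|-|k_1|-2\omega\omega_1=-2\omega_1\omega+\epsilon$, where $\epsilon=|k+k_1|-|k|-|k_1|$. One has $\epsilon=-(|k_1|-k_1\cdot\hat k)+O(|k_1|^2/|k|)$; more precisely the error is $O(|k_1|^2(1-\alpha^2)/|k|)$, and $|\epsilon|\le 2|k_1|$. The exact identity
\[
\frac{4(\omega+\omega_1)^2}{D}=\frac{4(\omega+\omega_1)^2}{-2\omega_1\omega}\Big(1+\frac{\epsilon}{2\omega_1\omega}+\frac{\epsilon^2}{(2\omega_1\omega)^2}\cdot\frac{1}{1-\epsilon/(2\omega_1\omega)}\Big)
\]
shows that the bracket equals $\omega_1^2$-terms, plus terms of size $\epsilon^2/(\omega_1^3\omega)$ and $(\epsilon+(|k_1|-k_1\cdot\hat k))/\omega_1\omega$.

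\emph{Near-singular denominator.} The dangerous point is that $1-\epsilon/(2\omega_1\omega)$ must stay bounded below. Since $\epsilon\le0$ by the triangle inequality, this factor is $\ge1$. This is the key observation we would verify first, because it removes any possible singularity.

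\emph{Degeneracy in the prefactor.} The remaining danger is the factor $\omega_1^{-3}$ when $|k_1|\ll|k_2|$. Here we use that $f_-(k_1,k)=|k||k_1|(\alpha-1)$ vanishes to the same order as $\epsilon$, since $|\epsilon|\lesssim|k_1|(1-\alpha)$. Hence
\[
|f_-(k_1,k)\,\epsilon^2|\,\omega_1^{-3}\lesssim |k||k_1|^{3/2}(1-\alpha)^3 ,
\]
which is harmless. After multiplying by $|f_-(k_2,k_3)|\le2|k_2||k|$, every piece in the first bracket is $\lesssim\rho^{5/2}|k|^{3/2}$. For this we also use $|k_1|\le\rho$ and the ratios $\omega/\omega_3\sim1$ from \eqref{eq: kk3 1/4}.

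\emph{The second bracket.} The analogous bracket involving $k-k_2$ is handled identically. One writes $\omega_{k-k_2}^2-(\omega-\omega_2)^2=2\omega\omega_2+\epsilon'$ with $\epsilon'=|k-k_2|-|k|+|k_2|\ge0$ and $\epsilon'\lesssim|k_2|(1+\beta)$. The denominator is then $\ge2\omega\omega_2$, and the factor $f_+(k,k_2)=|k||k_2|(\beta+1)$ absorbs $\epsilon'^2/\omega_2^3$.

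\textbf{Conclusion.} Collecting all pieces gives
\[
|\tilde T_{kk_1,3}^{k_2k_3}|\lesssim (|k_1||k_2|)^{-1/4}\rho^{5/2}|k| .
\]
To obtain $\rho^2|k|$ we must still gain $(|k_1||k_2|)^{1/4}$. In each term, one factor $\omega_j$ or $|k_j|^{1/2}$ appears paired with each of $k_1$ and $k_2$, and this pairing supplies the gain; we would check it term by term, which is routine. Symmetrizing yields $C_6$.
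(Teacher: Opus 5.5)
Your overall route is the paper's: Lemma~\ref{lem: T6} is proved there ``analogously to Lemma~\ref{lem: T3}'', i.e.\ by term-by-term bounds, with the two resonant brackets controlled by a second-order expansion of their denominators. Most of your estimates are right. Your bound $|f_-(k,k_3)|=\tfrac12\big(|k-k_3|^2-(|k|-|k_3|)^2\big)\le\tfrac12|k_1-k_2|^2$ for the term $f_-(k_1,k_2)f_-(k,k_3)$ is genuinely needed, and it does not appear in the paper's sketch.

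However, the second bracket is not ``handled identically'', and the step you give for it is false. In fact
\[
\omega_{k-k_2}^2-(\omega-\omega_2)^2=2\omega\omega_2+\epsilon',\qquad \epsilon'=|k-k_2|-|k|-|k_2|\le 0,
\]
not $\epsilon'=|k-k_2|-|k|+|k_2|\ge 0$.
\begin{itemize}
\item So the denominator is at most $2\omega\omega_2$, and it equals $2\omega_2(\omega-\omega_2)$ when $k_2$ is parallel to $k$. The triangle-inequality sign that gave $1-\epsilon/(2\omega_1\omega)\ge1$ in the first bracket now pushes the denominator toward zero.
\item The repair needs scale separation. On the support, $\epsilon'\ge-2|k_2|$ and $|k_2|\le|k|/18$ by \eqref{eq: k12 and k3} and \eqref{eq: range k}. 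Hence $u'=\epsilon'/(2\omega\omega_2)\in[-\omega_2/\omega,0]$ and $1+u'\ge 1-18^{-1/2}$.
\item Then $1/(1+u')=1-u'+u'^2/(1+u')$, together with $|\epsilon'|\lesssim|k_2|(1+\beta)$ (correct for this $\epsilon'$), lets the rest of your argument go through.
\end{itemize}
Unlike the first bracket, this one genuinely uses $\omega_2/\omega\ll 1$, as the paper's Lemma~\ref{lem: asymptotic denominator} does throughout. You are also implicitly, and rightly, taking the subtracted constant in this bracket to be $+\frac{8\omega\omega_2}{2\omega\omega_2}$. The display \eqref{eq: tT3 k1kk2k3} has a minus sign there, which is inconsistent with \eqref{eq: tT2 kk1k2k3 2}.

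Smaller slips that do not affect the conclusion:
\begin{itemize}
\item In the $(\omega-\omega_3)$-fraction the quotient is $\lesssim|\omega-\omega_3|/\omega\lesssim\rho/|k|$, not $\rho/|k|^{3/2}$.
\item In the first bracket, the combined first-order piece is $-(\epsilon+|k_1|-k_1\cdot\hat k)/\omega_1^2$, hence at most $|k_1|(1-\alpha^2)/|k|$; it is not divided by $\omega_1\omega$.
\item Your list for that bracket omits the contribution $-4\big(u+u^2/(1-u)\big)=O(\omega_1/\omega)$ of the $8\omega_1\omega$ part of the numerator, where $u=\epsilon/(2\omega_1\omega)$.
\end{itemize}

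Finally, as you acknowledge, the crude per-term bound $\rho^{5/2}|k|^{3/2}$ does not survive the prefactor when $|k_1|\ll|k_2|$. So the closing ``pairing'' step must be written out, not deferred. It does hold, because every term carries one of the following:
\begin{itemize}
\item a factor $\lesssim|k_1||k_2|$ from its $f_\pm$'s: both brackets, $f_-(k_1,k_2)f_-(k,k_3)$, the $(\omega-\omega_3)$-fraction, and the $f_+(k_1,k_2)$ half of the $(\omega-\omega_3)^2$ term;
\item the factor $\omega_1\omega_2$: the other half of the $(\omega-\omega_3)^2$ term;
\item explicit powers $|k_1|^{a}|k_2|^{b}$ with $a,b\ge\frac12$: the $\omega_1^2$ and $\omega_2^2$ terms.
\end{itemize}
Hence each term in the braces is $\lesssim(|k_1||k_2|)^{1/2}\rho^{3/2}|k|^{3/2}$. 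The prefactor $\sim(|k_1||k_2|)^{-1/4}|k|^{-1/2}$ then gives $\lesssim(|k_1||k_2|)^{1/4}\rho^{3/2}|k|\le\rho^2|k|$, as required.
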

Since $T_{k k_1,3}^{k_2k_3}$ consists of lower-order terms, we defer the proof of Lemma~\ref{lem: T6} to Appendix~\ref{app: Cauchy}.

\begin{proof}[Proof of Proposition~\ref{prop: kernel}]
By assembling the bounds established in Lemmas~\ref{lem: T1} through~\ref{lem: T6}, and recalling the full kernel decomposition in Eq.~\eqref{eq: def T}, we can now evaluate the complete interaction kernel. For formal consistency, we identify the third components entirely as remainder terms by defining $R_{k_1k,3}^{k_2k_3} \coloneqq T_{k_1k,3}^{k_2k_3}$ and $R_{kk_1,3}^{k_2k_3} \coloneqq T_{kk_1,3}^{k_2k_3}$.

Crucially, the principal singular parts extracted in the preceding lemmas perfectly annihilate each other:
\begin{equation}
    M_{k_1k,1}^{k_2k_3} + M_{k_1k,2}^{k_2k_3} = 0 \quad \text{and} \quad M_{kk_1,1}^{k_2k_3} + M_{kk_1,2}^{k_2k_3} = 0.
\end{equation}
Exploiting this exact algebraic cancellation, the total kernel reduces entirely to a sum of bounded remainders. Applying the triangle inequality, we obtain
\begin{equation}
\begin{aligned}
    |T_{k_1k}^{k_2k_3}| &\leq \frac{1}{2}\left|\sum_{j=1}^3 \big(T_{k_1k,j}^{k_2k_3} + T_{kk_1,j}^{k_2k_3}\big)\right| \\
    &= \frac{1}{2}\left|\sum_{j=1}^3 \big(R_{k_1k,j}^{k_2k_3} + R_{kk_1,j}^{k_2k_3}\big)\right| \\
    &\leq \frac{1}{2} \Bigg(\sum_{j=1}^6 C_j\Bigg) (|k_1|^2+|k_2|^2)|k|.
\end{aligned}
\end{equation}
Squaring this expression yields Eq.~\eqref{eq: est kernel'} with the aggregate constant defined as $\tilde{C}_Q = \frac{1}{4}\big(\sum_{j=1}^6 C_j\big)^2$. Consequently, we directly obtain the final desired bound \eqref{eq: est kernel} for some strictly positive constant $C_Q$. This completes the proof of the proposition.
\end{proof}

\section{Dissipativity and Operator Bounds}\label{sec: A-Est}

For $g \in \Bc$, $p \in [1,\infty]$ and $\epsilon\in [0,1]$, let $\mathcal{Q}_{g,D,\epsilon}(t) \colon L^p_{2} \to L^p$ be the operator defined by
\begin{equation}\label{eq: QDg}
\begin{aligned} \mathcal{Q}_{g,D,\epsilon}(t) h &= 2\iiint_{\mathbb{R}^{3d}} e^{-\epsilon(|k|+|k_3|)}T_{k,k_1,k_2,k_3}\delta(\Sigma)\delta(\Omega) \chi_{2<3}\chi_{1<0}g_1(t)g_2(t) (h_3-h)\,dk_1\,dk_2\, dk_3,
\end{aligned}
\end{equation}
and let $\Qc_{g,b}(t) \colon L^p \to L^p$ be defined by
\begin{equation}\label{eq: Qbg}
\begin{aligned}
    \mathcal{Q}_{g,b}(t) h &= \iiint_{\mathbb{R}^{3d}} T_{k,k_1,k_2,k_3}\delta(\Sigma)\delta(\Omega) \Big[ 2\chi_{2<3}\chi_{1\geq 0}g_1(t)g_2(t) (h_3-h) \\
    &\qquad\qquad + g_2(t)g_3(t)h - 2\chi_{2\geq 3}g_1(t)g_2(t)h \Big] \,dk_1\,dk_2\, dk_3.
\end{aligned}
\end{equation}
\begin{lem}\label{lem: bd QgD} 
Let $g\in \Bc$. Then, for any $1\leq p\leq \infty$, the operator $\Qc_{g,D}(t)$ is bounded from $L^p_2$ to $L^p$.
\end{lem}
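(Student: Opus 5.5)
I would split $\Qc_{g,D}(t)h = \mathcal{A}h - \mathcal{M}h$. The gain part is
\[
\mathcal{A}h(k)=2\iiint T\,\delta(\Sigma)\delta(\Omega)\chi_{2<3}\chi_{1<0}\,g_1g_2\,h_3\,dk_1dk_2dk_3 ,
\]
and the loss part is the multiplication operator $\mathcal{M}h(k)=m(k)h(k)$, where
\[
m(k)=2\iiint T\,\delta(\Sigma)\delta(\Omega)\chi_{2<3}\chi_{1<0}\,g_1g_2\,dk_1dk_2dk_3 .
\]
The first case division is by scale. On the support of $\chi_{2<3}\chi_{1<0}$ we have $|k_1|<|k|$ and $|k_2|<|k_3|$, so either the configuration is non-local ($\varphi_{1<3}\varphi_{2<3}$ active, with $|k|\sim|k_3|\gg|k_1|,|k_2|$), or all four wavenumbers are comparable up to a fixed factor. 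In every case Proposition~\ref{prop: kernel} gives
\[
T\lesssim (|k_1|^2+|k_2|^2)^2(|k|^2+|k_3|^2)\lesssim \langle k_1\rangle^4\langle k_2\rangle^4\langle k\rangle^2 .
\]
Here I use that $\min_{j<l}(|k_j|^2+|k_l|^2)\le |k_1|^2+|k_2|^2$ and that $\sum_j|k_j|^2\lesssim |k|^2+|k_3|^2\lesssim\langle k\rangle^2$. The last step holds on the resonant manifold: $|k_3|\le |k|+|k_1|+|k_2|\lesssim |k|+|k_2|$, and in the comparable regime $|k_2|<|k_3|\lesssim |k|$. The loss of two powers of $\langle k\rangle$ is exactly what the domain $L^p_2$ absorbs. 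The polynomial factors in $k_1,k_2$ are absorbed by $g\in\Bc$, since $\langle k\rangle^{12+4d}g\in L^\infty$.

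Next I would integrate out the delta functions. Use $\delta(\Sigma)$ to eliminate $k_3=k+k_1-k_2$. Then handle $\delta(\Omega)$ by polar coordinates in $k_2$ (or $k_1$): for fixed $k,k_1$ and direction $\hat k_2$, the map $|k_2|\mapsto \Omega$ has derivative
\[
-\tfrac12|k_2|^{-1/2}-\tfrac12|k_3|^{-1/2}\,\hat k_3\cdot\hat k_2 ,
\]
whose size is bounded below by $\tfrac14|k_2|^{-1/2}$ when $|k_2|<|k_3|$ up to angular factors. The coarea Jacobian is then bounded by $\lesssim |k_2|^{1/2}$, so at most polynomial growth in the small variables appears and no growth in $|k|$. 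This matches the paper's remark that integrating out the resonant manifold gives no decay but also no growth in $|k|$. Combined with the weights on $g_1g_2$ (decay $\langle k_1\rangle^{-12-4d}\langle k_2\rangle^{-12-4d}$, integrable in $d$ dimensions with plenty of room), this yields
\[
\|m\langle k\rangle^{-2}\|_{L^\infty}\lesssim \|g\|_{\infty,12+4d}^2 ,
\]
so $\mathcal{M}:L^p_2\to L^p$ is bounded for every $p$.

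For the gain term $\mathcal{A}$, I would use Schur's test with the kernel $K(k,k_3)$ obtained after integrating out $k_1,k_2$. Concretely, I would bound both
\[
\sup_k \int K(k,k_3)\langle k_3\rangle^{-2}\,dk_3 \quad\text{and}\quad \sup_{k_3}\int K(k,k_3)\langle k_3\rangle^{-2}\,dk .
\]
Because $K$ contains $g_1g_2$, both $|k-k_3|=|k_1-k_2|$ and the pair $(k_1,k_2)$ are localized. So these reduce to the same estimate as for $m$, using $\langle k\rangle\sim\langle k_3\rangle$ modulo $\langle k_1\rangle\langle k_2\rangle$ factors. The sup estimates give $p=1$ and $p=\infty$, and Riesz--Thorin (or Schur directly) covers intermediate $p$.

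The main obstacle I expect is the $\delta(\Omega)$ Jacobian near degenerate configurations, where $\hat k_3\cdot\hat k_2\approx -(|k_3|/|k_2|)^{1/2}$ is impossible but cancellation in the derivative could occur when $|k_2|\approx|k_3|$. In the comparable-scale regime I would instead parametrize by whichever variable gives a nondegenerate derivative, possibly $|k_1|$ or $|k|$. The remaining singular factor is of the form $|k_j|^{1/2}$ or $(\text{angle})^{-1}$, which is integrable in $d\ge2$ after polar integration. Uniformity in $t$ follows since only $\sup_t\|g(t)\|_{\infty,12+4d}$ enters.
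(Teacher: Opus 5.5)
Your architecture matches the paper's proof. Proposition~\ref{prop: kernel} gives $T\lesssim\langle k_1\rangle^4\langle k_2\rangle^4\langle k\rangle^2$ on the support of $\delta(\Sigma)\delta(\Omega)\chi_{2<3}\chi_{1<0}$. The $L^1_2\to L^1$ and $L^\infty_2\to L^\infty$ bounds are column and row sums; the paper gets them by integrating in $k$ and swapping $(k,k_1)\leftrightarrow(k_3,k_2)$, and by duality, without your gain/loss split. Riesz--Thorin then interpolates. So everything rests on the uniform estimate $\sup_k\iiint\delta(\Sigma)\delta(\Omega)\langle k_1\rangle^4\langle k_2\rangle^4 g_1g_2\,dk_1\,dk_2\,dk_3<\infty$, which the paper takes from Lemma~\ref{lem: Rep1}, \eqref{eq:general_integral_reduced}. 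You try to derive this directly, and that derivation has a real hole. A minor point first: $|k|\sim|k_3|$ on the support follows most cleanly from energy conservation, $\omega_3<\omega+\omega_1<2\omega$ and $\omega<\omega_2+\omega_3<2\omega_3$. Your scale split assumes this in the comparable case.

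With $k,k_1$ fixed and $k_3=k+k_1-k_2$, one has $\partial_{|k_2|}\Omega=\hat k_2\cdot\big(v(k_3)-v(k_2)\big)$ with $v=\nabla\omega$; your second term has the wrong sign. Hence only $|\partial_{|k_2|}\Omega|\ge\frac12\big(|k_2|^{-1/2}-|k_3|^{-1/2}\big)$ holds, and the Jacobian bound $\lesssim|k_2|^{1/2}$ is valid only for $|k_2|\le|k_3|/4$.

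For this fibration the degeneration is genuine. The configuration $k_2=k_3=(k+k_1)/2$ is resonant whenever $(\omega+\omega_1)^2=2|k+k_1|$, for example $|k_1|=0.2|k|$ at a suitable angle. The support accumulates at such points, and there $\nabla_{k_2}\Omega=0$.

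Your remedy, switching to a variable whose derivative is nondegenerate, cannot work uniformly. The support also accumulates at $k=k_1=k_2=k_3$ (forward scattering $k_1=k_2\to k$), where the full gradient $\big(v(k_1)-v(k_3),\,v(k_3)-v(k_2)\big)$ in all integration variables vanishes. No coordinate choice is nondegenerate there, and the integrable ``$|k_j|^{1/2}$ or $(\text{angle})^{-1}$'' factor you claim is asserted, not derived.

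What is missing is a quantitative lower bound on the gradient. The paper fixes $(k,k_3)$, eliminates $k_1=k_2+k_3-k$, and applies Lemma~\ref{lem:group_velocity_lower_bound}: $|\nabla_{k_2}\Delta\omega|=|v(k_1)-v(k_2)|\ge |k_1-k_2|/\big(2\omega_1\omega_2(\omega_1+\omega_2)\big)$. Here $|k_1-k_2|=|k-k_3|$ is constant on each fibre, so the degeneracy becomes the factor $|k-k_3|^{-1}$, which is locally integrable in $k_3$ for $d\ge 2$. The growth $\omega_1\omega_2(\omega_1+\omega_2)$ and the large-$|k-k_3|$ tail (note $|k-k_3|\le|k_1|+|k_2|$) are absorbed by $g_1g_2$. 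A cone decomposition together with the root count of Lemma~\ref{lem: Zvt} controls the fibre measure. Citing \eqref{eq:general_integral_reduced}, or reproducing this argument, closes your proof.
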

We defer the proof of Lemma~\ref{lem: bd QgD} until after Lemma~\ref{lem: Rep1}.
\begin{lem}\label{lem: Qf and Qg} For all $f\in \Bc$, $\Qc_f(t)f(t)=\Qc[f(t)]$ holds true.
\end{lem}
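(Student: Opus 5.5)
The plan is to verify the identity pointwise in $k$ at the level of the integrands, using only the symmetry of the measure $T_{k,k_1,k_2,k_3}\delta(\Sigma)\delta(\Omega)\,dk_1\,dk_2\,dk_3$ under the exchange $k_2\leftrightarrow k_3$. Writing $d\mu_k$ for this measure, I will use that both $\Sigma$ and $\Omega$ are invariant under $k_2\leftrightarrow k_3$, and that $T_{k,k_1,k_2,k_3}=T_{k,k_1,k_3,k_2}$. Consequently $\int F(k_2,k_3)\,d\mu_k=\int F(k_3,k_2)\,d\mu_k$ for any integrable $F$.

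\textbf{Step 1: absolute integrability.} Before any rearrangement, I will check that each of the four monomials $f_1f_2f_3$, $ff_2f_3$, $ff_1f_2$ and $ff_1f_3$ is absolutely integrable against $d\mu_k$ for a.e.\ $k$, and indeed in $L^1_k$. For this I will use the kernel bound \eqref{eq: est kernel} of Proposition~\ref{prop: kernel}, which gives polynomial growth of degree $6$. I will also integrate out $\delta(\Sigma)$ by eliminating $k_3=k+k_1-k_2$. The remaining $\delta(\Omega)$ restricts the integration to a hypersurface in $k_2$ (for fixed $k,k_1$), whose surface measure is controlled by $|\nabla_{k_2}\Omega|^{-1}$. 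The weights $\langle k\rangle^{12+4d}$ in $L^\infty$ and $\langle k\rangle^{22+5d}$ in $L^2$ that define $\Bc$ absorb both this polynomial growth and the Jacobian. This is the same type of estimate that underlies Lemma~\ref{lem: bd QgD}, so I will either borrow it or reprove it by the same argument. With integrability in hand, splitting and swapping the integrals is legitimate.

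\textbf{Step 2: the loss terms.} By the $2\leftrightarrow3$ symmetry, $\int ff_1f_3\,d\mu_k=\int ff_1f_2\,d\mu_k$. Hence
\[
-\int ff_1(f_2+f_3)\,d\mu_k=-2\int ff_1f_2\,d\mu_k=-2\int(\chi_{2<3}+\chi_{2\geq3})ff_1f_2\,d\mu_k .
\]
This reproduces exactly the terms $-2\chi_{2<3}g_1g_2h$ and $-2\chi_{2\geq3}g_1g_2h$ in \eqref{Qfn} when $g=h=f$. The term $ff_2f_3$ appears identically in both $\Qc[f]$ and $\Qc_f f$, so it needs no treatment.

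\textbf{Step 3: the cubic gain term.} It remains to show that $\int f_1f_2f_3\,d\mu_k=2\int\chi_{2<3}f_1f_2f_3\,d\mu_k$. The integrand is symmetric under $2\leftrightarrow3$, so the swap gives
\[
\int\chi_{2\geq3}f_1f_2f_3\,d\mu_k=\int\chi_{3\geq2}f_1f_2f_3\,d\mu_k=\int\chi_{2<3}f_1f_2f_3\,d\mu_k+\int\mathbf 1_{\{|k_2|=|k_3|\}}f_1f_2f_3\,d\mu_k .
\]
The identity therefore reduces to showing that the set $\{|k_2|=|k_3|\}$ is $\mu_k$-null.

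This is the main (mild) obstacle. After eliminating $k_3$, the measure $\mu_k$ is absolutely continuous with respect to surface measure on the hypersurface $\{\Omega=0\}$ in $(k_1,k_2)$-space, which has dimension $2d-1$. On this hypersurface, the condition $|k_2|=|k+k_1-k_2|$ is a single additional independent equation. Indeed, it forces $\omega_2=\omega_3$, hence $\omega+\omega_1=2\omega_2$, and its gradient in $k_1$ is not parallel to that of $\Omega$ away from a lower-dimensional set. So it cuts out a codimension-one subset, which carries zero surface measure. I will make the transversality precise either by a direct gradient computation or by slicing in the radial variable $|k_1|$. Together with Step 1, this completes the proof that $\Qc_f(t)f(t)=\Qc[f(t)]$.
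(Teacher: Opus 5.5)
Your proposal is correct and is essentially the paper's argument. You insert $\chi_{2<3}+\chi_{2\geq 3}$, use the $k_2\leftrightarrow k_3$ symmetry of $T_{k,k_1,k_2,k_3}\delta(\Sigma)\delta(\Omega)$, and discard $\{|k_2|=|k_3|\}$ as a null set for the resonant measure; as you observe, this is needed only for the cubic gain term.

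The paper simply asserts that null-set property and the convergence of the integrals, so your Step 1 and transversality sketch add rigor rather than a different route. For the transversality, use the full $(k_1,k_2)$-gradients, or note that for fixed $k_1\neq -k$ the two conditions confine $k_2$ to the sphere $|k_2|=(\omega+\omega_1)^2/4$ intersected with the hyperplane $(k+k_1)\cdot k_2=|k+k_1|^2/2$.
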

\begin{proof} Based on Eq.~\eqref{Qf}, we rewrite $\Qc[f(t)]$:
    \begin{equation}
        \begin{aligned}
            \Qc[f(t)]=& \iiint_{\mathbb{R}^{3d}} T_{k,k_1,k_2,k_3} \delta(\Sigma)\delta(\Omega) (\chi_{2<3}+\chi_{2\geq 3})\big[f_2 f_3 f_1 - f f_1 (f_2 + f_3)\big]\,dk_1\,dk_2\, dk_3\\
            &+\iiint_{\mathbb{R}^{3d}} T_{k,k_1,k_2,k_3} \delta(\Sigma)\delta(\Omega) f_2 f_3 f\,dk_1\,dk_2\, dk_3.
        \end{aligned}
    \end{equation}
Using the decomposition $\chi_{3 \geq 2} = \chi_{2 < 3} + \chi(|k_2| = |k_3|)$, we note that the resonant manifold intersecting with the diagonal set $\{|k_2| = |k_3|\}$ has measure zero. Consequently, the corresponding integral vanishes for any admissible function $f$:
\begin{equation}
    \iiint_{\mathbb{R}^{3d}} T_{k,k_1,k_2,k_3}\delta(\Sigma)\delta(\Omega)\chi(|k_2|=|k_3|)\big[f_2 f_3 f_1 - f f_1 (f_2 + f_3)\big]\,dk_1\,dk_2\,dk_3 = 0.
\end{equation}
Leveraging this vanishing property, along with the intrinsic symmetry of the collision kernel $T_{k,k_1,k_2,k_3}$ with respect to the variables $k_2$ and $k_3$, we can consolidate the collision operator into the following form:
\begin{equation}
\begin{split}
    \Qc[f(t)] &= 2\iiint_{\mathbb{R}^{3d}} T_{k,k_1,k_2,k_3} \delta(\Sigma)\delta(\Omega) \left( \chi_{2<3}\big(f_2 f_3 f_1 - f f_1 f_2 \big) - 2\chi_{2\geq 3}f_1 f_2 f \right)\,dk_1\,dk_2\,dk_3 \\
    &\quad + \iiint_{\mathbb{R}^{3d}} T_{k,k_1,k_2,k_3} \delta(\Sigma)\delta(\Omega) f_2 f_3 f \,dk_1\,dk_2\,dk_3,
\end{split}
\end{equation}
which yields~$\Qc_f(t)f(t)=\Qc[f(t)]$.
\end{proof}
\begin{lem}\label{lem: QgD diss}
$\Qc_{g,D}(t)$ is dissipative for all $1\leq p<\infty$.
\end{lem}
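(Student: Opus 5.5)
We prove the dissipativity of $\Qc_{g,D}(t)$ on $L^p$, $1\le p<\infty$, by first proving it for the regularized operators $\Qc_{g,D,\epsilon}(t)$ with $\epsilon\in(0,1]$, uniformly in $\epsilon$, and then passing to the strong limit $\epsilon\downarrow0$ on the domain $L^p_2$. In a Banach space, dissipativity of $A$ is equivalent to the existence, for each $h\in D(A)$, of some $h^*\in J(h)$ in the duality set with $\mathrm{Re}\langle Ah,h^*\rangle\le 0$. For $1<p<\infty$ (real-valued functions) we take $h^*=|h|^{p-2}h\,\|h\|_p^{2-p}$; for $p=1$ we take $h^*=\mathrm{sgn}(h)\|h\|_1$. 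It therefore suffices to show
\begin{equation*}
\int_{\mathbb R^d} \big(\Qc_{g,D,\epsilon}(t)h\big)(k)\,|h(k)|^{p-2}h(k)\,dk\le 0 .
\end{equation*}

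Write $K_\epsilon(k,k_3):=2\iint e^{-\epsilon(|k|+|k_3|)}T_{k,k_1,k_2,k_3}\delta(\Sigma)\delta(\Omega)\chi_{2<3}\chi_{1<0}g_1g_2\,dk_1dk_2\ge0$, using $g\ge0$ and $T\ge0$. Then $\Qc_{g,D,\epsilon}h(k)=\int K_\epsilon(k,k_3)(h_3-h)\,dk_3$. The pairing becomes
\begin{equation*}
\iint K_\epsilon(k,k_3)\big(h_3-h\big)|h|^{p-2}h\,dk\,dk_3 .
\end{equation*}
By Young's inequality, $h_3|h|^{p-2}h\le \frac1p|h_3|^p+\frac{p-1}{p}|h|^p$ (for $p=1$ simply $h_3\,\mathrm{sgn}h\le|h_3|$). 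The integrand is therefore bounded by $\frac1p K_\epsilon(k,k_3)\big(|h_3|^p-|h|^p\big)$. It then remains to show
\begin{equation*}
\iint K_\epsilon(k,k_3)\big(|h(k_3)|^p-|h(k)|^p\big)\,dk\,dk_3\le 0,
\end{equation*}
that is, $\int|h(k)|^p\big(\int K_\epsilon(k_3,k)dk_3-\int K_\epsilon(k,k_3)dk_3\big)dk\le0$ after relabeling.

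This column-minus-row comparison is the main obstacle. The kernel is not symmetric because of the cutoffs $\chi_{2<3}\chi_{1<0}$ and the weights $g_1g_2$. Under the exchange $k\leftrightarrow k_3$, however, the resonant manifold $k+k_1=k_2+k_3$, $\omega+\omega_1=\omega_2+\omega_3$ maps to itself if we also swap $k_1\leftrightarrow k_2$: then $k_3+k_2=k_1+k$. The kernel $T$ is invariant under this combined swap by the scattering symmetries, the product $g_1g_2$ is invariant, and the exponential factor is symmetric. The cutoff $\chi_{2<3}\chi_{1<0}$ becomes $\chi_{1<0}\chi_{2<3}$, which is the same. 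Hence $K_\epsilon(k,k_3)=K_\epsilon(k_3,k)$ exactly, so the difference vanishes and the pairing is $\le0$. I would check this symmetry carefully, since everything hinges on it. Integrability is justified by Proposition~\ref{prop: kernel}, the factor $e^{-\epsilon(\cdot)}$ (which makes $\int K_\epsilon(k,k_3)dk_3$ bounded, so Fubini applies), and $g\in\Bc$.

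Finally, for $h\in L^p_2$, Lemma~\ref{lem: bd QgD} together with dominated convergence gives $\Qc_{g,D,\epsilon}h\to\Qc_{g,D}h$ in $L^p$. The inequality $\|(\lambda-\Qc_{g,D,\epsilon})h\|_p\ge\lambda\|h\|_p$ then passes to the limit, which yields the dissipativity of $\Qc_{g,D}(t)$.
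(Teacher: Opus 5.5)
Your proposal is correct and rests on the same key fact as the paper's proof: the effective kernel $K(k,k_3)$ is invariant under the swap $(k,k_1)\leftrightarrow(k_3,k_2)$, because $T$, the product $g_1g_2$, and the cutoff $\chi_{2<3}\chi_{1<0}$ are all preserved by it, so pairing $\Qc_{g,D}h$ with the duality element $|h|^{p-2}h$ gives a nonpositive sign. The differences are minor technical variants: the paper symmetrizes the bilinear form directly, obtaining $-\iiiint T\,\delta(\Sigma)\delta(\Omega)\chi_{2<3}\chi_{1<0}g_1g_2\,(h^{p-1}-h_3^{p-1})(h-h_3)\le 0$ by monotonicity, and it works on $L^p_2$ without regularizing, whereas you reduce via Young's inequality to $\iint K_\epsilon\,(|h_3|^p-|h|^p)=0$ and pass through $\Qc_{g,D,\epsilon}$ to justify Fubini (and your choice of $\mathrm{sgn}(h)$ at $p=1$ handles signed $h$ more carefully than the paper's $h^{p-1}$).
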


\begin{proof}
For each $h \in L^p_2$ with $\|h\|_{p}=1$, by Lemma~\ref{lem: bd QgD}, $(h^{p-1},\Qc_{g,D}(t)h)_{L^2}$ is well-defined. Taking the $L^2$ inner product with $h^{p-1}$, exchanging the integration variables $(k_3,k_2) \leftrightarrow (k,k_1)$, and averaging the two expressions yields 
\begin{equation}
\begin{aligned}
    \big(h^{p-1}, \Qc_{g,D}(t)h\big)_{L^2} &= 2\iiiint_{\mathbb{R}^{4d}} T_{k,k_1,k_2,k_3}\delta(\Sigma)\delta(\Omega)\chi_{2<3}\chi_{1<0}g_1(t)g_2(t) h^{p-1}(h_3-h) \,dk_1\,dk_2\,dk_3\,dk \\
    &= 2\iiiint_{\mathbb{R}^{4d}} T_{k,k_1,k_2,k_3}\delta(\Sigma)\delta(\Omega)\chi_{2<3}\chi_{1<0}g_1(t)g_2(t) h_3^{p-1}(h-h_3) \,dk_1\,dk_2\,dk_3\,dk \\
    &= -\iiiint_{\mathbb{R}^{4d}} T_{k,k_1,k_2,k_3}\delta(\Sigma)\delta(\Omega)\chi_{2<3}\chi_{1<0}g_1(t)g_2(t) (h^{p-1}-h_3^{p-1})(h-h_3) \,dk_1\,dk_2\,dk_3\,dk \\
    &\leq 0,
\end{aligned}
\end{equation}
where the non-positivity follows from the monotonicity of $x \mapsto x^{p-1}$. This yields 
\begin{equation}
    \begin{aligned}
        \big\| (\lambda-\Qc_{g,D}(t))h \big\|_{p} &= \sup_{\| g\|_{{p'}}=1} \big| \big(g, (\lambda-\Qc_{g,D}(t))h \big)_{L^2} \big| \\
        &\geq \big(h^{p-1}, (\lambda-\Qc_{g,D}(t))h \big)_{L^2} \\
        &\geq \lambda = \lambda \|h\|_{p},
    \end{aligned}
\end{equation}
where $p'$ denotes the conjugate exponent of $p$ (i.e., $\frac{1}{p'}+\frac{1}{p}=1$), and we have used the fact that 
\begin{equation}
   \| h^{p-1}\|_{{p'}} = \left(\int_{\mathbb{R}^d} |h|^{(p-1)p'}\,dk\right)^{\frac{1}{p'}} = \left(\int_{\mathbb{R}^d} |h|^p\,dk\right)^{\frac{1}{p'}} = 1.
\end{equation}  
\end{proof}

\begin{lem}\label{lem: Rep1}
Define the multidimensional collision integral as:
\begin{equation}
    \tilde{\Qc}_{F1}(k) \coloneqq \iiint_{\mathbb{R}^{3d}} \delta(\Sigma)\delta(\Omega)F(k,k_1,k_2,k_3) \,dk_1 \,dk_2 \,dk_3.
\end{equation}
Then, there exists a dimensional constant $C_d > 0$ such that the following anisotropic bounds hold:
\begin{equation}\label{eq: Qf03}
    |\tilde{\Qc}_{F1}(k)| \leq C_d\int_{\mathbb{R}^d} \frac{1}{|k-k_3|}\big\|\langle k_1\rangle^{d/2+1}\langle k_2\rangle^{d/2+1} F(k,k_1,k_2,k_3)\big\|_{L^\infty_{k_1,k_2}(\mathbb{R}^{2d})}\,dk_3,
\end{equation}
\begin{equation}\label{eq: Qf02}
    |\tilde{\Qc}_{F1}(k)| \leq C_d\int_{\mathbb{R}^d} \frac{1}{|k-k_2|}\big\|\langle k_1\rangle^{d/2+1}\langle k_3\rangle^{d/2+1} F(k,k_1,k_2,k_3)\big\|_{L^\infty_{k_1,k_3}(\mathbb{R}^{2d})}\,dk_2,
\end{equation}
\begin{equation}\label{eq: Qf01}
    |\tilde{\Qc}_{F1}(k)| \leq C_d\int_{\mathbb{R}^d} \frac{1}{|k+k_1|}\big\|\langle k_2\rangle^{d/2+1}\langle k_3\rangle^{d/2+1} F(k,k_1,k_2,k_3)\big\|_{L^\infty_{k_2,k_3}(\mathbb{R}^{2d})}\,dk_1.
\end{equation}
Furthermore, we have the isotropic uniform bound:
\begin{equation}\label{eq:general_integral_reduced}
    |\tilde{\Qc}_{F1}(k)| \leq C_d \big\|\langle k_1\rangle^{d+2}\langle k_2\rangle^{d+2}F(k,k_1,k_2,k_3)\big\|_{L^\infty(\mathbb{R}^{4d})},
\end{equation}
provided that the quantities on the right-hand sides are finite.
\end{lem}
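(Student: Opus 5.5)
The plan is to reduce each bound to a single \emph{fibre estimate}: with one momentum difference frozen, we integrate over the energy surface. Take \eqref{eq: Qf03} first. Use $\delta(\Sigma)$ to eliminate $k_2=k_1+p$, where $p:=k-k_3$ is fixed once $k$ and $k_3$ are fixed. Then
\[
\tilde\Qc_{F1}(k)=\int_{\mathbb R^d}\Big(\int_{\mathbb R^d}\delta\big(\omega_1-\omega_{k_1+p}-c\big)\,F(k,k_1,k_1+p,k_3)\,dk_1\Big)dk_3,\qquad c:=\omega_3-\omega .
\]
Insert $1=\langle k_1\rangle^{d/2+1}\langle k_2\rangle^{d/2+1}\cdot\langle k_1\rangle^{-d/2-1}\langle k_1+p\rangle^{-d/2-1}$ and pull out the $L^\infty_{k_1,k_2}$ norm. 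It then suffices to prove the fibre estimate
\[
\sup_{c\in\mathbb R}\int_{\mathbb R^d}\delta\big(\sqrt{|k_1|}-\sqrt{|k_1+p|}-c\big)\,\langle k_1\rangle^{-d/2-1}\langle k_1+p\rangle^{-d/2-1}\,dk_1\le \frac{C_d}{|p|}.
\]

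To prove the fibre estimate, pass to bipolar (elliptic) coordinates with foci $0$ and $-p$. Set $r_1=|k_1|$, $r_2=|k_1+p|$, and take an angular variable $\sigma\in S^{d-2}$ around the axis $\mathbb R p$. In these coordinates
\[
dk_1=\frac{r_1r_2}{|p|\,\rho}\,\rho^{d-2}\,dr_1\,dr_2\,d\sigma ,
\]
where $\rho\le\min(r_1,r_2)$ is the distance from $k_1$ to the axis; this is exactly where the factor $1/|p|$ comes from. The delta now depends only on $(r_1,r_2)$. Integrating it out in $r_2$ costs a Jacobian $2\sqrt{r_2}$, which leaves
\[
\frac{C}{|p|}\int r_1\,r_2^{3/2}\rho^{d-3}\,\langle r_1\rangle^{-d/2-1}\langle r_2\rangle^{-d/2-1}\,dr_1\,d\sigma .
\]
On the constraint set we have $|r_1-r_2|\le|p|$ and $\sqrt{r_2}\le\sqrt{r_1}+|c|$. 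Together with $\rho\le\min(r_1,r_2)$ these bounds should make the $r_1$ integral converge uniformly in $c$ and $p$. The power count is $r^{d-1/2}$ in the numerator against $r^{-d-2}$ from the weights.

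The three directional bounds then follow by relabelling. For \eqref{eq: Qf02}, eliminate $k_3$ via $\delta(\Sigma)$ instead, fix $p=k-k_2$, and integrate over $k_1$ on the surface $\omega_1-\omega_{k_1+p}=\omega_2-\omega$. For \eqref{eq: Qf01}, eliminate $k_2$ and fix $q=k+k_1$. The surface $\omega_2+\omega_{q-k_2}=\omega+\omega_1$ is then a bounded "ellipse" with foci $0$ and $q$. The same coordinates give the factor $1/|q|$, now with a positive sum $\sqrt{r_2}+\sqrt{r_3}$ in place of a difference, which is easier.

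For the isotropic bound \eqref{eq:general_integral_reduced}, eliminate $k_3$ and split the weights as $\langle k_1\rangle^{-d-2}\langle k_2\rangle^{-d-2}$. Fix $k_1$ and apply the $1/|q|$ fibre estimate in $k_2$, using only $\langle k_2\rangle^{-d/2-1}$ from the $k_2$-weight and keeping $\langle k_1\rangle^{-d-2}$. The $k_3$-weight that the fibre estimate also requires is not available here, so this fibre must be run with decay in $k_2$ alone; I expect this to work because the constraint $\sqrt{|k_2|}+\sqrt{|k_3|}=\omega+\omega_1$ makes the fibre bounded. This leaves $\int\langle k_1\rangle^{-d-2}|k+k_1|^{-1}dk_1$. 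That integral is bounded uniformly in $k$, since $|k+k_1|^{-1}$ is locally integrable and $\langle k_1\rangle^{-d-2}$ is integrable. This gives \eqref{eq:general_integral_reduced}.

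The main obstacle is the fibre estimate near the axis $\rho\to0$, and especially in $d=2$. There the factor $\rho^{d-3}=\rho^{-1}$ is singular, and the transverse "sphere" $S^0$ is just two points, so the crude Jacobian bound fails. My first attempt would be to use the coarea formula directly and bound $|\nabla_{k_1}(\omega_1-\omega_{k_1+p})|$ from below on the level set. This gradient is $\tfrac12\big(\hat k_1/\sqrt{r_1}-\widehat{k_1+p}/\sqrt{r_2}\big)$, and it degenerates only when $\hat k_1=\widehat{k_1+p}$ and $r_1=r_2$, which is impossible for $p\neq0$. Near the axis the two unit vectors are (anti)parallel, so the gradient has size about $|r_1^{-1/2}-r_2^{-1/2}|$ or $r_1^{-1/2}+r_2^{-1/2}$, which I would compare with $|p|/r^{3/2}$ in the regime $r\gg|p|$. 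If that fails, I would split the surface into $\rho\ge|p|$ and $\rho<|p|$, and on the latter part bound the surface measure directly.
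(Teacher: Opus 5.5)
\textbf{Verdict: genuine gap.} For $d\ge 3$ your route works, but the case $d=2$ and the isotropic bound are not established.

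\textbf{What works.} Your bipolar-coordinate route is genuinely different from the paper's, and for $d\ge 3$ it succeeds. The exact volume element $dk_1=\frac{r_1r_2}{|p|\rho}\rho^{d-2}\,dr_1\,dr_2\,d\sigma$ produces the factor $1/|p|$ without any lower bound on $\nabla\Omega$. Then $\rho^{d-3}\le (r_1r_2)^{(d-3)/2}$ bounds the remaining integrand by $2\langle r_1\rangle^{-3/2}$, uniformly in $c$ and $p$. The paper instead eliminates $k_1$ and applies Lemma~\ref{lem:group_velocity_lower_bound} with $k_1-k_2=k_3-k$ frozen. It then controls the level set with angular cones and at most $8$ roots per line. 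Your computation also gives \eqref{eq: Qf01} for $d\ge 3$. The paper's gradient argument does not transfer to that case, because there $k_2+k_3$ rather than $k_2-k_3$ is frozen.

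\textbf{The gaps.}

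(i) In $d=2$ you leave the factor $\rho^{-1}$ unresolved, and the statement is needed for $d\ge 2$. For the difference fibres in \eqref{eq: Qf03} and \eqref{eq: Qf02} this can be closed. There $v(k_1)-v(k_1+p)$ never vanishes and has the lower bound above, so the coarea formula reduces everything to a length bound on the level curve (the paper's root count). You have not done this.

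(ii) Your claim that the sum fibre in \eqref{eq: Qf01} is ``easier'' is wrong. There $\nabla_{k_2}(\omega_2+\omega_{q-k_2})=v(k_2)-v(q-k_2)$ vanishes at $k_2=q/2$. With $a=|q|/2$, this point is a nondegenerate saddle at level $\sqrt{2|q|}$, with Hessian $-\tfrac12 a^{-3/2}$ along $q$ and $a^{-3/2}$ across it.
\begin{itemize}
\item In your coordinates, the curve $\sqrt{r_2}+\sqrt{r_3}=\sqrt{2|q|}$ is tangent to the line $r_2+r_3=|q|$ at $r_2=r_3=|q|/2$.
\item Hence $\rho$ vanishes only linearly along the curve, and $\int\rho^{-1}\,dr_2$ diverges logarithmically.
\item So the uniform fibre bound is false for $d=2$, and \eqref{eq: Qf01} itself fails there.
\end{itemize}
For a counterexample, let $A_\epsilon$ be the $\epsilon$-neighbourhood of a regular arc of $\{k_1:\omega+\omega_1=\sqrt{2|k+k_1|}\}$ away from $k_1=-k$, and take $F=\chi_{A_\epsilon}(k_1)\langle k_2\rangle^{-2}\langle k_3\rangle^{-2}$. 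The left side is then $\gtrsim\epsilon\log(1/\epsilon)$, while the right side is $\lesssim\epsilon$. The paper's ``analogous derivations'' do not cover this case either.

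(iii) For \eqref{eq:general_integral_reduced}, the one-sided fibre bound you invoke is false. Fix $|k_1|=1$ and let $|k|\to\infty$, so that $\omega+\omega_1\approx\sqrt{|q|}+1$. The fibre is then essentially the unit sphere in $k_2$, with gradient of size about $\tfrac12$. It therefore carries mass $\sim 1$, not $\lesssim |q|^{-1}$. Boundedness of the fibre does not help, since its size grows with $\omega+\omega_1$. The paper avoids the sum fibre altogether. It integrates $k_3$ last, over a difference fibre with no critical points. It then uses $\langle k-k_3\rangle=\langle k_1-k_2\rangle\lesssim\max\{\langle k_1\rangle,\langle k_2\rangle\}$ to gain a factor $\langle k-k_3\rangle^{-d}$, after which $\int |k-k_3|^{-1}\langle k-k_3\rangle^{-d}\,dk_3<\infty$.
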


We defer the proof of Lemma~\ref{lem: Rep1} to the end of this section.
\begin{proof}[Proof of Lemma~\ref{lem: bd QgD}] 
We begin with the case $p=1$. Let $h\in L^1_2$. By Proposition~\ref{prop: kernel}, $\Qc_{g,D}(t)h$ satisfies 
\begin{equation}\label{eq: est Qgd}
    \begin{aligned}
        |\Qc_{g,D}(t)h|\leq & \,4C_Q\iiint_{\mathbb R^{3d}} \delta(\Sigma)\delta(\Omega)\chi_{1<0}\chi_{2<3}\left((|k_1|^2+|k_2|^2)^2g_1(t)g_2(t)\right)\\
        &\qquad\qquad\times \left((|k|^2+|k_3|^2)(|h_3|+|h|)\right) \,dk_1\,dk_2\,dk_3.
    \end{aligned}
\end{equation}
Note that on the support of $\delta(\Sigma)\delta(\Omega)\chi_{1<0}\chi_{2<3}$, we have the pointwise bound
\begin{equation}
    (|k|^2+|k_3|^2)(|h_3|+|h|)\leq C\left(|k_3|^2|h_3|+|k|^2|h|\right)
\end{equation}
for some universal constant $C>0$. Integrating \eqref{eq: est Qgd} over $\mathbb{R}^d$ and exploiting the symmetry of the integral under the exchange of variables, we obtain
\begin{equation}
    \begin{aligned}
        \int_{\mathbb R^d}  |\Qc_{g,D}(t)h|\,dk \leq & \,4C_QC\iiiint_{\mathbb R^{4d}}\delta(\Sigma)\delta(\Omega)\left((|k_1|^2+|k_2|^2)^2g_1(t)g_2(t)\right) |k_3|^2|h_3| \,dk_1\,dk_2\,dk_3\,dk\\
        &+4C_QC\iiiint_{\mathbb R^{4d}}\delta(\Sigma)\delta(\Omega)\left((|k_1|^2+|k_2|^2)^2g_1(t)g_2(t)\right) |k|^2|h| \,dk_1\,dk_2\,dk_3\,dk\\
        =& \,8C_QC\iiiint_{\mathbb R^{4d}}\delta(\Sigma)\delta(\Omega)\left((|k_1|^2+|k_2|^2)^2g_1(t)g_2(t)\right) |k|^2|h| \,dk_1\,dk_2\,dk_3\,dk.
    \end{aligned}
\end{equation}
Applying Lemma~\ref{lem: Rep1} then yields
\begin{equation}
    \int_{\mathbb R^d}  |\Qc_{g,D}(t)h|\,dk \leq 32C_QCC_d \|g(t)\|_{\infty,d+6}^2\|h\|_{1,2},
\end{equation}
which establishes the boundedness for $p=1$. 

Next, for the case $p=\infty$, we take a test function $\varphi\in L^1$ with $\|\varphi\|_1=1$ and estimate the dual pairing $|(\varphi, \Qc_{g,D}(t)h)_{L^2}|$. Following a procedure similar to the $p=1$ case, we deduce the bound
\begin{equation}
     \begin{aligned}
        \int_{\mathbb R^d}  |\varphi \Qc_{g,D}(t)h|\,dk \leq & \,4C_QC\iiiint_{\mathbb R^{4d}}\delta(\Sigma)\delta(\Omega)\left((|k_1|^2+|k_2|^2)^2g_1(t)g_2(t)\right) |k_3|^2|\varphi h_3| \,dk_1\,dk_2\,dk_3\,dk\\
        &+4C_QC\iiiint_{\mathbb R^{4d}}\delta(\Sigma)\delta(\Omega)\left((|k_1|^2+|k_2|^2)^2g_1(t)g_2(t)\right) |k|^2|\varphi h| \,dk_1\,dk_2\,dk_3\,dk.
        \end{aligned}
\end{equation}
Furthermore, on the support of $\delta(\Sigma)\delta(\Omega)\chi_{1<0}\chi_{2<3}$, we can bound the first term using
\begin{equation}
    |k_3|^2|\varphi h_3|\leq \|h\|_{\infty,2} |\varphi|.
\end{equation}
Combining this pointwise bound with Lemma~\ref{lem: Rep1} yields
\begin{equation}
    \int_{\mathbb R^d}  |\varphi\Qc_{g,D}(t)h|\,dk \leq 32C_QCC_d \|g(t)\|_{\infty,d+6}^2\|h\|_{\infty,2}\|\varphi\|_1,
\end{equation}
which confirms the boundedness for $p=\infty$. 

Finally, the result for the general case $1 < p < \infty$ follows immediately by Riesz-Thorin interpolation.
\end{proof}

\begin{lem}\label{lem: QgB bd}
$\Qc_{g,b}(t)$ is a bounded operator on $L^p(\mathbb{R}^d)$ for all $1\leq p\leq \infty$ and $s\geq 0$, satisfying
\begin{equation}
    \sup_{t\in (0,T]} \|\Qc_{g,b}(t)\|_{p,s\to p,s} \leq C_b \sup_{t\in (0,T]} \|g(t)\|_{\infty,{10+2d}}^2
\end{equation}
for some universal constant $C_b>0$.
\end{lem}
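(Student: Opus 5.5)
The plan is to split $\Qc_{g,b}(t)$ into its four constituent pieces and bound each one separately on $L^p_s$:
\begin{align*}
\Qc_{g,b}(t)h &= 2\iiint T\,\delta(\Sigma)\delta(\Omega)\chi_{2<3}\chi_{1\geq 0}g_1g_2\,h_3 \;-\; 2\iiint T\,\delta(\Sigma)\delta(\Omega)\chi_{2<3}\chi_{1\geq 0}g_1g_2\,h \\
&\quad + \iiint T\,\delta(\Sigma)\delta(\Omega)g_2g_3\,h \;-\; 2\iiint T\,\delta(\Sigma)\delta(\Omega)\chi_{2\geq 3}g_1g_2\,h.
\end{align*}
Three of these pieces are multiplication operators $h\mapsto m(k)h(k)$. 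For such a piece the $L^p_s\to L^p_s$ norm is $\|m\|_\infty$ for every $p$ and $s$, since the weight $\langle k\rangle^s$ commutes with multiplication. It therefore suffices to show $\sup_k |m(k)|\lesssim \|g\|_{\infty,10+2d}^2$. The remaining piece, which contains $h_3$, is a genuine integral operator, and I will handle it by Schur's test at $p=1$ and $p=\infty$ and then interpolate with Riesz--Thorin.

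\textbf{Multiplication pieces.} The key observation is that on each support, $|k|$ is no longer the unique large frequency. Two of the integrated variables are multiplied by $g$, so they decay, and the kernel bound from Proposition~\ref{prop: kernel} grows only like the product of a squared minimum pair with the total energy.

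For the term with $\chi_{2<3}\chi_{1\geq 0}$, we have $|k_1|\ge|k|$ and $|k_3|>|k_2|$. Momentum conservation gives $|k_3|\le |k|+|k_1|+|k_2|\le 3\max(|k_1|,|k_2|)$, so every frequency is controlled by $\langle k_1\rangle+\langle k_2\rangle$. Proposition~\ref{prop: kernel} then yields $T\lesssim \langle k_1\rangle^{6}\langle k_2\rangle^{6}$. Applying the isotropic bound \eqref{eq:general_integral_reduced} of Lemma~\ref{lem: Rep1} with $F=Tg_1g_2$ gives $|m(k)|\lesssim \|\langle k_1\rangle^{d+8}\langle k_2\rangle^{d+8}g_1g_2\|_\infty\le\|g\|_{\infty,d+8}^2$.

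The term with $\chi_{2\geq 3}$ is the same argument with the roles of $k_2$ and $k_3$ swapped. Since $|k_2|\ge|k_3|$ and $|k|\le|k_2|+|k_3|+|k_1|$, all frequencies are controlled by $k_1,k_2$.

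The term $g_2g_3h$ requires integrating out $k_1,k_2$ instead. Here $|k|,|k_1|\lesssim$ the largest of the four frequencies, and that largest frequency is again bounded by $|k_2|+|k_3|+\min(|k|,|k_1|)$. There is a subtle case where $|k|$ and $|k_1|$ are both large while $k_2,k_3$ are small. In that case $\min_{j<l}(|k_j|^2+|k_l|^2)\lesssim|k_2|^2+|k_3|^2$, and I would use the anisotropic bound \eqref{eq: Qf02} or \eqref{eq: Qf03} together with $\delta(\Omega)$ to control the large pair. This gives $T\lesssim\langle k_2\rangle^{4}\langle k_3\rangle^{4}\langle k\rangle^{2}$. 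The residual growth $\langle k\rangle^2$ is the delicate point. I expect to absorb it using energy conservation: if $|k|\gg|k_2|,|k_3|$, then $\sqrt{|k|}+\sqrt{|k_1|}=\sqrt{|k_2|}+\sqrt{|k_3|}$ forces $|k|\lesssim |k_2|+|k_3|$. So in fact all frequencies are bounded by $\langle k_2\rangle+\langle k_3\rangle$, and no large pair survives. This gives $T\lesssim\langle k_2\rangle^6\langle k_3\rangle^6$, and relabelling \eqref{eq:general_integral_reduced} yields a bound by $\|g\|_{\infty,d+8}^2$.

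The same energy-conservation remark also covers the two earlier pieces. In fact, it shows more generally that the largest frequency is at most a constant times the second largest, which I will invoke uniformly across all pieces.

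\textbf{Integral piece.} For $h\mapsto 2\iiint T\delta\delta\chi_{2<3}\chi_{1\ge0}g_1g_2h_3$, all frequencies are again $\lesssim\langle k_1\rangle+\langle k_2\rangle$. Consequently $\langle k\rangle^s\lesssim \langle k_3\rangle^s+\langle k_1\rangle^s+\langle k_2\rangle^s\lesssim\langle k_1\rangle^s\langle k_2\rangle^s$, and the weight can be transferred onto $h_3$ at the cost of extra $g$-weights.

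\emph{$L^\infty$ bound.} Bound $|h_3|\le\|h\|_\infty$ and apply \eqref{eq:general_integral_reduced}.

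\emph{$L^1$ bound.} Integrate over $k$ and exchange the roles of $k$ and $k_3$ using the symmetries of $\delta(\Sigma)\delta(\Omega)$ and of $T$, so that the $k$-integral becomes the multiplier integral at the point $k_3$. Then apply \eqref{eq:general_integral_reduced} again.

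Riesz--Thorin then gives all $1\le p\le\infty$.

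\textbf{Main obstacle.} I expect the main obstacle to be the weighted case $s>0$ in this integral piece. The weight $\langle k\rangle^s$ must be moved onto $\langle k_1\rangle,\langle k_2\rangle$, which is where the specific index $10+2d$ enters. I would first check that the statement is really intended with $s$ limited by the available decay of $g$. If $s$ is arbitrary, the transfer $\langle k\rangle^s\lesssim\langle k_3\rangle^s$ is still available, because on the resonant manifold with the constraints above we have $|k|\le 3\max(|k_1|,|k_2|)$ and $|k_3|\ge|k_2|$. Using $\langle k\rangle\lesssim\langle k_3\rangle+\langle k_1\rangle$, I would then rely on boundedness in $k_1$ through $g_1$. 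For large $s$ this consumes $\langle k_1\rangle^s$, so I would track the dependence of the constant on $s$. A sharper alternative is to use the ratio $\langle k\rangle^s/\langle k_3\rangle^s\lesssim(\langle k_1\rangle/\langle k_3\rangle)^s$ only when $|k_3|\ll|k|$, which is the only regime where the weights differ.
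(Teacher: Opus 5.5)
\textbf{The gap: the weighted gain term.} Your treatment of the three multiplication pieces is sound. It matches the paper, which groups them into one multiplier $\nu(k)$ and uses the same fact that all four frequencies are controlled by the two $g$-variables, including the energy-conservation argument for the $g_2g_3$ term. Your $s=0$ argument for the gain piece also works. The problem is the weighted gain piece $h\mapsto 2\iiint T\,\delta(\Sigma)\delta(\Omega)\chi_{2<3}\chi_{1\geq 0}g_1g_2h_3$.

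Your main plan, $\langle k\rangle^s\lesssim\langle k_1\rangle^s\langle k_2\rangle^s$, puts the weight on $g$. It therefore yields $\|g\|_{\infty,d+8+s}^2$, with an $s$-dependent constant. Once $s>d+2$ this is not controlled by $\|g\|_{\infty,10+2d}^2$, yet the lemma claims the bound for every $s\geq 0$ with a universal $C_b$.

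Your fallback asserts $\langle k\rangle^s\lesssim\langle k_3\rangle^s$, but the reason you give does not imply it. The inequalities $|k|\le 3\max(|k_1|,|k_2|)$ and $|k_3|\ge|k_2|$ still allow $|k_1|$ large with $|k_2|,|k_3|$ small. You then go back to spending $\langle k_1\rangle^s$ on $g_1$. Even a correct bound $\langle k\rangle\le C\langle k_3\rangle$ would only give the constant $C^s$.

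\textbf{The missing idea.} You need an exact pointwise inequality from $\delta(\Omega)$. On the support of $\chi_{2<3}\chi_{1\geq 0}$ we have $\omega_1\geq\omega$ and $\omega_3>\omega_2$, hence
\[
2\omega\le\omega+\omega_1=\omega_2+\omega_3<2\omega_3 .
\]
So $|k|<|k_3|$, and $\langle k\rangle^s\le\langle k_3\rangle^s$ with constant $1$ for every $s$. The weight then goes entirely onto $h_3$ at no cost.

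For $L^\infty_s$ you get $\|h\|_{\infty,s}$ times $2\iiint T\,\delta(\Sigma)\delta(\Omega)\chi_{2<3}\chi_{1\geq 0}g_1g_2\,dk_1\,dk_2\,dk_3$. This is exactly the multiplier you already bounded.

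For $L^1_s$, use the pair swap $(k,k_1)\leftrightarrow(k_3,k_2)$. Swapping only $k\leftrightarrow k_3$ would not preserve $\Sigma$. The pair swap turns $\langle k_3\rangle^s|h_3|$ into $\langle k\rangle^s|h(k)|$, and the remaining integral into the multiplier with $\chi_{1<0}\chi_{2\geq 3}$. That multiplier is again bounded by $\|g\|_{\infty,10+2d}^2$, since on $\{|k_2|\geq|k_3|\}$ all frequencies are controlled by $k_1,k_2$.

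This is how the paper closes the estimate with an $s$-independent constant. Your ``sharper alternative'' of using the ratio only when $|k_3|\ll|k|$ was heading here; the point is that this regime is empty on the resonant manifold.
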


\begin{proof} 
By the Riesz-Thorin interpolation theorem, it strictly suffices to establish the strong bounds for the endpoint cases $p=\infty$ and $p=1$.

We first decompose the operator into a gain term and a loss term: $\mathcal{Q}_{g,b}(t)h(k) = \mathcal{Q}_{gain}(h)(k) - \nu(k) h(k)$, where the collision frequency multiplier $\nu(k)$ is defined as
\begin{equation}\label{eq: def nuk}
    \nu(k) = \iiint_{\mathbb{R}^{3d}} T_{k,k_1,k_2,k_3} \delta(\Sigma)\delta(\Omega) \Big[ 2(1-\chi_{2<3}\chi_{1<0}) g_1(t) g_2(t) - g_2(t) g_3(t) \Big] \,dk_1 \,dk_2 \,dk_3,
\end{equation}
and the gain operator is
\begin{equation}
    \mathcal{Q}_{gain}(h)(k) = 2\iiint_{\mathbb{R}^{3d}} T_{k,k_1,k_2,k_3} \delta(\Sigma)\delta(\Omega)\chi_{2<3}\chi_{1\geq 0} g_1(t) g_2(t) h_3  \,dk_1 \,dk_2 \,dk_3.
\end{equation}

\noindent\textbf{Step 1: The $L^\infty_s$ estimate.} Assume $h \in L_s^\infty(\mathbb{R}^d)$. For the loss term, we have $\|\nu h\|_{\infty,s} \leq \|\nu\|_{\infty} \|h\|_{\infty,s}$. Let
\begin{equation}
    \tilde \nu(k):=\iiint_{\mathbb{R}^{3d}} T_{k,k_1,k_2,k_3} \delta(\Sigma)\delta(\Omega) \Big[ 2(1-\chi_{2<3}\chi_{1<0}) g_1(t) g_2(t) + g_2(t) g_3(t) \Big] \,dk_1 \,dk_2 \,dk_3.
\end{equation}
To bound $\nu(k)$, we invoke Proposition~\ref{prop: kernel} and bound $\tilde \nu(k)$. On the support of $\delta(\Sigma)\delta(\Omega)$, we have the algebraic bounds
\begin{equation}
\begin{aligned}
  &\langle k_1\rangle^{d+2} \langle k_2\rangle^{d+2} \left(\min_{0\leq j<l\leq 3}|k_j|^2+|k_l|^2\right)^2\left(\sum_{j=0}^3|k_j|^2\right)(1-\chi_{2<3}\chi_{1<0}) \\
  &\leq 100 (\langle k_1\rangle^{10+2d}+\langle k_2\rangle^{10+2d}) (1-\chi_{2<3}\chi_{1<0})
\end{aligned}
\end{equation}
and 
\begin{equation}
    \langle k_1\rangle^{d+2} \langle k_2\rangle^{d+2} \left(\min_{0\leq j<l\leq 3}|k_j|^2+|k_l|^2\right)^2\left(\sum_{j=0}^3|k_j|^2\right) \leq 100(\langle k_2\rangle^{10+2d}+\langle k_3\rangle^{10+2d}).
\end{equation}
Applying these bounds to the kernel yields
\begin{equation}
\begin{aligned}
    |\tilde\nu(k)| &\leq C_Q \iiint_{\mathbb{R}^{3d}} \left(\min_{0\leq j<l\leq 3}|k_j|^2+|k_l|^2\right)^2\left(\sum_{j=0}^3|k_j|^2\right)\delta(\Sigma)\delta(\Omega) \\
    &\qquad\qquad\qquad\times \Big[(1-\chi_{2<3}\chi_{1<0})g_1(t)g_2(t)+g_2(t)g_3(t)\Big]\,dk_1\,dk_2\,dk_3 \\
    &\leq 200C_Q \iiint_{\mathbb{R}^{3d}} \delta(\Sigma)\delta(\Omega)\langle k_1\rangle^{-d-2}\langle k_2\rangle^{-d-2}\Big[F(k_1,k_2,t)+F(k_2,k_3,t)\Big]\,dk_1\,dk_2\,dk_3,
\end{aligned}
\end{equation}
where $F(k_1,k_2,t)$ is defined as
\begin{equation}
    F(k_1,k_2,t) \coloneqq (\langle k_1\rangle^{10+2d}+\langle k_2\rangle^{10+2d})g_1(t)g_2(t).
\end{equation}
This, together with Lemma~\ref{lem: Rep1}, yields 
\begin{equation}\label{eq: nu k}
    \|\nu\|_{\infty} \leq  \|\tilde\nu\|_{\infty} \leq 800C_QC_d \|g(t)\|_{\infty,{10+2d}}^2.
\end{equation}

For the gain term, using that on the support of $\delta(\Sigma)\delta(\Omega)\chi_{2<3}\chi_{1\geq 0}$,
\begin{equation}
    \langle k\rangle^s\leq \langle k_3\rangle^s,
\end{equation}
we similarly extract $\|h\|_{\infty,s}$:
\begin{equation}
\begin{aligned}
    |\langle k\rangle^s\mathcal{Q}_{gain}(h)(k)| \leq &2\iiint_{\mathbb{R}^{3d}} T_{k,k_1,k_2,k_3} \delta(\Sigma)\delta(\Omega) \chi_{2<3}\chi_{1\geq 0} g_1(t) g_2(t) \langle k_3\rangle^s|h_3| \,dk_1 \,dk_2 \,dk_3\\
    \leq & 2\|h\|_{\infty,s} \iiint_{\mathbb{R}^{3d}} T_{k,k_1,k_2,k_3} \delta(\Sigma)\delta(\Omega) \chi_{2<3}\chi_{1\geq 0} g_1(t) g_2(t) \,dk_1 \,dk_2 \,dk_3.
    \end{aligned}
\end{equation}
Using the pointwise bound
\begin{equation}
   2 \chi_{2<3}\chi_{1\geq 0} g_1(t) g_2(t)\leq 2(1-\chi_{2<3}\chi_{1<0})g_1(t)g_2(t)+g_2(t)g_3(t)
\end{equation}
and the estimate~\eqref{eq: nu k} yields 
\begin{equation}
      \|\mathcal{Q}_{gain}(h)\|_{\infty,s} \leq \|\tilde\nu\|_{\infty} \|h\|_{\infty,s} \leq 800C_QC_d \|g(t)\|_{\infty,{10+2d}}^2\|h\|_{\infty,s}.
\end{equation}
Combining these estimates provides the $L^\infty_s$ bound:
\begin{equation}
    \|\Qc_{g,b}(t)\|_{\infty,s\to \infty,s} \leq 1600C_QC_d \|g(t)\|_{\infty,{10+2d}}^2.
\end{equation}

\noindent\textbf{Step 2: The $L^1_s$ estimate.} Assume $h \in L^1_s(\mathbb{R}^d)$. The $L^1_s$ norm of the loss term is bounded since $\|\nu h\|_{1,s} \leq \|\nu\|_{\infty} \|h\|_{1,s} \leq 800C_QC_d \|g(t)\|_{\infty,{10+2d}}^2 \|h\|_{1,s}$. For the gain term, similarly we have:
\begin{equation}
    \|\mathcal{Q}_{gain}(h)\|_{1,s} \leq 2 \iiiint_{\mathbb{R}^{4d}} T_{k,k_1,k_2,k_3} \delta(\Sigma)\delta(\Omega) \chi_{2<3}\chi_{1\geq 0} g_1(t) g_2(t)\langle k_3\rangle^s |h(k_3)| \,dk_1 \,dk_2 \,dk_3 \,dk.
\end{equation}
By Tonelli's Theorem, we interchange the order of integration. Applying the variable swap $(k,k_1) \leftrightarrow (k_3,k_2)$ and using the symmetry of the kernel $T_{k,k_1,k_2,k_3}$ yields:
\begin{equation}
\begin{aligned}
    \|\mathcal{Q}_{gain}(h)\|_{1,s} &\leq 2 \iiiint_{\mathbb{R}^{4d}} T_{k,k_1,k_2,k_3} \delta(\Sigma)\delta(\Omega) \chi_{1<0}\chi_{2\geq 3} g_1(t) g_2(t) \langle k\rangle^s|h(k)| \,dk_1 \,dk_2 \,dk_3 \,dk \\
    &\leq \|h\|_{1,s} \left\| 2 \iiint_{\mathbb{R}^{3d}} T_{k,k_1,k_2,k_3} \delta(\Sigma)\delta(\Omega) \chi_{1<0}\chi_{2\geq 3} g_1(t) g_2(t) \,dk_1 \,dk_2 \,dk_3 \right\|_{\infty}.
\end{aligned}
\end{equation}
Using the pointwise bound
\begin{equation}
   2 \chi_{1<0}\chi_{2\geq 3} g_1(t) g_2(t) \leq 2(1-\chi_{2<3}\chi_{1<0})g_1(t)g_2(t)+g_2(t)g_3(t),
\end{equation}
together with estimate~\eqref{eq: nu k}, yields 
\begin{equation}
     \|\mathcal{Q}_{gain}(h)\|_{1,s} \leq \|\tilde\nu\|_{\infty} \|h\|_{1,s} \leq 800C_QC_d \|g(t)\|_{\infty,{10+2d}}^2\|h\|_{1,s}.
\end{equation}
Combining these estimates provides the $L^1_s$ bound:
\begin{equation}
    \|\Qc_{g,b}(t)\|_{1,s\to 1,s} \leq 1600C_QC_d \|g(t)\|_{\infty,{10+2d}}^2.
\end{equation}

By the Riesz-Thorin interpolation theorem, we obtain  
\begin{equation}
    \|\Qc_{g,b}(t)\|_{p,s\to p,s} \leq 1600C_QC_d \|g(t)\|_{\infty,{10+2d}}^2 \qquad \forall\, 1\leq p\leq \infty,
\end{equation}
which completes the proof by setting $C_b = 1600C_QC_d$. 
\end{proof}

\begin{lem}\label{lem: we Dyn}
Equation~\eqref{eq: DE afn+1} is valid in the weak sense. 
\end{lem}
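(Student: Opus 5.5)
The plan is to reduce Lemma~\ref{lem: we Dyn} to a pointwise algebraic identity for the collision integrand, and then pass it through the weak formulation of \eqref{Qn}. Write $g=f_n$, $H(t,k):=\langle k\rangle^a f_{n+1}(t,k)$, and $X:=\langle k\rangle^{a/2}$, $Y:=\langle k_3\rangle^{a/2}$. The weak form of \eqref{Qn} is the identity
\[
\int \varphi f_{n+1}(t)\,dk-\int\varphi f_0\,dk=\int_0^t\int \varphi\, \Qc_n(s)f_{n+1}(s)\,dk\,ds
\]
for test functions $\varphi\in C_c^\infty(\mathbb R^d)$. This holds because $f_{n+1}$ is obtained as the strong limit of the $\epsilon$-regularized propagators, and the regularized identities pass to the limit by Lemmas~\ref{lem: bd QgD} and~\ref{lem: QgB bd}. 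Replacing $\varphi$ by $\langle k\rangle^a\varphi$, which is still compactly supported, gives $\partial_t H=\langle k\rangle^a\Qc_n(t)f_{n+1}$ in the weak sense. It therefore suffices to show
\[
\langle k\rangle^a\Qc_n(t)f_{n+1}=\big(\Qc_n(t)+\tilde\Qc_n(t)+\Rc_n(t)\big)H
\]
after pairing with $\varphi$.

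\textbf{Step 1 (algebra).} In \eqref{Qfn}, the two purely multiplicative terms $g_2g_3h$ and $-2\chi_{2\geq3}g_1g_2h$ commute with multiplication by $\langle k\rangle^a$, as does the $-h$ part of the gain--loss term. The only noncommuting piece is the gain contribution, where
\[
\langle k\rangle^a f_3=H_3+\frac{\langle k\rangle^a-\langle k_3\rangle^a}{\langle k_3\rangle^a}H_3 .
\]
On the region $\chi_{2<3}\chi_{1\geq0}$, the extra term is exactly the first integral of \eqref{eq: def Rnt}. On $\chi_{2<3}\chi_{1<0}$, I use the elementary identity
\[
\frac{X^2-Y^2}{Y^2}=\frac{X^2-Y^2}{XY}+\frac{(X-Y)^2(X+Y)}{XY^2}.
\]
To verify it, subtract the first summand on the right from the left; this gives $(X^2-Y^2)(X-Y)/(XY^2)$, which equals the second summand. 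The first summand produces the kernel of $\tilde\Qc_n$ in \eqref{tQfn}, and the second produces the second integral in \eqref{eq: def Rnt}. This yields the identity pointwise at the level of integrands on the resonant manifold.

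\textbf{Step 2 (justifying the splitting).} The identity of Step 1 is only meaningful after integration if each of the three pieces, paired with $\varphi$, is absolutely convergent. Only then may I split the integral and apply Fubini/Tonelli. Since $\varphi$ is compactly supported, $\langle k\rangle$ is bounded on its support. On $\chi_{1<0}\chi_{2<3}$ we also have $|k_3|\le |k|+|k_1|+|k_2|\le 3|k|$ by momentum conservation, so all weights in $k$ and $k_3$ are bounded by a constant depending on $\operatorname{supp}\varphi$. On $\chi_{1\geq0}\chi_{2<3}$, $|k|$ is controlled by $|k_1|$, so any weight growth is absorbed by $g_1$. The kernel bound from Proposition~\ref{prop: kernel}, combined with Lemma~\ref{lem: Rep1}, then bounds every term by $C_\varphi\|g\|_{\infty,10+2d}^2\|f_{n+1}\|_{\infty,a}$, uniformly for $s\in[0,t]$. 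This uses that $f_{n+1}\in L^\infty([0,T];L^\infty_a)$, which holds for the weights used ($a\le 12+4d$, or via the $L^2_{22+5d}$ bound and the anisotropic estimate \eqref{eq: Qf03}). Integrating in time then gives the weak form of \eqref{eq: DE afn+1}.

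\textbf{Main obstacle.} The delicate point is Step 2 for the $\tilde\Qc_n$ piece, which is unbounded on $L^p$ and has a kernel of size comparable to $T\,|\langle k\rangle^a-\langle k_3\rangle^a|/(XY)$. On $\operatorname{supp}\varphi$ this is harmless. The difficulty is justifying the prior regularity of $f_{n+1}$ in time, so that the Duhamel/weak identity holds with weighted test functions. I would handle this by first proving the identity for the $\epsilon$-regularized propagator $U_{n,D,\epsilon}$, where every operator is bounded and the computation is classical. I would then pass $\epsilon\downarrow0$ using the strong convergence from Lemma~\ref{lem: Cauchy} together with dominated convergence, with the uniform integrable majorant obtained above.
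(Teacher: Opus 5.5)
Your proposal is correct and follows the paper's argument. The paper likewise pairs with a test function, observes that only the gain term $h_3$ in $\Qc_n(t)$ fails to commute with $\langle k\rangle^a$, and splits the factor $(\langle k\rangle^a-\langle k_3\rangle^a)/\langle k_3\rangle^a$ by the same algebraic identity into the kernels of $\tilde\Qc_n(t)$ and $\Rc_n(t)$. Your Step 2 and the $\epsilon$-regularization remarks add justification that the paper leaves implicit, with one small correction: $|k_3|\le 3|k|$ on $\chi_{2<3}\chi_{1<0}$ does not follow from momentum conservation alone. You also need energy conservation, since $2\omega_2<\omega_2+\omega_3=\omega+\omega_1<2\omega$ gives $|k_2|<|k|$.
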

\begin{proof} 
Take $\rho\in C_0^\infty$. We compute 
\begin{equation}\label{eq: weak g fn+1}
    \begin{aligned}
        (\rho,\partial_t\langle k\rangle^af_{n+1}(t))_{L^2} &= (\rho,\langle k\rangle^a\Qc_n(t)f_{n+1}(t))_{L^2} \\
        &= (\rho,\Qc_n(t)\langle k\rangle^af_{n+1}(t))_{L^2} + (\rho,[\langle k\rangle^a,\Qc_n(t)]\langle k\rangle^{-a}\langle k\rangle^af_{n+1}(t))_{L^2}.
    \end{aligned}
\end{equation}
With $h = \langle k\rangle^{a}f_{n+1}(t)$, the second term on the r.h.s.\ of Eq.~\eqref{eq: weak g fn+1} reads
\begin{equation}
    \begin{aligned}
        &(\rho,[\langle k\rangle^a,\Qc_n(t)]\langle k\rangle^{-a}h)_{L^2} \\
        &= 2\iiiint_{\mathbb{R}^{4d}}T_{k,k_1,k_2,k_3}\delta(\Sigma)\delta(\Omega)\chi_{2<3}g_1(t)g_2(t) (\langle k\rangle^a-\langle k_3\rangle^a)\langle k_3\rangle^{-a}\rho h_3 \,dk_1\,dk_2\, dk_3\,dk.
    \end{aligned}
\end{equation}
This, together with the algebraic relation
\begin{equation}
   \frac{\langle k\rangle^a-\langle k_3\rangle^a}{\langle k_3\rangle^a} = \frac{\langle k\rangle^a-\langle k_3\rangle^a}{\langle k\rangle^{a/2}\langle k_3\rangle^{a/2}} + \frac{(\langle k\rangle^{a/2}-\langle k_3\rangle^{a/2})^2(\langle k\rangle^{a/2}+\langle k_3\rangle^{a/2})}{\langle k \rangle^{a/2}\langle k_3 \rangle^{a}}
\end{equation}
and Eqs.~\eqref{tQfn} and~\eqref{eq: def Rnt}, yields 
\begin{equation}
  (\rho,[\langle k\rangle^a,\Qc_n(t)]\langle k\rangle^{-a}h)_{L^2} = (\rho,(\tilde{\Qc}_n(t)+\Rc_n(t))h)_{L^2},
\end{equation}
which completes the proof.
\end{proof}

\begin{lem}\label{lem: tQn diss} 
$\tilde{\Qc}_{n}(t)$ is dissipative on $L^2$.
\end{lem}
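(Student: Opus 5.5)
We show that $(h,\tilde{\Qc}_n(t)h)_{L^2}=0$ for every $h$ in the domain, i.e.\ that the kernel of $\tilde{\Qc}_n(t)$ is antisymmetric under the swap $k\leftrightarrow k_3$. Dissipativity then follows exactly as at the end of the proof of Lemma~\ref{lem: QgD diss} with $p=2$:
\begin{equation*}
\|(\lambda-\tilde{\Qc}_n(t))h\|_2\,\|h\|_2\ \geq\ (h,(\lambda-\tilde{\Qc}_n(t))h)_{L^2}=\lambda\|h\|_2^2 .
\end{equation*}

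\textbf{Step 1: the quadratic form.} For real $h$ in the domain we write
\begin{equation*}
(h,\tilde{\Qc}_n(t)h)_{L^2}=2\iiiint_{\mathbb{R}^{4d}} T_{k,k_1,k_2,k_3}\,\frac{\langle k\rangle^a-\langle k_3\rangle^a}{\langle k\rangle^{a/2}\langle k_3\rangle^{a/2}}\,\delta(\Sigma)\delta(\Omega)\,\chi_{2<3}\chi_{1<0}\,g_1g_2\,h\,h_3\,dk_1\,dk_2\,dk_3\,dk .
\end{equation*}
Before manipulating this integral we need it to converge absolutely. The weight is bounded by $\langle k\rangle^{a/2}\langle k_3\rangle^{-a/2}+\langle k_3\rangle^{a/2}\langle k\rangle^{-a/2}$. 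On the support of $\chi_{1<0}\chi_{2<3}$ and the resonant manifold, $|k|\sim|k_3|$ whenever $|k_1|,|k_2|$ are small relative to $|k|,|k_3|$, so each ratio is $O(1)$ there. Otherwise a factor $\langle k_1\rangle^{a}$ or $\langle k_2\rangle^{a}$ controls the ratio, and the decay of $g$ in $\Bc$ absorbs it. Proposition~\ref{prop: kernel} gives $T\lesssim(|k_1|^2+|k_2|^2)^2(|k|^2+|k_3|^2)$ on this support. Combining this with Lemma~\ref{lem: Rep1}, as in the proof of Lemma~\ref{lem: bd QgD}, and with Cauchy--Schwarz in $h$ and $h_3$, gives absolute convergence for $h\in L^2_2$, or more generally in the domain.

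If the weighted $g$-norms needed here exceed what $\Bc$ provides (note $a=22+5d$ is large), we instead work first with the $\epsilon$-regularized version carrying $e^{-\epsilon(|k|+|k_3|)}$, following the convention stated for propagators, and pass to the limit $\epsilon\downarrow 0$. The $\epsilon$-cutoff is itself symmetric in $(k,k_3)$, so it does not disturb Step 2.

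\textbf{Step 2: antisymmetry.} Since the integral converges absolutely, Fubini lets us apply the change of variables $(k,k_1)\leftrightarrow(k_3,k_2)$ used in Lemma~\ref{lem: QgD diss}. This map preserves $\delta(\Sigma)\delta(\Omega)$ and $T$ by the scattering symmetry $T_{k,k_1,k_2,k_3}=T_{k_2,k_3,k,k_1}$ together with the pair permutations. It exchanges $\chi_{2<3}\chi_{1<0}$ with $\chi_{1<0}\chi_{2<3}$, which is the same product, and it preserves $g_1g_2$ and $hh_3$. The only factor that changes is the weight $\langle k\rangle^a-\langle k_3\rangle^a$, which flips sign while its denominator stays fixed. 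Hence the integral equals its own negative, so it vanishes.

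The main obstacle is Step 1: proving absolute integrability, or the uniform-in-$\epsilon$ convergence in the regularized version, so that the symmetrization is justified. Step 2 is a purely algebraic cancellation.
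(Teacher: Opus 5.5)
Your proposal is correct and follows the paper's proof. The swap $(k,k_1)\leftrightarrow(k_3,k_2)$ leaves every factor invariant except the sign of $\langle k\rangle^a-\langle k_3\rangle^a$, so the quadratic form vanishes, and dissipativity then follows by Cauchy--Schwarz.

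Your Step 1 adds an integrability justification that the paper skips (the paper simply asserts the identity for all $h\in L^2$). It is easier than you fear: on the support of $\chi_{2<3}\chi_{1<0}\delta(\Omega)$, energy conservation gives $\omega\le\omega+\omega_1=\omega_2+\omega_3<2\omega_3$ and $\omega_3\le\omega_2+\omega_3=\omega+\omega_1<2\omega$, hence $|k|/4<|k_3|<4|k|$. The weight ratio is therefore bounded, no extra decay of $g$ is needed, and the $\epsilon$-regularization detour is unnecessary.
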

\begin{proof}
    For each $h\in L^2$, exchanging the positions of $(k_3,k_2)$ and $(k,k_1)$ and then taking the average yields 
\begin{equation}
\begin{aligned}
    (h, \tilde{\Qc}_{n}(t)h)_{L^2} &= 2\iiiint_{\mathbb{R}^{4d}}T_{k,k_1,k_2,k_3}\delta(\Sigma)\delta(\Omega)\chi_{2<3}\chi_{1<0}g_1(t)g_2(t)\frac{\langle k\rangle^a-\langle k_3\rangle^a}{\langle k\rangle^{a/2}\langle k_3\rangle^{a/2}} hh_3 \,dk_1\,dk_2\, dk_3 \, dk \\
    &= 2\iiiint_{\mathbb{R}^{4d}}T_{k,k_1,k_2,k_3}\delta(\Sigma)\delta(\Omega)\chi_{2<3}\chi_{1<0}g_1(t)g_2(t) \frac{\langle k_3\rangle^a-\langle k\rangle^a}{\langle k\rangle^{a/2}\langle k_3\rangle^{a/2}} hh_3 \,dk_1\,dk_2\, dk_3 \, dk \\
    &= 0.
    \end{aligned}
\end{equation}
This yields 
\begin{equation}
    \begin{aligned}
        \| (\lambda-\tilde{\Qc}_{n}(t))h\|_{2} &= \sup_{\| g\|_{{2}}=1}|(g, (\lambda-\tilde{\Qc}_{n}(t))h)_{L^2}| \\
        &\geq (h/\|h\|_{2}, (\lambda-\tilde{\Qc}_{n}(t))h)_{L^2} \\
        &= \lambda \|h\|_{2}.
    \end{aligned}
\end{equation}
This completes the proof.
\end{proof}

\begin{lem}\label{lem: eq: Rn bd} 
$\Rc_n(t)$ is bounded on $L^p$ for all $1\leq p\leq \infty$ satisfying the estimate
\begin{equation}\label{eq: Rn bd est}
    \| \Rc_n(t)\|_{p\to p} \leq \tilde C_a \| f_n(t)\|_{\infty,{10+2d}}^2 \qquad \forall\, 1\leq p\leq \infty
\end{equation}
for some constant $\tilde C_a>0$. 
\end{lem}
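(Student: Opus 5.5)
Both pieces of $\Rc_n(t)$ in \eqref{eq: def Rnt} are integral operators of gain type: they are built on the nonnegative kernel $T_{k,k_1,k_2,k_3}\delta(\Sigma)\delta(\Omega)\chi_{2<3}g_1g_2$ and act on $h_3$, with an extra multiplier depending only on $(k,k_3)$. The plan is to follow the proof of Lemma~\ref{lem: QgB bd}. We prove the $L^\infty$ and $L^1$ endpoint bounds separately and then conclude by Riesz--Thorin. The key task is to show that the multipliers
\[
m_1:=\frac{\langle k\rangle^a-\langle k_3\rangle^a}{\langle k_3\rangle^a}\,\chi_{1\geq 0},\qquad m_2:=\frac{(\langle k\rangle^{a/2}-\langle k_3\rangle^{a/2})^2(\langle k\rangle^{a/2}+\langle k_3\rangle^{a/2})}{\langle k\rangle^{a/2}\langle k_3\rangle^{a}}\,\chi_{1<0}
\]
are harmless on the support of $\delta(\Sigma)\delta(\Omega)\chi_{2<3}$. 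In particular, whatever growth they produce must be absorbable into the weights $\langle k_1\rangle^{10+2d}$ and $\langle k_2\rangle^{10+2d}$ carried by $g_1g_2$.

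\textbf{The term $m_1$.} On $\chi_{2<3}\chi_{1\geq 0}$ we have $|k|\le|k_1|$ and, by momentum conservation, $|k_3|\le |k|+|k_1|+|k_2|\le 3|k_1|$. Hence $|m_1|\le 1+\langle k\rangle^a\langle k_3\rangle^{-a}\le 1+C_a\langle k_1\rangle^{a}$. This is only polynomial growth in $k_1$. The $L^\infty$ bound then reduces to $\sup_k\iiint T\delta\delta\,\langle k_1\rangle^a g_1g_2$, which Proposition~\ref{prop: kernel} and \eqref{eq:general_integral_reduced} of Lemma~\ref{lem: Rep1} control exactly as $\tilde\nu$ was controlled. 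For the $L^1$ bound we swap $(k,k_1)\leftrightarrow(k_3,k_2)$ as in Step~2 of Lemma~\ref{lem: QgB bd} and again use a sup-in-$k$ bound. One caveat is that the weight exponent in the final bound may then depend on $a$. To land on $\|f_n\|_{\infty,10+2d}$ as stated, I would use the factor $\min(|k_j|^2+|k_l|^2)^2$ in \eqref{eq: est kernel}: when $|k|\lesssim|k_1|$ the sum $\sum_j|k_j|^2$ is bounded by $|k_1|^2+|k_2|^2$, so the kernel is of size $\langle k_1\rangle^{6}$. The extra $\langle k_1\rangle^a$ would then be handled either by enlarging the weight to $a$-dependent order (acceptable since $f_n\in L^\infty_{12+4d}$) or by using $\chi_{1\ge0}$ more carefully. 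I would first try the crude route and allow the constant and weight to depend on $a$, which is consistent with the notation $\tilde C_a$.

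\textbf{The term $m_2$ (main obstacle).} On $\chi_{2<3}\chi_{1<0}$ we may have $|k|,|k_3|\gg|k_1|,|k_2|$, and there the kernel grows like $|k|^2(|k_1|^2+|k_2|^2)^2$. The structure $(\langle k\rangle^{a/2}-\langle k_3\rangle^{a/2})^2$ is designed to offset this growth. By the mean value theorem and $||k|-|k_3||\le|k_1|+|k_2|$ (cf.\ \eqref{eq: k-k3} in the separated regime), we get $|\langle k\rangle^{a/2}-\langle k_3\rangle^{a/2}|\lesssim_a(|k_1|+|k_2|)\langle k_3\rangle^{a/2-1}$ whenever $|k|\sim|k_3|$. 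This gives $|m_2|\lesssim_a(|k_1|+|k_2|)^2\langle k_3\rangle^{-2}$, which cancels exactly the quadratic factor $\sum_j|k_j|^2\sim|k|^2$ from Proposition~\ref{prop: kernel}. The product $T\,m_2$ is therefore bounded by $C_a\langle k_1\rangle^{8}\langle k_2\rangle^{8}$ uniformly in $k,k_3$. In the non-separated regime, where $|k|$ and $|k_3|$ are not comparable, all four momenta are comparable to the largest of $|k_1|,|k_2|$ up to constants, because $|k_3|\lesssim|k|+|k_1|+|k_2|$ and $|k|<|k_1|$ is excluded. So either $\langle k\rangle\sim\langle k_3\rangle$, or $|k_3|\lesssim|k_1|+|k_2|$, in which case $|m_2|\lesssim\langle k\rangle^{a/2}\langle k_3\rangle^{-a/2}$ stays polynomially controlled by $\langle k_1\rangle,\langle k_2\rangle$. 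In every case the integrand is bounded by $C_a\langle k_1\rangle^{N}\langle k_2\rangle^{N}g_1g_2$. Lemma~\ref{lem: Rep1} then gives a bound uniform in $k$ (for $L^\infty$) and, after the symmetric swap, uniform in $k_3$ (for $L^1$), of size $\tilde C_a\|f_n\|_{\infty,10+2d}^2$, where the weight is adjusted as discussed above.

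Finally, Riesz--Thorin interpolation between the two endpoints gives \eqref{eq: Rn bd est} for all $1\le p\le\infty$.
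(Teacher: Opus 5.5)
Your architecture matches the paper's: endpoint $L^\infty$ and $L^1$ bounds through Lemma~\ref{lem: Rep1}, the swap $(k,k_1)\leftrightarrow(k_3,k_2)$ for $L^1$, then Riesz--Thorin. Your mean-value bound $|m_2|\lesssim_a(|k_1|+|k_2|)^2\langle k_3\rangle^{-2}$ when $|k|\sim|k_3|$ is essentially the paper's $C_a|k_1-k_2|^2/(\langle k\rangle\langle k_3\rangle)$.

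The gap is in how you control $m_1$, and also the ``non-comparable'' case of $m_2$. The bound $|m_1|\le 1+C_a\langle k_1\rangle^a$ makes the integrand of size $\langle k_1\rangle^{6+a}g_1g_2$, so Lemma~\ref{lem: Rep1} only closes with a weight of order $a+d+8$ on $f_n$. That is not the stated estimate: its weight $10+2d$ is independent of $a$, and the subscript in $\tilde C_a$ refers to the constant only. Your claim that an $a$-dependent weight is ``acceptable since $f_n\in L^\infty_{12+4d}$'' also fails for the exponents where the lemma is actually used. The weighted $L^2$ Gr\"onwall estimate takes $a=22+5d$, and the $L^\infty$ estimate through Lemma~\ref{lem: tUn h} takes $a=12+4d$. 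These would require $f_n\in L^\infty_{30+6d}$ and $L^\infty_{20+5d}$ respectively, and the iteration propagates neither. Your fallback ``use $\chi_{1\geq 0}$ more carefully'' is exactly where the missing idea sits, but you do not supply it.

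The missing idea is the frequency ordering forced by energy conservation $\omega+\omega_1=\omega_2+\omega_3$.

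\textbf{Support of $\chi_{2<3}\chi_{1\geq 0}$.} Here $\omega_2<\omega_3$ and $\omega\le\omega_1$, so $\omega_3>\tfrac12(\omega_2+\omega_3)=\tfrac12(\omega+\omega_1)\ge\omega$. Hence $\langle k\rangle<\langle k_3\rangle$ and $|m_1|\le 1$, with no weight at all. The first piece of $\Rc_n$ is then dominated by the gain term already bounded by $\|\tilde\nu\|_\infty\lesssim\|f_n\|_{\infty,10+2d}^2$ in Lemma~\ref{lem: QgB bd}. After the swap, the same argument gives $\langle k_3\rangle<\langle k\rangle$ on $\chi_{1<0}\chi_{2\geq 3}$.

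\textbf{Support of $\chi_{2<3}\chi_{1<0}$.} Here $\omega\le\omega_2+\omega_3<2\omega_3$ and $\omega_3\le\omega+\omega_1<2\omega$, so $\tfrac14|k|<|k_3|<4|k|$ everywhere. Your non-comparable case is therefore empty, and the mean-value bound holds on the whole support. This gives $T\,m_2\lesssim_a(|k_1|^2+|k_2|^2)^3$, hence the weight $8+d\le 10+2d$.

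\textbf{A related slip.} Your bound $|k_3|\le|k|+|k_1|+|k_2|\le 3|k_1|$ silently assumes $|k_2|\le|k_1|$, which is not given on this support. It is again energy conservation, $\omega_3\le 2\omega_1$, that gives $|k_3|\le 4|k_1|$.
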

\begin{proof}
By the mean value theorem, we have on the support of $\delta(\Sigma)\delta(\Omega)$,
\begin{equation}
   \langle k\rangle\langle k_3\rangle \frac{(\langle k\rangle^{a/2}-\langle k_3\rangle^{a/2})^2(\langle k\rangle^{a/2}+\langle k_3\rangle^{a/2})}{\langle k \rangle^{a/2}\langle k_3 \rangle^{a/2}} \chi_{2<3}\chi_{1<0} \leq C_a \frac{(|k|-|k_3|)^2}{\langle k\rangle\langle k_3\rangle} \leq C_a \frac{|k_1-k_2|^2}{\langle k\rangle\langle k_3\rangle}
\end{equation}
for some constant $C_a>0$. We also note that on the support of $\delta(\Sigma)\delta(\Omega)$,
\begin{equation}
   \Big| \frac{\langle k\rangle^a-\langle k_3\rangle^a}{\langle k_3\rangle^{a}}\Big|\chi_{2<3}\chi_{1\geq 0} \leq 2. 
\end{equation}
These bounds, combined with Eq.~\eqref{eq: def nuk}, the kernel estimates~\eqref{eq: est kernel}, and the following bound on the support of $\delta(\Sigma)\delta(\Omega)$:
\begin{equation}
\begin{aligned}
   &\left(\min_{0\leq j<l\leq 3}|k_j|^2+|k_l|^2\right)^2\left(\sum_{j=0}^3|k_j|^2\right)\frac{|k_1-k_2|^2}{\langle k\rangle\langle k_3\rangle}\chi_{2<3}\chi_{1<0} \\
   &\leq 2 \left(|k_1|^2+|k_2|^2\right)^2\left(|k|^2+|k_3|^2\right)\frac{|k_1-k_2|^2}{\langle k\rangle\langle k_3\rangle}\chi_{2<3}\chi_{1<0} \\
   &\leq C_1\left(|k_1|^2+|k_2|^2\right)^3\chi_{2<3}\chi_{1<0} \qquad \qquad \text{ for some constant }C_1>0,
\end{aligned}
\end{equation}
yield for $h\in L^\infty$, with $g(t)=f_n(t)$:
\begin{equation}\label{eq: est Rnt}
    \begin{aligned}
        |\Rc_n(t)h| &\leq 4\|h\|_{\infty}\iiint_{\mathbb{R}^{3d}}T_{k,k_1,k_2,k_3}\delta(\Sigma)\delta(\Omega) \chi_{2<3}\chi_{1\geq 0}g_1(t)g_2(t)\,dk_1\,dk_2\, dk_3 \\
        &\quad + 2C_aC_Q\|h\|_{\infty}\iiint_{\mathbb{R}^{3d}}\left(\min_{0\leq j<l\leq 3}|k_j|^2+|k_l|^2\right)^2\left(\sum_{j=0}^3|k_j|^2\right)\frac{|k_1-k_2|^2}{\langle k\rangle\langle k_3\rangle} \\
        &\qquad\qquad\times\delta(\Sigma)\delta(\Omega)\chi_{2<3}\chi_{1<0}g_1(t)g_2(t) \,dk_1\,dk_2\, dk_3 \\
        &\leq 2\|h\|_{\infty}\|\tilde\nu(k)\|_{\infty} \\
        &\quad + 2C_aC_1C_Q\|h\|_{\infty}\iiint_{\mathbb{R}^{3d}}(|k_1|^2+|k_2|^2)^3\delta(\Sigma)\delta(\Omega)\chi_{2<3}\chi_{1<0}g_1(t)g_2(t) \,dk_1\,dk_2\, dk_3.
    \end{aligned}
\end{equation}
This, together with estimate~\eqref{eq: nu k} and Lemma~\ref{lem: Rep1}, yields 
\begin{equation}
    \|\Rc_n(t)h\|_{\infty} \leq \left(1600C_QC_d \| g\|_{\infty,{10+2d}}^2 + 16C_aC_1C_QC_d \| g\|_{\infty,{8+d}}^2\right)\|h\|_{\infty}.
\end{equation}
Consequently, we have 
\begin{equation}
    \|\Rc_n(t)\|_{\infty\to \infty} \leq 1600C_QC_d \| g\|_{\infty,{10+2d}}^2 + 16C_aC_1C_QC_d \| g\|_{\infty,{8+d}}^2.
\end{equation}

Similarly, for $h\in L^1$, we interchange the order of integration to evaluate the $k_3$ integral last, swap the positions of $(k_2,k_3)$ and $(k_1,k)$, and use the symmetry of the kernel $T_{k,k_1,k_2,k_3}$:
\begin{equation}\label{eq: est Rnt L1}
    \begin{aligned}
        \|\Rc_n(t)h\|_{1} &\leq 4\|h\|_{1}\iiint_{\mathbb{R}^{3d}}T_{k,k_1,k_2,k_3}\delta(\Sigma)\delta(\Omega) \chi_{1<0}\chi_{2\geq 3}g_1(t)g_2(t)\,dk_1\,dk_2\, dk_3 \\
        &\quad + 2C_aC_Q\|h\|_{1}\iiint_{\mathbb{R}^{3d}}\left(\min_{0\leq j<l\leq 3}|k_j|^2+|k_l|^2\right)^2\left(\sum_{j=0}^3|k_j|^2\right)\frac{|k_1-k_2|^2}{\langle k\rangle\langle k_3\rangle} \\
        &\qquad\qquad\times\delta(\Sigma)\delta(\Omega)\chi_{2<3}\chi_{1<0}g_1(t)g_2(t) \,dk_1\,dk_2\, dk_3 \\
        &\leq 2\|h\|_{1}\|\tilde\nu(k)\|_{\infty} \\
        &\quad + 2C_aC_1C_Q\|h\|_{1}\iiint_{\mathbb{R}^{3d}}(|k_1|^2+|k_2|^2)^3\delta(\Sigma)\delta(\Omega)\chi_{2<3}\chi_{1<0}g_1(t)g_2(t) \,dk_1\,dk_2\, dk_3.
    \end{aligned}
\end{equation}
This, together with estimate~\eqref{eq: nu k} and Lemma~\ref{lem: Rep1}, yields 
\begin{equation}
    \|\Rc_n(t)h\|_{1} \leq \left(1600C_QC_d \| g\|_{\infty,{10+2d}}^2 + 16C_aC_1C_QC_d \| g\|_{\infty,{8+d}}^2\right)\|h\|_{1}.
\end{equation}
Consequently, we have 
\begin{equation}
    \|\Rc_n(t)\|_{1\to 1} \leq 1600C_QC_d \| g\|_{\infty,{10+2d}}^2 + 16C_aC_1C_QC_d \| g\|_{\infty,{8+d}}^2.
\end{equation}
By the interpolation inequality, we arrive at~\eqref{eq: Rn bd est}.
\end{proof}
Identifying the base functions as $g(t) := f_{n+1}(t)$ and $\tilde{g}(t) := f_n(t)$, we define the linear operator $\Fc_n(t): L^2(\mathbb{R}^d) \to L^2(\mathbb{R}^d)$. Its action on a generic function $h \in L^2(\mathbb{R}^d)$ is given by the collision integral:
\begin{equation}\label{eq: Fcnh}
\begin{aligned}
    \Fc_n(t)h := \iiint_{\mathbb{R}^{3d}} & T_{k,k_1,k_2,k_3} \, \delta(\Sigma)\delta(\Omega) \Bigg\{ \\
    & \quad 2\chi_{2<3}\langle k\rangle^{2(d+1)} \left( g_3(t)-g(t)+\frac{\langle k\rangle^{2(d+1)}-\langle k_3\rangle^{2(d+1)}}{\langle k_3\rangle^{2(d+1)}}g_3(t) \right) \\
    & \qquad \times \left( g_2(t)\langle k_1\rangle^{-2(d+1)} h_1 + \tilde{g}_1(t)\langle k_2\rangle^{-2(d+1)} h_2 \right) \\
    & \quad + \left( \langle k_2\rangle^{-2(d+1)} h_2 g_3(t) + \tilde{g}_2(t)\langle k_3\rangle^{-2(d+1)} h_3 \right) \langle k\rangle^{2(d+1)}g(t) \\
    & \quad - 2\chi_{2\geq 3} \left( \langle k_1\rangle^{-2(d+1)} h_1 g_2(t) + \tilde{g}_1(t)\langle k_2\rangle^{-2(d+1)} h_2 \right) \langle k\rangle^{2(d+1)}g(t)
    \Bigg\} \,dk_1\,dk_2\,dk_3.
\end{aligned}
\end{equation}

\begin{lem}\label{lem: est Fn}
There exists a constant $C_{F}=C_{F}(d,C_Q)>0$, depending only on the dimension $d$ and $C_Q$, such that the $L^2 \to L^2$ operator norm of $\Fc_n(t)$ satisfies
\begin{equation}\label{eq: est Fn}
    \|\Fc_n(t)\|_{2\to 2} \leq C_{F} \max\left\{ \sup_{t\in [0,T]} \| f_n(t)\|_{\infty,12+4d}^2, \sup_{t\in [0,T]} \| f_{n+1}(t)\|_{\infty,12+4d}^2 \right\}.
\end{equation}
\end{lem}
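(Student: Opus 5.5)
We prove \eqref{eq: est Fn} by Schur's test: it suffices to bound $\Fc_n(t)$ on $L^1$ and on $L^\infty$ by the right-hand side of \eqref{eq: est Fn}, and then interpolate (Riesz--Thorin) to $L^2$. Split $\Fc_n(t)h$ into its individual terms. In each term, $h$ is evaluated at one of $k_1,k_2,k_3$ and always carries the decaying factor $\langle k_j\rangle^{-2(d+1)}$. The remaining factors are one or two of $g,\tilde g$, each evaluated at a specified variable, together with a growing weight $\langle k\rangle^{2(d+1)}$ (and, in the commutator-type term, $\langle k\rangle^{2(d+1)}\langle k_3\rangle^{-2(d+1)}$). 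We treat each term separately, bounding $T_{k,k_1,k_2,k_3}$ by Proposition~\ref{prop: kernel}.

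\textbf{Pointwise bound.} The key step is a pointwise bound on the combined weight on the resonant manifold. We use $\big(\min_{j<l}(|k_j|^2+|k_l|^2)\big)^2\big(\sum_j|k_j|^2\big)\lesssim \langle k_a\rangle^{4}\langle k_b\rangle^{4}\big(\langle k_a\rangle^2+\langle k_b\rangle^2+\langle k_c\rangle^2\big)$ for any three distinct indices, together with momentum conservation $\langle k\rangle\lesssim \langle k_1\rangle+\langle k_2\rangle+\langle k_3\rangle$. With these, all the polynomial growth (the $\langle k\rangle^{2(d+1)}$ weight and the kernel's degree $6$) can be assigned to the two $g$-type factors present in the term. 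This absorbs a loss of at most $\langle\cdot\rangle^{2(d+1)+6}$ plus the $\langle\cdot\rangle^{d+2}$ decay needed by Lemma~\ref{lem: Rep1}, i.e. at most $\langle\cdot\rangle^{4d+12}$ split among factors. This is exactly why the weight $12+4d$ appears.

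\textbf{The commutator term.} The term $g_3-g+\frac{\langle k\rangle^{2(d+1)}-\langle k_3\rangle^{2(d+1)}}{\langle k_3\rangle^{2(d+1)}}g_3$ simplifies to $\langle k\rangle^{2(d+1)}\langle k_3\rangle^{-2(d+1)}g_3-g$. Each piece is handled by putting $\langle k\rangle^{2(d+1)}\langle k\rangle^{2(d+1)}\le C(\langle k_1\rangle+\langle k_2\rangle+\langle k_3\rangle)^{4(d+1)}$ onto the $g$ factor and onto the companion factor $g_2$ or $\tilde g_1$. Here no cancellation is needed, since we only want a bound in terms of $\|\cdot\|_{\infty,12+4d}$ and $h$ already carries the decay $\langle k_j\rangle^{-2(d+1)}$.

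\textbf{$L^\infty$ and $L^1$ bounds.} For the $L^\infty$ bound, take $|h_j|\le\|h\|_\infty$. After absorbing the weights, each term is dominated by $\|f_n\|_{\infty,12+4d}\|f_{n+1}\|_{\infty,12+4d}$ times an integral of the form $\iiint\delta(\Sigma)\delta(\Omega)\langle k_a\rangle^{-d-2}\langle k_b\rangle^{-d-2}$, where $a,b$ are two of the variables. When $\{a,b\}=\{1,2\}$, this integral is bounded by the isotropic estimate \eqref{eq:general_integral_reduced} of Lemma~\ref{lem: Rep1}. Otherwise we first permute variables using the symmetries $T_{k,k_1,k_2,k_3}=T_{k_2,k_3,k,k_1}=\cdots$ of the kernel and the manifold, and then apply one of the anisotropic estimates \eqref{eq: Qf03}--\eqref{eq: Qf01}. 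For the $L^1$ bound, integrate in $k$ and use Tonelli to make the variable carrying $h$ the outer integration variable, relabelling via the symmetries, exactly as in the proofs of Lemmas~\ref{lem: QgB bd} and~\ref{lem: eq: Rn bd}. The inner integral, over the other three variables, is again bounded uniformly using Lemma~\ref{lem: Rep1}.

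\textbf{Main obstacle.} The main difficulty is bookkeeping in the terms where the decaying factors from Lemma~\ref{lem: Rep1} must be placed on variables that do not carry $g$ or $\tilde g$. An example is the term $\tilde g_2 h_3\langle k_3\rangle^{-2(d+1)}\langle k\rangle^{2(d+1)}g$, where the available factors sit at $k_2,k_3,k$. Here we must check that the decay $\langle k_3\rangle^{-2(d+1)}$ from $h$, together with the factors $g$ at $k$ and $\tilde g$ at $k_2$, provides integrability in two variables of the manifold parametrization; for instance $\langle k_2\rangle^{-d-2}\langle k\rangle^{-d-2}$ can be used via a relabelled form of \eqref{eq: Qf02}. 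We also have to handle the factor $1/|k-k_j|$ in the anisotropic bounds, using the remaining $\langle k_3\rangle^{-2(d+1)}$ decay and local integrability of $1/|x|$ in $d\ge2$. We would go through each of the six terms in turn, choosing the variable pair for Lemma~\ref{lem: Rep1} to match where the $g$ factors sit.
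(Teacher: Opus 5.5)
\paragraph{The gap: weight transfer in the commutator piece.}
Your Schur-type strategy can work, but the weight-transfer step fails as you describe it. The failure is in the commutator piece paired with $g_2h_1$. After your (correct) simplification, that piece is
\[
2\chi_{2<3}\,\frac{\langle k\rangle^{4(d+1)}}{\langle k_3\rangle^{2(d+1)}}\,g_3\,g_2\,\langle k_1\rangle^{-2(d+1)}h_1 .
\]
Its two $g$-type factors sit at $k_2$ and $k_3$. Momentum conservation with $\chi_{2<3}$ only gives $\langle k\rangle\lesssim\langle k_1\rangle+\langle k_3\rangle$. So the bound $\langle k\rangle^{4(d+1)}\le C(\langle k_1\rangle+\langle k_2\rangle+\langle k_3\rangle)^{4(d+1)}$ leaves a factor $\langle k_1\rangle^{4(d+1)}$ with no $g$ to absorb it. The result is a net growth $\langle k_1\rangle^{2(d+1)}$ on the $h$-variable, which none of \eqref{eq: Qf03}--\eqref{eq:general_integral_reduced} can handle. (The $\tilde g_1h_2$ piece is fine by momentum, since there the growth lands on $k_1$ and $k_3$.)

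This is not a bookkeeping artefact. On $\{\Sigma=0\}$ alone, take $k_1\approx-k$, $|k_2|<|k_3|=O(1)$, $|k|\to\infty$. There the weight is of size $\langle k\rangle^{2(d+1)}$, so neither your $\sup_k$ bound nor your $\sup_{k_1}$ bound closes from momentum considerations.

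\paragraph{The fix: use energy conservation.}
On $\operatorname{supp}\delta(\Omega)$ one has $\omega\le\omega_2+\omega_3$. Under $\chi_{2<3}$ this gives $\omega<2\omega_3$, i.e.\ $|k|<4|k_3|$, so $\langle k\rangle^{2(d+1)}/\langle k_3\rangle^{2(d+1)}\le 4^{2(d+1)}$. The piece is then dominated by $\langle k_3\rangle^{2(d+1)}g_3\,g_2\,\langle k_1\rangle^{-2(d+1)}|h_1|$.

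You should bound this ratio by a constant rather than discard $\langle k_3\rangle^{-2(d+1)}$. If the full $\langle k\rangle^{4(d+1)}$ goes onto a single $g$ factor, it combines with the kernel growth and the $\langle\cdot\rangle^{d/2+1}$ required by Lemma~\ref{lem: Rep1}. The total is of order $\langle\cdot\rangle^{9d/2+7}$, which exceeds the available weight $12+4d$ once $d>10$.

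More generally, do your weight transfers with $\delta(\Omega)$, not just $\delta(\Sigma)$. Each wavenumber is controlled by the opposite pair: $\langle k\rangle,\langle k_1\rangle\lesssim\langle k_2\rangle+\langle k_3\rangle$, and conversely. This also supplies decay in a variable carrying no function when you integrate against $1/|k\pm k_j|$. The paper's own pointwise bound for this piece (the factor $\langle k\rangle^{-(d+1)}\langle k_3\rangle^{3(d+1)}$) tacitly uses $|k|\lesssim|k_3|$ in exactly this way.

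\paragraph{Comparison with the paper's route.}
With this repair your argument is sound, and it differs from the paper's only in how the final $L^2$ bound is reached. The paper bounds $|\Fc_n(t)h|(k)$ pointwise, using \eqref{eq: Qf03}--\eqref{eq: Qf01} to take suprema over the two non-$h$ variables. This reduces everything to two-variable operators with kernels like $\langle k\rangle^{-(d/2+1)}\langle y\rangle^{-m}|k\pm y|^{-1}$, whose $L^2$-boundedness the paper asserts without proof.

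You instead run the Schur test on the full four-variable operator. The $\sup_k$ bound comes from Lemma~\ref{lem: Rep1}. The $\sup$ over the $h$-variable comes from Tonelli plus the in/out and within-pair symmetries of the resonant manifold, followed by Riesz--Thorin.

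Your version gives $L^1$ and $L^\infty$ boundedness of $\Fc_n(t)$ as a by-product. It also makes explicit what the paper's singular-integral estimate amounts to, since that estimate is itself a Schur test. The cost is redoing the Lemma~\ref{lem: Rep1} bookkeeping with each of $k_1,k_2,k_3$ as the fixed variable.
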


\begin{proof} 
We proceed by establishing pointwise bounds on the collision integrand. We note that by Proposition~\ref{prop: kernel}, on the support of the conservation distributions $\delta(\Sigma)\delta(\Omega)$, we have
\begin{equation} 
\begin{aligned}
    &T_{k,k_1,k_2,k_3}\chi_{2<3}\langle k\rangle^{2(d+1)}\frac{\langle k\rangle^{2(d+1)}}{\langle k_3\rangle^{2(d+1)}}\langle k_3\rangle^{-2}\langle k_2\rangle^{-4}\langle k_1\rangle^{-3-d/2}\\
    &\quad \leq \tilde{C}_{d1}C_Q\left(|k_2|^2+\min\{|k_1|^2, |k|^2\}\right)^2\langle k\rangle^{-(d+1)}\langle k_3\rangle^{3(d+1)}\langle k_2\rangle^{-4}\langle k_1\rangle^{-3-d/2}\\
    &\quad \leq 4\tilde C_{d1}C_Q\langle k\rangle^{-(d+1)}\langle k_3\rangle^{3(d+1)}
\end{aligned}
\end{equation}
and
\begin{equation} 
\begin{aligned}
    &T_{k,k_1,k_2,k_3}\chi_{2<3}\langle k\rangle^{2(d+1)}\langle k_3\rangle^{d/2+1}\langle k_2\rangle^{-4}\langle k_1\rangle^{-3-d/2}\\
    &\quad \leq \tilde C_{d2}C_Q\left(|k_2|^2+\min\{|k_1|^2, |k|^2\}\right)^2\langle k\rangle^{2(d+1)}\langle k_3\rangle^{d/2+3}\langle k_2\rangle^{-4}\langle k_1\rangle^{-3-d/2}\\
    &\quad \leq 4\tilde C_{d2}C_Q\langle k\rangle^{d/2+3+2(d+1)},
\end{aligned}
\end{equation}
for some constants $\tilde C_{d1}, \tilde C_{d2}>0$. These bounds imply that, on the support of the conservation distributions $\delta(\Sigma)\delta(\Omega)$, we obtain the first composite estimate:
\begin{equation}\label{eq: est_piece1}
\begin{split}
    & 2T_{k,k_1,k_2,k_3} \chi_{2<3}\langle k\rangle^{2(d+1)} \left|g_3(t)-g(t)+\frac{\langle k\rangle^{2(d+1)}-\langle k_3\rangle^{2(d+1)}}{\langle k_3\rangle^{2(d+1)}}g_3(t)\right| \\
    &\quad \times \left|g_2(t)\langle k_1\rangle^{-2(d+1)} h_1+\tilde g_1(t)\langle k_2\rangle^{-2(d+1)} h_2\right|\\
    &= 2T_{k,k_1,k_2,k_3} \chi_{2<3}\langle k\rangle^{2(d+1)} \left|\frac{\langle k\rangle^{2(d+1)}}{\langle k_3\rangle^{2(d+1)}}g_3(t)-g(t)\right| \\
    &\quad \times \left|g_2(t)\langle k_1\rangle^{-2(d+1)} h_1+\tilde g_1(t)\langle k_2\rangle^{-2(d+1)} h_2\right|\\
    &\leq C_1C_Q \left(\langle k\rangle^{-(d+1)}\langle k_3\rangle^{2+3(d+1)}g_3(t)+\langle k_3\rangle^{-d/2-1}\langle k\rangle^{d/2+3+2(d+1)}g(t)\right)\\
    &\quad \times \left(\langle k_2\rangle^4g_2(t)\langle k_1\rangle^{3+d/2-2(d+1)}|h_1|+\langle k_1\rangle^{3+d/2} \tilde g_1(t)\langle k_2\rangle^{4-2(d+1)}|h_2| \right),
\end{split}
\end{equation}
with $C_1 = C_1(d) = 16(\tilde C_{d1}+\tilde C_{d2})>0$. 

Similarly, utilizing the inequalities
\begin{equation}
\begin{aligned}
    & T_{k,k_1,k_2,k_3}\langle k_2\rangle^{-2-d/2-1}\langle k_3\rangle^{-2-d/2-1}\langle k\rangle^{-4}\langle k_1\rangle^{d/2+1}\\
    &\quad \leq 2C_Q \left( \min\{|k_2|,|k_3|\}+\min\{|k_1|,|k|\}\right)^2\left( \max\{|k_2|,|k_3|\}+\max\{|k_1|,|k|\}\right)\\
    &\quad \quad \times \langle k_2\rangle^{-2-d/2-1}\langle k_3\rangle^{-2-d/2-1}\langle k\rangle^{-4}\langle k_1\rangle^{d/2+1}\\
    &\quad \leq C_2C_Q
\end{aligned}
\end{equation}
and
\begin{equation}
\begin{aligned}
    &2T_{k,k_1,k_2,k_3}\langle k_2\rangle^{-4}\langle k_1\rangle^{-2-d/2-1}\langle k\rangle^{-2-d/2-1}\langle k_3\rangle^{d/2+1}\\
    &\quad \leq 4C_Q \left( \min\{|k_2|,|k_3|\}+\min\{|k_1|,|k|\}\right)^2\left( \max\{|k_2|,|k_3|\}+\max\{|k_1|,|k|\}\right)\\
    &\quad \quad \times \langle k_2\rangle^{-4}\langle k_1\rangle^{-2-d/2-1}\langle k\rangle^{-2-d/2-1}\langle k_3\rangle^{d/2+1}\\
    &\quad \leq C_3C_Q
\end{aligned}
\end{equation}
for some constants $C_2=C_2(d), C_3=C_3(d)>0$, the remaining terms are bounded as:
\begin{equation}\label{eq: est_piece2}
\begin{split}
    & T_{k,k_1,k_2,k_3}\left| \langle k_2\rangle^{-2(d+1)}h_2g_3(t)+\tilde g_2(t)\langle k_3\rangle^{-2(d+1)}h_3\right|\langle k\rangle^{2(d+1)}g(t)\\
    &\leq C_2C_Q \left(\langle k_2\rangle^{3+d/2-2(d+1)}|h_2|\langle k_3\rangle^{3+d/2} g_3(t)+\langle k_2\rangle^{3+d/2} \tilde g_2(t)\langle k_3\rangle^{3+d/2-2(d+1)}|h_3|\right)\\
    &\quad \times \langle k_1\rangle^{-(1+d/2)}\langle k\rangle^{2(d+1)+4}g(t),
\end{split}
\end{equation}
and 
\begin{equation}\label{eq: est_piece3}
\begin{split}
    & 2T_{k,k_1,k_2,k_3}\chi_{2\geq 3}\left|\langle k_1\rangle^{-2(d+1)} h_1 g_2(t)+\tilde g_1(t)\langle k_2\rangle^{-2(d+1)} h_2\right|\langle k\rangle^{2(d+1)}g(t)\\
    &\leq C_3C_Q\left(\langle k_1\rangle^{3+d/2-2(d+1)}|h_1|\langle k_2\rangle^4 g_2(t)+\langle k_1\rangle^{3+d/2}\tilde g_1(t)\langle k_2\rangle^{4-2(d+1)}|h_2| \right)\\
    &\quad \times \langle k_3\rangle^{-(1+d/2)}\langle k\rangle^{2(d+1)+3+d/2}g(t).
\end{split}
\end{equation}
By virtue of \eqref{eq: Qf03}, \eqref{eq: Qf02}, and \eqref{eq: Qf01}, substituting these pointwise bounds yields the composite estimate:
\begin{equation}\label{eq: Fn_pointwise_bound}
\begin{split}
    |\Fc_n(t)h| &\leq C_{d1} \int_{\mathbb R^d} \frac{ \left(\|g(t)\|_{\infty,6+7d/2}\|g(t)\|_{\infty,5+d/2}\langle k\rangle^{-d-1}+\|g(t)\|_{\infty,5+d/2}\langle k\rangle^{5+5d/2}g(t)\right)\langle k_1\rangle^{3+d/2-2(d+1)}|h_1| }{|k+k_1|} \,dk_1 \\
    &\quad + C_{d1} \int_{\mathbb R^d} \frac{ \left(\|g(t)\|_{\infty,6+7d/2}\|\tilde g(t)\|_{\infty,4+d}\langle k\rangle^{-d-1}+\|\tilde g(t)\|_{\infty,4+d}\langle k\rangle^{5+5d/2}g(t)\right)\langle k_2\rangle^{4-2(d+1)}|h_2| }{|k-k_2|} \,dk_2 \\
    &\quad + C_{d2} \int_{\mathbb R^d} \frac{ \|g(t)\|_{\infty,4+d}\langle k\rangle^{2d+6}g(t)\langle k_2\rangle^{3+d/2-2(d+1)}|h_2| }{|k-k_2|} \,dk_2 \\
    &\quad + C_{d2} \int_{\mathbb R^d} \frac{ \|\tilde g(t)\|_{\infty,4+d}\langle k\rangle^{6+2d}g(t)\langle k_3\rangle^{3+d/2-2(d+1)}|h_3| }{|k-k_3|} \,dk_3 \\
    &\quad + C_{d3} \int_{\mathbb R^d} \frac{ \|g(t)\|_{\infty,5+d/2}\langle k\rangle^{5d/2+5}g(t)\langle k_1\rangle^{3+d/2-2(d+1)}|h_1| }{|k+k_1|} \,dk_1 \\
    &\quad + C_{d3} \int_{\mathbb R^d} \frac{ \|\tilde g(t)\|_{\infty,4+d}\langle k\rangle^{5+5d/2}g(t)\langle k_2\rangle^{4-2(d+1)}|h_2| }{|k-k_2|} \,dk_2,
\end{split}
\end{equation}
for some constants $C_{dj} = C_d C_j C_Q > 0$, where $j=1,2,3$. 

Finally, we apply the weighted singular integral estimate for $d \geq 2$:
\begin{equation}
  \left\| \int_{\mathbb R^d}\frac{\langle k\rangle^{-(d/2+1)}\langle y\rangle^{4-2(d+1)}|h(y)|}{|k\pm y|} \,dy \right\|_2 \leq  \left\| \int_{\mathbb R^d}\frac{\langle k\rangle^{-(d/2+1)}\langle y\rangle^{3+d/2-2(d+1)}|h(y)|}{|k\pm y|} \,dy \right\|_2 \leq C_{d4} \|h\|_2,
\end{equation}
for some uniform constant $C_{d4}>0$. Taking the $L^2$-norm of \eqref{eq: Fn_pointwise_bound} and directly applying this singular integral bound yields the desired estimate \eqref{eq: est Fn}, with the overarching constant
\begin{equation}
    C_{F,d} \coloneqq 2C_{d4}\left(\sum\limits_{j=1}^3 C_{dj}\right).
\end{equation}
\end{proof}

\begin{lem}\label{lem: tUn h} 
For all $h\in L^2\cap L^\infty$ and $0\leq s<t\leq T$, we have
\begin{equation}\label{eq: est tUnh}
    \|\tilde U_{n}(t,s)h\|_{\infty} \leq \exp\left((t-s)(C_b+\tilde C_a)\sup_{t\in [0,T]} \| f_n(t)\|_{\infty,10+2d}^2\right)\|h\|_{\infty}.
\end{equation}  
\end{lem}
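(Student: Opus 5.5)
We prove the bound first for the regularized propagators and then pass to the limit. Recall the convention: $\tilde U_n(t,s)$ is the strong limit as $\epsilon\downarrow0$ of the propagators $\tilde U_{n,\epsilon}(t,s)$ generated by
\begin{equation*}
\Qc_{f_n,D,\epsilon}(t)+\Qc_{f_n,b}(t)+\Rc_n(t).
\end{equation*}
Each such generator is bounded on $L^2$ and on $L^\infty$. It therefore suffices to prove the $\epsilon$-uniform estimate
\begin{equation*}
\|\tilde U_{n,\epsilon}(t,s)h\|_\infty\le e^{(t-s)M}\|h\|_\infty,\qquad M:=(C_b+\tilde C_a)\sup_{\tau\in[0,T]}\|f_n(\tau)\|_{\infty,10+2d}^2 .
\end{equation*}
Indeed, strong $L^2$ convergence gives a.e.\ convergence along a subsequence, and the $L^\infty$ bound is lower semicontinuous under a.e.\ limits, so the estimate passes to $\tilde U_n(t,s)h$.

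For fixed $\epsilon>0$, we split the generator into the dissipative part $A_\epsilon(t):=\Qc_{f_n,D,\epsilon}(t)$ and the bounded part $B(t):=\Qc_{f_n,b}(t)+\Rc_n(t)$.

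By Lemma~\ref{lem: QgB bd} with $s=0$, $p=\infty$, and by Lemma~\ref{lem: eq: Rn bd}, we have $\|B(t)\|_{\infty\to\infty}\le M$.

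For $A_\epsilon(t)$, we show that its propagator $V_\epsilon(t,s)$ is an $L^\infty$ contraction. Write
\begin{equation*}
A_\epsilon(t)h = K_\epsilon(t)h - m_\epsilon(t)h, \qquad K_\epsilon(t)h:=2\iiint e^{-\epsilon(|k|+|k_3|)}T\,\delta(\Sigma)\delta(\Omega)\chi_{2<3}\chi_{1<0}g_1g_2\,h_3 .
\end{equation*}
Here $m_\epsilon(t,k)=K_\epsilon(t)1\ge0$, because $g=f_n\ge0$. The operator $K_\epsilon(t)$ is positivity preserving, and it is bounded: the factor $e^{-\epsilon(|k|+|k_3|)}$ absorbs the $|k|^2$ growth of the kernel from Proposition~\ref{prop: kernel}, and Lemma~\ref{lem: Rep1} then applies. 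Hence $V_\epsilon$ is given by the Dyson series of $K_\epsilon$ conjugated by the multiplicative semigroup $e^{-\int m_\epsilon}$. So $V_\epsilon(t,s)$ is positivity preserving.

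Moreover $V_\epsilon(t,s)1\le1$. Constants are annihilated by $A_\epsilon$, since the integrand contains $h_3-h$. Thus $u(t)=V_\epsilon(t,s)1$ satisfies $\partial_t u=A_\epsilon u$ with $u(s)=1$, and uniqueness of the bounded-generator ODE in $L^\infty$ gives $u\equiv1$. To be safe, we avoid relying on $1\in L^2$ and instead argue by a comparison or maximum principle directly in $L^\infty$, where the bounded generator makes the ODE well posed.

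Positivity together with $V_\epsilon 1=1$ gives $|V_\epsilon h|\le V_\epsilon|h|\le\|h\|_\infty$. Hence $\|V_\epsilon(t,s)\|_{\infty\to\infty}\le1$.

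Next we treat $B(t)$ as a bounded perturbation via Duhamel's formula:
\begin{equation*}
\tilde U_{n,\epsilon}(t,s)h=V_\epsilon(t,s)h+\int_s^tV_\epsilon(t,\tau)B(\tau)\tilde U_{n,\epsilon}(\tau,s)h\,d\tau .
\end{equation*}
Set $\phi(t)=\|\tilde U_{n,\epsilon}(t,s)h\|_\infty$. Then $\phi(t)\le\|h\|_\infty+M\int_s^t\phi(\tau)\,d\tau$, and Gr\"onwall's inequality yields $\phi(t)\le e^{(t-s)M}\|h\|_\infty$, uniformly in $\epsilon$. Equivalently, one can iterate the Dyson series in $B$ and sum $\sum_j M^j(t-s)^j/j!$. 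Taking $\epsilon\downarrow0$ as described above gives \eqref{eq: est tUnh}.

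The main obstacle is the rigorous $L^\infty$ contractivity of $V_\epsilon$. Lemma~\ref{lem: QgD diss} gives dissipativity only for $p<\infty$. We could try to pass $p\to\infty$ in $\|V_\epsilon h\|_p\le\|h\|_p$ for $h\in L^2\cap L^\infty$. This also needs $h\in L^p$ for large $p$, which interpolation provides, and it requires the $L^p$ dissipativity to hold uniformly in $\epsilon$ for the regularized operator. That uniformity follows from the same symmetrization proof, since the factor $e^{-\epsilon(|k|+|k_3|)}$ is symmetric under $(k,k_1)\leftrightarrow(k_3,k_2)$. I would use this $p\to\infty$ route as a cross-check of the positivity argument.
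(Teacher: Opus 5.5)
Your proof is correct, but it obtains the $L^\infty$ bound by a different mechanism from the paper's.

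The paper works in $L^p$ with $p<\infty$. For $h\ge 0$ it differentiates $\|u(t)\|_p^p$ along $u=\tilde U_n(t,s)h$ and drops the dissipative contribution via the pairing with $u^{p-1}$ (Lemma~\ref{lem: QgD diss}). It bounds the rest by H\"older and the $p$-uniform operator norms from Riesz--Thorin, applies Gr\"onwall, and only at the end lets $p\to\infty$ using $\|u\|_\infty=\lim_{p\to\infty}\|u\|_p$. Its text calls all of $\Qc_n(t)$ dissipative, but the constant $C_b+\tilde C_a$ shows that $\Qc_{f_n,b}$ is really treated as a bounded perturbation, exactly as in your splitting.

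You instead supply the $p=\infty$ endpoint that Lemma~\ref{lem: QgD diss} does not cover. The regularized dissipative generator is a nonnegative kernel minus its own row sum $K_\epsilon 1$. So its propagator $V_\epsilon$ is positivity preserving and fixes constants, hence contracts $L^\infty$. Duhamel and Gr\"onwall then handle $\Qc_{f_n,b}+\Rc_n$ purely through operator norms.

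What your route gains:
\begin{itemize}
\item It needs positivity only of $V_\epsilon$, which is immediate from $T\ge 0$ and $f_n\ge 0$, and it needs no reduction to $h\ge 0$. The paper invokes positivity of the full $\tilde U_n$. That is true but less obvious: the first part of $\Rc_n$ has a nonpositive kernel (since $|k_3|>|k|$ on the support of $\chi_{2<3}\chi_{1\geq 0}$). It only combines with the gain part of $\Qc_{f_n,b}$ into the nonnegative weight $\langle k\rangle^a/\langle k_3\rangle^a$.
\item You make the $\epsilon$-regularization and the limit $\epsilon\downarrow 0$ explicit. The paper differentiates norms along the unregularized flow.
\end{itemize}

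What the paper's route gains: it never has to run the evolution on $L^\infty$ or apply it to $1\notin L^2$, and it yields bounds in every $L^p$ uniformly in $p$. Your fallback of sending $p\to\infty$ in $\|V_\epsilon h\|_p\le\|h\|_p$ is precisely the paper's argument restricted to the dissipative part, and it removes that technicality.
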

\begin{proof} 
It suffices to prove the $h\geq 0$ case. Let $u(t) = \tilde U_n(t,s)h$ be the solution to the evolution equation $\partial_t u(t) = \Qc_n(t)u(t) + \Rc_n(t)u(t)$ with the initial condition $u(s) = h$. Since $h \geq 0$ and $\tilde U(t,s)$ is positivity-preserving, we have $u(t) \geq 0$ for all $t \in [s, T]$.

We first establish the energy estimate in $L^p$ for any finite $p \in [2, \infty)$. Taking the time derivative of the $L^p$ norm and using the standard duality mapping (noting that $u \geq 0$, so $|u|^{p-2}u = u^{p-1}$), we obtain:
\begin{equation}
    \frac{1}{p} \frac{d}{dt} \|u(t)\|_p^p = ( \partial_t u, u^{p-1} )_{L^2} = ( \Qc_n(t)u, u^{p-1} )_{L^2} + (\Rc_n(t)u, u^{p-1})_{L^2}.
\end{equation}
By the condition that $\Qc_n(t)$ is dissipative on $L^p$, its duality pairing is non-positive, i.e., $$( \Qc_n(t)u, u^{p-1} )_{L^2} \leq 0.$$ 
Thus, applying H\"older's inequality to the remaining term yields:
\begin{equation}
    \frac{1}{p} \frac{d}{dt} \|u(t)\|_p^p \leq ( \Rc_n(t)u, u^{p-1} )_{L^2} \leq \|\Rc_n(t)u\|_p \|u^{p-1}\|_{p'} = \|\Rc_n(t)u\|_p \|u\|_p^{p-1},
\end{equation}
where $1/p + 1/p' = 1$. This simplifies to the differential inequality:
\begin{equation}
    \frac{d}{dt} \|u(t)\|_p \leq \|\Rc_n(t)u(t)\|_p \leq \|\Rc_n(t)\|_{p \to p} \|u(t)\|_p.
\end{equation}
Using Gr\"onwall's inequality, we obtain the $L^p$ bound:
\begin{equation}
    \|u(t)\|_p \leq \|h\|_p \exp\left( \int_s^t \|\Rc_n(\tau)\|_{p \to p} \, d\tau \right) \leq \|h\|_p \exp\left( (t-s)\sup_{\tau \in [0,T]} \|\Rc_n(\tau)\|_{p \to p} \right).
\end{equation}
This, together with Lemma~\ref{lem: eq: Rn bd} and $\|u\|_{\infty}=\lim_{p\to \infty}\|u\|_p$ for all $u\in L^2\cap L^\infty$, yields~\eqref{eq: est tUnh}. This completes the proof.
\end{proof}

Let $U_{n,2}(t,s)$, for $0<s<t<T$, denote the propagator generated by $\Qc_{n+1}(t)+\tilde{\Qc}_{n+1}(t)+\Rc_{n+1}(t)$. Using an argument similar to that in Lemma~\ref{lem: tUn h}, we have 
\begin{cor}\label{cor: est tUn2h}
\begin{equation}\label{eq: est tUn2h}
\|U_{n,2}(t,s)h\|_{2}
\leq
\exp\left((t-s)\tilde C_a\sup_{t\in [0,T]} \| f_n(t)\|_{\infty,10+2d}^2\right)\|h\|_{2},
\qquad \forall\, h\in L^2.
\end{equation}
\end{cor}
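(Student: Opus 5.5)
We argue as in Lemma~\ref{lem: tUn h}, now in the energy space $L^2$, with the supplementary operator $\tilde{\Qc}_{n+1}(t)$ absorbed by its dissipativity. Fix $h\in L^2$ and set $u(t)=U_{n,2}(t,s)h$, the solution of
\[
\partial_t u=\big(\Qc_{n+1}(t)+\tilde{\Qc}_{n+1}(t)+\Rc_{n+1}(t)\big)u,\qquad u(s)=h.
\]
Following the global convention of the paper, we first work with the $\epsilon$-regularized generator, in which the dissipative parts of $\Qc_{n+1}(t)$ and $\tilde{\Qc}_{n+1}(t)$ are multiplied by $e^{-\epsilon(|k|+|k_3|)}$. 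Each regularized operator is then bounded on $L^2$, so $\|u_\epsilon(t)\|_2^2$ is differentiable and all pairings below make sense. We prove the bound for $u_\epsilon$ uniformly in $\epsilon$ and pass to the strong limit. Because we work in $L^2$, no positivity reduction is needed, since $(v,v)_{L^2}$ does not require $v\ge0$. We compute
\[
\frac12\frac{d}{dt}\|u\|_2^2=(u,\Qc_{n+1}(t)u)_{L^2}+(u,\tilde{\Qc}_{n+1}(t)u)_{L^2}+(u,\Rc_{n+1}(t)u)_{L^2}.
\]

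We split $\Qc_{n+1}(t)=\Qc_{f_{n+1},D}(t)+\Qc_{f_{n+1},b}(t)$. The symmetrization computation of Lemma~\ref{lem: QgD diss} with $p=2$ gives $(u,\Qc_{f_{n+1},D}u)\le0$. The same computation survives regularization, since the factor $e^{-\epsilon(|k|+|k_3|)}$ is invariant under the swap $(k,k_1)\leftrightarrow(k_3,k_2)$. The antisymmetry argument of Lemma~\ref{lem: tQn diss} gives $(u,\tilde{\Qc}_{n+1}u)=0$, and this too is stable under the symmetric regularization. By Cauchy--Schwarz and Lemma~\ref{lem: eq: Rn bd} we get $(u,\Rc_{n+1}u)\le\tilde C_a\|f_{n+1}(t)\|_{\infty,10+2d}^2\|u\|_2^2$.

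The bounded part $\Qc_{f_{n+1},b}$ would contribute a further factor $C_b$ by Lemma~\ref{lem: QgB bd}. The stated exponent contains only $\tilde C_a$, so I would read the corollary with the generator understood as in \eqref{eq: DE afn+1}. In that reading either the $C_b$ contribution is absorbed into $\tilde C_a$ by enlarging the constant, or, as in Lemma~\ref{lem: tUn h}, only the dissipative part of $\Qc_{n+1}$ is relevant to the estimate. In either case the differential inequality reads
\[
\frac{d}{dt}\|u\|_2\le C\|f_{n+1}(t)\|_{\infty,10+2d}^2\|u\|_2 .
\]

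Grönwall's inequality on $[s,t]$ then yields \eqref{eq: est tUn2h}, and the bound passes to the strong limit $\epsilon\downarrow0$ by lower semicontinuity of the norm. The supremum over $t$ may be taken of $\|f_n\|$ or of $\|f_{n+1}\|$, because both are controlled by the uniform bound \eqref{eq: uniform bound fn}.

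The main obstacle is the bookkeeping of constants and indices. One must decide whether $C_b$ genuinely appears and whether the index is $n$ or $n+1$. The analytic content, namely dissipativity plus boundedness followed by Grönwall, is routine given Lemmas~\ref{lem: QgD diss}, \ref{lem: tQn diss} and~\ref{lem: eq: Rn bd}.
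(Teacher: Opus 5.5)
Your route is the paper's own: its proof only points to the argument of Lemma~\ref{lem: tUn h}. Your analytic ingredients are sound. At $p=2$ the symmetrization of Lemma~\ref{lem: QgD diss} gives $(u,\Qc_{f_{n+1},D}(t)u)_{L^2}\le 0$, which uses $f_{n+1}\ge 0$. The swap $(k,k_1)\leftrightarrow(k_3,k_2)$ leaves $\chi_{2<3}\chi_{1<0}$ and $e^{-\epsilon(|k|+|k_3|)}$ invariant, so $(u,\tilde\Qc_{n+1}(t)u)_{L^2}=0$. Lemma~\ref{lem: eq: Rn bd} then controls $\Rc_{n+1}(t)$ on $L^2$.

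The gap is in the last step, where you try to reach the stated exponent. What your computation actually proves is
\[
\|U_{n,2}(t,s)h\|_2\le \exp\Big((t-s)\,(C_b+\tilde C_a)\sup_{\tau\in[0,T]}\|f_{n+1}(\tau)\|_{\infty,10+2d}^2\Big)\|h\|_2 .
\]
Neither of your two ``readings'' turns this into the displayed statement.

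\textbf{Discarding the bounded part.} You cannot drop $\Qc_{f_{n+1},b}(t)$: it is part of the generator, and it is not dissipative. The term $g_2(t)g_3(t)h$ in \eqref{eq: Qbg} is multiplication by the nonnegative function $\iiint T_{k,k_1,k_2,k_3}\delta(\Sigma)\delta(\Omega)g_2g_3\,dk_1\,dk_2\,dk_3$, the stimulated-emission term. It makes a nonnegative, generally nonzero, contribution to $\frac{d}{dt}\|u\|_2^2$. That contribution is exactly what Lemma~\ref{lem: QgB bd} and the constant $C_b$ pay for. Enlarging $\tilde C_a$ to $C_b+\tilde C_a$ is harmless, but then you are proving a different inequality, not reinterpreting the stated one.

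\textbf{Swapping $f_{n+1}$ for $f_n$.} The bound \eqref{eq: uniform bound fn} lets you replace $\sup_\tau\|f_{n+1}(\tau)\|_{\infty,10+2d}^2$ by $E^2$, with $E=2\|f_0\|_{L^2_{22+5d}\cap L^\infty_{12+4d}}$ as in the proof of Theorem~\ref{thm: LWP}. It does not let you replace it by $\sup_\tau\|f_n(\tau)\|_{\infty,10+2d}^2$: a common upper bound does not make one quantity dominate the other.

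The $\tilde C_a$-only form with $f_n$ is what one gets by treating all of $\Qc_{n+1}(t)$ as dissipative. That is how the written proof of Lemma~\ref{lem: tUn h} proceeds, even though that lemma's statement keeps $C_b+\tilde C_a$. Only $\Qc_{g,D}(t)$ is shown to be dissipative, so your second reading reproduces that gloss rather than justifying it.

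What your argument honestly delivers is the display above, or its consequence $\|U_{n,2}(t,s)h\|_2\le e^{(t-s)(C_b+\tilde C_a)E^2}\|h\|_2$. That is all the contraction step in the proof of Theorem~\ref{thm: LWP} uses, provided $\tilde C_a$ is replaced by $C_b+\tilde C_a$ in the exponent and in the smallness condition on $T$.
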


\begin{lem}\label{lem: infty bd} 
For all $h\in L^2_{22+5d}$, 
\begin{equation}
    \|\tilde{\Qc}_{n}(t)\langle k\rangle^{12+4d}h\|_{\infty} \leq \tilde C_b \left(\sup_{t\in (0,T]}\|f_n(t)\|_{\infty,10+2d}^2\right)\|h\|_{2,22+5d}
\end{equation}
holds true for some constant $\tilde C_b = \tilde C_b(C_Q,d)>0$.
\end{lem}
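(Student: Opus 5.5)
The plan is to bound $\tilde{\Qc}_n(t)\langle k\rangle^{12+4d}h$ pointwise in $k$, using Proposition~\ref{prop: kernel}, the mean value theorem for the weight difference, and Cauchy--Schwarz in the $k_3$ variable to bring in $\|h\|_{2,22+5d}$. Write $a=12+4d$ and $H:=\langle k\rangle^{a}h$, so that
\[
\tilde{\Qc}_n(t)H(k)=2\iiint T_{k,k_1,k_2,k_3}\frac{\langle k\rangle^{a}-\langle k_3\rangle^{a}}{\langle k\rangle^{a/2}\langle k_3\rangle^{a/2}}\delta(\Sigma)\delta(\Omega)\chi_{2<3}\chi_{1<0}g_1g_2\langle k_3\rangle^{a}h_3\,dk_1\,dk_2\,dk_3 .
\]

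\textbf{Step 1: weight difference.} On the support of $\chi_{2<3}\chi_{1<0}\delta(\Sigma)\delta(\Omega)$, Lemma~\ref{lem: basic kernel} gives $||k|-|k_3||\le |k_1-k_2|$, and $\langle k\rangle\sim\langle k_3\rangle$ up to additive errors of size $\langle k_1\rangle+\langle k_2\rangle$. The mean value theorem then yields
\[
\Big|\frac{\langle k\rangle^{a}-\langle k_3\rangle^{a}}{\langle k\rangle^{a/2}\langle k_3\rangle^{a/2}}\Big|\le C_a\frac{|k_1-k_2|}{\langle k_3\rangle}\,(\langle k_1\rangle+\langle k_2\rangle)^{a/2}.
\]
The extra factor $(\langle k_1\rangle+\langle k_2\rangle)^{a/2}$ covers the non-separated regime. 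I will avoid it where possible by splitting into $|k_1|+|k_2|\le |k_3|/10$ and its complement. In the complement, all momenta are comparable to $\max(|k_1|,|k_2|)$, so any weight on $k$ or $k_3$ can be transferred to $g_1g_2$.

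\textbf{Step 2: kernel bound.} Proposition~\ref{prop: kernel} gives $T\le C_Q(|k_1|^2+|k_2|^2)^2(|k|^2+|k_3|^2)$ on the support. So, in the separated regime, the integrand is bounded by
\[
C\,\langle k_1\rangle^{5}\langle k_2\rangle^{5}g_1g_2\,\langle k_3\rangle^{1+a}|h_3|.
\]
The factor $\langle k_3\rangle^{1+a}$ is $\langle k_3\rangle^{13+4d}$, and the weight available in $h$ is $22+5d$. This leaves decay $\langle k_3\rangle^{-(9+d)}$, and after Cauchy--Schwarz $\langle k_3\rangle^{-(9+d)}$ is square-integrable. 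In the non-separated regime, $\langle k_3\rangle^{13+4d}$ is dominated by powers of $\langle k_1\rangle,\langle k_2\rangle$. These are absorbed by $\|g\|_{\infty,10+2d}^2$, possibly at the cost of leaving only $\langle k_3\rangle^{22+5d}|h_3|$ paired against decay from $g_1g_2$; the extra $k_3$ decay is again obtained by the comparability.

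\textbf{Step 3: integration against the resonant manifold.} I then apply \eqref{eq: Qf03} of Lemma~\ref{lem: Rep1}, which gives
\[
|\tilde{\Qc}_n H(k)|\le C\|g\|_{\infty,10+2d}^2\int_{\mathbb R^d}\frac{\langle k_3\rangle^{13+4d}|h_3|}{|k-k_3|}\,dk_3 .
\]
Here $\langle k_1\rangle^{d/2+1}\langle k_2\rangle^{d/2+1}\langle k_1\rangle^5\langle k_2\rangle^5 g_1g_2\le\|g\|_{\infty,10+2d}^2$, since $6+d/2\le 10+2d$. Cauchy--Schwarz then gives
\[
\int\frac{\langle k_3\rangle^{13+4d}|h_3|}{|k-k_3|}\,dk_3\le\|h\|_{2,22+5d}\Big(\int\frac{\langle k_3\rangle^{-2(9+d)}}{|k-k_3|^2}\,dk_3\Big)^{1/2},
\]
and the last integral is bounded uniformly in $k$ because $|k-k_3|^{-2}$ is locally integrable for $d\ge 3$.

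\textbf{Main obstacle.} The case $d=2$ is the hard one, since $|k-k_3|^{-2}$ is not locally integrable there. For $d=2$ I would avoid squaring the singular factor: split $|k-k_3|\le 1$ versus $>1$, and on the near region use H\"older with $L^{q}$, $q<2$, on $|k-k_3|^{-1}$. This requires $h$ in $L^{q'}$ locally with $q'>2$, which $L^2$ alone does not give. So my fallback is to avoid \eqref{eq: Qf03} and instead integrate out $k_3$ via the delta functions first, as in \eqref{eq: Qf01} or the isotropic bound \eqref{eq:general_integral_reduced} in the variables $(k_1,k_2)$. That leaves $k_3=k+k_1-k_2$ and places the singular Jacobian on $k_1$ or $k_2$, where $g\in L^\infty$ with strong weights absorbs it. After that, Cauchy--Schwarz in $k_2$ (with $k_1$ fixed) costs only $\|h\|_{2,22+5d}$ times a weighted $L^2$ norm of $g$, which is controlled by $\|g\|_{\infty,10+2d}$.
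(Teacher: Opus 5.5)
Your route works for $d\ge 3$, but it has a genuine gap at $d=2$, which the statement includes (and which is the physical case). The gap is self-inflicted. Step~1 correctly produces the factor $|k_1-k_2|$, which on $\Sigma=0$ equals $|k-k_3|$. In Step~2, however, you absorb it into $\langle k_1\rangle^5\langle k_2\rangle^5$, and that discards exactly the factor that neutralizes the singularity in \eqref{eq: Qf03}. If you keep it, it comes out of the $L^\infty_{k_1,k_2}$ norm, because $|k-k_3|$ does not depend on $(k_1,k_2)$. It then cancels the $1/|k-k_3|$ and gives
\[
\bigl|\tilde{\Qc}_n(t)\langle k\rangle^{12+4d}h(k)\bigr|\le C\|f_n(t)\|_{\infty,5+d/2}^2\int_{\mathbb R^d}\langle k_3\rangle^{13+4d}|h_3|\,dk_3\le C\|f_n(t)\|_{\infty,5+d/2}^2\,\bigl\|\langle \cdot\rangle^{-(9+d)}\bigr\|_{2}\,\|h\|_{2,22+5d}
\]
uniformly in $k$, for every $d\ge 2$, with no singular integral left. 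This is the paper's proof. Your regime splitting is also unnecessary. On the support of $\chi_{2<3}\chi_{1<0}\delta(\Omega)$ one has $\omega_3\le\omega_2+\omega_3=\omega+\omega_1<2\omega$ and likewise $\omega<2\omega_3$. Hence $|k|/4<|k_3|<4|k|$, and the weight difference is simply $\lesssim |k-k_3|/\langle k\rangle$, with no factor $(\langle k_1\rangle+\langle k_2\rangle)^{a/2}$.

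Your $d=2$ fallback does not repair this, for two reasons.
\begin{enumerate}
\item The bounds \eqref{eq: Qf02}, \eqref{eq: Qf01} and \eqref{eq:general_integral_reduced} all take an $L^\infty$ norm in $k_3$, i.e.\ they need $\sup|h|$, which $h\in L^2_{22+5d}$ does not give.
\item ``Cauchy--Schwarz in $k_2$ with $k_1$ fixed'' ignores $\delta(\Omega)$. For fixed $k_1$ this confines $k_2$ to a hypersurface, and an $L^2$ function has no controlled restriction there.
\end{enumerate}
More fundamentally, the singularity is not an artifact of the order of integration. With $p=k-k_3$ and $k_1=k_2-p$, one has $\Omega\approx p\cdot(\nabla\omega(k)-\nabla\omega(k_2))$. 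So for positive $g$, the $k_3$-marginal of $\delta(\Sigma)\delta(\Omega)\chi_{2<3}\chi_{1<0}g_1g_2$ is of size $|k-k_3|^{-1}$ near $k_3=k$. Meanwhile $h\mapsto\int|h(y)|\,|k-y|^{-1}\,dy$ is unbounded from $L^2(\mathbb R^2)$ to $L^\infty$; take $|h(y)|=|y-k|^{-1}(\log|y-k|^{-1})^{-1}$ near $y=k$. Consequently, once the integrand has been bounded by $C\|g\|_{\infty,10+2d}^2\langle k_3\rangle^{13+4d}|h_3|$, no rearrangement can give the estimate in $d=2$. The missing idea is to keep the vanishing factor $|k-k_3|$ coming from the commutator weight.
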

\begin{proof} 
 On the support of $\delta(\Sigma)\delta(\Omega)$, we have
\begin{equation}
\chi_{2<3}\chi_{1<0}\frac{\big| \langle k\rangle^a-\langle k_3\rangle^a \big|}{\langle k\rangle^{a/2}\langle k_3\rangle^{a/2}} \leq C_2\frac{\big| |k|-|k_3| \big|}{\langle k\rangle} \leq C_2\frac{|k_1-k_2|}{\langle k\rangle} \qquad \text{ for some }C_2>0.
\end{equation}
Together with the estimate~\eqref{eq: est kernel}, this yields, on the support of $\delta(\Sigma)\delta(\Omega)$, 
\begin{equation}
    T_{k,k_1,k_2,k_3} \chi_{2<3}\chi_{1<0}\frac{\big| \langle k\rangle^a-\langle k_3\rangle^a \big|}{\langle k\rangle^{a/2}\langle k_3\rangle^{a/2}} \leq C_3C_Q (|k_1|^2+|k_2|^2)^2|k_1-k_2||k_3| \qquad \text{ for some }C_3>0,
\end{equation}
and consequently, with $g(t)=f_n(t)$, 
\begin{equation}
    \begin{aligned}
        |\tilde{\Qc}_n(t)\langle k\rangle^{12+4d}h| &\leq 2C_3C_Q \iiint_{\mathbb{R}^{3d}}(|k_1|^2+|k_2|^2)^2|k_1-k_2||k_3| \\
        &\qquad\qquad\times\delta(\Sigma)\delta(\Omega)g_1(t)g_2(t)\langle k_3\rangle^{12+4d}|h_3| \,dk_1\,dk_2\,dk_3\\
        &= 2C_3C_Q \iiint_{\mathbb{R}^{3d}}|k-k_3|(|k_1|^2+|k_2|^2)^2|k_3| \\
        &\qquad\qquad\times\delta(\Sigma)\delta(\Omega)g_1(t)g_2(t)\langle k_3\rangle^{12+4d}|h_3| \,dk_1\,dk_2\,dk_3.
    \end{aligned}
\end{equation}
Integrating out the variable $k_1$ by enforcing the momentum conservation constraint imposed by the Dirac delta function $\delta(\Sigma)$ and using~\eqref{eq: Qf03}, yields
\begin{equation}
    \begin{aligned}
        |\tilde{\Qc}_n(t)\langle k\rangle^{12+4d}h| &\leq 2C_3C_Q C_d\int_{\mathbb{R}^{d}}\|(|k_1|^2+|k_2|^2)^2\langle k_1\rangle^{d/2+1}\langle k_2\rangle^{d/2+1} g_1(t) g_2(t)\|_{L^\infty_{k_1,k_2}(\mathbb R^{2d})}\\
        &\times\langle k_3\rangle^{13+4d}|h_3|\,dk_3 \\
        &\leq 8C_3C_Q C_d\|f_n(t)\|_{\infty,5+d/2}^2\int_{\mathbb{R}^d}\langle k_3\rangle^{13+4d}|h_3|dk_3\\
        &\leq  8C_3C_Q  C_d\tilde C_d'\|f_n(t)\|_{\infty,5+d/2}^2\|h\|_{2,22+5d}
    \end{aligned}
\end{equation}
for some constants $\tilde C_d, \tilde C_{d}'>0$. This completes the proof.\end{proof}

\begin{proof}[Proof of Lemma~\ref{lem: Rep1}] 
First, we prove \eqref{eq: Qf03}. We estimate 
\begin{equation}
    |\tilde{\Qc}_{F1}(k)| \leq \tilde{\Qc}_{F1,<}(k) + \tilde{\Qc}_{F1,>}(k),
\end{equation}
where 
\begin{equation}
    \tilde{\Qc}_{F1,<}(k) \coloneqq \iiint_{\mathbb{R}^{3d}} \delta(\Sigma)\delta(\Omega)\chi_{2<1}|F(k,k_1,k_2,k_3)| \,dk_1 \,dk_2 \,dk_3
\end{equation}
and
\begin{equation}
    \tilde{\Qc}_{F1,>}(k) \coloneqq \iiint_{\mathbb{R}^{3d}} \delta(\Sigma)\delta(\Omega)\chi_{2>1}|F(k,k_1,k_2,k_3)| \,dk_1 \,dk_2 \,dk_3.
\end{equation}
Since the diagonal set $\{ (k_1, k_2, k_3) : k = k_3 \}$ constitutes a lower-dimensional manifold (corresponding to the trivial forward scattering configuration) with Lebesgue measure zero, its contribution to the integral vanishes identically. Thus, we have
\begin{equation}
    \iiint_{\mathbb{R}^{3d}} \delta(\Sigma)\delta(\Omega)\chi(|k-k_3|=0)\chi_{1<2}F(k,k_1,k_2,k_3) \,dk_1 \,dk_2 \,dk_3 = 0,
\end{equation}
which allows us to insert the indicator function $\chi(|k-k_3|>0)$ and rewrite $\tilde{\Qc}_{F1,<}(k)$ as:
\begin{equation}
    \tilde{\Qc}_{F1,<}(k) = \iiint_{\mathbb{R}^{3d}} \delta(\Sigma)\delta(\Omega)\chi(|k-k_3|>0)\chi_{2<1}F(k,k_1,k_2,k_3) \,dk_1 \,dk_2 \,dk_3.
\end{equation}
We first integrate out the variable $k_1$ by enforcing the momentum conservation constraint imposed by the Dirac delta distribution $\delta(\Sigma)$:
\begin{equation}
    \tilde{\Qc}_{F1,<}(k) = \iint_{\mathbb{R}^{2d}} \delta\big(\Delta \omega(k, k_2,k_3)\big) \chi(|k-k_3|>0)\chi_{2<1}F(k, k_2+k_3-k, k_2, k_3) \,dk_2\,dk_3,
\end{equation}
where the energy resonance mismatch function is defined as:
\begin{equation}\label{eq: A1}
    \Delta \omega(k, k_2, k_3) = \omega + \sqrt{|k_2+k_3-k|} - \omega_2 - \omega_3.
\end{equation}
To evaluate the integration with respect to $k_2$ in the presence of $\delta(\Delta \omega)$, we compute the gradient:
\begin{equation}
    \nabla_{k_2}[\Delta \omega(k,k_2,k_3)] = \left. \left( \frac{k_1}{2\omega_1|k_1|} - \frac{k_2}{2\omega_2|k_2|} \right) \right|_{k_1=k_2+k_3-k}.
\end{equation}
We note that by Lemma~\ref{lem:group_velocity_lower_bound}, the gradient admits a strictly positive lower bound:
\begin{equation}
    |\nabla_{k_2}[\Delta \omega(k,k_2,k_3)]| \geq \frac{|k_3-k|}{2\omega_1\omega_2(\omega_1+\omega_2)} > 0.
\end{equation}
To integrate out the energy Dirac delta function, we partition the domain based on the direction of the gradient vector $V(k_2) \equiv \nabla_{k_2}[\Delta \omega(k,k_2,k_3)]$. We cover the unit sphere $\mathcal{S}^{d-1}$ with a finite number of overlapping angular cones (sectors) $\{ \mathcal{C}_j \}_{j=1}^N$ for some positive integer $N=N(d)\in \mathbb{N}^+$. Within each cone $\mathcal{C}_j$, we can choose a fixed unit vector $e_j$ (the axis of the cone) such that for any $V(k_2)$ pointing into $\mathcal{C}_j$, the directional derivative satisfies the lower bound:
\begin{equation}\label{eq: Vej}
    |e_j\cdot V(k_2)| \geq \frac{1}{2\sqrt{d}} |V(k_2)|.
\end{equation}
Let $\{ \chi_j \}_{j=1}^N$ be a smooth partition of unity subordinate to this angular covering, such that 
\begin{equation}
\sum_{j=1}^N \chi_j(V(k_2)) = 1\qquad \forall \,k_2. 
\end{equation}
The integral can then be decomposed as:
\begin{equation}\label{eq: sim1}
\begin{split}
    \tilde{\Qc}_{F1,<}(k) &= \int_{\mathbb{R}^{2d}} \delta\big(\Delta \omega(k, k_2, k_3)\big) \chi(|k-k_3|>0)\chi_{2<1}F(k, k_2+k_3-k, k_2, k_3) \,dk_2\,dk_3 \\
    &= \sum_{j=1}^N \int_{\mathbb{R}^{2d}} \delta\big(\Delta \omega(k, k_2, k_3)\big) \chi(|k-k_3|>0)\chi_{2<1}F(k, k_2+k_3-k, k_2, k_3)\chi_j \,dk_2\,dk_3.
\end{split}
\end{equation}
In each localized integral, we adopt a Cartesian coordinate system aligned with the cone axis: $k_2 = s e_j + \vt$, where $s \in \mathbb{R}$ is the coordinate along $e_j$ and $\vt \in \mathbb{R}^{d-1}$ is the transverse vector. Applying the one-dimensional property of the Dirac delta distribution along the $s$-direction, together with \eqref{eq: Vej} and Lemma~\ref{lem:group_velocity_lower_bound}, we obtain:
\begin{equation}
\begin{split}
    |\tilde{\Qc}_{F1,<}(k)| &\leq 4\sqrt{d}\sum_{j=1}^N \int_{\mathbb R^d}\int_{\mathbb{R}^{d-1}}\frac{1}{\langle \vt\rangle^{d}} \\
    &\quad \times \sum_{s \in \mathcal{Z}_{\vt}^{(3)}} \frac{\|\langle k_2\rangle^{d}\omega_1\omega_2(\omega_1+\omega_2)\chi_{2<1}F(k,k_1,k_2,k_3)\chi_j|_{k_2=se_j+\vt}\|_{L^\infty_{k_1}(\mathbb R^d)}}{|k-k_3|} \,d\vt\, dk_3,
\end{split}
\end{equation}
where $\mathcal{Z}_{\vt}^{(3)} = \{ s : \Delta \omega(k, k_2, k_3)|_{k_2=se_{j}+\vt} = 0 \}$ represents the roots of the resonance condition for a fixed transverse vector $\vt \in \mathbb R^{d-1}$. Crucially, the cardinality of the root set $|\mathcal{Z}_{\vt}^{(3)}|$ is uniformly bounded by a finite universal constant (i.e., $\mathcal{O}(1)$). Globally, the dispersion relation ensures the resonance condition is equivalent to a polynomial equation of finite degree. Locally, the lower bound \eqref{eq: Vej} guarantees that $\Delta \omega$ is strictly monotonic along the $s$-direction within each open cone $\mathcal{C}_j$. Consequently, within any connected component, the root $s$, if it exists, is strictly unique, rendering the summation over $\mathcal{Z}_{\vt}^{(3)}$ finite and well-defined. For a rigorous derivation establishing the uniform cardinality bound $|\mathcal{Z}_{\vt}^{(3)}|\leq 8$, we refer the reader to~Lemma~\ref{lem: Zvt}. 

Lemma~\ref{lem: Zvt}, together with the estimate
\begin{equation}
\begin{split}
    &\|\langle k_2\rangle^{d}\omega_1\omega_2(\omega_1+\omega_2)\chi_{2<1}F(k,k_1,k_2,k_3)\chi_j|_{k_2=se_j+\vt}\|_{L^\infty_{k_1}(\mathbb R^d)} \\
    &\quad \leq 2\|\langle k_2\rangle^{d/2+1}\langle k_1\rangle^{d/2+1} F(k,k_1,k_2,k_3)\|_{L^\infty_{k_1,k_2}(\mathbb R^{2d})},
\end{split}
\end{equation}
yields the bound:
\begin{equation}\label{eq: Qf031}
    |\tilde{\Qc}_{F1,<}(k)| \leq \frac{C_d}{2}\int_{\mathbb{R}^d} \frac{1}{|k-k_3|}\big\|\langle k_1\rangle^{d/2+1}\langle k_2\rangle^{d/2+1} F(k,k_1,k_2,k_3)\big\|_{L^\infty_{k_1,k_2}(\mathbb{R}^{2d})}\,dk_3.
\end{equation}
By a symmetric argument, we deduce:
\begin{equation}\label{eq: Qf032}
    |\tilde{\Qc}_{F1,>}(k)| \leq \frac{C_d}{2}\int_{\mathbb{R}^d} \frac{1}{|k-k_3|}\big\|\langle k_1\rangle^{d/2+1}\langle k_2\rangle^{d/2+1} F(k,k_1,k_2,k_3)\big\|_{L^\infty_{k_1,k_2}(\mathbb{R}^{2d})}\,dk_3,
\end{equation}
which immediately establishes \eqref{eq: Qf03}. Following analogous derivations, we also obtain the estimates \eqref{eq: Qf02} and \eqref{eq: Qf01}.

Next, we establish the uniform isotropic bound \eqref{eq:general_integral_reduced}. We note that on the support of $\delta(\Sigma)\delta(\Omega)$, the conservation of momentum implies:
\begin{equation}
    \langle k-k_3\rangle^{d} = \langle k_1-k_2\rangle^{d} \leq 4\max\{\langle k_1\rangle^d, \langle k_2\rangle^d\}.
\end{equation}
This allows us to bound the collision integral as:
\begin{equation}
\begin{split}
    |\tilde{\Qc}_{F1}(k)| &\leq 4\iiint_{\mathbb{R}^{3d}} \frac{1}{\langle k-k_3\rangle^{d}}\delta(\Sigma)\delta(\Omega)\chi(|k-k_3|>0) \\
    &\quad \times \max\{\langle k_1\rangle^d, \langle k_2\rangle^d\}|F(k,k_1,k_2,k_3)| \,dk_1 \,dk_2 \,dk_3.
\end{split}
\end{equation}
Following an identical geometric decomposition process to \eqref{eq: sim1}--\eqref{eq: Qf032}, we arrive at:
\begin{equation}
    |\tilde{\Qc}_{F1}(k)| \leq 4C_d\int_{\mathbb{R}^d} \frac{1}{|k-k_3|\langle k-k_3\rangle^d}\big\|\langle k_1\rangle^{d+1}\langle k_2\rangle^{d+1} F(k,k_1,k_2,k_3)\big\|_{L^\infty_{k_1,k_2}(\mathbb{R}^{2d})}\,dk_3,
\end{equation}
which implies \eqref{eq:general_integral_reduced} and concludes the proof.
\end{proof}

\section{Proof of the Main Theorem}
\begin{proof}[Proof of Theorem~\ref{thm: LWP}] 
For notational convenience, we define the constant
\begin{equation}
    E := 2\|f_0\|_{L^2_{22+5d}\cap L^\infty_{12+4d}}.
\end{equation}
Taking the weight index $a=2(d+1)$ in~\eqref{eq: DE afn+1}, we observe that the difference $(|k|^2+1)^{d+1}\bigl(f_{n+2}(t)-f_{n+1}(t)\bigr)$ satisfies the evolution equation
\begin{equation}
\begin{split}
   \partial_t\Big[(|k|^2+1)^{d+1}\bigl(f_{n+2}(t)-f_{n+1}(t)\bigr)\Big] 
   &= \left(\Qc_{n+1}(t)+\tilde{\Qc}_{n+1}(t)+\Rc_{n+1}(t)\right)(|k|^2+1)^{d+1}\bigl(f_{n+2}(t)-f_{n+1}(t)\bigr) \\
   &\quad +\Fc_n(t)(|k|^2+1)^{d+1}\bigl(f_{n+1}(t)-f_n(t)\bigr),
\end{split}
\end{equation}
where the forcing operator $\Fc_n(t)$ is given in~\eqref{eq: Fcnh}. Applying Duhamel's principle, and utilizing the uniform bound~\eqref{eq: uniform bound fn} alongside Corollary~\ref{cor: est tUn2h} and Lemma~\ref{lem: est Fn}, we estimate the $L^2$-norm of the difference:
\begin{equation}
\begin{split}
    \|f_{n+2}(t)-f_{n+1}(t)\|_{2,2d+2}
    &\leq \int_0^t \left\| U_{n,2}(t,s)\Fc_{n}(s) (|k|^2+1)^{d+1}\bigl(f_{n+1}(s)-f_n(s)\bigr)\right\|_2 ds\\
    &\leq \int_0^t \exp\left((t-s)\tilde C_aE^2\right) C_{F}E^2 \|f_{n+1}(s)-f_{n}(s)\|_{2,2d+2} ds\\
    &\leq \left( \frac{C_F}{\tilde C_a} \left(e^{t\tilde C_aE^2}-1\right) \right) \sup_{s\in [0,t]} \|f_{n+1}(s)-f_{n}(s)\|_{2,2d+2}.
\end{split}
\end{equation}
Taking the supremum over $t \in [0,T]$, we ensure a strict contraction mapping,
\begin{equation}
    \sup_{t \in [0,T]} \|f_{n+2}(t)-f_{n+1}(t)\|_{2,2d+2} \leq \frac{1}{2} \sup_{t \in [0,T]} \|f_{n+1}(t)-f_{n}(t)\|_{2,2d+2},
\end{equation}
provided that the lifespan $T$ is chosen sufficiently small such that $\frac{C_F}{\tilde C_a} (e^{T\tilde C_aE^2}-1) \leq \frac{1}{2}$. This mathematically imposes the restriction
\begin{equation}\label{eq: final_T_choice}
    T \leq \min\left\{ \frac{1}{\tilde C_a E^2} \ln\left( 1 + \frac{\tilde C_a}{2C_F} \right), T_1 \right\},
\end{equation}
where $T_1$ is defined in~\eqref{eq: def T1}. This contraction guarantees that the sequence $\{f_n(t)\}$ is convergent in the Banach space $C([0,T]; L^2_{2d+2}(\mathbb{R}^d))$. Furthermore, by H\"older's inequality, this implies the strong convergence of $\{f_n(t)\}$ in $C([0,T]; L^1(\mathbb R^d))$, which completes the proof of existence and uniqueness of the local-in-time $L^1$ solution with the lifespan $T = T(\|f_0\|_{\infty, 12+4d}, \|f_0\|_{\infty,22+5d})$ given by~\eqref{eq: final_T_choice}. Finally, combining this strong convergence with the uniform bound~\eqref{eq: uniform bound fn} ensures that the resulting limit solution rigorously satisfies the propagation of bounds formulated in~\eqref{eq: stay}.\end{proof}

\section*{Acknowledgements}
The authors thank Yu Deng, Alexander O. Korotkevich, Xiao Ma, Sergey Nazarenko, and Jalal M. I. Shatah for fruitful discussions and valuable comments. The second author is also grateful to Andrew Hassell for helpful conversations. This work was supported by the Simons Collaboration on Wave Turbulence (Award No. 651459). The first author acknowledges funding from the National Science Foundation (Grant No. AWD023422). The second author acknowledges support from the Australian Research Council through an Australian Laureate Fellowship (Grant No. FL220100072).

\medskip
\appendix

\section{Auxiliary Estimates}\label{app: Cauchy}

\begin{proof}[Proof of Lemma~\ref{lem: basic kernel}] 
The estimate~\eqref{eq: est omega1-2} follows from 
\begin{equation}
    |\omega-\omega_3| = \frac{\big| |k_3+k_2-k_1|^2-|k_3|^2 \big|}{(\omega_3+\omega)(|k_3|+|k|)} \leq \frac{|k_1-k_2|(\frac{1}{10}|k_3|+2|k_3|)}{(\sqrt{\frac{11}{10}}+1)\omega_3\cdot \frac{19}{10}|k_3|} \leq \frac{3|k_2-k_1|}{5\omega_3}.
\end{equation}
The estimate~\eqref{eq: k-k3} follows from 
\begin{equation}
    \big| |k|-|k_3| \big| \leq |k-k_3|.
\end{equation}
The estimate~\eqref{eq: kk3 1/4} follows from 
\begin{equation}
\begin{aligned}
    \left| \frac{1}{|k|^{1/4}}-\frac{1}{|k_3|^{1/4}} \right| &= \frac{|\omega-\omega_3|}{|k|^{1/4}|k_3|^{1/4}(|k|^{1/4}+|k_3|^{1/4})} \\
    &\leq \frac{3|k_2-k_1|}{5\cdot (\frac{9}{10})^{1/4}\cdot (1+(\frac{9}{10})^{1/4})|k_3|^{5/4}} \\
    &\leq \frac{2|k_1-k_2|}{5|k_3|^{5/4}}.
\end{aligned}
\end{equation}
The estimate~\eqref{eq: 1-3-1-0 omega} follows from 
\begin{equation}
    \big| (\omega_1-\omega_3)^2-(\omega_1-\omega)^2 \big| = |\omega-\omega_3|(\omega+\omega_3-2\omega_1) \leq \big| |k|-|k_3| \big| \leq |k_2-k_1|.
\end{equation}
The estimate~\eqref{eq: omega dyz} follows from 
\begin{equation}
    \begin{aligned}
        \omega_{y-z}^2-(\omega_y-\omega_z)^2 &\geq \max\{|y|,|z|\}-\min\{|y|,|z|\}-|y|-|z|+2\omega_y\omega_z \\
        &= 2\min\{|y|,|z|\} + 2\omega_y\omega_z - 2\min\{|y|,|z|\} \\
        &\geq 2\min\{|y|,|z|\} |\omega_y-\omega_z|.
    \end{aligned}
\end{equation}
\end{proof}

\begin{lem}[Explicit Algebraic Lower Bound of the Relative Group Velocity]
\label{lem:group_velocity_lower_bound}
Let the dispersion relation be $\omega(k) = |k|^{1/2}$ for $k \in \mathbb{R}^2 \setminus \{0\}$. Define the group velocity field as $v(k) = \nabla \omega(k)$. Then, for any $k_2, k_3 \in \mathbb{R}^2 \setminus \{0\}$, the following sharp algebraic lower bound holds:
\begin{equation}
    |v(k_2) - v(k_3)| \geq \frac{1}{2 \sqrt{|k_2| |k_3|} \left( \sqrt{|k_2|} + \sqrt{|k_3|} \right)} |k_2 - k_3|.
\end{equation}
\end{lem}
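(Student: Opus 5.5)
The plan is a direct computation: square both sides, observe that the resulting inequality is affine in the single scalar $c:=k_2\cdot k_3$, and check it at the two endpoints of the admissible range of $c$. Write $a:=|k_2|$ and $b:=|k_3|$. Since $\omega(k)=|k|^{1/2}$, the group velocity is $v(k)=k/(2|k|^{3/2})$. The Cauchy--Schwarz inequality gives $c\in[-ab,ab]$.

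First, expand both squared norms:
\begin{equation*}
4|v(k_2)-v(k_3)|^2=\frac1a+\frac1b-\frac{2c}{(ab)^{3/2}},\qquad |k_2-k_3|^2=a^2+b^2-2c .
\end{equation*}
Squaring the claimed bound and multiplying through by $4ab$, the statement becomes equivalent to
\begin{equation*}
D(c):=(a+b)-\frac{2c}{\sqrt{ab}}-\frac{a^2+b^2-2c}{(\sqrt a+\sqrt b)^2}\;\geq\;0\qquad\text{for all } c\in[-ab,ab].
\end{equation*}
Because $D$ is affine in $c$, it suffices to verify $D(\pm ab)\ge 0$.

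Next, evaluate the two endpoints. At $c=ab$ (the parallel configuration) the first two terms give $a+b-2\sqrt{ab}=(\sqrt a-\sqrt b)^2$. The last fraction is $(a-b)^2/(\sqrt a+\sqrt b)^2=(\sqrt a-\sqrt b)^2$, so $D(ab)=0$ exactly. This is where the bound is sharp, as the lemma asserts. At $c=-ab$ (the antiparallel configuration) we get $D(-ab)=(\sqrt a+\sqrt b)^2-(a+b)^2/(\sqrt a+\sqrt b)^2$. This is nonnegative because $a+b\le(\sqrt a+\sqrt b)^2$. Affinity then gives $D\ge0$ on the whole interval, and taking square roots yields the lemma.

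No step is genuinely hard. The only point requiring care is the bookkeeping of the factor $4ab$ and of the $(ab)^{3/2}$ denominator when clearing fractions, so that the reduction to $D(c)\ge0$ is exact. The dimension $d=2$ plays no role, so the same argument gives the bound in every $\mathbb R^d$, as used in Lemma~\ref{lem: Rep1}.
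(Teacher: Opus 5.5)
Your proof is correct and is essentially the paper's argument. The paper also expands $|v(k_2)-v(k_3)|^2$ explicitly, but it parametrizes the angular dependence by $|k_2-k_3|^2$ (affine in $k_2\cdot k_3$ for fixed radii), factors $A=-(\sqrt{|k_2|}-\sqrt{|k_3|})^2(|k_2|+\sqrt{|k_2||k_3|}+|k_3|)$, and uses only the reverse triangle inequality $|k_2-k_3|\ge\bigl||k_2|-|k_3|\bigr|$, which is exactly your endpoint $c=ab$. Your antiparallel check $D(-ab)\ge 0$ is harmless but redundant, since the slope of $D$ in $c$, namely $-2/\sqrt{ab}+2/(\sqrt a+\sqrt b)^2$, is negative; and your remark that nothing depends on $d=2$ applies equally to the paper's proof.
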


\begin{proof}
By direct computation, the group velocity is given by $v(k) = \frac{k}{2|k|^{3/2}}$. To establish the lower bound, we compute the squared distance $|v(k_2) - v(k_3)|^2$ algebraically. For simplicity of notation, let $r_2 = |k_2|$ and $r_3 = |k_3|$. 

Expanding the squared Euclidean norm, we have:
\begin{equation*}
    |v(k_2) - v(k_3)|^2 = \left| \frac{k_2}{2 r_2^{3/2}} - \frac{k_3}{2 r_3^{3/2}} \right|^2 = \frac{1}{4} \left( \frac{1}{r_2} + \frac{1}{r_3} - \frac{2 (k_2 \cdot k_3)}{r_2^{3/2} r_3^{3/2}} \right).
\end{equation*}
Using the fundamental geometric identity $2(k_2 \cdot k_3) = r_2^2 + r_3^2 - |k_2 - k_3|^2$, we rewrite the inner product term and find a common denominator:
\begin{equation*}
\begin{aligned}
    |v(k_2) - v(k_3)|^2 &= \frac{1}{4} \left( \frac{r_2 + r_3}{r_2 r_3} - \frac{r_2^2 + r_3^2 - |k_2 - k_3|^2}{r_2^{3/2} r_3^{3/2}} \right) \\
    &= \frac{1}{4 r_2^{3/2} r_3^{3/2}} \Big( (r_2 + r_3)\sqrt{r_2 r_3} - (r_2^2 + r_3^2) + |k_2 - k_3|^2 \Big).
\end{aligned}
\end{equation*}

We now focus on the algebraic term $A := (r_2 + r_3)\sqrt{r_2 r_3} - (r_2^2 + r_3^2)$. By setting $x = \sqrt{r_2}$ and $y = \sqrt{r_3}$, $A$ can be factorized as:
\begin{equation*}
\begin{aligned}
    A &= (x^2 + y^2)xy - x^4 - y^4 \\
    &= x^3y + xy^3 - x^4 - y^4 \\
    &= -(x - y)^2 (x^2 + xy + y^2).
\end{aligned}
\end{equation*}
Substituting $r_2$ and $r_3$ back into the expression yields:
\begin{equation*}
    A = -(\sqrt{r_2} - \sqrt{r_3})^2 (r_2 + \sqrt{r_2 r_3} + r_3).
\end{equation*}

To relate $A$ to the metric distance $|k_2 - k_3|^2$, we observe that by the triangle inequality, $(r_2 - r_3)^2 \leq |k_2 - k_3|^2$. Since $r_2 - r_3 = (\sqrt{r_2} - \sqrt{r_3})(\sqrt{r_2} + \sqrt{r_3})$, we obtain the strict inequality:
\begin{equation*}
    (\sqrt{r_2} - \sqrt{r_3})^2 \leq \frac{|k_2 - k_3|^2}{(\sqrt{r_2} + \sqrt{r_3})^2}.
\end{equation*}
Because the polynomial factor $(r_2 + \sqrt{r_2 r_3} + r_3)$ is strictly positive, we deduce the lower bound for $A$:
\begin{equation*}
    A \geq - \frac{r_2 + \sqrt{r_2 r_3} + r_3}{(\sqrt{r_2} + \sqrt{r_3})^2} |k_2 - k_3|^2.
\end{equation*}

Adding $|k_2 - k_3|^2$ to $A$ allows us to gather the coefficients of the distance term:
\begin{equation*}
\begin{aligned}
    A + |k_2 - k_3|^2 &\geq |k_2 - k_3|^2 \left( 1 - \frac{r_2 + \sqrt{r_2 r_3} + r_3}{(\sqrt{r_2} + \sqrt{r_3})^2} \right) \\
    &= |k_2 - k_3|^2 \left( 1 - \frac{r_2 + \sqrt{r_2 r_3} + r_3}{r_2 + 2\sqrt{r_2 r_3} + r_3} \right) \\
    &= |k_2 - k_3|^2 \left( \frac{\sqrt{r_2 r_3}}{(\sqrt{r_2} + \sqrt{r_3})^2} \right).
\end{aligned}
\end{equation*}

Finally, substituting this crucial lower bound back into the equation for $|v(k_2) - v(k_3)|^2$, we obtain:
\begin{equation*}
\begin{aligned}
    |v(k_2) - v(k_3)|^2 &\geq \frac{1}{4 r_2^{3/2} r_3^{3/2}} \left( \frac{\sqrt{r_2 r_3}}{(\sqrt{r_2} + \sqrt{r_3})^2} |k_2 - k_3|^2 \right) \\
    &= \frac{1}{4 r_2 r_3 (\sqrt{r_2} + \sqrt{r_3})^2} |k_2 - k_3|^2.
\end{aligned}
\end{equation*}
Taking the square root on both sides concludes the proof.
\end{proof}

\begin{lem}\label{lem: Cauchy} 
The strong operator limit 
\begin{equation}
    s\text{-}\lim_{\epsilon \downarrow 0} U_{g,D,\epsilon}(t,s)
\end{equation}
exists on $L^2(\mathbb{R}^d)$ for all $0\leq s<t\leq T$.  
\end{lem}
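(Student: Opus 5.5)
We will prove that $\{U_{g,D,\epsilon}(t,s)h\}_{\epsilon\in(0,1]}$ is Cauchy in $L^2$ for every $h\in L^2$, uniformly in $0\le s<t\le T$. First we get uniform bounds, then prove the Cauchy property on a dense subspace of weighted functions, and finally extend by density.

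\textbf{Step 1: uniform contraction bound.} Each $\Qc_{g,D,\epsilon}(t)$ is bounded on $L^2$, with bound depending on $\epsilon$ through the factor $e^{-\epsilon(|k|+|k_3|)}$. The symmetrization in Lemma~\ref{lem: QgD diss} applies verbatim, because the factor $e^{-\epsilon(|k|+|k_3|)}$ is invariant under the swap $(k,k_1)\leftrightarrow(k_3,k_2)$. Hence $(h,\Qc_{g,D,\epsilon}(t)h)_{L^2}\le 0$. It follows that $\frac{d}{dt}\|U_{g,D,\epsilon}(t,s)h\|_2^2\le 0$, so $\|U_{g,D,\epsilon}(t,s)\|_{2\to2}\le 1$ uniformly in $\epsilon$.

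\textbf{Step 2: uniform propagation of a weight.} Fix $h\in L^2_2$ and set $u_\epsilon(t)=U_{g,D,\epsilon}(t,s)h$. We will show that $\sup_{\epsilon}\sup_t\|u_\epsilon(t)\|_{2,2}\le C\|h\|_{2,2}$. To do this, we commute $\langle k\rangle^2$ through $\Qc_{g,D,\epsilon}$ exactly as in Lemma~\ref{lem: we Dyn} with $a=2$. This gives
\[
\partial_t(\langle k\rangle^2u_\epsilon)=\big(\Qc_{g,D,\epsilon}+\tilde\Qc_\epsilon+\Rc_\epsilon\big)\langle k\rangle^2u_\epsilon .
\]
Here $\tilde\Qc_\epsilon,\Rc_\epsilon$ are the operators \eqref{tQfn}, \eqref{eq: def Rnt} restricted to the $\chi_{1<0}$ part and carrying the factor $e^{-\epsilon(|k|+|k_3|)}$. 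The antisymmetry argument of Lemma~\ref{lem: tQn diss} still gives $(v,\tilde\Qc_\epsilon v)=0$. The bound of Lemma~\ref{lem: eq: Rn bd} holds uniformly in $\epsilon$, since $e^{-\epsilon(\cdot)}\le1$ and all the estimates used absolute values. A Gr\"onwall argument then yields $\|u_\epsilon(t)\|_{2,2}\le e^{(t-s)\tilde C_a\|g\|^2_{\infty,10+2d}}\|h\|_{2,2}$.

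\textbf{Step 3: Cauchy estimate on $L^2_2$.} For $0<\epsilon'<\epsilon$, Duhamel's formula gives
\[
u_\epsilon(t)-u_{\epsilon'}(t)=\int_s^t U_{g,D,\epsilon'}(t,\tau)\big(\Qc_{g,D,\epsilon}(\tau)-\Qc_{g,D,\epsilon'}(\tau)\big)u_\epsilon(\tau)\,d\tau .
\]
By Step 1, its $L^2$ norm is at most $\int_s^t\|(\Qc_{g,D,\epsilon}-\Qc_{g,D,\epsilon'})u_\epsilon(\tau)\|_2\,d\tau$. The difference operator has kernel $(e^{-\epsilon(|k|+|k_3|)}-e^{-\epsilon'(|k|+|k_3|)})\,T\,\cdots$. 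This factor is bounded by $\min\{1,\epsilon(|k|+|k_3|)\}\le \epsilon^{\theta}(|k|+|k_3|)^{\theta}$ for any $\theta\in(0,1]$.

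The proof of Lemma~\ref{lem: bd QgD} shows that $\Qc_{g,D}$ maps $L^2_2\to L^2$. Rerunning it with an extra factor $\langle k\rangle^\theta+\langle k_3\rangle^\theta$ and $|h|$ in place of $h$ shows that the difference operator maps $L^2_{2+\theta}\to L^2$ with norm $\lesssim\epsilon^\theta\|g\|^2_{\infty,d+6}$. We therefore run Step 2 with weight $a=2+\theta$ instead of $2$, for example $\theta=1$. This gives
\[
\|u_\epsilon(t)-u_{\epsilon'}(t)\|_2\lesssim \epsilon\, T\, e^{CT}\|h\|_{2,3}\to0 .
\]

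\textbf{Step 4: density.} $L^2_3$ is dense in $L^2$, and the family $\{U_{g,D,\epsilon}\}$ is uniformly bounded by Step 1. A standard $\varepsilon/3$ argument then gives the Cauchy property for every $h\in L^2$, so the strong limit exists.

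\textbf{Main obstacle.} The delicate point is Step 2. One must check that the commutator $[\langle k\rangle^a,\Qc_{g,D,\epsilon}]$ splits into an antisymmetric part plus a part bounded on $L^2$ uniformly in $\epsilon$. This is where the cancellation $|\langle k\rangle^a-\langle k_3\rangle^a|\lesssim |k_1-k_2|\langle k\rangle^{a-1}$ and the quadratic kernel bound of Proposition~\ref{prop: kernel} are essential. One must also make sure the regularizing factor preserves the $(k,k_1)\leftrightarrow(k_3,k_2)$ symmetry, which it does because it depends only on $|k|+|k_3|$.
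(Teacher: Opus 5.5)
Your proposal is correct and follows essentially the same route as the paper. The steps match: uniform $L^2$ contractivity of $U_{g,D,\epsilon}$, then propagation of the weight $\langle k\rangle^3$ by splitting $[\langle k\rangle^a,\Qc_{g,D,\epsilon}]$ into an antisymmetric part plus an $\epsilon$-uniformly bounded remainder, then a Duhamel estimate for the difference using $|e^{-\epsilon x}-e^{-\epsilon' x}|\lesssim \epsilon x$ to absorb one power of the weight, and finally a density argument. The paper's density step uses the explicit truncation $\chi(|k|<M)h$, and it simply invokes the uniform contraction bound that you derive in Step 1; otherwise the two arguments coincide.
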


\begin{proof}
It suffices to consider the case $s=0$. Let $h\in L^2(\mathbb R^d)$. To show that $\{U_{g,D,\epsilon}(t,s)h\}_{\epsilon > 0}$ satisfies the Cauchy criterion, we fix an arbitrary tolerance $\eta>0$. We can choose a cutoff $M=M(\eta)\geq 1$ such that the high-frequency tail satisfies 
\begin{equation}\label{eq: tail_eta}
    \|\chi(|k|\geq M)h\|_2 < \eta. 
\end{equation}
Define the truncated solution $h_{M,\epsilon}(t) := U_{g,D,\epsilon}(t,s) \big[\chi(|k|<M)h\big]$. Similar to the derivation of~\eqref{eq: DE afn+1}, the weighted profile $\langle k\rangle^a h_{M,\epsilon}(t)$ satisfies the differential equation
\begin{equation}\label{eq: cQgDe}
    \partial_t \big( \langle k\rangle^a h_{M,\epsilon}(t) \big) = \big( \mathcal{Q}_{g,D,\epsilon}(t) + \tilde{\mathcal{Q}}_{g,D,\epsilon}(t) + \mathcal{R}_{g,\epsilon}(t) \big) \big( \langle k\rangle^a h_{M,\epsilon}(t) \big),
\end{equation}
where the modified operators are explicitly given by
\begin{equation}
\begin{split}
    \tilde{\mathcal{Q}}_{g,D,\epsilon}(t) h(k) &= 2 \iiint_{\mathbb{R}^{3d}} e^{-\epsilon(|k|+|k_3|)}T_{k,k_1,k_2,k_3} \frac{\langle k\rangle^a - \langle k_3\rangle^a}{\langle k\rangle^{a/2}\langle k_3\rangle^{a/2}} \delta(\Sigma)\delta(\Omega) \\
    &\quad \times \chi_{2<3}\chi_{1<0} g_1(t) g_2(t) h_3 \,dk_1\,dk_2\, dk_3,
\end{split}
\end{equation}
and the remainder term is 
\begin{equation}
\begin{split}
    \mathcal{R}_{g,\epsilon}(t) h(k) &= 2 \iiint_{\mathbb{R}^{3d}} e^{-\epsilon(|k|+|k_3|)}T_{k,k_1,k_2,k_3} \frac{\langle k\rangle^a - \langle k_3\rangle^a}{\langle k_3\rangle^a} \delta(\Sigma)\delta(\Omega) \\
    &\quad \times \chi_{2<3}\chi_{1\geq 0} g_1(t) g_2(t) h_3 \,dk_1\,dk_2\, dk_3 \\
    &\quad + 2 \iiint_{\mathbb{R}^{3d}} e^{-\epsilon(|k|+|k_3|)}T_{k,k_1,k_2,k_3} \frac{(\langle k\rangle^{a/2} - \langle k_3\rangle^{a/2})^2 (\langle k\rangle^{a/2} + \langle k_3\rangle^{a/2})}{\langle k\rangle^{a/2}\langle k_3\rangle^a} \\
    &\quad \times \delta(\Sigma)\delta(\Omega) \chi_{2<3}\chi_{1<0} g_1(t) g_2(t) h_3 \,dk_1\,dk_2\, dk_3.
\end{split}
\end{equation}
Notice that $\mathcal{Q}_{g,D,\epsilon}(t) + \tilde{\mathcal{Q}}_{g,D,\epsilon}(t)$ is dissipative on $L^2(\mathbb R^d)$. Following an argument identical to Lemma~\ref{lem: eq: Rn bd}, the remainder is uniformly bounded:
\begin{equation}
    \| \Rc_{g,\epsilon}(t)\|_{2\to 2} \leq \tilde C_a \sup_{t\in [0,T]}\| g(t)\|_{\infty,{10+2d}}^2,
\end{equation}
for the constant $\tilde C_a>0$ introduced in~\eqref{eq: Rn bd est}. To absorb the polynomial weight arising from the difference of the exponential factors later, we specifically take $a=3$ in~\eqref{eq: cQgDe}, which yields the uniform bound
\begin{equation}\label{eq: hM_weighted_bound}
    \|h_{M,\epsilon}(t)\|_{2,3} \leq M^3\|h\|_{2}\exp\left(t\tilde C_a \sup_{t\in [0,T]}\| g(t)\|_{\infty,{10+2d}}^2\right), \quad \forall t \in [0,T].
\end{equation}
Now, for any $\epsilon_1, \epsilon_2 > 0$, we apply Duhamel's principle to the difference $h_{M,\epsilon_1}(t)-h_{M,\epsilon_2}(t)$. Utilizing the mean value theorem bound $|e^{-\epsilon_1 x} - e^{-\epsilon_2 x}| \leq |\epsilon_1-\epsilon_2|x$ to extract the factor $(|k|+|k_3|)$, and relying on the uniform bounds of the propagators and~\eqref{eq: hM_weighted_bound}, we obtain
\begin{equation}
\begin{split}
    \|h_{M,\epsilon_1}(t)-h_{M,\epsilon_2}(t)\|_2 
    &\leq C C_Q \int_0^t \left\| \frac{e^{-\epsilon_1(|k|+|k_3|)}-e^{-\epsilon_2(|k|+|k_3|)}}{|k|+|k_3|} \right\|_\infty \left(\sup_{\tau\in [0,T]}\| g(\tau)\|_{\infty,{10+2d}}^2 \right) \|h_{M,\epsilon_1}(s)\|_{2,3} ds\\
    &\leq C C_Q T M^3 |\epsilon_1-\epsilon_2| \|h\|_2 \left(\sup_{t\in [0,T]}\| g(t)\|_{\infty,{10+2d}}^2 \right) \exp\left(T\tilde C_a \sup_{t\in [0,T]}\| g(t)\|_{\infty,{10+2d}}^2\right).
\end{split}
\end{equation}
Thus, as $\epsilon_1, \epsilon_2 \downarrow 0$, the difference $\|h_{M,\epsilon_1}(t)-h_{M,\epsilon_2}(t)\|_2 \to 0$ uniformly on $[0,T]$. Finally, using the standard triangle inequality and the uniform $L^2$-boundedness of the exact propagator $U_{g,D,\epsilon}(t,0)$, we have
\begin{equation}
\begin{split}
    \|U_{g,D,\epsilon_1}(t,0)h - U_{g,D,\epsilon_2}(t,0)h\|_2 
    &\leq \|U_{g,D,\epsilon_1}(t,0) [\chi(|k|\geq M)h]\|_2 + \|h_{M,\epsilon_1}(t) - h_{M,\epsilon_2}(t)\|_2\\
    &+ \|U_{g,D,\epsilon_2}(t,0) [\chi(|k|\geq M)h]\|_2 \\
    &\leq 2 \eta + \|h_{M,\epsilon_1}(t)-h_{M,\epsilon_2}(t)\|_2.
\end{split}
\end{equation}
Since $\eta>0$ is arbitrary, we conclude that $\{U_{g,D,\epsilon}(t,0)h\}_{\epsilon\in (0,1)}$ is uniformly Cauchy in $C([0,T]; L^2(\mathbb R^d))$. This establishes the existence of the strong limit and completes the proof.
\end{proof}
\begin{lem}\label{lem: f_pm_bound}
On the support of the conservation distributions $\delta(\Sigma)\delta(\Omega)\varphi_{1<3}\varphi_{2<3}$, the estimate
\begin{equation}
    |f_\pm(k_j,k)-f_\pm(k_j,k_3)| \leq 2 |k_j|(|k_1|+|k_2|)
\end{equation}
holds for $j=1,2$.
\end{lem}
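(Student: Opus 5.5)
The plan is to exploit the bilinear structure of $f_\pm$ directly, using only momentum conservation on the support of $\delta(\Sigma)$. The frequency cut-offs $\varphi_{1<3}\varphi_{2<3}$ and the energy constraint $\delta(\Omega)$ play no role in this estimate.

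First I write out the difference explicitly from the definition $f_\pm(x,y)=x\cdot y\pm|x||y|$. For $j\in\{1,2\}$,
\begin{equation*}
f_\pm(k_j,k)-f_\pm(k_j,k_3) = k_j\cdot(k-k_3) \pm |k_j|\big(|k|-|k_3|\big).
\end{equation*}
The first term is linear in $k-k_3$. The second is controlled by $\big||k|-|k_3|\big|$.

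Next I use the conservation law $\Sigma=0$, which gives $k-k_3=k_2-k_1$. By Cauchy--Schwarz, $|k_j\cdot(k-k_3)|\leq |k_j|\,|k_2-k_1|$. By the reverse triangle inequality, which is exactly estimate~\eqref{eq: k-k3} of Lemma~\ref{lem: basic kernel}, $\big||k|-|k_3|\big|\leq |k-k_3| = |k_2-k_1|$. Adding the two contributions yields
\begin{equation*}
|f_\pm(k_j,k)-f_\pm(k_j,k_3)|\leq 2|k_j|\,|k_2-k_1|.
\end{equation*}
A final application of the triangle inequality, $|k_2-k_1|\leq |k_1|+|k_2|$, gives the claimed bound.

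I do not expect any genuine obstacle, since the argument is purely algebraic. The only point to state carefully is that it holds pointwise on the support of $\delta(\Sigma)$, where $k_1$ is determined by $k_1=k_2+k_3-k$. This is precisely the situation in which the lemma is later used to control the differences $\tilde T^{k_2k_3}-\tilde T^{k_2k}$ in Lemmas~\ref{lem: T1}--\ref{lem: T5}.
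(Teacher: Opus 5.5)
Your proposal is correct and follows essentially the same route as the paper. Both expand the difference from the definition of $f_\pm$, bound the dot-product term by Cauchy--Schwarz and the modulus term by the reverse triangle inequality, substitute $k-k_3=k_2-k_1$ from $\delta(\Sigma)$, and finish with $|k_2-k_1|\leq |k_1|+|k_2|$; your remark that neither $\delta(\Omega)$ nor the cut-offs are needed is also accurate.
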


\begin{proof}
We demonstrate the bound for the case $j=1$. By the definition of the function $f_\pm$ and applying the reverse triangle inequality, we first estimate
\begin{equation}\label{eq: f_pm_step1}
\begin{split}
    |f_\pm(k_1,k)-f_\pm(k_1,k_3)| 
    &\leq |k_1 \cdot (k-k_3)| + |k_1| \big| |k|-|k_3| \big| \\
    &\leq |k_1 \cdot (k-k_3)| + |k_1| |k-k_3|.
\end{split}
\end{equation}
On the support of $\delta(\Sigma)$, momentum conservation implies $|k-k_3| = |k_2-k_1|$. Substituting this into~\eqref{eq: f_pm_step1} and applying the Cauchy-Schwarz inequality yields
\begin{equation}\label{eq: f_pm_step2}
\begin{split}
    |f_\pm(k_1,k)-f_\pm(k_1,k_3)| 
    &\leq |k_1 \cdot (k_2-k_1)| + |k_1| |k_2-k_1| \\
    &\leq 2|k_1| |k_2-k_1| \\
    &\leq 2|k_1| (|k_1|+|k_2|) .
\end{split}
\end{equation}
By symmetry, an identical argument establishes the desired estimate for the case $j=2$. This completes the proof.
\end{proof}
\begin{lem}\label{lem: C11 est}
On the support of the conservation distributions $\delta(\Sigma)\delta(\Omega)\varphi_{1<3}\varphi_{2<3}$, the estimate~\eqref{eq: d t T1 1} holds for some constant $C_{11}>0$.
\end{lem}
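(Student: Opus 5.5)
The plan is to prove \eqref{eq: d t T1 1} by comparing \eqref{eqL tT 1} evaluated at $k_3$ with its evaluation at $k_3=k$, one term at a time. First I rewrite the prefactor so that the change from $k_3$ to $k$ appears explicitly. Put $P(k_3):=(|k_1||k||k_2||k_3|)^{-1/4}$ and let $B(k_3)$ be the bracketed expression in \eqref{eqL tT 1}. Then
\begin{equation*}
\tilde T_{k_1k,1}^{k_2k_3}-\tilde T_{k_1k,1}^{k_2k}=-\frac{1}{16\pi^2}\Big[(P(k_3)-P(k))B(k_3)+P(k)\big(B(k_3)-B(k)\big)\Big].
\end{equation*}
By \eqref{eq: kk3 1/4}, $|P(k_3)-P(k)|\lesssim (|k_1||k_2|)^{-1/4}|k|^{-1/4}|k_1-k_2||k|^{-5/4}$. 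For $B(k_3)$ I only need the crude bound coming from homogeneity:
\begin{equation*}
|B(k_3)|\lesssim |k|^{2}\,\omega\,(\omega_1\omega_2)\max(\omega_1,\omega_2)^{2}.
\end{equation*}
To get it, I use $|f_\pm(k_j,k_3)|\le 2|k_j||k_3|$ and bound each product explicitly: for instance $\omega^2\omega_2\omega_3|f_-(k_1,k)|\lesssim |k|^{5/2}\omega_2|k_1|$, and the two fraction terms are of size $|k|^{3/2}|k_1||k_2|/\omega_1$. Multiplying these two bounds gives a contribution of size $(|k_1|^2+|k_2|^2)|k|$, using $\omega_1,\omega_2\lesssim$ the larger of the two, so the first piece is fine.

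The second piece is $P(k)(B(k_3)-B(k))$. Each term of $B$ is a product of factors that depend on $k_3$ only through $\omega_3$, $f_\pm(k_2,k_3)$, $f_\pm(k_1,k_3)$, or $\omega_3^2=|k_3|$. I expand each difference telescopically, replacing one factor at a time, and bound the differences by:
\begin{itemize}
\item Lemma \ref{lem: f_pm_bound}: $|f_\pm(k_j,k_3)-f_\pm(k_j,k)|\le 2|k_j|(|k_1|+|k_2|)$;
\item \eqref{eq: est omega1-2}: $|\omega_3-\omega|\lesssim |k_1-k_2|/\omega$;
\item \eqref{eq: k-k3}: $\big||k_3|-|k|\big|\le|k_1-k_2|$.
\end{itemize}
In every case one factor that is large in $|k|$ is traded for a factor of size $|k_1|+|k_2|$ times a power of $|k|$ that is at least one unit lower. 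For example,
\begin{equation*}
\omega^2\omega_1\omega\,\big|f_-(k_2,k_3)-f_-(k_2,k)\big|\lesssim |k|^{3/2}\omega_1|k_2|(|k_1|+|k_2|).
\end{equation*}
After multiplying by $P(k)\sim (|k_1||k_2|)^{-1/4}|k|^{-1/2}$, this is $\lesssim |k|\,\omega_1|k_2|(|k_1|+|k_2|)/\sqrt{\omega_1\omega_2}$.

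The main obstacle is keeping the small-frequency powers balanced. Factors such as $\omega_1^{1/2}\omega_2^{-1/2}$, and the $1/\omega_1$ coming from the fraction terms $4\omega^2 f_-f_-/(-2\omega_1\omega)$, can blow up when one of $|k_1|,|k_2|$ is much smaller than the other. In the fraction terms, the factor $f_\pm(k_1,\cdot)$ supplies $|k_1|$, which cancels the $1/\omega_1$ and leaves $\omega_1$. I will check term by term that the net power of each small variable is nonnegative, so that the result is bounded by $(|k_1|+|k_2|)^2\lesssim |k_1|^2+|k_2|^2$. If an unbalanced case like $\omega_1\omega_2^{-1/2}|k_2|$ arises, $|k_2|\omega_2^{-1/2}=\omega_2\cdot\omega_2^{1/2}$ still gives positive powers, so it is harmless.

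Finally I collect the finitely many terms, use \eqref{eq: range k} to replace $|k_3|$ by $|k|$ throughout, and obtain the constant $C_{11}$.
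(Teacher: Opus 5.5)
Your proof is essentially the paper's own argument. You split off the difference of the $(\cdot)^{-1/4}$ prefactors, telescope the polynomial part one factor at a time, and bound each difference with Lemma~\ref{lem: f_pm_bound}, \eqref{eq: est omega1-2}, \eqref{eq: k-k3} and \eqref{eq: range k}. Your term-by-term check is what makes the argument close: after multiplying by $P(k)=(\omega_1\omega_2)^{-1/2}|k|^{-1/2}$, every small variable must appear with a nonnegative power. The paper's reduction to \eqref{eq: est bar T1} does not track the factor $(|k_1||k_2|)^{-1/4}$. Its stated right-hand side also has homogeneity $7/2$, while the $\bar T$-difference has degree $4$. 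So your bookkeeping is the more careful version of the same proof.

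There is one slip in your crude bound for $B(k_3)$.
\begin{itemize}
\item Each of the six terms of $B(k_3)$ is $\lesssim |k|^{5/2}$ times either $\omega_1^2\omega_2$ or $\omega_1\omega_2^2$.
\item In particular, the fraction terms are of size $|k|^{5/2}|k_1||k_2|/\omega_1$, not $|k|^{3/2}|k_1||k_2|/\omega_1$.
\item So the correct bound is $|B(k_3)|\lesssim |k|^{5/2}\omega_1\omega_2(\omega_1+\omega_2)$.
\end{itemize}
Your version, with $\max(\omega_1,\omega_2)^2$, is false for small frequencies. If used as written, it would give small-variable degree $5/2$ instead of $2$, and the estimate would not close. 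With the corrected bound you get $|P(k_3)-P(k)|\,|B(k_3)|\lesssim |k|\,|k_1-k_2|\,(\omega_1\omega_2)^{1/2}(\omega_1+\omega_2)\lesssim (|k_1|+|k_2|)^2|k|$, which is what you intended.
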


\begin{proof} 
By invoking the momentum conservation relation $k_3-k=k_1-k_2$ and the scale separation bounds~\eqref{eq: range k}, we can first bound the difference of the inverse fourth-root prefactors as follows:
\begin{equation}
\begin{split}
    \left| \frac{1}{|k_3|^{1/4}}-\frac{1}{|k|^{1/4}}\right| 
    &\leq \frac{\big| |k_3|-|k| \big|}{|k_3|^{1/4}|k|^{1/4}\big(|k_3|^{1/4}+|k|^{1/4}\big)\big(|k_3|^{1/2}+|k|^{1/2}\big)}\\
    &\leq \frac{|k_3-k|}{|k_3|^{1/4}|k|^{1/4}\big(|k_3|^{1/4}+|k|^{1/4}\big)\big(|k_3|^{1/2}+|k|^{1/2}\big)}\\
    &= \frac{|k_1-k_2|}{|k_3|^{1/4}|k|^{1/4}\big(|k_3|^{1/4}+|k|^{1/4}\big)\big(|k_3|^{1/2}+|k|^{1/2}\big)}\\
    &\leq \frac{C(|k_1|+|k_2|)}{|k|^{5/4}},
\end{split}
\end{equation}
for some universal constant $C>0$. Consequently, to establish the full estimate, it suffices to prove that the core polynomial part, defined as $\bar T_{k_1k,1}^{k_2k_3}:= -16\pi^2\big(|k||k_1||k_2||k_3|\big)^{1/4}\tilde T_{k_1k,1}^{k_2k_3}$, satisfies the bound
\begin{equation}\label{eq: est bar T1}
    | \bar T_{k_1k,1}^{k_2k_3}-\bar T_{k_1k,1}^{k_2k}|\leq \bar{C}_{11}(|k_1|^2+|k_2|^2)|k|^{3/2},
\end{equation}
for some constant $\bar C_{11}>0$. To this end, we algebraically expand the difference:
\begin{equation}\label{eq: bT 1}
\begin{split}
    \bar{T}_{k_1 k,1}^{k_2 k_3}-\bar{T}_{k_1 k,1}^{k_2 k} 
    &= - 2\omega^2 \Big[ \omega_2 (\omega_3-\omega) f_-(k_1,k) + \omega_1 \omega \big(f_-(k_2,k_3)-f_-(k_2,k)\big) \Big] \\
    &\quad - 2(\omega_3^2-\omega^2) \Big[ \omega \omega_2 f_+(k_1,k_3) + \omega_1 \omega_3 f_+(k,k_2)\Big] \\
    &\quad - 2\omega^2 \Big[ \omega \omega_2 \big(f_+(k_1,k_3)-f_+(k_1,k)\big) + \omega_1 (\omega_3-\omega) f_+(k,k_2)\Big] \\
    &\quad - \frac{2 \omega f_-(k_1,k)\big(f_-(k_2,k_3)-f_-(k_2,k)\big)}{\omega_1} + \frac{2 (\omega_3-\omega)f_+(k_1,k_3)f_+(k,k_2)}{\omega_1}\\
    &\quad +\frac{2\omega\big(f_+(k_1,k_3)-f_+(k_1,k)\big)f_+(k,k_2)}{\omega_1}.
\end{split}
\end{equation}
Applying the bounds from Lemma~\ref{lem: f_pm_bound} along with the estimates~\eqref{eq: est omega1-2}, \eqref{eq: k-k3}, and \eqref{eq: range k}, we deduce
\begin{equation}
\begin{split}
    |\bar{T}_{k_1 k,1}^{k_2 k_3}-\bar{T}_{k_1 k,1}^{k_2 k}| 
    &\leq 2\omega^2 \left[ \omega_2 \frac{|k_2-k_1|}{\omega} |f_-(k_1,k)| + 2\omega_1 \omega |k_2|(|k_1|+|k_2|) \right] \\
    &\quad + 2|k_1-k_2| \Big[ \omega \omega_2 |f_+(k_1,k_3)| + \omega_1 \omega_3 |f_+(k,k_2)|\Big] \\
    &\quad + 2\omega^2 \left[ 2\omega \omega_2 |k_1|(|k_1|+|k_2|) + \omega_1 \frac{|k_1-k_2|}{\omega} |f_+(k,k_2)|\right] \\
    &\quad + \frac{4 \omega |f_-(k_1,k)| |k_2|(|k_1|+|k_2|)}{\omega_1} + \frac{2|k_2-k_1| |f_+(k_1,k_3)| |f_+(k,k_2)|}{\omega\omega_1}\\
    &\quad +\frac{4\omega |k_1|(|k_1|+|k_2|) |f_+(k,k_2)|}{\omega_1}.
\end{split}
\end{equation}
Combining this with the pointwise estimates $|f_\pm(y,z)|\leq 2|y||z|$ and the triangle inequality $|k_1-k_2|\leq |k_1|+|k_2|$, we obtain
\begin{equation}
\begin{split}
    |\bar{T}_{k_1 k,1}^{k_2 k_3}-\bar{T}_{k_1 k,1}^{k_2 k}| 
    &\leq (|k_1|+|k_2|) \Bigg\{ 4|k_1|\omega_2|k|^{3/2} + 4\omega_1|k_2||k|^{3/2} + 4|k_1|\omega_2\omega|k_3| \\
    &\quad + 4\omega_1|k_2||k|\omega_3 + 4|k_1|\omega_2|k|^{3/2} + 4\omega_1|k_2||k|^{3/2} \\
    &\quad + 8\omega_1|k_2||k|^{3/2} + 8\omega_1|k_2||k_3|\omega + 8\omega_1|k_2||k|^{3/2}\Bigg\}.
\end{split}
\end{equation}
Finally, utilizing the bounds in~\eqref{eq: range k}, the terms inside the braces are bounded by $\mathcal{O}\big((|k_1|+|k_2|)|k|^{3/2}\big)$, which directly yields the target estimate~\eqref{eq: est bar T1}. This completes the proof.
\end{proof}
\begin{lem}\label{lem: C41 est}
On the support of the conservation distributions $\delta(\Sigma)\delta(\Omega)\varphi_{1<3}\varphi_{2<3}$, the estimate~\eqref{eq: d t T2 q} holds for some constant $C_{41}>0$.
\end{lem}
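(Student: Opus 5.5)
Following Lemma~\ref{lem: C11 est}, I first strip off the prefactor. I write $\tilde T_{kk_1,1}^{k_2k_3} = -\big(16\pi^2(|k||k_1||k_2||k_3|)^{1/4}\big)^{-1}\bar T_{kk_1,1}^{k_2k_3}$, where $\bar T_{kk_1,1}^{k_2k_3}$ is the bracketed polynomial part of \eqref{eqL tT 2}. The prefactor difference is controlled by \eqref{eq: kk3 1/4}:
\[
\bigl||k_3|^{-1/4}-|k|^{-1/4}\bigr|\lesssim (|k_1|+|k_2|)|k|^{-5/4}.
\]
It multiplies $|\bar T_{kk_1,1}^{k_2k}|$, which by direct inspection (each term is $\lesssim |k_j|\,\omega_{j'}\,|k|^{5/2}$ or $|k_1||k_2||k|^{2}\omega/\omega_j$) is $O((|k_1|+|k_2|)^{3/2}|k|^{5/2})$. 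That contribution is therefore $O((|k_1|^2+|k_2|^2)|k|)$ after the factor $(|k_1||k_2|)^{-1/4}$, using $(|k_1|+|k_2|)\,\omega_1\omega_2\cdot\omega_j^{-1}$-type balancing. It thus suffices to prove
\[
|\bar T_{kk_1,1}^{k_2k_3}-\bar T_{kk_1,1}^{k_2k}|\le \bar C_{41}(|k_1|^2+|k_2|^2)|k|^{3/2}.
\]

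Next I expand this difference term by term, as in \eqref{eq: bT 1}. The replacements $k_3\to k$ occur only in three places: $\omega_3$ versus $\omega$, in $f_-(k_2,k_3)$ versus $f_-(k_2,k)$, and in $f_+(k_1,k_3)$ versus $f_+(k_1,k)$. Each product is telescoped so that exactly one factor is differenced. The differenced factors are bounded as follows:
\begin{itemize}
\item $|\omega_3-\omega|\le \tfrac35|k_1-k_2|/\omega_3$, by \eqref{eq: est omega1-2};
\item $|f_\pm(k_j,k_3)-f_\pm(k_j,k)|\le 2|k_j|(|k_1|+|k_2|)$, by Lemma~\ref{lem: f_pm_bound};
\item the prefactor $\omega^2$ is unchanged, since $\eqref{eqL tT 2}$ uses $\omega^2$ in all four terms.
\end{itemize}
The undifferenced factors are bounded by $|f_\pm(y,z)|\le 2|y||z|$ and $|k|\sim|k_3|$ from \eqref{eq: range k}.

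For the two rational terms $-2\omega f_-(k,k_1)f_-(k_2,k_3)/\omega_1$ and $2\omega f_+(k,k_2)f_+(k_1,k_3)/\omega_2$, the difference is $(2\omega/\omega_j)\,|f(k,\cdot)|\cdot 2|k_{j'}|(|k_1|+|k_2|)$. This is bounded by $(|k_1|+|k_2|)\,|k|^{3/2}\,|k_1||k_2|/\omega_j$. Since $|k_1||k_2|/\omega_1\le \omega_1|k_2|\le |k_1|^2+|k_2|^2$ up to constants, the bound is acceptable. The polynomial terms give contributions like $\omega^2\omega_2|k_1||k|\cdot|k_1-k_2|/\omega$, which are $(|k_1|+|k_2|)^{2}|k|^{3/2}$ or $(|k_1|+|k_2|)^{3/2}\omega_2|k|^{3/2}$-type and are also acceptable.

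The main obstacle is the terms in which a $1/\omega_j$ appears. Because the $\omega$-denominators involve only $\omega_1$ or $\omega_2$, and never the small differences, each such term is still compensated by an explicit factor $|k_j|$ coming from $f(k,k_j)$ or $f(k_j,\cdot)$. I will check these terms individually, so that no negative power of $|k_1|$ or $|k_2|$ survives.

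Finally, I recombine the estimate with the prefactor $(|k_1||k_2||k||k_3|)^{-1/4}\sim(|k_1||k_2|)^{-1/4}|k|^{-1/2}$. This is the step where a homogeneity mismatch can appear, since $|k_1|^{-1/4}$ is unbounded as $|k_1|\to0$. If it does, I will keep the $\omega_1\omega_2$ factors explicit, exactly as in Lemma~\ref{lem: T4}'s $M$ term, so that the $(|k_1||k_2|)^{1/4}$ is absorbed.
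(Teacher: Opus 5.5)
Your route is the paper's: strip the prefactor as in Lemma~\ref{lem: C11 est}, telescope the six $k_3$-dependent factors as in \eqref{eq: bT 4}, and bound them with \eqref{eq: est omega1-2} and Lemma~\ref{lem: f_pm_bound}. Your raw term bounds are correct. The gap is the reduction you declare sufficient.

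Take $\bar T_{kk_1,1}^{k_2k_3}$ to be the full bracket of \eqref{eqL tT 2}. Every term of it is homogeneous of degree $4$. The support of $\delta(\Sigma)\delta(\Omega)\varphi_{1<3}\varphi_{2<3}$ is invariant under $k_j\mapsto\lambda k_j$, because $\omega$ is $\tfrac12$-homogeneous and $\varphi$ sees only ratios. By contrast, $(|k_1|^2+|k_2|^2)|k|^{3/2}$ has degree $7/2$. The difference is not identically zero: for $|k_1|=|k_2|$, $|k_3|=|k|$ it equals $2\omega\,\omega_1^{-1}(|k_1|^2-k_1\cdot k_2)\,k\cdot(k_1+k_2)$. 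Hence the bound
$|\bar T_{kk_1,1}^{k_2k_3}-\bar T_{kk_1,1}^{k_2k}|\le\bar C_{41}(|k_1|^2+|k_2|^2)|k|^{3/2}$
fails as $\lambda\to\infty$. Where it does hold, dividing by $(|k||k_1||k_2||k_3|)^{1/4}\simeq(|k_1||k_2|)^{1/4}|k|^{1/2}$ leaves a factor $(|k_1||k_2|)^{-1/4}$, which is unbounded as $|k_1|=|k_2|\to0$. You drop the same half power in your comparisons: $\omega_1|k_2|\lesssim|k_1|^2+|k_2|^2$ is false for $|k_1|=|k_2|\to0$. The paper's reduction \eqref{eq: est bar T4} has exactly the same mismatch.

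Your last paragraph names the cure, but it is not optional. The mismatch always occurs, and you must carry the factor $\omega_1\omega_2$ throughout. Every term of \eqref{eq: bT 4}, and every term of $\bar T_{kk_1,1}^{k_2k}$, is a cross term containing $\omega_2|k_1|=\omega_1\omega_2\,\omega_1$ or $\omega_1|k_2|=\omega_1\omega_2\,\omega_2$. In the rational terms, $f(k,k_j)/\omega_j$ leaves $\omega_j|k|$ and the other factor supplies $|k_{j'}|$ with $j'\neq j$. Your own estimates therefore give
\[
|\bar T_{kk_1,1}^{k_2k_3}-\bar T_{kk_1,1}^{k_2k}|\lesssim \omega_1\omega_2(\omega_1+\omega_2)(|k_1|+|k_2|)|k|^{3/2},\qquad |\bar T_{kk_1,1}^{k_2k}|\lesssim \omega_1\omega_2(\omega_1+\omega_2)|k|^{5/2}.
\]
The first of these is what the paper's penultimate display actually shows. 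Moreover,
\[
(|k_1||k_2|)^{-1/4}\omega_1\omega_2(\omega_1+\omega_2)=|k_1|^{3/4}|k_2|^{1/4}+|k_1|^{1/4}|k_2|^{3/4}\le|k_1|+|k_2|.
\]
Combining this with \eqref{eq: kk3 1/4} and \eqref{eq: range k} bounds both the main piece and the prefactor-difference piece by $C(|k_1|+|k_2|)^2|k|\le 2C(|k_1|^2+|k_2|^2)|k|$, which is \eqref{eq: d t T2 q}.

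Alternatively, on this support $|\omega_1-\omega_2|\le(|k_1|+|k_2|)/(\omega+\omega_3)$ together with $|k_1|,|k_2|\le|k_3|/20$ forces $\omega_1\le\tfrac{13}{10}\omega_2$ and $\omega_2\le\tfrac{13}{10}\omega_1$. Hence $(|k_1||k_2|)^{1/4}\simeq(|k_1|+|k_2|)^{1/2}$. That comparability is what your crude bound $|\bar T_{kk_1,1}^{k_2k}|\lesssim(|k_1|+|k_2|)^{3/2}|k|^{5/2}$ needs in order to close.
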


\begin{proof} 
Following an argument analogous to that in Lemma~\ref{lem: C11 est}, we can factor out the inverse fractional prefactors. The proof then rigorously reduces to establishing the corresponding bound for the core polynomial part. Specifically, defining $\bar T_{kk_1,1}^{k_2k_3} := -16\pi^2\big(|k||k_1||k_2||k_3|\big)^{1/4}\tilde T_{kk_1,1}^{k_2k_3}$, it is sufficient to show that
\begin{equation}\label{eq: est bar T4}
    | \bar T_{kk_1,1}^{k_2k_3}-\bar T_{kk_1,1}^{k_2k}|\leq \bar{C}_{41}(|k_1|^2+|k_2|^2)|k|^{3/2},
\end{equation}
for some universal constant $\bar C_{41}>0$. To this end, we algebraically expand the difference:
\begin{equation}\label{eq: bT 4}
\begin{split}
    \bar{T}_{k k_1,1}^{k_2 k_3}-\bar{T}_{k k_1,1}^{k_2 k} 
    &= - 2\omega^2 \Big[ \omega_2 (\omega_3-\omega) f_-(k_1,k) + \omega_1 \omega \big(f_-(k_2,k_3)-f_-(k_2,k)\big) \Big] \\
    &\quad - 2\omega^2 \Big[ \omega_1 (\omega_3-\omega )f_+(k,k_2) + \omega_2\omega( f_+(k_1,k_3)-f_+(k_1,k))\Big] \\
    &\quad - \frac{2 \omega f_-(k_1,k)\big(f_-(k_2,k_3)-f_-(k_2,k)\big)}{\omega_1} \\
    &\quad+\frac{2\omega\big(f_+(k_1,k_3)-f_+(k_1,k)\big)f_+(k,k_2)}{\omega_2}.
\end{split}
\end{equation}
Applying the bounds from Lemma~\ref{lem: f_pm_bound} along with the estimates~\eqref{eq: est omega1-2}, \eqref{eq: k-k3}, and \eqref{eq: range k}, we deduce
\begin{equation}
\begin{split}
    |\bar{T}_{k k_1,1}^{k_2 k_3}-\bar{T}_{k k_1,1}^{k_2 k}| 
    &\leq 2\omega^2 \left[ \omega_2 \frac{|k_2-k_1|}{\omega} |f_-(k_1,k)| + 2\omega_1 \omega |k_2|(|k_1|+|k_2|) \right] \\
    &\quad + 2\omega^2 \left[   \omega_1 \frac{|k_1-k_2|}{\omega} |f_+(k,k_2)|+2\omega \omega_2 |k_1|(|k_1|+|k_2|)\right] \\
    &\quad + \frac{4 \omega |f_-(k_1,k)| |k_2|(|k_1|+|k_2|)}{\omega_1} + \frac{4\omega |k_1|(|k_1|+|k_2|) |f_+(k,k_2)|}{\omega_2}.
\end{split}
\end{equation}
Combining this with the pointwise estimates $|f_\pm(y,z)|\leq 2|y||z|$ and the triangle inequality $|k_1-k_2|\leq |k_1|+|k_2|$, we obtain
\begin{equation}
\begin{split}
    |\bar{T}_{k k_1,1}^{k_2 k_3}-\bar{T}_{k k_1,1}^{k_2 k}| 
    &\leq (|k_1|+|k_2|) \Bigg\{ 4|k_1|\omega_2|k|^{3/2} + 4\omega_1|k_2||k|^{3/2} + 4\omega_1|k_2||k|^{3/2} \\
    &\quad + 4|k_1|\omega_2|k|^{3/2}  + 8\omega_1|k_2||k|^{3/2} + 8|k_1|\omega_2|k|^{3/2} \Bigg\}.
\end{split}
\end{equation}
Finally, utilizing the bounds in~\eqref{eq: range k}, the terms inside the braces are bounded by $\mathcal{O}\big((|k_1|+|k_2|)|k|^{3/2}\big)$, which directly yields the target estimate~\eqref{eq: est bar T4}. This completes the proof.
\end{proof}
\begin{lem}\label{lem: C21 est} On the support of the conservation distributions $\delta(\Sigma)\delta(\Omega)\varphi_{1<3}\varphi_{2<3}$, the estimate~\eqref{eq: d t T1 2} holds for some constant $C_{21}>0$.
\end{lem}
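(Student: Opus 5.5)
I will follow the template of Lemmas~\ref{lem: C11 est} and~\ref{lem: C41 est}. The first step is to separate the prefactor $(|k_1||k||k_2||k_3|)^{-1/4}$ from the polynomial bracket. Set $\bar T_{k_1k,2}^{k_2k_3}:=-16\pi^2(|k||k_1||k_2||k_3|)^{1/4}\tilde T_{k_1k,2}^{k_2k_3}$. Then
\begin{equation*}
\tilde T_{k_1k,2}^{k_2k_3}-\tilde T_{k_1k,2}^{k_2k}
=-\frac{1}{16\pi^2(|k_1||k_2|)^{1/4}}\left[\frac{\bar T_{k_1k,2}^{k_2k_3}-\bar T_{k_1k,2}^{k_2k}}{(|k||k_3|)^{1/4}}+\bar T_{k_1k,2}^{k_2k}\Big(\frac{1}{(|k||k_3|)^{1/4}}-\frac{1}{|k|^{1/2}}\Big)\right].
\end{equation*}

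For the second term, \eqref{eq: kk3 1/4} bounds the prefactor difference by $C(|k_1|+|k_2|)|k|^{-3/2}$. The explicit evaluation at $k_3=k$ in the proof of Lemma~\ref{lem: T2} gives $|\bar T_{k_1k,2}^{k_2k}|\le C|k_1||k_2||k|^2+C\omega_1\omega_2(|k_1|+|k_2|)|k|^{2}$, i.e.\ degree $3/2$ in the small variables times $|k|^2$. After dividing by $(|k_1||k_2|)^{1/4}$, this term is at most $C(|k_1|+|k_2|)^{5/2}|k|^{1/2}$, which is acceptable.

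So the main task is
\begin{equation*}
|\bar T_{k_1k,2}^{k_2k_3}-\bar T_{k_1k,2}^{k_2k}|\le \bar C_{21}\,(|k_1|^{2}+|k_2|^{2})\,(|k_1||k_2|)^{1/4}|k|.
\end{equation*}
Dividing by $(|k||k_3|)^{1/4}(|k_1||k_2|)^{1/4}\sim|k|^{1/2}$ then gives the claimed $(|k_1|^{5/2}+|k_2|^{5/2})\omega$ up to constants. Weight-wise this is at most $(|k_1|+|k_2|)^{5/2}|k|$, one power of $|k|^{1/2}$ below the natural size $|k|^{3/2}$ of $\bar T_{k_1k,2}$.

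I will expand the difference term by term in \eqref{eq: tT2 kk1k2k3}, replacing $k_3$ by $k$ one factor at a time via telescoping, exactly as in \eqref{eq: bT 1}. The available tools are:
\begin{itemize}
\item differences $\omega_3-\omega$, controlled by \eqref{eq: est omega1-2} (gain $|k_1-k_2|/\omega$);
\item differences $|k_3|-|k|$, controlled by \eqref{eq: k-k3};
\item differences $f_\pm(k_j,k_3)-f_\pm(k_j,k)$, controlled by Lemma~\ref{lem: f_pm_bound} (gain $|k|^{-1}$ relative to $|f_\pm|\le2|k_j||k|$).
\end{itemize}

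For the terms $-12|k_1||k||k_2||k_3|$ and $f_+(k_1,k)f_+(k_2,k_3)$, the difference is $|k_1||k_2||k|\,|k_1-k_2|$. This is bounded by $(|k_1|+|k_2|)^3|k|\le C(|k_1|^2+|k_2|^2)(|k_1||k_2|)^{1/4}|k|$ only when $|k_1|\sim|k_2|$, so it needs more care. The saving point is that each such term already carries the factor $|k_1||k_2|$. Hence the true bound is $|k_1||k_2|(|k_1|+|k_2|)|k|$, and since $|k_1||k_2|\le(|k_1||k_2|)^{1/4}(|k_1|+|k_2|)^{3/2}$, this is fine.

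The terms with $4\omega_1\omega[\dots]$ and $4\omega_1\omega_3[\dots]$ carry a factor $\omega_1\omega_2\omega_1$ or similar. Here I will check that each summand contains at least $\omega_1\omega_2$ or $|k_1|\,|k_2|$ type factors. The worst summand is $\omega_1^2\omega f_-(k_2,k_3)$, which contains $|k_2|$ but only $\omega_1^2=|k_1|$. Its difference is $\omega_1^2\omega\cdot|k_2||k_1-k_2|$ plus $\omega_1^2|\omega_3-\omega|\,|f_-|$, which gives $|k_1||k_2|(|k_1|+|k_2|)|k|^{1/2}$, again fine.

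The fraction terms (the $\frac{4\omega^2 f_-f_-(|k_1|-k_1\cdot\hat k)}{4\omega_1^2\omega^2}$ type) contain $1/\omega_1^2$. I plan to cancel this against $f_-(k_1,k)$, which is $\le 2|k_1||k|$, and use that $\hat k$ versus $\hat k_3$ differs by $|k_1-k_2|/|k|$.

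I expect the main obstacle to be the bookkeeping of fractional powers, namely ensuring every term keeps the factor $(|k_1||k_2|)^{1/4}$ (equivalently, that no term has only one small variable with total degree too low). In particular the $\omega_1^2\omega_2\omega_3 f_-(k_1,k)$-type contributions must be checked. If a term fails, I will fall back on pairing it with its counterpart from $\tilde T_{k_2k_3,2}^{k_1k}$ after symmetrization, since the target only concerns $T_{k_1k,2}$.
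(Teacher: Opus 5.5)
Your argument is the paper's argument: telescope each summand of the bracket of $\tilde T_{k_1k,2}^{k_2k_3}$ under $k_3\to k$, and control the pieces by \eqref{eq: est omega1-2}, \eqref{eq: k-k3}, Lemma~\ref{lem: f_pm_bound}, and $|\hat k_3-\hat k|\le 2|k_1-k_2|/|k|$. Your extra care with the prefactor is actually needed. The paper reduces to $|\bar T_{k_1k,2}^{k_2k_3}-\bar T_{k_1k,2}^{k_2k}|\le \bar C_{21}(|k_1|^3+|k_2|^3)|k|$. After dividing by $(|k_1||k_2|)^{1/4}(|k||k_3|)^{1/4}$, that bound exceeds $(|k_1|^{5/2}+|k_2|^{5/2})\omega$ by a factor of order $(|k_2|/|k_1|)^{1/4}$ when $|k_1|\ll|k_2|$. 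Two things close this, and you do both: keeping the factor $(|k_1||k_2|)^{1/4}$, and treating $\bar T^{k_2k}_{k_1k,2}\bigl((|k||k_3|)^{-1/4}-|k|^{-1/2}\bigr)$ explicitly via \eqref{eq: kk3 1/4}.

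Your displayed ``main task'' inequality, however, has the wrong homogeneity and must be corrected. The bracket $\bar T$ is homogeneous of degree $4$ (e.g.\ $-12|k_1||k||k_2||k_3|$), whereas $(|k_1|^2+|k_2|^2)(|k_1||k_2|)^{1/4}|k|$ has degree $7/2$. Since the support of $\delta(\Sigma)\delta(\Omega)\varphi_{1<3}\varphi_{2<3}$ is dilation invariant, that bound cannot hold at all scales. Dividing it by $(|k_1||k_2|)^{1/4}(|k||k_3|)^{1/4}$, which is not $\sim|k|^{1/2}$, would also give $(|k_1|^2+|k_2|^2)\omega$, not the lemma.

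The correct target is $(|k_1||k_2|)^{1/4}(|k_1|+|k_2|)^{5/2}|k|$, and your term-by-term checks in fact produce it. Every summand carries at least $\omega_1\omega_2$. The weakest are $4\omega_1\omega\omega_2\omega_3 f_-(k_1,k)$ and $4\omega_1\omega_3\omega\omega_2 f_+(k_1,k_3)$, which carry $\omega_1\omega_2|k_1|$; all others carry $|k_1||k_2|$. Hence $|\bar T_{k_1k,2}^{k_2k_3}-\bar T_{k_1k,2}^{k_2k}|\le C\,\omega_1\omega_2(|k_1|+|k_2|)^2|k|$, which suffices.

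Smaller slips do not affect the conclusions:
\begin{itemize}
\item $\bar T^{k_2k}_{k_1k,2}$ has degree $2$, not $3/2$, in the small variables.
\item The summand is $4\omega_1^2\omega^2 f_-(k_2,k_3)$, with no $\omega_3$.
\end{itemize}
Finally, the symmetrization fallback is unnecessary, and it would not yield \eqref{eq: d t T1 2}, which concerns the unsymmetrized $\tilde T_{k_1k,2}$.
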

\begin{proof}Following an argument analogous to that in Lemma~\ref{lem: C11 est}, we can factor out the inverse fractional prefactors. The proof then rigorously reduces to establishing the corresponding bound for the core polynomial part. Specifically, defining $\bar T_{k_1k,2}^{k_2k_3} := -16\pi^2\big(|k||k_1||k_2||k_3|\big)^{1/4}\tilde T_{k_1k,2}^{k_2k_3}$, it is sufficient to show that
\begin{equation}\label{eq: est bar T2}
    | \bar T_{k_1k,1}^{k_2k_3}-\bar T_{k_1k,1}^{k_2k}|\leq \bar{C}_{21}(|k_1|^3+|k_2|^3)|k|,
\end{equation}
for some universal constant $\bar C_{21}>0$. To this end, we algebraically expand the difference:
    \begin{equation}\label{eq: bT 2}
\begin{split}
    \bar{T}_{k_1 k,2}^{k_2 k_3}- \bar{T}_{k_1 k,2}^{k_2 k}= & -12 |k_1| |k| |k_2| (|k_3|-|k|) \\
    &- 4\omega_1\omega \Big[ \omega_2 (\omega_3-\omega) f_-(k_1,k) + \omega_1 \omega (f_-(k_2,k_3)-f_-(k_2,k)) \Big] \\
    &+ 4\omega_1(\omega_3-\omega) \Big[ \omega \omega_2 f_+(k_1,k_3) + \omega_1 \omega_3 f_+(k,k_2)\Big] \\
     &+ 4\omega_1\omega \Big[ \omega \omega_2 (f_+(k_1,k_3) -f_+(k_1,k))+ \omega_1 (\omega_3-\omega) f_+(k,k_2)\Big] \\
    &\quad + f_+(k_1,k)(f_+(k_2,k_3) -f_+(k_2,k))+ (f_-(k_1,k_3)-f_-(k_1,k))f_-(k,k_2) \\
    &\quad -4 f_-(k_1,k)(f_-(k_2,k_3)-f_-(k_2,k)) \\
    &+  f_-(k_1,k)(f_-(k_2,k_3)-f_-(k_2,k))(1-\hat{k}_1\cdot \hat k) \\
    &\quad -4 (f_+(k_1,k_3)-f_+(k_1,k))f_+(k,k_2) \\
    &+  (f_+(k_1,k_3)-f_+(k_1,k))f_+(k,k_2)(1+\hat{k}_1\cdot \hat k_3).
\end{split}
\end{equation}
Applying the bounds from Lemma~\ref{lem: f_pm_bound} along with the estimates~\eqref{eq: est omega1-2}, \eqref{eq: k-k3}, and \eqref{eq: range k}, we deduce
\begin{equation}
    \begin{split}
        | \bar{T}_{k_1 k,2}^{k_2 k_3}- \bar{T}_{k_1 k,2}^{k_2 k}|\leq & 12 |k_1| |k| |k_2| |k_1-k_2| \\
    &+4\omega_1\omega\Big[ \omega_2\cdot\frac{|k_1-k_2|}{\omega} |f_-(k_1,k)| + \omega_1 \omega \cdot 2|k_2|(|k_1|+|k_2|) \Big] \\
    &+ 4\omega_1\frac{|k_1-k_2|}{\omega} \Big[ \omega \omega_2 f_+(k_1,k_3) + \omega_1 \omega_3 f_+(k,k_2)\Big] \\
     &+ 4\omega_1\omega \Big[ \omega \omega_2 \cdot 2|k_1|(|k_1|+|k_2|)+ \omega_1 \frac{|k_1-k_2|}{\omega} f_+(k,k_2)\Big] \\
    &\quad + f_+(k_1,k) \cdot 2|k_2|(|k_1|+|k_2|)+ 2|k_1|(|k_1|+|k_2|)\cdot |f_-(k,k_2)| \\
    &\quad +4 |f_-(k_1,k)|\cdot 2|k_2|(|k_1|+|k_2|) \\
    &+  |f_-(k_1,k)|\cdot 2|k_2|(|k_1|+|k_2|)(1-\hat{k}_1\cdot \hat k) +8|k_1|(|k_1|+|k_2|)f_+(k,k_2) \\
    &+  2|k_1|(|k_1|+|k_2|) \cdot f_+(k,k_2)(1+\hat{k}_1\cdot \hat k_3).
    \end{split}
\end{equation}
Combining this with the pointwise estimates $|f_\pm(y,z)|\leq 2|y||z|$ and the triangle inequality $|k_1-k_2|\leq |k_1|+|k_2|$, we obtain
\begin{equation}
    \begin{split}
         | \bar{T}_{k_1 k,2}^{k_2 k_3}- \bar{T}_{k_1 k,2}^{k_2 k}|\leq & (|k_1|+|k_2|)\Bigg\{12|k_1||k_2||k|+8|k_1|^{3/2}\omega_2|k|+8|k_1||k_2||k|\\
&+8|k_1|^{3/2}\omega_2|k_3|+8|k_1||k_2|\omega_3\omega+8|k_1|^{3/2}\omega_2|k|+8|k_1||k_2||k|\\
&+4|k_1||k_2||k|+4|k_1||k_2||k|+16|k_1||k_2||k|+8|k_1||k_2||k|\\
&+16|k_1||k_2||k|+8|k_1||k_2||k|\Bigg\}.
    \end{split}
\end{equation}
Finally, utilizing the bounds in~\eqref{eq: range k}, the terms inside the braces are bounded by $\mathcal{O}\big((|k_1|+|k_2|)^2|k|\big)$, which directly yields the target estimate~\eqref{eq: est bar T2}. This completes the proof.
\end{proof}
\begin{lem}\label{lem: C51 est} On the support of the conservation distributions $\delta(\Sigma)\delta(\Omega)\varphi_{1<3}\varphi_{2<3}$, the estimate~\eqref{eq: d t T2 2} holds for some constant $C_{51}>0$.
\end{lem}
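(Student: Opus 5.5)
The plan mirrors the proofs of Lemmas~\ref{lem: C21 est} and~\ref{lem: C41 est}. We first deal with the inverse fractional prefactor $(|k_1||k||k_2||k_3|)^{-1/4}$. By the bound on $\bigl||k_3|^{-1/4}-|k|^{-1/4}\bigr|$ from the proof of Lemma~\ref{lem: C11 est}, replacing $|k_3|^{-1/4}$ by $|k|^{-1/4}$ costs a factor $C(|k_1|+|k_2|)|k|^{-5/4}$. This multiplies the crude size of the core polynomial, which is $\mathcal{O}\big((|k_1|^{3/2}+|k_2|^{3/2})\,|k|^{3/2}\cdot(\ldots)\big)$ after the degree count. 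It is therefore enough to bound the core polynomial part
\[
\bar T_{kk_1,2}^{k_2k_3} := -16\pi^2\big(|k||k_1||k_2||k_3|\big)^{1/4}\tilde T_{kk_1,2}^{k_2k_3},
\]
namely
\[
\big|\bar T_{kk_1,2}^{k_2k_3}-\bar T_{kk_1,2}^{k_2k}\big|\leq \bar C_{51}(|k_1|^3+|k_2|^3)|k|.
\]
Dividing by $(|k_1||k_2|)^{1/4}|k|^{1/2}$ and using \eqref{eq: range k} then gives a bound of order $(|k_1|^{5/2}+|k_2|^{5/2})\omega$. This reduction uses $|k_1|^a|k_2|^b\lesssim |k_1|^{a+b}+|k_2|^{a+b}$, together with the fact that each term of the difference carries enough powers of $|k_1|,|k_2|$ to absorb the $(|k_1||k_2|)^{-1/4}$ factor.

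Next we expand the difference algebraically, term by term from \eqref{eq: tT2 kk1k2k3 2}, in the same way as \eqref{eq: bT 2}. The $-12|k_1||k||k_2|(|k_3|-|k|)$ term is handled by \eqref{eq: k-k3}. In the two bracketed $\omega$-terms, each factor that depends on $k_3$ ($\omega_3$, $f_-(k_2,k_3)$, $f_+(k_1,k_3)$) is replaced by its value at $k$ through a telescoping sum. The differences $\omega_3-\omega$ are bounded by \eqref{eq: est omega1-2}, which gives $|k_1-k_2|/\omega$. The differences $f_\pm(k_j,k_3)-f_\pm(k_j,k)$ are bounded by Lemma~\ref{lem: f_pm_bound}, which gives $2|k_j|(|k_1|+|k_2|)$. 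The quadratic terms $f_+(k,k_1)f_+(k_2,k_3)+f_-(k,k_2)f_-(k_1,k_3)$ and the four rational-type terms are treated the same way. In those rational terms the denominators $-2\omega_1\omega$ and $2\omega\omega_2$ are already linearized, so the only $k_3$ dependence sits in the factors $f_-(k_2,k_3)$ and $f_+(k_1,k_3)$.

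Then we bound each resulting term using $|f_\pm(y,z)|\leq 2|y||z|$, $|k_1-k_2|\leq|k_1|+|k_2|$ and \eqref{eq: range k}, and check that each is $\mathcal{O}\big((|k_1|+|k_2|)^3|k|\big)$.

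The delicate point is the degree bookkeeping in the rational terms. A term such as
\[
\frac{4\omega^2 f_-(k,k_1)\big(f_-(k_2,k_3)-f_-(k_2,k)\big)(|k_1|-k_1\cdot\hat k)}{4\omega_1^2\omega^2}
\]
has an $\omega_1^{-2}=|k_1|^{-1}$ in the denominator. It is cancelled by the factor $(|k_1|-k_1\cdot\hat k)\leq 2|k_1|$, which leaves $|k||k_1|\cdot|k_2|(|k_1|+|k_2|)$. This is acceptable, but only because of that cancellation. By contrast, the terms with $8\omega_1\omega/(-2\omega_1\omega)=-4$ are harmless. The $\omega_2^{-1}$-type denominators have to be checked the same way: the term coming from $f_+(k,k_2)f_+(k_1,k_3)$ has its singular factor $\omega_2^{-2}$ compensated by $(|k_2|+k_2\cdot\hat k)\leq 2|k_2|$.

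Finally, since only the unsymmetrized $\tilde T$ is estimated, the partner $\tilde T_{k_3k_2,2}^{k_1k}$ has to be handled by the same argument with the roles of the variables relabeled. Doing so gives \eqref{eq: d t T2 2}.
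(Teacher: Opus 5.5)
Your proposal is correct and follows the paper's proof essentially step by step. You reduce to the core polynomial $\bar T_{kk_1,2}^{k_2k_3}$, telescope the $k_3$-dependent factors using \eqref{eq: est omega1-2}, \eqref{eq: k-k3} and Lemma~\ref{lem: f_pm_bound}, and close with $|f_\pm(y,z)|\leq 2|y||z|$ and \eqref{eq: range k}. Your remark that every term of the difference carries positive powers of both $|k_1|$ and $|k_2|$ is actually needed: it absorbs the prefactor $(|k_1||k_2|)^{-1/4}$, which the bare bound $(|k_1|^3+|k_2|^3)|k|$ would not do when $|k_2|\ll|k_1|$, and the paper's reduction skips this point. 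Your closing remark about the partner $\tilde T_{k_3k_2,2}^{k_1k}$ is unnecessary, since \eqref{eq: d t T2 2} concerns only $\tilde T_{kk_1,2}$.
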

\begin{proof}Following an argument analogous to that in Lemma~\ref{lem: C11 est}, we can factor out the inverse fractional prefactors. The proof then rigorously reduces to establishing the corresponding bound for the core polynomial part. Specifically, defining $\bar T_{kk_1,2}^{k_2k_3} := -16\pi^2\big(|k||k_1||k_2||k_3|\big)^{1/4}\tilde T_{kk_1,2}^{k_2k_3}$, it is sufficient to show that
\begin{equation}\label{eq: est bar T5}
    | \bar T_{kk_1,1}^{k_2k_3}-\bar T_{kk_1,1}^{k_2k}|\leq \bar{C}_{51}(|k_1|^3+|k_2|^3)|k|,
\end{equation}
for some universal constant $\bar C_{51}>0$. To this end, we algebraically expand the difference:
    \begin{equation}\label{eq: bT 5}
\begin{split}
    \bar{T}_{k k_1,2}^{k_2 k_3} - \bar{T}_{k k_1,2}^{k_2 k} = & -12 |k_1| |k| |k_2| (|k_3|-|k|) \\
    &\quad - 4\omega_1\omega \Big[ \omega_2 (\omega_3-\omega) f_-(k,k_1) + \omega \omega_1 (f_-(k_2,k_3)-f_-(k_2,k)) \Big]\\
    &\quad+ 4\omega\omega_2 \Big[ \omega_1 (\omega_3-\omega) f_+(k,k_2) + \omega \omega_2 (f_+(k_1,k_3)-f_+(k_1,k))\Big] \\
    &\quad + f_+(k,k_1)(f_+(k_2,k_3)-f_+(k_2,k))+f_-(k,k_2)(f_-(k_1,k_3) -f_-(k_1,k))\\
    &\quad -4 f_-(k,k_1)(f_-(k_2,k_3)-f_-(k_2,k)) \\
    &\quad +  f_-(k,k_1)(f_-(k_2,k_3)-f_-(k_2,k))(1-\hat{k}_1\cdot \hat k) \\
    &\quad -4f_+(k,k_2)(f_+(k_1,k_3)-f_+(k_1,k))\\
    &\quad +  f_+(k,k_2)(f_+(k_1,k_3)-f_+(k_1,k))(1+\hat{k}_2\cdot \hat k).
\end{split}
\end{equation}
Applying the bounds from Lemma~\ref{lem: f_pm_bound} along with the estimates~\eqref{eq: est omega1-2}, \eqref{eq: k-k3}, and \eqref{eq: range k}, we deduce
\begin{equation}
    \begin{split}
        | \bar{T}_{k k_1,2}^{k_2 k_3}- \bar{T}_{k k_1,2}^{k_2 k}|\leq & 12 |k_1| |k| |k_2| |k_1-k_2| \\
    &+4\omega_1\omega\Big[ \omega_2\cdot\frac{|k_1-k_2|}{\omega} |f_-(k_1,k)| + \omega_1 \omega \cdot 2|k_2|(|k_1|+|k_2|) \Big] \\
     &+ 4\omega\omega_2 \Big[  \omega_1 \frac{|k_1-k_2|}{\omega} f_+(k,k_2)+\omega \omega_2 \cdot 2|k_1|(|k_1|+|k_2|)\Big] \\
    &\quad + f_+(k_1,k) \cdot 2|k_2|(|k_1|+|k_2|)+ 2|k_1|(|k_1|+|k_2|)\cdot |f_-(k,k_2)| \\
    &\quad +4 |f_-(k_1,k)|\cdot 2|k_2|(|k_1|+|k_2|) \\
    &+  |f_-(k_1,k)|\cdot 2|k_2|(|k_1|+|k_2|)(1-\hat{k}_1\cdot \hat k) +8|k_1|(|k_1|+|k_2|)f_+(k,k_2) \\
    &+  2|k_1|(|k_1|+|k_2|) \cdot f_+(k,k_2)(1+\hat{k}_1\cdot \hat k_3).
    \end{split}
\end{equation}
Combining this with the pointwise estimates $|f_\pm(y,z)|\leq 2|y||z|$ and the triangle inequality $|k_1-k_2|\leq |k_1|+|k_2|$, we obtain
\begin{equation}
    \begin{split}
         | \bar{T}_{k k_1,2}^{k_2 k_3}- \bar{T}_{k k_1,2}^{k_2 k}|\leq & (|k_1|+|k_2|)\Bigg\{12|k_1||k_2||k|+8|k_1|^{3/2}\omega_2|k|+8|k_1||k_2||k|\\
&+8|k_2|^{3/2}\omega_1|k_3|+8|k_1||k_2||k|+4|k_1||k_2||k|+4|k_1||k_2||k|\\
&+16|k_1||k_2||k|+8|k_1||k_2||k|+16|k_1||k_2||k|\\
&+8|k_1||k_2||k|\Bigg\}.
    \end{split}
\end{equation}
Finally, utilizing the bounds in~\eqref{eq: range k}, the terms inside the braces are bounded by $\mathcal{O}\big((|k_1|+|k_2|)^2|k|\big)$, which directly yields the target estimate~\eqref{eq: est bar T5}. This completes the proof.
\end{proof}
\begin{lem}\label{lem: asymptotic denominator}
On the support of $\delta(\Sigma)\delta(\Omega)\varphi_{1<3}\varphi_{2<3}$, assuming the dispersion relation $\omega^2 = |k|$, we have the asymptotic expansion
\begin{equation}\label{eq: expansion denominator}
    \frac{1}{|k_1+k|-(\omega_1+\omega)^2} = \frac{1}{-2\omega_1\omega} - \frac{|k_1|-k_1\cdot \hat k}{(-2\omega_1\omega)^2} + \mathcal{O}\left(\frac{1}{\omega^3}\right),
\end{equation}
with the error term satisfying the rigorous bound
\begin{equation}
   \left| \mathcal{O}\left(\frac{1}{\omega^3}\right)\right| \leq \frac{C_r(\omega_1+\omega_2)}{\omega^3},
\end{equation}
for some universal constant $C_r>0$.
\end{lem}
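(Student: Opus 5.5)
The plan is to expand the denominator exactly rather than asymptotically. Since $\omega^2=|k|$ and $\omega_1^2=|k_1|$, we have
\[
D:=|k_1+k|-(\omega_1+\omega)^2=-2\omega_1\omega+E,\qquad E:=|k_1+k|-|k|-|k_1|.
\]
The triangle inequality gives $E\le 0$ and $|E|\le 2|k_1|$. Hence $D=-(2\omega_1\omega+|E|)$ and $|D|\ge 2\omega_1\omega$, so no smallness of $E$ relative to $\omega_1\omega$ is needed to control $1/D$. The identity I will use is the exact second-order algebraic expansion
\[
\frac{1}{D}=\frac{1}{-2\omega_1\omega}-\frac{E}{(-2\omega_1\omega)^2}+\frac{E^2}{D\,(-2\omega_1\omega)^2},
\]
which can be checked by multiplying through by $D(-2\omega_1\omega)^2$.

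Next I replace $E$ by its linearization $-(|k_1|-k_1\cdot\hat k)$. Write $E=-(|k_1|-k_1\cdot\hat k)+G$ with $G:=|k+k_1|-|k|-k_1\cdot\hat k$. Using
\[
|k+k_1|-|k|=\frac{2k\cdot k_1+|k_1|^2}{|k+k_1|+|k|},
\]
one gets $|G|\le C|k_1|^2/|k|$. The only geometric input is $|k+k_1|\ge |k|-|k_1|\ge \tfrac12|k|$, which holds on the support of $\varphi_{1<3}$ together with \eqref{eq: range k}, since $20|k_1|\le|k_3|\le \tfrac{10}{9}|k|$. Substituting this decomposition of $E$ into the exact identity isolates the claimed two leading terms and leaves the error
\[
\mathcal{O}\Big(\frac1{\omega^3}\Big)=-\frac{G}{(2\omega_1\omega)^2}+\frac{E^2}{D(2\omega_1\omega)^2}.
\]

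It remains to bound the two error pieces. For the first,
\[
\frac{|G|}{4|k_1||k|}\le \frac{C|k_1|}{|k|^2}=\frac{C\omega_1^2}{\omega^4}\le \frac{C\omega_1}{\omega^3},
\]
using $\omega_1\le\omega$. For the second,
\[
\frac{E^2}{|D|(2\omega_1\omega)^2}\le \frac{4|k_1|^2}{(2\omega_1\omega)^3}=\frac{\omega_1}{2\omega^3}.
\]
Together these give the bound with $\omega_1$ alone, which is stronger than the stated $C_r(\omega_1+\omega_2)/\omega^3$.

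The only step needing care is the bound on $G$, since it requires a lower bound on $|k+k_1|+|k|$. This lower bound is supplied by the scale separation \eqref{eq: k12 and k3} and \eqref{eq: range k}. Everything else is exact algebra, so I do not anticipate a real obstacle.
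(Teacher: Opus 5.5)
\textbf{There is a genuine gap.} It sits at the sentence ``Substituting this decomposition of $E$ into the exact identity isolates the claimed two leading terms.'' In your identity the second term is $-E/(-2\omega_1\omega)^2$. With $E=-(|k_1|-k_1\cdot\hat k)+G$ this equals
\[
+\frac{|k_1|-k_1\cdot\hat k}{(-2\omega_1\omega)^2}-\frac{G}{(2\omega_1\omega)^2},
\]
so the correction comes out with a \emph{plus} sign, opposite to the statement. Your own observation forces this sign: $D\le -2\omega_1\omega<0$ gives $1/D\ge 1/(-2\omega_1\omega)$.

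Measured against the formula as printed, the remainder therefore also contains $\frac{2(|k_1|-k_1\cdot\hat k)}{(2\omega_1\omega)^2}$, which neither of your error estimates sees. This term is not $O((\omega_1+\omega_2)/\omega^3)$. Take $k=(K,0)$ and $k_1=(-1,0)$. Then exactly $D=-2(1+\sqrt K)$, so
\[
\frac1D=-\frac{1}{2\sqrt K}+\frac{1}{2K}+O(K^{-3/2}),
\]
while the stated expansion gives $-\frac{1}{2\sqrt K}-\frac{1}{2K}$. Choosing $k_2=k_1$, $k_3=k$ (or any nearby resonant point) puts this on the support with $\omega_2\approx 1$. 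The discrepancy $\approx 1/\omega^2$ then exceeds $C_r(\omega_1+\omega_2)/\omega^3\approx 2C_r/\omega^3$ as soon as $\omega\gg C_r$. So the lemma as printed is false, and you should have flagged the sign rather than asserting the match.

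The paper's proof has the same slip. It Taylor-expands $|k_1+k|$, writes $D=-2\omega_1\omega(1+x)$, and uses a geometric series. From $\frac{1}{-2\omega_1\omega}(1-x+R_2)$ with $x=\frac{(|k_1|-k_1\cdot\hat k)-R_1}{2\omega_1\omega}$, the correct output is
\[
\frac{1}{-2\omega_1\omega}+\frac{|k_1|-k_1\cdot\hat k}{(2\omega_1\omega)^2}-\frac{R_1}{(2\omega_1\omega)^2}+\frac{R_2}{-2\omega_1\omega},
\]
not the displayed version. The plus sign is also what the paper actually needs. It matches the term chosen in the decomposition \eqref{eq: tT2 kk1k2k3}. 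It is the only sign for which the cancellation in $\mathcal B_1$ in the proof of Lemma~\ref{lem: T3} really occurs; with the minus sign an uncancelled $O(1)$ term $-2(|k_1|-k_1\cdot\hat k)/\omega_1^2$ would survive.

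For the corrected statement your argument is complete, and it is cleaner than the paper's. The exact identity together with $|D|\ge 2\omega_1\omega$ removes any need for $|x|\ll 1$ and for the geometric-series remainder $R_2$. Your two bounds give the explicit estimate $\omega_1/\omega^3$. You do not even need $|k+k_1|\ge\frac12|k|$: since $|k+k_1|+|k|\ge|k|$, you already get $|G|\le 2|k_1|^2/|k|$.
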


\begin{proof}
Using the dispersion relation $\omega_j^2 = |k_j|$ and $\omega^2 = |k|$, the subtracted term in the denominator expands precisely as
\begin{equation}\label{eq: expansion term1_new}
    (\omega_1+\omega)^2 = |k_1| + |k| + 2\omega_1\omega.
\end{equation}
For the term $|k_1+k|$, due to the scale separation $\varphi_{1<3}$ which enforces $|k_1| \ll |k|$, we perform a Taylor expansion of the square root. Writing $|k_1+k| = |k|\sqrt{1 + \frac{2k_1\cdot k + |k_1|^2}{|k|^2}}$, we have
\begin{equation}\label{eq: expansion term2_new}
    |k_1+k| = |k| + k_1\cdot \hat{k} + R_1,
\end{equation}
where the remainder $R_1$ is bounded by $|R_1| \leq C \frac{|k_1|^2}{|k|} = C \frac{\omega_1^4}{\omega^2}$ for some constant $C>0$. Subtracting \eqref{eq: expansion term1_new} from \eqref{eq: expansion term2_new}, the full denominator $D := |k_1+k|-(\omega_1+\omega)^2$ becomes
\begin{equation}
    D = -2\omega_1\omega - \big(|k_1| - k_1\cdot \hat{k}\big) + R_1 = -2\omega_1\omega \big(1 + x \big),
\end{equation}
where we define the dimensionless parameter
\begin{equation}
    x := \frac{|k_1| - k_1\cdot \hat{k}}{2\omega_1\omega} - \frac{R_1}{2\omega_1\omega}.
\end{equation}
We now estimate the magnitude of $x$. Since $\big||k_1| - k_1\cdot \hat{k}\big| \leq 2|k_1| = 2\omega_1^2$, the first term in $x$ is bounded by $\frac{\omega_1}{\omega}$. The second term is bounded by $\frac{C\omega_1^4/\omega^2}{2\omega_1\omega} \leq C\frac{\omega_1^3}{\omega^3}$. Thus, under the scale separation regime, $|x| \leq C' \frac{\omega_1}{\omega} \ll 1$.

Inverting $D$ using the geometric series expansion $\frac{1}{1+x} = 1 - x + R_2$, with the quadratic remainder bounded by $|R_2| \leq C x^2 \leq C \frac{\omega_1^2}{\omega^2}$, we obtain
\begin{equation}
\begin{split}
    \frac{1}{D} &= \frac{1}{-2\omega_1\omega} \big( 1 - x + R_2 \big) \\
    &= \frac{1}{-2\omega_1\omega} - \frac{|k_1| - k_1\cdot \hat{k}}{(-2\omega_1\omega)^2} + \frac{R_1}{(-2\omega_1\omega)^2} + \frac{R_2}{-2\omega_1\omega}.
\end{split}
\end{equation}
The first two terms exactly yield the leading order and the first correction in \eqref{eq: expansion denominator}. It remains to bound the overall error term $E := \frac{R_1}{4\omega_1^2\omega^2} - \frac{R_2}{2\omega_1\omega}$. Plugging in the bounds for $R_1$ and $R_2$, we compute
\begin{equation}
\begin{split}
    |E| &\leq \frac{C \omega_1^4/\omega^2}{4\omega_1^2\omega^2} + \frac{C \omega_1^2/\omega^2}{2\omega_1\omega} \\
    &\leq C \frac{\omega_1^2}{\omega^4} + C \frac{\omega_1}{\omega^3} \\
    &\leq \tilde{C} \frac{\omega_1}{\omega^3},
\end{split}
\end{equation}
where we used the fact that $\frac{\omega_1}{\omega} \ll 1$ to absorb the higher-order term. Finally, noting that trivially $\omega_1 \leq \omega_1 + \omega_2$, we conclude that the error is bounded by
\begin{equation}
    |E| \leq \frac{C_r (\omega_1+\omega_2)}{\omega^3}.
\end{equation}
This rigorously justifies the asymptotic expansion and its associated remainder bound, completing the proof.
\end{proof}

\begin{proof}[Proof of Lemma~\ref{lem: T3}]Defining the bracket
\begin{equation}
    \mathcal{B}_1 := \frac{4(\omega_1 + \omega)^2}{\omega_{k_1+k}^2 - (\omega_1 + \omega)^2} - \frac{4\omega^2}{-2\omega_1\omega} - \frac{8\omega_1\omega}{-2\omega_1\omega} - \frac{4\omega^2(|k_1|-k_1\cdot\hat k)}{(-2\omega_1\omega)^2}.
\end{equation}
Let $D := \omega_{k_1+k}^2 - (\omega_1+\omega)^2 = |k_1+k| - (\omega_1+\omega)^2$. Expanding the numerator of the first term in $\mathcal{B}_1$ as $4(\omega_1+\omega)^2 = 4\omega_1^2 + 8\omega_1\omega + 4\omega^2$, we distribute the denominator to write
\begin{equation}
    \frac{4(\omega_1+\omega)^2}{D} = \frac{4\omega_1^2}{D} + \frac{8\omega_1\omega}{D} + \frac{4\omega^2}{D}.
\end{equation}
By invoking Lemma~\ref{lem: asymptotic denominator}, we substitute the asymptotic expansion $D^{-1} = \frac{1}{-2\omega_1\omega} - \frac{|k_1|-k_1\cdot\hat k}{(-2\omega_1\omega)^2} + E$ into the $\omega^2$ and $\omega_1\omega$ terms, where the remainder is bounded by $|E| \leq C_r(\omega_1+\omega_2)/\omega^3$. This yields exact algebraic cancellations with the subtracted terms in $\mathcal{B}_1$:
\begin{equation}\label{eq: bracket B simplified}
\begin{split}
    \mathcal{B}_1 &= \frac{4\omega_1^2}{D} + 8\omega_1\omega \left( \frac{1}{-2\omega_1\omega} - \frac{|k_1|-k_1\cdot\hat k}{(-2\omega_1\omega)^2} + E \right) + 4\omega^2 \left( \frac{1}{-2\omega_1\omega} - \frac{|k_1|-k_1\cdot\hat k}{(-2\omega_1\omega)^2} + E \right) \\
    &\quad - \frac{4\omega^2}{-2\omega_1\omega} - \frac{8\omega_1\omega}{-2\omega_1\omega} - \frac{4\omega^2(|k_1|-k_1\cdot\hat k)}{(-2\omega_1\omega)^2} \\
    &= \frac{4\omega_1^2}{D} - \frac{8\omega_1\omega(|k_1|-k_1\cdot\hat k)}{(-2\omega_1\omega)^2} + 8\omega_1\omega E + 4\omega^2 E.
\end{split}
\end{equation}
We now bound each of the four remaining terms in~\eqref{eq: bracket B simplified}. Recall that $|D^{-1}| \leq C/(\omega_1\omega)$ and $\big||k_1|-k_1\cdot\hat k\big| \leq 2|k_1| = 2\omega_1^2$. 
First, for the unexpanded term, $\big| \frac{4\omega_1^2}{D} \big| \leq C \frac{\omega_1^2}{\omega_1\omega} = C\frac{\omega_1}{\omega}$.
Second, for the difference term, $\big| \frac{8\omega_1\omega(|k_1|-k_1\cdot\hat k)}{(-2\omega_1\omega)^2} \big| \leq \frac{16\omega_1^3\omega}{4\omega_1^2\omega^2} = 4\frac{\omega_1}{\omega}$.
Third, the main error term is bounded by $|4\omega^2 E| \leq 4\omega^2 \frac{C_r(\omega_1+\omega_2)}{\omega^3} \leq \tilde{C}\frac{\omega_1+\omega_2}{\omega}$.
Lastly, the higher-order error $|8\omega_1\omega E| \leq C \frac{\omega_1(\omega_1+\omega_2)}{\omega^2}$ is strictly subsumed by the previous term since $\omega_1 \ll \omega$.
Summing these bounds and recalling $\omega_j = |k_j|^{1/2}$, we obtain the critical estimate for the bracket:
\begin{equation}\label{eq: bracket B final bound}
    |\mathcal{B}_1| \leq C \frac{\omega_1+\omega_2}{\omega} = C \frac{|k_1|^{1/2}+|k_2|^{1/2}}{|k|^{1/2}}.
\end{equation}

Next, we address the prefactors. By definition, $|f_-(y,z)| \leq 2|y||z|$. Thus, $|f_-(k_1,k)| \leq 2|k_1||k|$ and $|f_-(k_2,k_3)| \leq 2|k_2||k_3|$. On the support of the conservation laws, scale separation ensures $|k_3| \sim |k|$, which implies $|f_-(k_2,k_3)| \leq C|k_2||k|$.
Multiplying the prefactors with the bound~\eqref{eq: bracket B final bound} and applying Young's inequality yields
\begin{equation}
\begin{split}
    \big| f_-(k_1,k)f_-(k_2,k_3) \mathcal{B}_1 \big| 
    &\leq C |k_1||k_2||k|^2 \frac{|k_1|^{1/2}+|k_2|^{1/2}}{|k|^{1/2}} \\
    &= C \big( |k_1|^{3/2}|k_2| + |k_1||k_2|^{3/2} \big) |k|^{3/2}\\
    &\leq C \big( |k_1|^{5/2} + |k_2|^{5/2} \big) |k|^{3/2}.
\end{split}
\end{equation}

Similarly, for the interaction involving the difference of frequencies, we define the analogous bracket
\begin{equation}
  \mathcal B_2 := \frac{4(\omega_1 - \omega_3)^2}{\omega_{k_1-k_3}^2 - (\omega_1 - \omega_3)^2} - \frac{4\omega_3^2}{2\omega_1\omega_3} + \frac{8\omega_1\omega_3}{2\omega_1\omega_3} - \frac{4\omega_3^2(|k_1|+k_1\cdot\hat k_3)}{(2\omega_1\omega_3)^2}.
\end{equation}
Following an identical asymptotic expansion, we deduce the corresponding estimate:
\begin{equation}
\begin{split}
    \big| f_+(k_1,k_3)f_+(k,k_2) \mathcal{B}_2 \big| 
    &\leq C |k_1||k_2||k|^2 \frac{|k_1|^{1/2}+|k_2|^{1/2}}{|k|^{1/2}} \\
    &= C \big( |k_1|^{3/2}|k_2| + |k_1||k_2|^{3/2} \big) |k|^{3/2}\\
    &\leq C \big( |k_1|^{5/2} + |k_2|^{5/2} \big) |k|^{3/2},
\end{split}
\end{equation}
for some uniform constant $C>0$. 

Finally, for the low-frequency difference term, we utilize the bounds~\eqref{eq: est omega1-2} and~\eqref{eq: omega dyz} to straightforwardly obtain
\begin{equation}
\begin{split}
    \left| \frac{4(\omega_1 - \omega_2)^2 f_+(k_1,k_2)f_+(k,k_3)}{\omega_{k_1-k_2}^2 - (\omega_1 - \omega_2)^2} \right| 
    &\leq C\frac{|k_1-k_2|}{\min\{\omega_1,\omega_2\}\omega_3}|k_1||k_2||k_3||k|\\
    &\leq \tilde C\big(|k_1|^{5/2}+|k_2|^{5/2}\big)|k|^{3/2},
\end{split}
\end{equation}
for some constants $C, \tilde C > 0$. Combining all these robust estimates rigorously yields the ultimate bound~\eqref{eq: last est T3}.
\end{proof}
\begin{proof}[Proof of Lemma~\ref{lem: T6}] This follows from an argument analogous to that in Lemma~\ref{lem: T3}.\end{proof}
\begin{lem}\label{lem: Zvt}
For any transverse vector $\vt\in \mathbb R^{d-1}$ and for each index $j \in \{1,2,3\}$, the cardinality of the resonance root set satisfies $|\mathcal{Z}_{\vt}^{(j)}|\leq 8$.
\end{lem}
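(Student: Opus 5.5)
The plan is to treat the index $j=3$, which is the case used in the proof of Lemma~\ref{lem: Rep1}, and then note that $j=1,2$ are the same argument after relabeling. The idea is to turn the resonance condition along the line $k_2=se_j+\vt$ into a polynomial equation in $s$ of degree at most $8$, using a norm (product over conjugates) argument.

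\textbf{Setup.} Fix $k,k_3$ with $k\neq k_3$ (the diagonal was already discarded as measure zero) and fix $\vt$. Along the line $k_2=se_j+\vt$ we have $k_1=k_2+k_3-k$. Hence
\[
P(s):=|k_1|^2 \quad\text{and}\quad Q(s):=|k_2|^2
\]
are real quadratic polynomials in $s$, both with leading coefficient $1$. Write $c_0:=\omega_3-\omega$, which is constant along the line. The resonance condition $\Delta\omega=0$ from~\eqref{eq: A1} becomes
\[
a-b-c_0=0,\qquad a:=P(s)^{1/4},\quad b:=Q(s)^{1/4}.
\]

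\textbf{Eliminating the radicals.} Consider
\[
N(a,b):=\prod_{\zeta_1,\zeta_2\in\{1,i,-1,-i\}}\big(\zeta_1 a-\zeta_2 b-c_0\big).
\]
This product is invariant under $a\mapsto ia$ and under $b\mapsto ib$. Therefore it is a polynomial in $a^4=P$ and $b^4=Q$ with coefficients polynomial in $c_0$. It has total degree $16$ in $(a,b,c_0)$, so it has weighted degree at most $4$ in $(P,Q)$. Substituting the quadratics gives a polynomial $\Pi(s):=N(P(s)^{1/4},Q(s)^{1/4})$ of degree at most $8$ in $s$. Every root $s\in\mathcal{Z}^{(3)}_{\vt}$ makes the factor with $\zeta_1=\zeta_2=1$ vanish, so it is a root of $\Pi$. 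Consequently, if $\Pi\not\equiv0$, then $|\mathcal{Z}^{(3)}_{\vt}|\le 8$.

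\textbf{Main obstacle: showing $\Pi\not\equiv 0$.} The risk is genuine: as $s\to\pm\infty$ the quantities $a$ and $b$ are asymptotically equal, so leading terms may cancel. I would first try a direct leading-order computation. The conjugate factors with $\zeta_1\neq\zeta_2$ behave like $(\zeta_1-\zeta_2)|s|^{1/2}$, so they do not cancel. The factors with $\zeta_1=\zeta_2$ behave like
\[
\zeta_1(a-b)-c_0=\zeta_1\,\frac{P-Q}{(a+b)(a^2+b^2)}-c_0 ,
\]
and $P-Q$ is linear in $s$ because the $s^2$ terms cancel. Expanding this as $s\to\infty$ should give a nonzero limit unless $c_0$ and the linear coefficient degenerate simultaneously.

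If that expansion is inconclusive, the fallback is an analyticity argument. $\Pi$ is a polynomial, so $\Pi\equiv0$ would force one fixed conjugate factor to vanish on an interval, by real-analyticity of $a$ and $b$ away from zeros of $P$ and $Q$. By Lemma~\ref{lem:group_velocity_lower_bound} combined with~\eqref{eq: Vej}, the factor $a-b-c_0$ itself has nonvanishing $s$-derivative inside each cone, since $|k-k_3|>0$. For the remaining conjugate factors, identical vanishing on an interval would force a polynomial identity between $P$ and $Q$. I would rule this out either by comparing coefficients or by noting that it can only happen for a measure-zero set of parameters $(k,k_3,\vt)$, which does not affect the integrals in Lemma~\ref{lem: Rep1}.

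\textbf{Other indices.} The cases $j=1,2$ follow by the same construction with the roles of the variables permuted. In each case two of the frequencies are square roots of quadratics along the line and the other two are constants, so the same bound $8$ results.
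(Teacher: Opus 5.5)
\textbf{Verdict: genuine gap.} The step $\Pi\not\equiv0$ cannot be completed, because the lemma is false on an exceptional set. Your paper and your proposal share the same gap.

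Your elimination is the same idea as the paper's, packaged more cleanly. The paper squares four times to reach a polynomial in $s$ of degree at most $8$ and applies the fundamental theorem of algebra. It never checks that this polynomial is not identically zero, and it calls the squared equation ``equivalent'' to $\Delta\omega=0$. You are right to isolate $\Pi\not\equiv0$ as the crux. Taking the product over $\zeta_1$ first gives the closed form
\[
\Pi(s)=\prod_{\zeta^4=1}\Big((\zeta b+c_0)^4-P(s)\Big).
\]
\begin{itemize}
\item If $c_0\neq0$, each factor is dominated by $4\zeta c_0 b^3$, so $\Pi(s)=-256\,c_0^4s^6+O(s^5)\not\equiv0$. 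This case is decided by $c_0$ alone, not by the linear coefficient, since the diagonal factors tend to $-c_0$. It even gives the bound $6$.
\item If $c_0=0$, i.e.\ $|k|=|k_3|$, then $\Pi=(Q-P)^4$ with
\[
Q-P=2s\,e_j\cdot(k-k_3)+2\vt\cdot(k-k_3)-|k-k_3|^2 .
\]
This vanishes identically when $e_j\cdot(k-k_3)=0$ and $2\vt\cdot(k-k_3)=|k-k_3|^2$. The whole line then lies in the hyperplane $\{|k_1|=|k_2|\}$, and every $s$ is a root.
\end{itemize}
Concretely, for a unit vector $u\perp e_j$, take $k=u$, $k_3=-u$, $\vt=u$. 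Then $k_2=se_j+u$ and $k_1=se_j-u$, so $|k_1|=|k_2|$ and $\omega=\omega_3$. Hence $\Delta\omega\equiv0$ and $\mathcal{Z}^{(3)}_{\vt}=\mathbb{R}$, even though $k\neq k_3$.

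Neither of your fallbacks rescues the general claim.
\begin{itemize}
\item The monotonicity from Lemma~\ref{lem:group_velocity_lower_bound} and \eqref{eq: Vej} only controls those $s$ with $V(se_j+\vt)\in\mathcal{C}_j$. But $\mathcal{Z}^{(3)}_{\vt}$ is defined with no cone restriction. On the degenerate line above, $V=(k_1-k_2)/(2|k_2|^{3/2})\perp e_j$, so the line never meets the cone, while $a-b-c_0$ vanishes along all of it.
\item Comparing coefficients exhibits this degeneracy rather than excluding it.
\end{itemize}
What your argument (and the paper's) actually proves is $|\mathcal{Z}^{(3)}_{\vt}|\le 8$ off the set
\[
\{|k|=|k_3|,\ e_j\cdot(k-k_3)=0,\ 2\vt\cdot(k-k_3)=|k-k_3|^2\},
\]
and the lemma should be stated that way. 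Your closing measure-zero remark is then the right way to finish. For fixed $k$, the condition $|k_3|=|k|$ is a null set in $k_3$, so it does not affect Lemma~\ref{lem: Rep1}. Alternatively, since $\chi_j(V)\equiv0$ along such a line, one may count only roots in the support of $\chi_j$.
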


\begin{proof}
We present the proof for $j=3$; the cases for $j=1,2$ follow by identical arguments. Let $c=\omega-\omega_3$ and $p=k-k_3$. The resonance condition $\Delta\omega(k,k_2,k_3)=0$ can then be rewritten as 
\begin{equation}
    c+\sqrt{|k_2-p|}=\omega_2.
\end{equation}
Since the dispersion relation implies $\omega_2^2 = |k_2|$, squaring both sides gives
\begin{equation}
    c^2+2c\sqrt{|k_2-p|}+|k_2-p|=|k_2|,
\end{equation}
which we rearrange to isolate the remaining radical:
\begin{equation}
   2c\sqrt{|k_2-p|}=|k_2|-c^2-|k_2-p|.
\end{equation}
Squaring both sides again yields
\begin{equation}
   4c^2|k_2-p|=|k_2|^2+c^4+|k_2-p|^2-2c^2|k_2|-2|k_2||k_2-p|+2c^2|k_2-p|,
\end{equation}
that is, 
\begin{equation}
   (2c^2+2|k_2|)|k_2-p|=|k_2|^2+c^4+|k_2-p|^2-2c^2|k_2|.
\end{equation}
To eliminate the absolute value terms appearing linearly, we square both sides a third time:
\begin{equation}
   (2c^2+2|k_2|)^2|k_2-p|^2=(|k_2|^2+c^4+|k_2-p|^2)^2+4c^4|k_2|^2-4c^2|k_2|(|k_2|^2+c^4+|k_2-p|^2).
\end{equation}
Rearranging this to isolate the single $|k_2|$ term gives
\begin{equation}
\begin{split}
   &\left( 8c^2|k_2-p|^2+4c^2(|k_2|^2+c^4+|k_2-p|^2)\right) |k_2| \\
   &\quad =(|k_2|^2+c^4+|k_2-p|^2)^2+4c^4|k_2|^2 -(4c^4+4|k_2|^2)|k_2-p|^2.
\end{split}
\end{equation}
Squaring both sides one final time eliminates all non-integer powers. Recall the local parameterization $k_2 = se_j + \vt$, which ensures that $|k_2|^2 = s^2 + |\vt|^2$ and $|k_2-p|^2$ are explicitly polynomials in the 1D variable $s$ of degree $2$. 

Tracing the algebraic degrees through the final squared equation reveals that $\Delta \omega(k,k_2,k_3)=0$ is equivalent to a polynomial equation in $s$ with a degree smaller than or equal to $8$. By the Fundamental Theorem of Algebra, there are at most $8$ distinct real solutions. This yields the uniform bound $|\mathcal{Z}_{\vt}^{(3)}|\leq 8$. 

By symmetry, analogous algebraic reductions apply for $j=1, 2$, yielding $|\mathcal{Z}_{\vt}^{(j)}|\leq 8$. This concludes the proof.
\end{proof}

\medskip

\bibliographystyle{abbrv} 

\end{document}